\NewDocumentCommand{\rot}{O{45} O{1em} m}{\makebox[#2][l]{\rotatebox{#1}{#3}}}%
\renewcommand{\P}{\mathbb{P}} 
\newcommand{\Prob}{\P}
\newcommand{\E}{\mathbb{E}} 
\newcommand{\Var}{\mathrm{Var}} 
\newcommand{\Cov}{\mathrm{Cov}} 
\newcommand{\Cor}{\mathrm{Cor}} 
\newcommand{\vect}{\mathrm{vec}}
\newcommand\indep{\protect\mathpalette{\protect\independenT}{\perp}} \def\independenT#1#2{\mathrel{\rlap{$#1#2$}\mkern2mu{#1#2}}} 
\newcommand{\1}{\mathbbm{1}} 
\newcommand{\R}{\mathbb{R}} 
\newcommand{\N}{\mathbb{N}} 
\renewcommand{\O}{\mathcal{O}} 
\renewcommand{\o}{o}
\newcommand{\codite}{CoDiTE}
\DeclarePairedDelimiter\norm{\lVert}{\rVert} 
\newcommand\myeq{\mkern2.5mu{=}\mkern2.5mu} 
\renewcommand\mid{\mkern4mu{|}\mkern4mu} 
\newcommand{\F}{\ensuremath{{\mathcal F}}}
\renewcommand{\H}{\ensuremath{{\mathcal H}}}
\newcommand{\A}{\ensuremath{{\mathcal A}}}
\newcommand{\I}{\ensuremath{{\mathcal I}}}
\newcommand{\HS}{\ensuremath{{\mathcal S}}}
\newcommand{\HSz}{\ensuremath{{\mathcal S}_0}}
\newcommand{\HSo}{\ensuremath{{\mathcal S}_1}}
\newcommand{\one}{\mathds{1}}
\newcommand{\forestass}[1]{\textbf{(F#1)}}
\newcommand{\dataass}[1]{\textbf{(D#1)}}
\newcommand{\kernelass}[1]{\textbf{(K#1)}}
\newcommand{\Ybf}{\mathbf{Y}}
\newcommand{\Yibf}{\mathbf{Y}_i}
\newcommand{\ybf}{\mathbf{y}}
\newcommand{\Xbf}{\mathbf{X}}
\newcommand{\xbf}{\mathbf{x}}
\newcommand{\Xibf}{\mathbf{X}_i}
\newcommand{\Zbf}{\mathbf{Z}}
\newcommand{\Zcal}{\mathcal{Z}}
\newcommand{\PYgX}{\P_{\Ybf \mid \Xbf \myeq \xbf}}
\newcommand{\PYgXz}{\P_{\Ybf \mid \Xbf \myeq \xbf}^0}
\newcommand{\PYgXo}{\P_{\Ybf \mid \Xbf \myeq \xbf}^1}
\newcommand{\hPYgX}{\hat\P_{\Ybf \mid \Xbf \myeq \xbf}}
\newcommand{\wix}{w_i(\xbf)}
\newcommand{\hwix}{\hat w_i(\xbf)}
\newcommand{\kYi}{k(\Yibf, \cdot)}
\newcommand{\hmun}{\hat\mu_n(\xbf)}
\newcommand{\hmunz}{\hat\mu_{n_0,0}(\xbf)}
\newcommand{\hmuno}{\hat\mu_{n_1,1}(\xbf)}
\newcommand{\hmunHS}{\hat{\mu}_n^{\HS}(\xbf)}
\newcommand{\hmunHSz}{\hat{\mu}_{n_0,0}^{\HS_0}(\xbf)}
\newcommand{\hmunHSo}{\hat{\mu}_{n_1,1}^{\HS_1}(\xbf)}
\newcommand{\Kbf}{\mathbf{K}}
\newcommand{\kbf}{\mathbf{k}}
\newcommand{\diracYi}{\delta_{\Yibf}}
\newcommand{\diracY}{\delta_{\Ybf}}
\newcommand{\Lcal}{\mathcal{L}}
\newcommand{\wbf}{\mathbf{w}}
\newcommand{\hw}{\hat w}
\newcommand{\hwbf}{\hat\wbf}
\newcommand{\hwbfz}{\hat\wbf_0}
\newcommand{\hwbfo}{\hat\wbf_1}
\newcommand{\bandybf}{\mathcal{B}(\ybf)}
\newtheorem{corollarytwo}[theorem]{Corollary}
\newenvironment{claim}[1]{\par\noindent\underline{Claim:}\space#1}{}
\newenvironment{claimproof}[1]{\par\noindent\underline{Proof:}\space#1}{\hfill $\square$} 
\begin{document}

\title{
Confidence and Uncertainty Assessment for Distributional Random Forests
}

\author{\name Jeffrey N\"{a}f \email jeffrey.naf@inria.fr\\
\addr Inria, PreMeDICaL Team, University of Montpellier\\
       34000 Montpellier,\\
       France
       \AND
       \name Corinne Emmenegger \email emmenegger@stat.math.ethz.ch 
       \AND
       \name Peter B\"{u}hlmann \email buhlmann@stat.math.ethz.ch 
       \AND 
       \name Nicolai Meinshausen \email meinshausen@stat.math.ethz.ch\\
       \addr Seminar for Statistics ETH Zurich\\
       8092 Zurich,\\
       Switzerland}

\editor{Boaz Nadler}

\maketitle

\begin{abstract}%
The Distributional Random Forest (DRF) is a 
recently introduced Random Forest algorithm to 
estimate 
multivariate conditional distributions. 
Due to its general estimation procedure, it can be employed to estimate a wide range of targets such as conditional average treatment effects, conditional quantiles, and conditional correlations. 
However, only results about the consistency and convergence rate of the DRF prediction are available so far. 
We characterize the asymptotic distribution of DRF and develop a bootstrap approximation of it. This allows  
us to derive inferential tools for quantifying standard errors and the construction of confidence regions that have asymptotic coverage guarantees.
In simulation studies, we empirically validate the developed theory for inference of low-dimensional targets and for testing 
distributional differences between two populations. 
\end{abstract}

\begin{keywords} bootstrap, causality, conditional distributional treatment effect (\codite), decision trees, distributional regression, ensemble methods, two-sample testing 
\end{keywords}

\section{Introduction}

Building on Random Forests~\citep{breiman2001random}, Distributional Random Forests (DRF)~\citep{DRF-paper} provide nonparametric estimates of the distribution 
of a multivariate response, conditional on potentially  many covariates. 
DRF estimates a locally adaptive Hilbert space embedding $\hmun$ of a multivariate conditional distribution $\PYgX$ of a variable of interest $\Ybf=(Y_1, Y_2, \ldots, Y_d)^T \in \mathbb{R}^{d}$ given covariates $\Xbf=(X_1, X_2, \ldots, X_p)^T \in \mathbb{R}^{p}$.
More precisely, in a reproducing kernel Hilbert space (RKHS) with reproducing kernel $k$ and associated Hilbert space $\H$,  
DRF computes the estimator 
\begin{align} \label{eq:muhat-intro}
    \hmun = \sum_{i=1}^n \hwix \kYi
\end{align}
of the conditional mean embedding (CME)
$\mu(\xbf)=\E[k(\Ybf, \cdot) \mid \Xbf=\xbf]$ of $\PYgX$.
The weights $\hwix$ quantify the relevance of each training data point $\xbf_i$ to predict $\mu(\xbf)$, which makes DRF locally adaptive. 
\citet{DRF-paper} established consistency of $\hmun$ at a fixed test point $\xbf$. 
A natural, but more challenging, question is whether an asymptotic normality result can be formulated for $\hmun$. 
Providing such a result is the aim of the present paper. 

We present two main results.
First, we show that the appropriately centered and scaled embedding $\hmun$, for a fixed test point $\xbf$, weakly converges to a limiting Gaussian process. Second, we present a resampling-based approach to infer properties of the distribution of $\hmun$. 
In practice, this resampling-based approach allows us to simultaneously and computationally efficiently compute the DRF prediction and a bootstrap approximation of its distribution.

In addition to our theoretical developments,
we present two lines of applications. 
First, we use the estimated Hilbert space embedding to formally test if two conditional distributions coincide or not, and we provide confidence bands for the so-called (conditional) witness function that can be used to assess where the two distributions differ.
Second, we make inference for targets $\theta(\xbf)=G(\PYgX)$ that can be represented by some smooth function $G$ of the underlying distribution $\PYgX$ by replacing $\PYgX$ by its DRF estimate.
A wide range of conditional (multivariate) estimators like the conditional average treatment effect (CATE), conditional quantiles, or conditional correlations can be obtained in this way.
These estimators are mutually consistent. For example, estimated conditional covariance matrices are guaranteed to be positive semi-definite for $d < n$. In general, this might not be guaranteed if we estimated the conditional variances and covariances individually.

\subsection{Contributions}\label{sec:contributions}

We develop asymptotic results for uncertainty quantification for the DRF and apply them in two use cases: testing two conditional distributions for equality and making inference for target parameters like  conditional expectations, the CATE, conditional quantiles, or conditional correlations.

We present a rigorous analysis of the DRF in an RKHS that does not depend on a specific target parameter. 
Consequently, the same DRF can be used to estimate different targets. Furthermore, the targets may be $\R^q$-valued for $q \geq 2$, and confidence ellipsoids in $\R^q$ can be constructed. Generalizing the arguments in~\citet{wager2018estimation} to RKHS's allows us to develop a U-statistics approximation of the DRF prediction $\hmun$ in the RKHS. Particularly, we show that $\hmun$ for a fixed test point $\xbf$ is asymptotically equivalent to a sum of independent, but not necessarily identically distributed, random elements in the Hilbert space $\H$. The former requires a considerable extension of the arguments in~\citet{DRF-paper}. We then extend and refine the arguments in~\citet{wager2018estimation} for Random Forest to obtain an improved characterization of the asymptotic variance of this U-statistics approximation.
These results allow us to establish that $\hmun$, appropriately scaled, converges weakly to a limiting Gaussian process in the RKHS. This result holds under rather natural assumptions and does not depend on the estimation target we have in mind. For instance, the primary assumption is on the Lipschitz continuity of the map $\xbf \mapsto \mu(\xbf)$, which was already used in \citet{DRF-paper}. Using the expression of the Maximum Mean Discrepancy (MMD) between two multivariate Gaussian distributions derived in \citet{ourMMDresult}, we give a specific example where one can verify this assumption.

To cope with the theoretical complexity of our Hilbert space-valued Random Forest, we use and extend techniques to analyze Generalized Random Forests (GRF)~\citep{wager2018estimation, athey2019generalized}, theory for random elements in Hilbert spaces~\citep{hilbertspacebook, HilbertspaceCLTs}, and bootstrap arguments~\citep{B1,B2,B3}. Our RKHS-valued bootstrap result builds on arguments from the bootstrap and empirical process literature and those of~\citet{athey2019generalized}. We show that an adaptation of half-sampling can be used 
to obtain a random element $\hmunHS$ in $\H$, by sampling from the data, that converges to the same limiting distribution as the original estimate $\hmun$, conditional on the data. Consequently, a resampling-based approach can be used to infer properties of the distribution of the random element $\hmun$ of $\H$. 
In practice, we propose to adapt the DRF algorithm of~\citet{DRF-paper} to be fitted in ``little bags'' as motivated in~\citet{athey2019generalized}. This allows us to simultaneously and computationally efficiently compute the DRF prediction and a bootstrap approximation of its distribution in the form of $\hmunHS$.

Finally, we use our bootstrap results for the DRF to formally test for distributional differences between two groups.~\citet{CATEGeneralization} introduced the idea to test equality of the distributions of the control and treatment groups of an experiment, given some covariates. In contrast to estimating the CATE, which compares the two groups based on their mean, comparing whole distributions allows us to identify differences that may not be captured by the mean alone. 
Our developments allow us to formally test for conditional distributional differences between the control and the treatment group at a test point $\xbf$. Although it may be possible to derive an asymptotic normality result for the usual kernel-based CME estimator as used in~\citet{CATEGeneralization}, 
we are not aware of
a formal test for fixed $\xbf$. 
Finally, our confidence bands for the conditional witness function can be interpreted as the Hilbert space-valued generalization of the work in~\citet{wager2018estimation, athey2019generalized}, which derived confidence intervals for the CATE at a fixed $\xbf$.

\subsection{Previous Work}

There is a growing literature 
on nonparametric estimation of 
multivariate conditional distributions. These include Conditional Generative Adversarial Neural Networks~\citep{aggarwal2019benchmarking}, Conditional Variational Auto-Encoders~\citep{sohn2015learning}, Masked Autoregressive Flows~\citep{papamakarios2017masked}, and Conditional Mean Embeddings~\citep{CMEinDynamicalSystems, kernelmeanembeddingreview, OurapproachtoCME}. To the best of our knowledge, none of these methods provide mathematical guarantees of uncertainty.
Our methodology might be most closely related 
to the GRF, which builds on the theory of Causal Forests~\citep{wager2018estimation}. GRF is a locally adaptive method to estimate univariate real-valued targets defined by local moment conditions using forest-based weights. It uses a splitting criterion for growing trees that depends on the specific estimation target, and the resulting estimator is proven to be consistent and asymptotically normal at a test point $\xbf$. In contrast to DRF, a new splitting criterion needs to be constructed for each new target and the theory presented in~\citet{athey2019generalized} only provides results for univariate targets. However, from a theoretical perspective, GRF has exact asymptotic normality guarantees for  more univariate functionals than what the current paper is able to derive with DRF because some functionals mapping $\P_{\Ybf\mid \Xbf=\xbf}$ to the desired targets might not be sufficiently smooth. This is discussed in more detail in Remark~\ref{problematicremark}. 
\citet{kunzel2019metalearners} introduce the X-learner to estimate the CATE, which is a meta algorithm that initially estimates the unobserved potential outcomes, and confidence intervals are obtained via the Bootstrap.

\textit{Outline:} 
In the subsequent Section~\ref{sec:background},
we recall relevant definitions and results concerning RKHS's, the Landau notation, and we introduce basic concepts and summarize core ideas of the DRF. 
Afterwards, Section~\ref{sec:theory} presents our formal assumptions and main results. 
Section~\ref{sec: Experimental} and~\ref{sec:real-valued-targets} discuss our two applications: 
inference for the conditional distributional treatment effect and general multivariate real-valued parameters. 
Finally, Section~\ref{sec:empirical} 
demonstrates empirical validation of our theoretical developments, and Section~\ref{sec:conclusion} concludes with a brief discussion of our results.

\section{Background}\label{sec:background}

In this section, we introduce notation and present key results from~\citet{DRF-paper} that serve as a basis for our subsequent developments. Throughout, we assume an underlying probability space $(\Omega, \mathcal{A}, \P)$ and denote by $\mathcal{M}_{b}(\R^d)$ the space of all bounded signed measures on $\R^d$.

\subsection{Reproducing Kernel Hilbert Spaces and Landau Notation}\label{sec:RKHS} 

Let $\left(\H, \langle\cdot,\cdot\rangle\right)$ be the reproducing kernel Hilbert space induced by the positive definite, bounded, and continuous kernel $k\colon\R^d\times\R^d \to \R$; see for instance~\citet[Chapter 2.7]{hilbertspacebook} for an exposition of the topic. Crucially, continuity of $k$ ensures that $\H$ is separable~\citep[Theorem 2.7.5]{hilbertspacebook}. For a random element $\xi$ taking values in the (separable) Hilbert space $\H$ with $\E[\|\xi \|_{\H}] < \infty$, we define its expected value in $\H$ by
\[
\E[\xi]= \int_{\Omega} \xi d \P \in \H,
\]
where the integral is to be understood in a Bochner sense~\citep[Chapter 3]{hilbertspacebook}. Because $\H$ is separable, this integral is well defined and there are no measurability issues. If $\E[\| \xi \|_{\H}^2] < \infty$, we define the variance of $\xi\in\H$ by
\[
\Var(\xi)=\E[\|\xi \|_{\H}^2] - \| \E[\xi] \|_{\H}^2.
\]

For a sequence of random elements $ \xi_n$ in $\H$, we denote by $\xi_n \stackrel{D}{\to} \xi$ convergence in distribution. That is, for all bounded and continuous functions $F\colon \H \to \R$, we have $\E[F(\xi_n)] \to \E[F(\xi)]$ as $n \to \infty$. 
By separability, every random element $\xi$ with values in $\H$ is tight~\citep[Chapter 7.1]{dudley}. That is, for all $\varepsilon > 0$, there is a compact $K_{\varepsilon} \subset \H$ such that $\P(\xi \in K_{\varepsilon}) \geq 1-\varepsilon$. More generally, uniform tightness of a sequence $\xi_n$, $n \in \mathbb{N}$ means that for all $\varepsilon > 0$, there is a compact $K_{\varepsilon} \subset \H$ such that
\begin{align*}
     \inf_n \P(\xi_{n} \in K_{\varepsilon}) \geq 1-\varepsilon.
\end{align*}
If for all $f \in \H$ the distribution of $\langle \xi,f \rangle$ on $\R$ is $N(0, \sigma_f^2)$ for some $\sigma_f > 0$, we write $\xi \sim N(0, \boldsymbol{\Sigma})$ with $\boldsymbol{\Sigma}$ a self-adjoint Hilbert-Schmidt (HS) operator satisfying $\langle \boldsymbol{\Sigma}f , f\rangle=\sigma_f^2$ . 
In this case, we also write $\xi_n \stackrel{D}{\to} N(0, \boldsymbol{\Sigma})$, if $\xi_n \stackrel{D}{\to} \xi$.

The kernel embedding function $\Phi\colon\mathcal{M}_{b}(\R^d) \to \H$
maps any bounded signed Borel measure $Q$ on $\R^d$ to an element $\Phi(Q) \in \H$ defined by
$$\Phi(Q) = \int_{\R^d} k(\mathbf{y}, \cdot)dQ(\mathbf{y})=\int_{\R^d} k(\mathbf{y}, \cdot)dQ^{+}(\mathbf{y}) - \int_{\R^d} k(\mathbf{y}, \cdot)dQ^{-}(\mathbf{y}),$$
where the integrals are Bochner integrals. Boundedness of $k$ ensures that $\Phi$ is indeed defined on all of $\mathcal{M}_{b}(\R^d)$. If $k$ is the Gaussian kernel, $\| \Phi(Q_1) - \Phi(Q_2) \|_{\H}=0$ implies $Q_1=Q_2$ for all $Q_1, Q_2 \in \mathcal{M}_{b}(\R^d)$; see for example~\citet[p.2]{simon2020metrizing} and~\citet[Example 3.2]{optimalestimationofprobabilitymeasures}. Thus, $\Phi$ is injective, and the inverse $\Phi^{-1}\colon\Phi(\mathcal{M}_{b}(\R^d)) \to \mathcal{M}_{b}(\R^d)$ is well defined. In particular, for $Q=\diracY$, it holds that $\Phi(\diracY)=\kYi$, and thus
\begin{align*}
    \Phi(\hPYgX )= \sum_{i=1}^n \wix \Phi(\diracYi) = \sum_{i=1}^n \wix \kYi =  \hmun
\end{align*}
because $\Phi$ is linear, where $\hPYgX$ is the estimator of $\PYgX$ defined in~\eqref{eq:phat-intro} below.

For two functions $f$ and $g$ from the real numbers into the real numbers with $\liminf_{s \to \infty} g(s) > 0$, we write $f(s)= \mathcal{O}(g(s))$ if 
\[
\limsup_{s \to \infty} \frac{|f(s)|}{g(s)} \leq C
\]
holds
for some $0<C < \infty$. If $C=1$, we write $f(s) \precsim g(s)$. For a sequence of random variables $X_n\colon \Omega \to \R$ and $a_n \in (0,+\infty)$, $n \in \N$, we write $X_n=\O_p(a_n)$ if 
\[
\lim_{M \to \infty} \sup_{n} \P(a_n^{-1} |X_n| > M) = 0.
\]
We write $X_n=\o_p(a_n)$ if $a_n^{-1} X_n$ converges in probability to zero. Similarly, for $(S,d)$ a separable metric space, $\Xbf_n\colon (\Omega, \mathcal{A}) \to (S, \mathcal{B}(S))$, $n \in \N$, and $\Xbf\colon (\Omega, \mathcal{A}) \to (S, \mathcal{B}(S))$ measurable, we write $\Xbf_n \stackrel{p}{\to} \Xbf$ if $d(\Xbf_n, \Xbf)=o_{p}(1)$.

\subsection{Distributional Random Forests}\label{sec:DRF-recall}

Given an i.i.d.\ data sample of size $n$, DRF can be used to estimate a representation $\hPYgX$ of the conditional distribution $\PYgX$ of  $\Ybf=(Y_1, Y_2, \ldots, Y_d)^T \in \mathbb{R}^{d}$ given a realization $\xbf$ of covariates $\mathbf{X}=(X_1, X_2, \ldots, X_p)^T \in \mathbb{R}^{p}$ by the weighted sum 
\begin{align} \label{eq:phat-intro}
    \hPYgX = \sum_{i=1}^n \hwix \diracYi
\end{align}
of Dirac measures $\diracYi$.  
The weights $\hwix$ quantify the relevance of a data point $\xbf_i$ in predicting the target distribution  $\PYgX$. 

To compute the weights $\hwix$, DRF applies a Random Forest algorithm in the RKHS  $(\H, k)$. That is, $N$ trees are built, and each tree splits the data repeatedly with respect to the covariates into sets of the form $\mathcal{I}_L=\{i: X_{ij} < S \}$ and $\mathcal{I}_R=\{i: X_{ij} \geq S \}$, whereby a number $S \in \R$ and a candidate feature  $j$ are chosen according to a splitting criterion depending on $\Ybf$. In DRF,  
each split is chosen to maximize
the Maximum Mean Discrepancy (MMD) statistic~\citep{gretton2007kernel} across the child nodes 
such that the induced distributions in the child nodes are as different as possible.
For example, to split the root node of a tree, two sets of indices $\mathcal{I}_L$ and $\mathcal{I}_R$ are searched for which 
\begin{equation} \label{eq: splitting}
    \left\lVert
    \Phi \left(\frac{1}{|\mathcal{I}_L|}\sum_{i\in\mathcal{I}_L} \diracYi \right)
    -
    \Phi \left(\frac{1}{|\mathcal{I}_R|}\sum_{i\in\mathcal{I}_R}\diracYi \right)
    \right\rVert_{\H}^2 = 
    \left\lVert
    \frac{1}{|\mathcal{I}_L|}\sum_{i\in\mathcal{I}_L}  \kYi
    -
  \frac{1}{|\mathcal{I}_R|}\sum_{i\in\mathcal{I}_R} \kYi
    \right\rVert_{\H}^2
\end{equation}
is maximal. This is essentially the traditional CART splitting criterion~\citep{breiman2001random}, but now in the RKHS. Indeed, for $d=1$ and the kernel $k(x,y)=xy$, the MMD statistic~\eqref{eq: splitting} simplifies to the CART criterion~\citep[Section 2.3.1]{DRF-paper}. Thus, the trees are built such that the distribution of the response variable 
in the child nodes are as different as possible in the MMD metric. Intuitively, this should lead to leaves that are as homogeneous as possible such that the leaf containing $\xbf$ of the $k$th tree, denoted by $\Lcal_k(\xbf)$, approximately contains a sample from the distribution $\PYgX$. For $k$ being the Gaussian kernel, the embedding $\Phi$ is injective, which allows the MMD statistic to detect any distributional differences for large enough sample sizes. Crucially, this splitting criterion does not depend on the estimation target like for instance the CATE. 
\citet{DRF-paper} employed efficient computation methods of this MMD statistic to obtain a forest construction with comparable computational complexity as the original Random Forest algorithm. This is achieved by using a well-known approximation of the MMD statistic with a specified number of random features~\citep[Section 2.3]{DRF-paper}. 

Once the trees are grown and leaf nodes determined, the weights $\wix$ can be computed. 
For each tree $k=1,\ldots,N$, the leaf node $\Lcal_k(\xbf)$ of the $k$th tree is the leaf in which $\xbf$ falls. 
Then, the prediction of $\mu(\xbf)=\Phi(\PYgX) =
\E[k(\Ybf,\cdot)\mid\Xbf=\xbf]$ from each tree is given by averaging the elements $k(\Ybf_j,\cdot)$ that belong to $\Lcal_k(\xbf)$, namely $1 / |\Lcal_k(\xbf)|\sum_{j\in\Lcal_k(\xbf)}k(\Ybf_j,\cdot)$; that is, the $k(\Ybf_j,\cdot)$'s belonging to the leaf $\Lcal_k(\xbf)$ of $\xbf$ each get assigned the weight $1/|\Lcal_k(\xbf)|$. 
These per-tree predictions 
are subsequently averaged to form the forest predictor 
\begin{displaymath}
    \hmun = \frac{1}{N}\sum_{k=1}^N \left(\frac{1}{ |\Lcal_k(\xbf)|}\sum_{j\in\Lcal_k(\xbf)}k(\Ybf_j,\cdot)\right). 
\end{displaymath}
Rearranging this double sum such that each Hilbert element is present only once yields
\begin{displaymath}
    \hmun 
    = \sum_{i=1}^n\hwix k(\Ybf_i,\cdot)
\end{displaymath}
for suitable weights $\hwix$. 
From this last expression, we can read off our weights $\hwix$ that quantify the importance of the $i$th data point in predicting $\mu(\xbf)$. Consequently, this approach allows us to characterize data-adaptive neighborhoods of data points $\xbf$ whose corresponding conditional distribution is similar to $\PYgX$. Algorithm~\ref{pseudocode} in Appendix~\ref{appendix: pseudocode} provides pseudocode for this procedure.

\section{Theoretical Development}\label{sec:theory}

DRF estimates the embedding $\mu(\xbf)=\Phi(\PYgX) = \E[k(\Ybf,\cdot)\mid\Xbf=\xbf]$ of the conditional distribution $\PYgX$ in an RKHS with reproducing kernel $k$. In this section, we first state the assumptions on the forest construction and the data generating process and recall that it consistently estimates $\mu(\xbf)$ at a certain rate~\citep{DRF-paper}. Subsequently, we establish convergence in distribution of the standardized estimator to a limiting Gaussian process. Lastly, we develop a consistent variance estimation procedure that enables efficient empirical computation.

\subsection{Forest Construction and Consistency in the RKHS}

We require our forest construction to satisfy the following properties that are similar to~\citet{wager2018estimation}. 
First, we require that the data used to build a tree is independent from the data used to populate its leaves for prediction. 
To ensure this, we split the subsample used to build a particular tree into two halves. The first half is used to construct the tree. Then, the data from the second half gets assigned to the leaves of the tree according to the covariate splits that were fitted on the first half. Subsequently, the responses from the second half of the data, which are now distributed across the leaves, are used to form the DRF predictions. Second, when a parent node is split into two child nodes, every feature may be chosen with at least a certain non-zero probability. Third, the prediction of a tree is not allowed to depend on the order of the training samples. Fourth, when a parent node of a tree is split into two child nodes, this split may not be arbitrarily imbalanced. Each child node needs to contain a certain fraction $\alpha$ of its parent's data points. 
Finally, to grow a tree, the traditional Random Forest algorithm samples training data points with replacement from the $n$ training points; that is, a bootstrap approach is pursued. In contrast, we sample a subset without replacement as done by~\citet{wager2018estimation, athey2019generalized}. 
These assumptions on the forest construction are summarized as follows:
\begin{enumerate}[label=(\textbf{F\arabic*})]
    \item\label{forestass1} (\textit{Honesty}) The data used for constructing each tree is split into two halves; the first is used for determining the splits and the second for populating the leaves and thus for estimating the response. The covariates in the second sample may be used for the splits, to enforce the subsequent assumptions, but not the response.
    \item\label{forestass2} (\textit{Random-split}) At every split point and for all feature dimensions $j=1,\ldots,p$, the probability that the split occurs along the feature $X_j$ is bounded from below by $\pi/p$ for some $\pi > 0$.
    \item\label{forestass3} (\textit{Symmetry}) The (randomized) output of a tree does not depend on the ordering of the training samples.
    \item\label{forestass4} (\textit{$\alpha$-regularity}) 
   After splitting a parent node, each child node contains at least a fraction $\alpha \leq 0.2$ of the parent's training samples. Moreover, the trees are grown until every leaf contains between $\kappa$ and $2\kappa - 1$ many observations for some fixed tuning parameter $\kappa \in \N$. 
    \item\label{forestass5}(\textit{Data sampling}) 
    To grow a tree, a subsample of size $s_n$ out of the $n$ training data points is sampled. 
    We consider $s_n=n^{\beta}$ with 
    \[
    1 > \beta >  \left( 1 +  \frac{\log( (1-\alpha)^{-1})}{\log(\alpha^{-1})} \frac{\pi}{p} \right)^{-1},
    \]
   where $\alpha$ is chosen in~\ref{forestass4}.
\end{enumerate}
The validity of the above properties are ensured by the forest construction.
As outlined above, the prediction of DRF for a given test point $\xbf$ is an element of $\H$. If we denote the $i$th training observation by $\Zbf_i=(\Xbf_i, k(\Ybf_i, \cdot)) \in \R^p \times \H$, then DRF estimates the embedding of the conditional distribution $\Phi(\PYgX)$ by averaging the corresponding estimates across the $N$ trees, namely  
\begin{equation*}\label{rewriting}
    \Phi(\hPYgX) = \frac{1}{N} \sum_{k=1}^N T(\mathbf{x}; \varepsilon_k, \Zcal_k),
\end{equation*}
where $\Zcal_k = \{\Zbf_{k_1}, \ldots, \Zbf_{k_{s_n}}\}$ is a random subset of $\{\Zbf_i\}_{i=1}^n$ of size $s_n$ (see~\ref{forestass5}) chosen for constructing the $k$th tree, and $\varepsilon_k$ is a random variable capturing the randomness in growing the $k$th tree such as the choice of the splitting candidates, and $T(\mathbf{x}; \varepsilon_k, \Zcal_k)$ denotes the output of a single tree.  The output of a single tree is given by the average of the terms $k(\Ybf_i, \cdot)$ over all data points $\mathbf{X}_i$ contained in the leaf $\Lcal_k(\mathbf{x})$ of the tree constructed from $\varepsilon_k$ and $\Zcal_k$: 
\begin{align}\label{tree} 
        T(\xbf; \varepsilon_k, \{\Zbf_{k_1}, \ldots, \Zbf_{k_{s_n}}\} )=\sum_{j \in \I_{k}} \frac{\1(\Xbf_{j} \in \mathcal{L}_{k}(\xbf))}{|\mathcal{L}_{k}(\xbf)|} k(\Ybf_{j}, \cdot), 
\end{align}
where $\I_{k}$ is the set of indices of size $s_n/2$ that is used for populating the leaves (see \forestass{1}).

To develop our theory, we do not consider forests that consist of a user-specified number $N$ of trees. Instead, we consider $N \to \infty$, such that the forest estimator $\hmun$ is obtained by averaging all possible $\binom{n}{s_n}$ many trees, which equals the number of possible subsets of $\{\Zbf_{i}\}_{i=1}^n$ of size $s_n$.
This idealized version of our DRF predictor, which we will denote by $\hmun$ from now onwards, is given by 
\begin{equation}\label{finalestimator}
    \hmun = \binom{n}{s_n}^{-1}  \sum_{i_1 < i_2 < \cdots < i_{s_n}} \E_{\varepsilon} \left[T(\xbf; \varepsilon, \{\mathbf{Z}_{i_1}, \ldots, \mathbf{Z}_{i_{s_n}}\})\right].
\end{equation}
This is a standard simplification also employed by~\citet{wager2018estimation, athey2019generalized}. 
\citet{DRF-paper} established that 
$\hmun$ in~\eqref{finalestimator} consistently estimates $\mu(\xbf)$ with respect to the RKHS norm at a certain rate.

\begin{restatable}[Theorem 1 in~\citet{DRF-paper}]{theorem}{consistency}\label{thm: consistency}
Assume that the forest construction satisfies the properties~\ref{forestass1}--\ref{forestass5}. Additionally, assume  that $k$ is a bounded and continuous kernel (this corresponds to Assumption~\ref{kernelass1} and~\ref{kernelass2} below) and that we have a random design with $\mathbf{X}_1,\ldots, \mathbf{X}_n$ independent and identically distributed on $[0,1]^p$ with a density bounded away from $0$ and infinity (this corresponds to~\ref{dataass1} below). If the subsample size $s_n$ is of order $n^\beta$ 
for some $0  < \beta < 1$, the mapping $\xbf \mapsto \mu(\xbf) \in \H$ is Lipschitz (this corresponds to~\ref{dataass2} below).
Then, we have consistency of $\hmun$ in~\eqref{finalestimator} with respect to the RKHS norm, namely
\begin{equation}\label{eq: rate} 
\norm{\hmun- \mu(\xbf)}_\H = \O_{p}\left(n^{-\gamma} \right)
\end{equation}
for any $\gamma \leq \frac{1}{2} \min\left( 1- \beta, \frac{\log((1-\alpha)^{-1})}{\log(\alpha^{-1})} \frac{\pi}{p} \cdot \beta \right)$.
\end{restatable}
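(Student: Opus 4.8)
Since this result is quoted from~\citet{DRF-paper}, the proof follows the consistency argument for scalar regression forests in~\citet{wager2018estimation}, carried through for the $\H$-valued estimator~\eqref{finalestimator}. The plan is a bias--variance split in $\H$: writing $\bar\mu_n(\xbf) := \E[\hmun]$,
\[
\norm{\hmun - \mu(\xbf)}_{\H} \le \norm{\hmun - \bar\mu_n(\xbf)}_{\H} + \norm{\bar\mu_n(\xbf) - \mu(\xbf)}_{\H},
\]
and I would bound the first (stochastic) term via a variance estimate combined with Markov's inequality applied to $\norm{\hmun - \bar\mu_n(\xbf)}_{\H}^2$, whose expectation equals $\Var(\hmun)$ in the Bochner sense of Section~\ref{sec:RKHS}, and the second (bias) term via the Lipschitz assumption~\ref{dataass2} and a bound on the diameter of the leaf $\mathcal{L}(\xbf)$ containing $\xbf$.

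\emph{Variance term.} By~\ref{forestass3} and~\eqref{finalestimator}, $\hmun$ is a complete U-statistic of (growing) order $s_n$ in the i.i.d.\ observations $\Zbf_i$, with symmetric $\H$-valued kernel $h(\Zbf_{i_1},\dots,\Zbf_{i_{s_n}}) = \E_\varepsilon[T(\xbf;\varepsilon,\{\Zbf_{i_1},\dots,\Zbf_{i_{s_n}}\})]$. Because $k$ is bounded (Assumption~\ref{kernelass1}), every tree output~\eqref{tree} is a convex combination of elements $k(\Ybf_j,\cdot)$ with $\norm{k(\Ybf_j,\cdot)}_{\H}^2 = k(\Ybf_j,\Ybf_j) \le \sup_{\ybf}k(\ybf,\ybf) < \infty$, so $\norm{h}_{\H}$ is bounded and $\Var(h) < \infty$. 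Lifting the classical Hoeffding variance bound $\Var(U_n) \le (s_n/n)\Var(h)$ to the $\H$-valued case --- either by expanding against an orthonormal basis of the separable space $\H$ and summing, or directly through the $\H$-valued H\'ajek projection --- then gives $\Var(\hmun) \precsim s_n/n = n^{-(1-\beta)}$, so that $\norm{\hmun - \bar\mu_n(\xbf)}_{\H} = \O_p\big(n^{-(1-\beta)/2}\big)$.

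\emph{Bias term.} Condition on the information that fixes the partition, namely all covariates $(\Xbf_i)_i$, the tree randomness $\varepsilon$, and the split-sample responses. By honesty~\ref{forestass1}, the leaf-populating responses are still, given their covariates, distributed as fresh independent draws, so --- since $\mu(\cdot) = \E[k(\Ybf,\cdot)\mid\Xbf=\cdot]$ --- the conditional mean of a single tree output equals the average of $\mu(\Xbf_j)$ over the points $\Xbf_j$ populating $\mathcal{L}(\xbf)$. Using~\ref{dataass2},
\[
\norm{\E[T(\xbf;\varepsilon,\Zcal)\mid \text{partition}] - \mu(\xbf)}_{\H} \le \frac{1}{|\mathcal{L}(\xbf)|}\sum_{j\in\mathcal{L}(\xbf)}\norm{\mu(\Xbf_j) - \mu(\xbf)}_{\H} \le L\,\mathrm{diam}\big(\mathcal{L}(\xbf)\big),
\]
and averaging over $\varepsilon$, over subsamples, and over the conditioning variables bounds $\norm{\bar\mu_n(\xbf) - \mu(\xbf)}_{\H}$ by $L\,\E[\mathrm{diam}(\mathcal{L}(\xbf))]$. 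The expected leaf diameter is controlled exactly as in the leaf-diameter lemma of~\citet{wager2018estimation}, a purely geometric statement about the covariate-space partition that is independent of $\H$ and $k$: by~\ref{forestass2} a Chernoff bound shows that, with probability tending to one, the leaf containing $\xbf$ has been split at least $\tfrac12\tfrac{\pi}{p}\tfrac{\log s_n}{\log(\alpha^{-1})}$ times along each coordinate, while~\ref{forestass4} together with the two-sided density bound~\ref{dataass1} forces each such split to shrink the corresponding side length by a constant factor; hence $\mathrm{diam}(\mathcal{L}(\xbf)) = \O_p\big(s_n^{-\frac12\frac{\log((1-\alpha)^{-1})}{\log(\alpha^{-1})}\frac{\pi}{p}}\big) = \O_p\big(n^{-\frac{\beta}{2}\frac{\log((1-\alpha)^{-1})}{\log(\alpha^{-1})}\frac{\pi}{p}}\big)$, the factor $\tfrac12$ in the exponent of~\eqref{eq: rate} originating in this Chernoff step. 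Adding the two displayed rates, and noting both exponents are strictly positive for any $\beta\in(0,1)$, yields~\eqref{eq: rate} for every $\gamma$ below their minimum.

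The argument has no genuinely hard step; the only place where the Hilbert-space setting is more than cosmetic is the variance bound, where one must check that the U-statistic variance calculus of~\citet{wager2018estimation} survives the passage to $\H$-valued kernels --- which is precisely what boundedness of $k$ secures, guaranteeing $\E[\norm{T}_{\H}^2]<\infty$, hence a finite Bochner-sense variance and a legitimate Hoeffding decomposition in $\H$. Everything else is the scalar proof with $|\cdot|$ replaced by $\norm{\cdot}_{\H}$: the bias bound is unchanged because the leaf-diameter lemma concerns only the partition of $[0,1]^p$, and the two points requiring care are to invoke honesty~\ref{forestass1} in the right place (it is exactly what makes $\E[k(\Ybf_j,\cdot)\mid\text{partition},\Xbf_j]=\mu(\Xbf_j)$) and to use the without-replacement subsampling of~\ref{forestass5} so that $\hmun$ is a U-statistic at all.
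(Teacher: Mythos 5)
Your proof mirrors the argument from~\citet{DRF-paper} that the present paper imports rather than reproves: a bias--variance split in $\H$, with the variance term handled by the $\H$-valued U-statistic Hoeffding bound (restated here as Lemma~\ref{variancebound}, built on Lemma~\ref{Hdecomposition}) and the bias term by honesty~\ref{forestass1}, Lipschitz continuity~\ref{dataass2}, and the leaf-diameter lemma (restated as Lemma~\ref{lemma2} and Corollary~\ref{bias}). The only cosmetic slip is that the bias bound is on $\|\E[\hmun]-\mu(\xbf)\|_{\H}\le L\,\E[\mathrm{diam}(\Lcal(\xbf))]$, so you need an expectation bound rather than the $\O_p$ statement you write; this does follow from Lemma~\ref{lemma2} by splitting on the bad event (using $\mathrm{diam}\le\sqrt p$ there), and the dominant $\tfrac12$ in the exponent then comes from the tail-probability exponent rather than the $0.51$ governing the typical diameter, but the resulting rate is the one you state.
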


\begin{remark}
    \citet{DRF-paper} also assume 
    \begin{align}\label{supbound}
        \sup_{\xbf \in [0,1]^p} \E[ \|k(\Ybf, \cdot )\|_\H^2\mid \Xbf \myeq \xbf] < \infty
    \end{align}
    in their Theorem 1. However, this assumption is redundant as it is implied by the boundedness of $k$. Indeed, for all $\xbf$,
    \begin{align*}
        \E[ \|k(\Ybf, \cdot )\|_\H^2\mid \Xbf \myeq \xbf] = \E[ k(\Ybf, \Ybf )\mid \Xbf \myeq \xbf] \leq \sup_{\ybf_1, \ybf_2} k(\ybf_1, \ybf_2) < \infty,
    \end{align*}
    implying \eqref{supbound}.
\end{remark}

Although this result shows consistency of $\hmun$ at a certain rate, it does not establish distributional convergence of the scaled difference. Subsequently, we  establish this result.

\subsection{Asymptotic Normality in the RKHS}\label{sec:asympt-norm-RKHS}

To establish an asymptotic Gaussian process behavior of $\hmun$ in the Hilbert space, we first show asymptotic linearity in Theorem~\ref{thm: asymptoticlinearity}. More precisely, we show that
\begin{align*}
    \hmun -  \mu(\xbf) = \frac{s_n}{n} \sum_{i=1}^n  T_{n}(\Zbf_i) + \o_{p}(\sigma_n)
\end{align*}
holds, where $\Zbf_i=(\Xbf_i,k(\Ybf_i,\cdot))\in\R^d\times\H$ concatenates the $i$th covariates and the embedding of the $i$th response in the Hilbert space, $T_n$ is some function depending on $n$, and $\sigma_n$ is some standard deviation converging to zero. Denote by
\begin{align}\label{xindef}
    \xi_n = \frac{1}{\sigma_n} \left( \hmun - \mu(\xbf) \right)
\end{align}
the shifted and scaled DRF estimator whose asymptotic distribution we subsequently investigate. 
To establish that $ \xi_n $  asymptotically converges to a Gaussian process, two ingredients are required~\citep[Chapter 7]{hilbertspacebook}.  First, we require weak convergence to a limiting Gaussian distribution in $\R$ of the univariate marginals 
$\langle\frac{s_n}{n\sigma_n} \sum_{i=1}^n  T_{n}(\Zbf_i), f \rangle$ for all $f\in\H$.  Second, we require uniform tightness of the sequence $\frac{s_n}{n\sigma_n} \sum_{i=1}^n  T_{n}(\Zbf_i)$ for $n\ge 1$.

We make the following assumptions on the data generating process. Throughout, we assume all involved expectations exist and are finite.

\begin{enumerate}[label=(\textbf{D\arabic*})]
    \item\label{dataass1} The covariates $\mathbf{X}_1,\ldots, \mathbf{X}_n$ are independent and identically distributed on $[0,1]^p$ with a density bounded away from $0$ and infinity.
        \item\label{dataass2} The mapping $\xbf \mapsto \mu(\xbf)=\E[ k(\Ybf,\cdot)  \mid \Xbf \myeq \xbf] \in \H $ is Lipschitz.
          \item\label{dataass3} The mapping $\xbf \mapsto \E[ \|k(\Ybf,\cdot) \|_{\H}^2 \mid \Xbf \myeq \xbf] $ is Lipschitz.
                \item\label{dataass4} $\Var(k(\Ybf,\cdot)\mid \Xbf=\xbf) = \E[\| k(\Ybf,\cdot) \|_{\H}^2 \mid \Xbf=\xbf] - \|\E[ k(\Ybf,\cdot)  \mid \Xbf=\xbf]\|_{\H}^2 > 0$.
                   \item\label{dataass5} $\E[\left\| k(\Ybf, \cdot) - \mu(\xbf)\right \|_{\H}^{2+\delta}\mid \Xbf=\xbf] \leq M$, for some constants $\delta, M$ uniformly over $\xbf\in [0,1]^d$. 
                        \item\label{dataass6} For all $f \in \H \setminus \{0\}$, $\Var( \langle k(\Ybf,\cdot), f \rangle\mid \Xbf=\xbf)= \Var( f(\Ybf)\mid \Xbf=\xbf) > 0$.
                               \item\label{dataass7} For all $f \in \H\setminus \{0\}$, $\xbf \mapsto \E[\left| f(\Ybf)\right|^{2}\mid \Xbf=\xbf]$ is Lipschitz. 
\end{enumerate}

As outlined below and in Appendix \ref{appendix: assumptions}, \dataass{3}--\dataass{5} are automatically satisfied when using the Gaussian kernel
\begin{align*}
    k(\ybf, \cdot) = \exp \left( - \frac{\norm{\ybf - \cdot}}{2\sigma^2} \right),
\end{align*}
and if $\Ybf \mid \Xbf=\xbf$ has nonzero variance.
Assumption~\ref{dataass1} is a standard assumption when analyzing Random Forests~\citep{meinshausen2006quantile,wager2018estimation}, 
and~\ref{dataass2}--\ref{dataass5} correspond to natural generalizations of the assumptions in~\citet{wager2018estimation} to the RKHS setting.
Particularly, Assumption~\ref{dataass2} implies that we have  
\[
\|\mu(\xbf_1) - \mu(\xbf_2) \|_{\H} \leq L \| \xbf_1 - \xbf_2  \|
\]
 for some $L > 0$. This means that if $\| \xbf_1 - \xbf_2  \|$ is small, the two respective conditional distributions have to be close in MMD distance. Because $\|\cdot \|_{\H}$ metrizes weak convergence for the Gaussian kernel, 
the distributions $\P_{\Ybf\mid\Xbf=\xbf_1}$ and $\P_{\Ybf\mid\Xbf=\xbf_2}$ are consequently close in the weak topology if   $\xbf_1$ and $\xbf_2$ are close enough in $\R^p$. 
Moreover,~\ref{dataass2} implies that for all $f \in \H$ and all $\xbf_1,\xbf_2 \in [0,1]^d$, we have
\begin{align}\label{eq:Lipschitzcontinuityf}
   | \E[f(\Ybf) \mid \Xbf=\xbf_1 ] - \E[f(\Ybf) \mid \Xbf=\xbf_2 ] | &= | \langle f, \mu(\xbf_1) - \mu(\xbf_2) \rangle | \nonumber \\
   &\leq \| f\|_{\H} \|\mu(\xbf_1) - \mu(\xbf_2) \|_{\H} \nonumber \\
   & \leq  \| f\|_{\H} L \| \xbf_1 - \xbf_2  \|.
\end{align}
Consequently,~\ref{dataass2} implies Lipschitz continuity of $\mathbf{x} \mapsto \E[f(\Ybf) \mid \Xbf=\xbf ]$ for all $f \in \H$. Appendix~\ref{appendix: assumptions} discusses an example for which~\dataass{2} is met. Assumption~\dataass{3} is for example trivially met for shift-invariant kernels, that is, $k$ such that $k(\ybf_1,\ybf_2)=k_0(\ybf_1-\ybf_2)$ for an appropriate function $k_0: \R^d \to \R$. Indeed, it holds that,
\begin{align*}
   \E[ \|k(\Ybf,\cdot) \|_{\H}^2 \mid \Xbf \myeq \xbf] = \E[k(\Ybf,\Ybf)  \mid \Xbf \myeq \xbf] = k_0(0). 
\end{align*}
Thus, $ \E[ \|k(\Ybf,\cdot) \|_{\H}^2 \mid \Xbf \myeq \xbf]$ is constant in $\xbf$ and therefore also Lipschitz. In particular, we have $k_0(0)=1$ for the Gaussian kernel. Similarly, Assumption \dataass{4} on the variance of $k(\Ybf, \cdot)$ is also met under a Gaussian kernel if in addition $\PYgX$ is not concentrated on a constant. That is, for some $c \in \R^d$,
\[
\ybf=c, \text{ for all } \ybf \in A, \text{ with } \PYgX(A)=1,
\]
must not be true. This is shown formally in Appendix \ref{appendix: assumptions}. Assumption~\ref{dataass5} is trivially met if $k$ is a bounded kernel, that is, if \kernelass{1} below holds. \dataass{5} implies that for all $f \in \H$,
\begin{align}\label{eq:delta-plus-2-moment-bound}
    \E[\left| f(\Ybf) - \E[f(\Ybf) \mid \Xbf=\xbf]\right|^{2+\delta}\mid \Xbf=\xbf] &=\E[\left| \langle f, k(\Ybf, \cdot) - \mu(\xbf) \rangle\right|^{2+\delta}\mid \Xbf=\xbf]\nonumber\\
    & \leq \E[\left| \| f \|_{\H} \cdot \| k(\Ybf, \cdot) - \mu(\xbf)\|_{\H} \right|^{2+\delta}\mid \Xbf=\xbf]\nonumber\\
    & \leq \| f \|_{\H}^{2+\delta} M
\end{align}
 holds uniformly over $\xbf\in [0,1]^d$. These two conclusions together with \dataass{6} and \dataass{7} will allow us to apply results of~\cite{wager2018estimation} for the univariate marginal 
$$
\left\langle\frac{s_n}{n\sigma_n} \sum_{i=1}^n  T_{n}(\Zbf_i), f \right\rangle
= \frac{s_n}{n\sigma_n} \sum_{i=1}^n  \left\langle T_{n}(\Zbf_i), f \right\rangle 
$$ 
to establish the asymptotic normality of these marginals. 

We also make the following assumptions on the kernel $k$:
\begin{enumerate}[label=(\textbf{K\arabic*})]
    \item\label{kernelass1} $k$ is bounded, i.e., $\sup_{\ybf_1, \ybf_2} k(\ybf_1, \ybf_2) < \infty$.
   \item\label{kernelass2} $(\xbf,\ybf) \mapsto k(\xbf,\ybf)$ is (jointly) continuous.
   \item\label{kernelass3} $k$ is integrally strictly positive definite (denoted by $\int$spd), that is
   \begin{align*}
      \| \Phi(Q_1) - \Phi(Q_2) \|_{\H} = 0 \implies Q_1=Q_2  ,\text{ for all } Q_1, Q_2 \in \mathcal{M}_{b}(\R^d);
   \end{align*}
   see for instance~\citet{optimalestimationofprobabilitymeasures, simon2020metrizing}.
\end{enumerate}

The Gaussian kernel satisfies the conditions in~\ref{kernelass1}--\ref{kernelass3}, for instance.

As outlined above, our first main result shows that  
$\xi_n$ in~\eqref{xindef} is  asymptotically linear, that is, indistinguishable from a sum of independent elements in $\H$ as $n\to\infty$.

\begin{restatable}{theorem}{asymptoticlinearity}\label{thm: asymptoticlinearity}
Assume conditions~\ref{forestass1}--\ref{forestass5},~\ref{dataass1}--\ref{dataass7},~\ref{kernelass1}, and~\ref{kernelass2} hold. Denote by $\Zbf_i=(\Xbf_i, k(\Ybf_i, \cdot))$, $i=1,\ldots,n$. Then, there exists a map $T_n\colon[0,1]^p \times \H \to \H$ such that, with 
\begin{align}\label{eq: sigmandef}
   \sigma_n^2=\frac{s_n^2}{n} \Var(T_{n}(\Zbf_1)),
\end{align}
we have $\sigma_n \to 0$, $\| \hmun -  \mu(\xbf)\| =\mathcal{O}_p(\sigma_n)$, and
\begin{align} \label{asymptoticlin}
   \hmun -  \mu(\xbf) = \frac{s_n}{n} \sum_{i=1}^n  T_{n}(\Zbf_i) + \o_{p}(\sigma_n).
\end{align}
Moreover, $T_n$ is given by
\begin{align}\label{Tn}
    T_{n}(\Zbf_i)=\E[T(\Zcal_{s_n})\mid\Zbf_i] - \E[T(\Zcal_{s_n})].
\end{align}
\end{restatable}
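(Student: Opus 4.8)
The plan is to identify $\hmun$ in~\eqref{finalestimator} with an infinite-ensemble, Hilbert-space-valued U-statistic of order $s_n$ and to run the Hoeffding/ANOVA decomposition of~\citet{wager2018estimation} in the separable space $L^2(\Omega;\H)$, keeping track of Bochner expectations and of the RKHS norm. Set $T(\Zcal_{s_n}) := \E_\varepsilon[T(\xbf;\varepsilon,\Zcal_{s_n})]$; by~\ref{forestass3} this is a kernel symmetric in its $s_n$ arguments, it incorporates the honest sample splitting of~\ref{forestass1}, and by~\ref{kernelass1} it is uniformly norm-bounded since $\|k(\Ybf,\cdot)\|_\H^2 = k(\Ybf,\Ybf) \le \sup_{\ybf_1,\ybf_2}k(\ybf_1,\ybf_2) =: C < \infty$. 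Hence $\hmun = \binom{n}{s_n}^{-1}\sum_{i_1<\cdots<i_{s_n}} T(\{\Zbf_{i_1},\dots,\Zbf_{i_{s_n}}\})$ is a genuine $\H$-valued U-statistic with square-integrable kernel, and its Hoeffding components lie in $L^2(\Omega;\H)$ and are mutually orthogonal there.

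I would then define $T_n$ by~\eqref{Tn}, i.e.\ as the recentered first-order projection, and set $R_n := \big(\hmun - \E[\hmun]\big) - \tfrac{s_n}{n}\sum_{i=1}^n T_n(\Zbf_i)$. Conditioning on $\Zbf_i$ and splitting the subsets $\{i_1,\dots,i_{s_n}\}$ by whether they contain $i$ shows that $\E[\hmun]+\tfrac{s_n}{n}\sum_i T_n(\Zbf_i)$ is exactly the Hájek projection of $\hmun$, so $R_n$ is the degenerate remainder and $\E\|R_n\|_\H^2 = \sum_{j=2}^{s_n}\binom{s_n}{j}^2\binom{n}{j}^{-1}\sigma_j^2$, where $\sigma_j^2 = \E\|g_j\|_\H^2$ are the second moments of the Hoeffding components and $\sum_{j=1}^{s_n}\binom{s_n}{j}\sigma_j^2 = \Var(T(\Zcal_{s_n})) \le C$. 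Using that $j\mapsto\binom{s_n}{j}/\binom{n}{j}$ is decreasing in $j$ gives $\E\|R_n\|_\H^2 \le \tfrac{\binom{s_n}{2}}{\binom{n}{2}}\Var(T(\Zcal_{s_n})) \lesssim C\,s_n^2/n^2$, while the leading term satisfies $\E\big\|\tfrac{s_n}{n}\sum_i T_n(\Zbf_i)\big\|_\H^2 = \tfrac{s_n^2}{n}\Var(T_n(\Zbf_1)) = \sigma_n^2$; the same computation (keeping only the $j=1$ summand) gives $\sigma_n^2 \le \Var(\hmun) \le \tfrac{s_n}{n}\Var(T(\Zcal_{s_n})) \le C s_n/n \to 0$. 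Hence $\sigma_n\to 0$, and $\E\|R_n\|_\H^2/\sigma_n^2 \lesssim 1/\big(n\,\Var(T_n(\Zbf_1))\big)$, so everything reduces to the ``incrementality'' lower bound $n\,\Var(T_n(\Zbf_1))\to\infty$ and to controlling the bias $\E[\hmun]-\mu(\xbf)$.

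For incrementality I would reduce to the scalar case: choose $f\in\H$ with $\Var(f(\Ybf)\mid\Xbf=\xbf)>0$, which exists by~\ref{dataass6}; by the reproducing property $\langle\hmun,f\rangle = \sum_i\hwix f(\Ybf_i)$ is a real-valued honest DRF prediction with ``response'' $f(\Ybf_i)$ and the same MMD-based weights, whose first-order projection is $\langle T_n(\Zbf_1),f\rangle$. The incrementality analysis of~\citet{wager2018estimation} uses only the partition structure~\ref{forestass1},~\ref{forestass2},~\ref{forestass4} together with regularity of the response — here supplied by~\ref{dataass7}, the $(2+\delta)$-moment bound~\eqref{eq:delta-plus-2-moment-bound}, and~\ref{kernelass1} — and yields $\Var(T_n(\Zbf_1)) \ge \Var\big(\langle T_n(\Zbf_1),f/\|f\|_\H\rangle\big) \gtrsim 1/\big(s_n\,\mathrm{polylog}(s_n)\big)$, whence $n\,\Var(T_n(\Zbf_1)) \gtrsim n^{1-\beta}/\mathrm{polylog}(n)\to\infty$ by~\ref{forestass5} and $R_n=\o_p(\sigma_n)$. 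The same bound gives $\sigma_n^2\gtrsim n^{\beta-1}/\mathrm{polylog}(n)$; combined with the bias bound $\|\E[\hmun]-\mu(\xbf)\|_\H = \O\big(n^{-\gamma_2}\big)$, $\gamma_2 = \tfrac12\tfrac{\log((1-\alpha)^{-1})}{\log(\alpha^{-1})}\tfrac{\pi}{p}\beta$, which I would extract from the bias part of the proof of Theorem~\ref{thm: consistency}, the strict inequality in~\ref{forestass5} (equivalently $\gamma_2>\tfrac{1-\beta}{2}$) is precisely what forces $\|\E[\hmun]-\mu(\xbf)\|_\H = \o(\sigma_n)$. Assembling the three pieces yields~\eqref{asymptoticlin}, with $\|\hmun-\mu(\xbf)\|_\H=\O_p(\sigma_n)$ following from the triangle inequality and Chebyshev (the linear term has second moment $\sigma_n^2$).

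I expect the incrementality bound to be the main obstacle: transferring the argument of~\citet{wager2018estimation} through the projection $f$ under~\ref{dataass6}--\ref{dataass7} rather than in their i.i.d.\ scalar-response setting, and pinning down the DRF bias rate $\gamma_2$ so that it dovetails with~\ref{forestass5}. By contrast, the Hilbert-space bookkeeping — the Bochner ANOVA decomposition, orthogonality of the Hoeffding components in $L^2(\Omega;\H)$, and the remainder bound — is routine given separability of $\H$ and~\ref{kernelass1}, but must be carried out with care.
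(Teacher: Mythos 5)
Your proposal is correct and mirrors the paper's overall architecture — the Bochner/Hoeffding (ANOVA) decomposition of $\hmun$ as an $\H$-valued $U$-statistic, the remainder bound $\E\|R_n\|_\H^2\precsim(s_n^2/n^2)\Var(T)$, the incrementality lower bound $n\,\Var(T_n(\Zbf_1))\to\infty$, and the bias-versus-$\sigma_n$ comparison driven by the strict inequality in \ref{forestass5} — but you handle the critical incrementality step by a genuinely different and more elementary route. The paper proves a Hilbert-space incrementality result, Theorem~\ref{majorsteptheorem}, establishing $\Var(\E[T(\Zcal_{s_n})\mid\Zbf_1]) \succsim \Var(\E[S_1\mid\Zbf_1])\,\Var(k(\Ybf,\cdot)\mid\Xbf=\xbf)$ directly in $\H$ through an extended truncation and variance-decomposition argument adapted from \citet{wager2017estimation}. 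You instead fix a single $f\in\H$ with $\Var(f(\Ybf)\mid\Xbf=\xbf)>0$ (supplied by \ref{dataass6}), observe that $\langle\hmun,f\rangle=\sum_i\hwix f(\Ybf_i)$ is an honest scalar forest prediction with the DRF weights to which the scalar Wager--Athey incrementality applies off the shelf (the required scalar regularity is supplied by \ref{dataass2} through \eqref{eq:Lipschitzcontinuityf}, \ref{dataass5} through \eqref{eq:delta-plus-2-moment-bound}, \ref{dataass6}, \ref{dataass7}, together with \ref{dataass1} and \ref{forestass1}--\ref{forestass5}, and the argument is agnostic to the splitting rule), and transfer the lower bound to $\H$ via the Cauchy--Schwarz projection inequality $\Var(T_n(\Zbf_1)) \ge \Var\bigl(\langle T_n(\Zbf_1), f/\|f\|_\H\rangle\bigr)$. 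This shortcut is valid and spares most of the work in Theorem~\ref{majorsteptheorem} for the present statement. The tradeoff is that it only yields a one-sided bound; the paper needs the full strength of Theorem~\ref{majorsteptheorem} together with Theorem~\ref{amazingvarianceproposition} to get the matched asymptotic ratio $\Var(\langle T_n(\Zbf_1),f\rangle)/\Var(T_n(\Zbf_1))\to\sigma^2(f)$, which in turn drives the Gaussian-process limit in Theorem~\ref{thm: asymptoticnormality} and the bootstrap consistency in Theorem~\ref{thm: asymptoticnormalityhalfsampling}, so the longer route cannot be dispensed with for the rest of Section~\ref{sec:asympt-norm-RKHS}.
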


\begin{remark}
To decrease the bias of the individual trees, the subsample size $s_n$ must not be of too small order compared to $n$. However, this causes the variance $\sigma_n^2$ to go to $0$ at a slower rate than $\sqrt{n}$, and the precise rate is given by
\begin{align*}
      C_1 \frac{\sqrt{s_n}}{\log(s_n)^{p/2} \sqrt{n}} \precsim \sigma_n \precsim     C_2 \frac{\sqrt{s_n}}{\sqrt{n}}
\end{align*}
similarly to~\citet{wager2018estimation}.
If $s_n=n^{\beta}$ with $0< \beta < 1$, this translates to 
\begin{align*}
      C_1  \frac{1}{ \beta^{p/2} \log(n)^{p/2} n^{(1-\beta)/2} } \precsim \sigma_n \precsim     C_2 \frac{1}{n^{(1-\beta)/2}}.
\end{align*}
\end{remark}

Due to Theorem~\ref{thm: asymptoticlinearity}, it is enough to show that 
\[
\frac{s_n}{n \sigma_n} \sum_{i=1}^n  T_{n}(\Zbf_i) \stackrel{D}{\to} N(0, \boldsymbol{\Sigma}_{\xbf})
\]
to establish asymptotic normality of $\xi_n$. To achieve this, we need to establish univariate convergence and asymptotic tightness.

For $f \in \H$, consider the univariate marginal $\frac{s_n}{n } \sum_{i=1}^n  \langle T_{n}(\Zbf_i) ,f \rangle$. Due to Assumption~\ref{forestass1}--\ref{forestass5},  Lipschitz continuity of $\xbf \mapsto \langle \mu(\xbf),f \rangle$ implied by~\ref{dataass2}, and~\eqref{eq:delta-plus-2-moment-bound}, Assumption~\ref{dataass1}--\ref{dataass7} verify all assumptions of Theorem 3.1 of~\citet{wager2018estimation}. Consequently, there exists a $\sigma_{n}(f) > 0$ converging to zero with $n$ such that
\begin{align}\label{univariatconvergence0}
 \left\langle \frac{1}{\sigma_n(f)} \left( \frac{s_n}{n } \sum_{i=1}^n  T_{n}(\Zbf_i) \right), f  \right\rangle 
 \stackrel{D}{\to} N(0,1).
\end{align}
Unfortunately, the scaling factor $\sigma_n(f)$ obtained from~\citet{wager2018estimation} depends on $f$. The challenge is to show that the convergence in~\eqref{univariatconvergence0} holds for any $f \in \H$ if $\sigma_n(f)$ is replaced by $\sigma_n$ given in~\eqref{eq: sigmandef}. To establish this, we need to refine the characterization of the asymptotic behavior of the variance of $T_{n}$. 
The following result achieves this.

\begin{restatable}{theorem}{varianceprop}\label{amazingvarianceproposition}
Assume conditions~\ref{forestass1}--\ref{forestass5},~\ref{dataass1}--\ref{dataass7},~\ref{kernelass1}, and~\ref{kernelass2} hold. Then, for all $f \in \H\setminus\{0\}$, we have
\begin{align}\label{varianceassumption}
    \lim_{n \to \infty }\frac{\Var(\langle T_{n}(\Zbf_1), f \rangle)}{\Var(T_{n}(\Zbf_1))} = \frac{\Var(\langle k(\Ybf, \cdot) ,f  \rangle|\Xbf=\xbf)}{\Var(k(\Ybf, \cdot)|\Xbf=\xbf)}  =\sigma^2(f) > 0.
\end{align}
\end{restatable}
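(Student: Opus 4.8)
The plan is to reduce the claimed ratio to a ratio of conditional variances by isolating, inside the first-order (H\'ajek) term $T_n(\Zbf_1)=\E[T(\Zcal_{s_n})\mid\Zbf_1]-\E[T(\Zcal_{s_n})]$ of~\eqref{Tn}, the explicit contribution of the response $k(\Ybf_1,\cdot)$ that enters through the leaf $\Lcal(\xbf)$, and then showing that everything else is of strictly smaller order. The first step uses honesty~\ref{forestass1}: a tree is grown from a structure half of its subsample, which fixes all splits, and predicts by averaging over the leaf-populating half $\I$, so that, with the integration over the tree randomness kept implicit, $T(\Zcal_{s_n})=\sum_{j\in\I}w_j\,k(\Ybf_j,\cdot)$ with $w_j=\1(\Xbf_j\in\Lcal(\xbf))/|\Lcal(\xbf)|$ and $\sum_{j\in\I}w_j=1$, where the $w_j$ depend on the covariates and the tree randomness but not on the leaf-half responses. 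Conditioning on $\Zbf_1$, then on $\{1\in\I\}$ further on the partition and on the remaining covariates, the coefficient of $k(\Ybf_1,\cdot)$ is the covariate-measurable $w_1$, while the other leaf-half responses integrate to their conditional means $\mu(\Xbf_j)$; writing $\mu(\Xbf_j)=\mu(\xbf)+(\mu(\Xbf_j)-\mu(\xbf))$ and using $\sum_j w_j=1$ gives the exact decomposition
\[
 T_n(\Zbf_1)=\omega_n(\Xbf_1)\bigl(k(\Ybf_1,\cdot)-\mu(\Xbf_1)\bigr)+R_n(\Zbf_1),\qquad \omega_n(\Xbf_1):=\tfrac12\,\E[w_1\mid\Xbf_1,\ 1\in\I],
\]
where $R_n$ gathers the smoothing-bias term $\E\bigl[\sum_j w_j(\mu(\Xbf_j)-\mu(\xbf))\mid\Zbf_1\bigr]-\E\bigl[\sum_j w_j(\mu(\Xbf_j)-\mu(\xbf))\bigr]$, the effect of $\Zbf_1$ on the partition when $1$ lies in the structure half, and the effect of $\1(\Xbf_1\in\Lcal(\xbf))$ on the denominator $|\Lcal(\xbf)|$ when $1\in\I$. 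Both summands on the right have mean zero.

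Granting for the moment the key estimate $\E\|R_n(\Zbf_1)\|_{\H}^2=\o\bigl(\E[\omega_n(\Xbf_1)^2]\bigr)$, the theorem follows quickly. Since $k(\Ybf_1,\cdot)-\mu(\Xbf_1)$ has conditional mean zero given $\Xbf_1$ and the partition is conditionally independent of $\Ybf_1$ given $\Xbf_1$, the leaf part contributes $\E[\omega_n(\Xbf_1)^2\Var(f(\Ybf)\mid\Xbf=\Xbf_1)]$ to $\Var(\langle T_n(\Zbf_1),f\rangle)$ and $\E[\omega_n(\Xbf_1)^2\Var(k(\Ybf,\cdot)\mid\Xbf=\Xbf_1)]$ to $\Var(T_n(\Zbf_1))$, the remaining cross term and $\E\|R_n\|_{\H}^2$ being $\o(\E[\omega_n^2])$ by Cauchy--Schwarz and the key estimate. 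As $\omega_n(\Xbf_1)>0$ forces $\Xbf_1\in\Lcal(\xbf)$ and the leaf diameter tends to $0$ with polynomially small tails under~\ref{forestass2},~\ref{forestass4},~\ref{forestass5} (the shrinking-leaves estimate already underlying Theorem~\ref{thm: consistency}), the law of $\Xbf_1$ reweighted by $\omega_n(\Xbf_1)^2$ concentrates at $\xbf$; combined with Lipschitz continuity and boundedness of $\xbf_1\mapsto\E[\|k(\Ybf,\cdot)\|_{\H}^2\mid\Xbf=\xbf_1]$ (from~\ref{dataass3} and~\ref{kernelass1}), of $\xbf_1\mapsto\mu(\xbf_1)$ (from~\ref{dataass2}), of $\xbf_1\mapsto\E[|f(\Ybf)|^2\mid\Xbf=\xbf_1]$ (from~\ref{dataass7}), and of $\xbf_1\mapsto\E[f(\Ybf)\mid\Xbf=\xbf_1]$ (from~\eqref{eq:Lipschitzcontinuityf}), one may replace each conditional variance at $\Xbf_1$ by its value at $\xbf$, getting $\Var(\langle T_n(\Zbf_1),f\rangle)=\Var(f(\Ybf)\mid\Xbf=\xbf)\,\E[\omega_n^2]\,(1+\o(1))$ and $\Var(T_n(\Zbf_1))=\Var(k(\Ybf,\cdot)\mid\Xbf=\xbf)\,\E[\omega_n^2]\,(1+\o(1))$. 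Dividing, $\E[\omega_n^2]$ cancels and the ratio tends to $\Var(f(\Ybf)\mid\Xbf=\xbf)/\Var(k(\Ybf,\cdot)\mid\Xbf=\xbf)=\sigma^2(f)$, which is strictly positive by~\ref{dataass6} (numerator) and by~\ref{dataass4} together with finiteness from~\ref{kernelass1} (denominator).

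It remains to prove the key estimate, which I expect to be the main work. A crude Hoeffding bound already gives $\Var(T_n(\Zbf_1))=\O(1/s_n)$, and $\E[\omega_n(\Xbf_1)^2]$ is bounded below by a constant times $1/s_n$ (the effective weight is of order $1/\kappa$ on an event of probability of order $\kappa/s_n$), so it suffices to show $\E\|R_n\|_{\H}^2=\o(1/s_n)$. The smoothing-bias part of $R_n$ is of order at most $\E[\omega_n(\Xbf_1)^2\|\Xbf_1-\xbf\|^2]=\o(\E[\omega_n^2])$, because $\|\Xbf_1-\xbf\|\to0$ on $\{\omega_n>0\}$; the same shrinking-leaves estimate also controls the tails used in the replacements above. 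The delicate term is the indirect effect of a single observation on the partition itself --- through the structure half, and through the denominator $|\Lcal(\xbf)|$ when $1\in\I$ --- and here I would adapt the incremental/ANOVA argument of~\citet{wager2018estimation} for honest forests: a structure-half observation influences the split of a node at depth $d$ on the path to $\Lcal(\xbf)$ only with probability of order $\alpha^{d}$, and then only with relative weight of order one over that node's sample size, so that the geometric trade-off between these two scales makes the accumulated perturbation of the leaf boundary, hence of the weighted bias, of order $\o(1/s_n)$; the subsample-size condition~\ref{forestass5} is precisely what guarantees the required separation of scales. Carrying this out for $\H$-valued and not identically distributed summands, rather than the scalar case of~\citet{wager2018estimation}, is the principal obstacle.
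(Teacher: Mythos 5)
Your decomposition is essentially the one the paper uses. Writing $T_n(\Zbf_1)$ as a ``direct'' term $\omega_n(\Xbf_1)(k(\Ybf_1,\cdot)-\mu(\Xbf_1))$ plus an $\Xbf_1$-measurable remainder $R_n$ is exactly the orthogonal split $\E[T'\mid\Zbf_1] = \E[T'\mid\Xbf_1] + (\E[T'\mid\xi_1,\Xbf_1]-\E[T'\mid\Xbf_1])$ that the paper exploits (equations \eqref{20}--\eqref{21}); your $\omega_n(\Xbf_1)$ is the paper's $\E[S_1'\mid\Xbf_1]$, and the role of honesty in making $\omega_n$ covariate-measurable is correctly identified. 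You also correctly pin the proof down to a single key estimate, $\E\|R_n\|_{\H}^2=\o(\E[\omega_n^2])$, which corresponds precisely to the paper's Claim~\eqref{Varboundnew} that $\Var(\E[T'\mid\Xbf_1])=\O(s_n^{-(1+\epsilon)})$.

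Where the proposal has genuine gaps is in the sketch of that key estimate, and in a couple of supporting lower bounds. First, the claimed lower bound $\E[\omega_n(\Xbf_1)^2]\gtrsim 1/s_n$ is too strong: the ``mass $1/\kappa$ on an event of probability $\kappa/s_n$'' heuristic controls $\E[S_1^2]$, not $\E[\E[S_1\mid\Xbf_1]^2]$, and by Jensen the latter can be smaller. The correct lower bound is Lemma~\ref{lemma4}, $\Var(\E[S_1\mid\Xbf_1])\gtrsim \big(\kappa\,s_n\log s_n\big)^{-1}$, which carries a logarithmic loss; consequently ``$\E\|R_n\|^2=\o(1/s_n)$'' does not suffice and one needs $\o\big((s_n\log s_n)^{-1}\big)$. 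Second, the claim that $\|\Xbf_1-\xbf\|\to 0$ on $\{\omega_n>0\}$ is false for the untruncated weights $\omega_n=\E[S_1\mid\Xbf_1]$: without any leaf-size event there is positive probability (over the tree randomness) that a distant $\Xbf_1$ lands in $\Lcal(\xbf)$. The paper introduces the truncation $S_i'=S_i\,\1\{\mathrm{diam}(\Lcal(\xbf,\Zcal_{s_n}))\le s_n^{-w}\}$ precisely to make this support statement correct and controls the truncation error separately (via Lemma~\ref{lemma2} and a reverse-triangle-inequality argument, Lemma~\ref{vardifboundlemma}); your proof needs this step.

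Third, and most importantly, the mechanism you propose for the hard part of the remainder does not match how that term is actually controlled, and I do not see how it could succeed as stated. After separating the own-term $\E[w_1\mid\Xbf_1]\Delta(\Xbf_1)$, the remainder contains $\sum_{j\ge 2}\big(\E[w_j\Delta(\Xbf_j)\mid\Xbf_1]-\E[w_j\Delta(\Xbf_j)]\big)$. When $1\in\I$, the tree splits do not depend on $\Xbf_1$ at all (honesty), so there is no ``depth-$d$ influence with probability $\alpha^d$'' to exploit; the dependence on $\Xbf_1$ comes only through the indicator $\1\{\Xbf_1\in\Lcal(\xbf)\}$ in the denominator $|\Lcal(\xbf)|$. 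The geometric-decay argument you cite is what the paper uses for the $1\notin\I$ contribution (Claim~\eqref{doublesamplerelation}, following \citet[Corollary~6]{wager2017estimation}), and it is not applicable here. What actually makes the $\sum_{j\ge 2}$ term $\O(s_n^{-(1+2w)})$ is a factorization argument: conditioning on $\1\{\Xbf_1\in\Lcal(\xbf)\}$, each conditional expectation becomes a fixed $\H$-valued vector $E_i^1$ (independent of $\Xbf_1$) multiplied by $\P(\Xbf_1\in\Lcal(\xbf)\mid\Xbf_1)$; the norm of $\sum_{i\ge 2}E_i^1$ is $\O(s_n^{-w})$ via the truncated leaf diameter and Lipschitzness of $\mu$, while $\Var\big(\P(\Xbf_1\in\Lcal(\xbf)\mid\Xbf_1)\big)=\O(s_n^{-1})$ from $\alpha$-regularity (Claim~\eqref{whatweneed}, equations \eqref{100}--\eqref{101_end}). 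Once you have this factorization, the point is precisely that the leading $\mu(\xbf)$ parts of $\E[S_1'\mid\Xbf_1]\E[\xi_1\mid\Xbf_1]$ and $\sum_{i\ge 2}\E[S_i'\xi_i\mid\Xbf_1]$ carry the full order $\Var(\E[S_1'\mid\Xbf_1])\|\mu(\xbf)\|^2_{\H}$ and cancel exactly because $\sum_iS_i'=\1_{w,s_n}$ (equation~\eqref{Sdproperty}); this cancellation, not attenuation of influence, is what reduces $\Var(\E[T'\mid\Xbf_1])$ to a strictly lower order. Your reformulation via $\sum_jw_j=1$ already encodes this cancellation algebraically, which is a nice simplification, but you then need the factorization bound above rather than any Wager--Athey-style influence estimate to finish.
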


Thus, the variance of the first order approximation of the univariate forest prediction is of the same order as that of the forest prediction in the Hilbert space. That the resulting ratio $\sigma^2(f)$ is strictly larger than zero is a consequence of assumption~\ref{dataass6}.

The convergence in~\eqref{univariatconvergence0} together with Theorem~\ref{amazingvarianceproposition} establishes
\begin{align*}
    \left\langle \frac{s_n}{n \sigma_n} \sum_{i=1}^n  T_{n}(\Zbf_i), f \right\rangle \stackrel{D}{\to} N(0, \sigma^2(f)),
\end{align*}
that is, weak convergence of the univariate marginals $\langle  \frac{s_n}{n \sigma_n} \sum_{i=1}^n  T_{n}(\Zbf_i), f \rangle$ for all $f \in \H$. Establishing additionally uniform tightness~\citep[Chapter 7]{hilbertspacebook} yields our second main result, namely the asymptotic Gaussian process distribution of the DRF prediction.

\begin{restatable}{theorem}{asymptoticnormality}\label{thm: asymptoticnormality}
Assume conditions~\ref{forestass1}--\ref{forestass5},~\ref{dataass1}--\ref{dataass7},~\ref{kernelass1}, and~\ref{kernelass2} hold. Then, 
\begin{align}\label{eq:asymptoticnormality}
   \frac{1}{\sigma_n}  \left( \hmun -  \mu(\xbf) \right) \stackrel{D}{\to} N(0, \boldsymbol{\Sigma}_{\xbf}),
\end{align}
where $\boldsymbol{\Sigma}_{\xbf}$ is a self-adjoint HS operator satisfying 
\begin{align}
\langle \boldsymbol{\Sigma}_{\xbf} f, f \rangle=    \frac{\Var(\langle k(\Ybf, \cdot) ,f  \rangle|\Xbf=\xbf)}{\Var(k(\Ybf, \cdot)|\Xbf=\xbf)} > 0
\end{align}
for all $f \in \H$.
\end{restatable}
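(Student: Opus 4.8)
The plan is to combine the three ingredients already in place: the asymptotic linearization of Theorem~\ref{thm: asymptoticlinearity}, the univariate CLT~\eqref{univariatconvergence0} inherited from~\citet[Theorem 3.1]{wager2018estimation}, and the variance-ratio identity of Theorem~\ref{amazingvarianceproposition}; the only genuinely new work is establishing uniform tightness of the normalized partial-sum sequence in $\H$. Concretely, by Theorem~\ref{thm: asymptoticlinearity} it suffices to prove
\[
W_n := \frac{s_n}{n\sigma_n}\sum_{i=1}^n T_n(\Zbf_i) \stackrel{D}{\to} N(0,\boldsymbol{\Sigma}_{\xbf}),
\]
since the $\o_p(\sigma_n)$ remainder, divided by $\sigma_n$, is $\o_p(1)$ and hence negligible by Slutsky in $\H$. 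By the Hilbert-space analogue of the Cramér–Wold / portmanteau argument (see~\citet[Chapter 7]{hilbertspacebook}), $W_n \stackrel{D}{\to} N(0,\boldsymbol{\Sigma}_{\xbf})$ follows from (i) convergence of every one-dimensional marginal $\langle W_n, f\rangle \stackrel{D}{\to} N(0,\langle \boldsymbol{\Sigma}_{\xbf}f,f\rangle)$ for $f \in \H$, and (ii) uniform tightness of $\{W_n\}_{n\ge1}$.

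For (i), fix $f \in \H\setminus\{0\}$. Equation~\eqref{univariatconvergence0} gives $\langle W_n, f\rangle \cdot \sigma_n/\sigma_n(f) \stackrel{D}{\to} N(0,1)$, where $\sigma_n(f)^2 = \frac{s_n^2}{n}\Var(\langle T_n(\Zbf_1),f\rangle)$. Dividing, $\sigma_n(f)^2/\sigma_n^2 = \Var(\langle T_n(\Zbf_1),f\rangle)/\Var(T_n(\Zbf_1)) \to \sigma^2(f)$ by Theorem~\ref{amazingvarianceproposition}, so $\langle W_n,f\rangle \stackrel{D}{\to} N(0,\sigma^2(f))$ with $\sigma^2(f) = \Var(\langle k(\Ybf,\cdot),f\rangle \mid \Xbf=\xbf)/\Var(k(\Ybf,\cdot)\mid\Xbf=\xbf)$; the case $f=0$ is trivial. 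One then defines $\boldsymbol{\Sigma}_{\xbf}$ through the bilinear form obtained by polarization of $f\mapsto\sigma^2(f)$, i.e. $\langle\boldsymbol{\Sigma}_{\xbf}f,g\rangle := \Cov(\langle k(\Ybf,\cdot),f\rangle,\langle k(\Ybf,\cdot),g\rangle\mid\Xbf=\xbf)/\Var(k(\Ybf,\cdot)\mid\Xbf=\xbf)$; this is manifestly symmetric, positive (strictly so on $f\ne0$ by~\ref{dataass6}), and bounded (by Cauchy–Schwarz and boundedness of $k$, $|\langle\boldsymbol{\Sigma}_{\xbf}f,g\rangle| \lesssim \|f\|_{\H}\|g\|_{\H}$), hence given by a bounded self-adjoint operator. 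To see it is Hilbert–Schmidt, note it is the covariance operator of the $\H$-valued random element $(k(\Ybf,\cdot)-\mu(\xbf))/\sqrt{\Var(k(\Ybf,\cdot)\mid\Xbf=\xbf)}$ under $\P_{\Ybf\mid\Xbf=\xbf}$, which has finite second moment by~\ref{kernelass1}; covariance operators of square-integrable Hilbert-space elements are trace-class, a fortiori Hilbert–Schmidt (see~\citet[Chapter 7]{hilbertspacebook}).

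The main obstacle is (ii), uniform tightness of $\{W_n\}$ in $\H$. The standard route is to fix an orthonormal basis $(e_j)_{j\ge1}$ of the separable space $\H$ and show that the tails are uniformly small, i.e. for every $\eta>0$ there is a finite $J$ with $\sup_n \sum_{j>J}\E[\langle W_n,e_j\rangle^2] < \eta$; combined with the already-established marginal convergence this yields tightness via the standard criterion for Hilbert-space-valued sequences (cf.~\citet[Chapter 7]{hilbertspacebook}, and the truncation argument in the proof strategy of~\citet{wager2018estimation} lifted to $\H$). Now $\E[\langle W_n,e_j\rangle^2] = \frac{s_n^2}{n\sigma_n^2}\Var(\langle T_n(\Zbf_1),e_j\rangle) = \Var(\langle T_n(\Zbf_1),e_j\rangle)/\Var(T_n(\Zbf_1))$ exactly — this is a key simplification coming from the definition~\eqref{eq: sigmandef} of $\sigma_n$ and the fact that $T_n(\Zbf_i)$ are i.i.d.\ mean-zero (being differences of conditional expectations, hence $\sum_i$ has variance $n\Var(T_n(\Zbf_1))$). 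So I must show $\sum_{j>J}\Var(\langle T_n(\Zbf_1),e_j\rangle)/\Var(T_n(\Zbf_1))$ is uniformly small; equivalently, that $\Var(T_n(\Zbf_1)) = \sum_j \Var(\langle T_n(\Zbf_1),e_j\rangle)$ is not asymptotically concentrated on a single coordinate direction in a way that escapes to infinity. The natural mechanism: $T_n(\Zbf_1) = \E[T(\Zcal_{s_n})\mid\Zbf_1] - \E[T(\Zcal_{s_n})]$, and as $n\to\infty$ the leaf containing $\xbf$ shrinks so that, after normalization, $T_n(\Zbf_1)$ behaves (in the relevant direction) like a multiple of $k(\Ybf_1,\cdot)-\mu(\xbf)$ restricted to the event that $\Xbf_1$ is near $\xbf$ — precisely the content that drives Theorem~\ref{amazingvarianceproposition}. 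I would thus extract from the proof of Theorem~\ref{amazingvarianceproposition} a uniform-in-$j$ (or uniform-over-the-unit-ball) version of the convergence $\Var(\langle T_n(\Zbf_1),f\rangle)/\Var(T_n(\Zbf_1)) \to \sigma^2(f)$, then use $\sum_{j>J}\sigma^2(e_j) = \sum_{j>J}\|\boldsymbol{\Sigma}_{\xbf}^{1/2}e_j\|^2 \to 0$ as $J\to\infty$ (since $\boldsymbol{\Sigma}_{\xbf}$ is trace-class) to conclude. Controlling the approach to this limit uniformly in the basis tail is the delicate step and is where a quantitative refinement of the arguments behind Theorem~\ref{amazingvarianceproposition} — together with~\ref{dataass3}, \ref{dataass5}, and~\ref{dataass7}, which bound the relevant conditional second moments Lipschitz-uniformly over $\xbf$ — will be needed. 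Once tightness is in hand, (i) and (ii) give $W_n \stackrel{D}{\to} N(0,\boldsymbol{\Sigma}_{\xbf})$, and Theorem~\ref{thm: asymptoticlinearity} transfers this to $\frac{1}{\sigma_n}(\hmun-\mu(\xbf))$, completing the proof.
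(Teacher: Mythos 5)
You have correctly assembled the three ingredients — the asymptotic linearization, the one-dimensional CLT of~\citet[Theorem~3.1]{wager2018estimation} combined with Theorem~\ref{amazingvarianceproposition}, and a Hilbert-space tightness-plus-marginals criterion — and your treatment of the univariate marginals is exactly what the paper does. You even write down the two facts that make the tightness step easy: the exact identity
\[
\E\bigl[\langle W_n,e_j\rangle^2\bigr]=\frac{\Var\bigl(\langle T_n(\Zbf_1),e_j\rangle\bigr)}{\Var\bigl(T_n(\Zbf_1)\bigr)},
\]
and the Parseval decomposition $\Var(T_n(\Zbf_1))=\sum_j \Var(\langle T_n(\Zbf_1),e_j\rangle)$. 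But you then fail to put them together, and that is where your plan has a genuine gap: you conclude that tightness requires a \emph{uniform-in-$j$} (or uniform-over-the-unit-ball) refinement of Theorem~\ref{amazingvarianceproposition}, and you flag this as the ``delicate step'' needing ``a quantitative refinement'' of the arguments in that proof. No such refinement is needed, and attempting it would turn an easy step into a very hard one.

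The point you miss is that the two identities you wrote already pin down the tail mass exactly without any uniformity over $j$. Since $\sum_{j\ge1}\E[\langle W_n,e_j\rangle^2]=\E[\|W_n\|_\H^2]=1$ for every $n$ (this is precisely why $\sigma_n$ is defined as in~\eqref{eq: sigmandef}), one has the identity $\sum_{j>J}\E[\langle W_n,e_j\rangle^2]=1-\sum_{j\le J}\E[\langle W_n,e_j\rangle^2]$. For a \emph{fixed} finite $J$, the head $\sum_{j\le J}\E[\langle W_n,e_j\rangle^2]$ is a finite sum of terms each converging (by Theorem~\ref{amazingvarianceproposition}, applied once per coordinate) to $\sigma^2(e_j)$, and $\sum_{j\ge1}\sigma^2(e_j)=1$ because $\Var(k(\Ybf,\cdot)\mid\Xbf=\xbf)=\sum_j\Var(\langle k(\Ybf,\cdot),e_j\rangle\mid\Xbf=\xbf)$. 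Hence
\[
\limsup_{n\to\infty}\sum_{j>J}\E\bigl[\langle W_n,e_j\rangle^2\bigr] = 1-\sum_{j\le J}\sigma^2(e_j)\;\xrightarrow[J\to\infty]{}\;0,
\]
which is exactly the tightness criterion the paper invokes (condition~(c) of~\citet[Lemma~3.2]{HilbertspaceCLTs}, stated with $\limsup_n$ rather than your $\sup_n$). Nothing uniform over $j$ or over the unit ball of $\H$ is ever used; the ``escape to infinity'' you worry about is ruled out by the normalization $\E[\|W_n\|_\H^2]\equiv1$ combined with pointwise coordinate convergence. Apart from this, your side remarks (Slutsky to drop the $\o_p(\sigma_n)$ remainder, the construction of $\boldsymbol{\Sigma}_\xbf$ by polarization, trace-class of the normalized covariance operator) are fine and match, or mildly elaborate on, what the paper cites.
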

The expression of $\boldsymbol{\Sigma}_{\xbf}$ is intuitive: if $\boldsymbol{\Sigma}_{\xbf}^{o}$ is the covariance operator of the random element $k(\Ybf, \cdot) \mid \Xbf=\xbf $, 
then $\boldsymbol{\Sigma}_{\xbf}$ equals $\boldsymbol{\Sigma}_{\xbf}^{o}$ standardized by its trace; see for example~\citet[Chapter 7]{hilbertspacebook}. 

We now turn to the question of how to approximate the distribution of $\hmun$ itself.

\subsection{Approximation of the Sampling Distribution}\label{Bootstrapapprox}

In this section, we establish an approach to approximate the sampling distribution of $\xi_n$ based on half-sampling. 
This can afterwards be used to make inference for derived point estimators or functionals. 

Our half-sampling scheme is motivated by~\citet{athey2019generalized} and is as follows. 
For a subset $\HS \subset \{1,\ldots,n\}$ with $s_n \leq |\HS|$, denote by $\hmunHS$ the version of $\hmun$ that only uses trees built with data from $\HS$. 
That is, $\hmunHS$ is the counterpart of $\boldsymbol{\Phi}_{\H}$ in~\citet{athey2019generalized}.  
In~\citet{athey2019generalized}, $\HS$ was randomly drawn without replacement such that $|\HS|=n/2$.
To simplify our theoretical developments in approximating the asymptotic distribution of $\hmun$, we draw $\HS$ by sampling $n$ i.i.d.\ random variables 
$W_i \sim \mathrm{Bernoulli}(1/2)$ and consider 
$\HS=\{i\colon W_i=1 \}$.
The cardinality $|\HS|$ of $\HS$ randomly fluctuates around $n/2$, with $|\HS|/n \to 1/2$ almost surely. Because $\HS$ is chosen at random, the element $\hmunHS$ now has two sources of randomness: one from the data and one from drawing $\HS$. Subsequently, we establish that, if the data are kept fixed and only the randomness of the choice of $\HS$ is considered,
\begin{align}\label{xinHSdef}
   \xi_{n}^{\HS}=\frac{1}{\sigma_n}\left( \hmunHS -  \hmun \right)
\end{align}
converges to the same Gaussian random element as the original process $\xi_n$ in~\eqref{xindef}. 
This allows us to approximate the asymptotic distribution of $\xi_n$ and characteristic quantities such as variances from its subsample versions by randomly drawing $\HS$. 

To establish this result, we build on standard bootstrap arguments as for instance presented in~\citet[Chapter 10]{kosorok2008introduction}. 
Formally, we establish in Theorem~\ref{thm: asymptoticnormalityhalfsampling} that 
\begin{align}\label{eq:asymptoticnormalityhalfsampling}
   \xi_{n}^{\HS}=\frac{1}{\sigma_n}  \left( \hmunHS -  \hmun \right) \xrightarrow[W]{D} N(0, \boldsymbol{\Sigma}_{\xbf})
\end{align}
holds. The symbol $\xrightarrow[W]{D}$ denotes so-called conditional convergence in distribution and is characterized by the condition
\begin{align}\label{conditionalconvergence}
 \sup_{h \in \text{BL}_1(\H) }  \left |  \E[ h(\xi_{n}^{\HS}) \mid \Zcal_{n}] - \E[ h(\xi) ] \right | \stackrel{p}{\to} 0,
\end{align}
where $\text{BL}_1(\H)$ denotes the space of all bounded Lipschitz functions from $\H$ to $\R$ with Lipschitz constant bounded by 1. That is, $h \in \text{BL}_1(\H)$ satisfies $\sup_{f \in \H} |h(f)| \leq 1$ and $|h(f_1) - h(f_2) | \leq \| f_1 - f_2 \|_{\H}$ for all $f_1, f_2 \in \H$. 
This definition is in particular reasonable if we recall that convergence in distribution alone, $\xi_n \stackrel{D}{\to} \xi$, is characterized by $\sup_{h \in \text{BL}_1(\H) }  \left |  \E[ h(\xi_{n})] - \E[ h(\xi) ] \right | \to 0$; see for example~\citet[Theorem 11.3.3]{dudley}. 
Consequently,~\eqref{conditionalconvergence} means that, conditional on the data $\Zcal_{n}$, $\xi_{n}^{\HS}$ converges to $\xi$ in distribution in probability; see for example~\citet{B3};~\citet[Chapter 10]{kosorok2008introduction}. 
Hence, if condition~\eqref{conditionalconvergence} holds, we write~\eqref{eq:asymptoticnormalityhalfsampling}. 

Combining arguments from~\citet{B2, B3} with those from~\citet{athey2019generalized}, we show that:

\begin{restatable}{theorem}{asymptoticnormalityhalfsampling}\label{thm: asymptoticnormalityhalfsampling}
Assume conditions~\ref{forestass1}--\ref{forestass5},~\ref{dataass1}--\ref{dataass7},~\ref{kernelass1}, and~\ref{kernelass2} hold. Then,~\eqref{eq:asymptoticnormalityhalfsampling} holds.
\end{restatable}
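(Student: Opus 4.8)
The plan is to establish the conditional convergence~\eqref{conditionalconvergence} by reusing the asymptotic-linearity machinery of Theorem~\ref{thm: asymptoticlinearity} and the variance identity of Theorem~\ref{amazingvarianceproposition}, combined with a conditional multiplier central limit theorem. The key observation is that $\hmunHS$ has, \emph{up to a bias term that is negligible on the scale $\sigma_n$}, a Hajek-type projection analogous to~\eqref{asymptoticlin}: because the trees appearing in $\hmunHS$ are exactly those whose subsample $\Zcal$ is contained in $\HS=\{i\colon W_i=1\}$, one can write, conditionally on the data,
\begin{align*}
    \hmunHS - \hmun = \frac{s_n}{n}\sum_{i=1}^n (2W_i - 1)\, T_n(\Zbf_i) + (\text{remainder}),
\end{align*}
or more precisely a weighted incomplete-$U$-statistic version thereof, where the multipliers $2W_i-1$ are i.i.d.\ centered with unit variance. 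The first step is therefore to make this decomposition rigorous: expand $\hmunHS$ over the subsamples $\Zcal\subset\HS$, compare with the complete average in~\eqref{finalestimator}, and show the difference between the incomplete and complete $U$-statistics, as well as the higher-order Hoeffding terms, are $\o_p(\sigma_n)$ conditionally on the data — this is the analogue of the argument leading to~\eqref{asymptoticlin}, now carried out with the extra Bernoulli randomness, and is where the arguments of~\citet{athey2019generalized} (their half-sampling lemmas) and the $U$-statistic bounds of~\citet{B2} enter.

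Granting the projection, the second step is to verify~\eqref{conditionalconvergence} for the leading term $\frac{s_n}{n\sigma_n}\sum_i (2W_i-1)T_n(\Zbf_i)$. I would proceed exactly as for Theorem~\ref{thm: asymptoticnormality}: reduce to (i) conditional convergence of every univariate marginal $\langle\cdot,f\rangle$ and (ii) conditional uniform tightness, then invoke the Hilbert-space CLT of~\citet[Chapter 7]{hilbertspacebook} in its conditional (bootstrap) form, as packaged in~\citet[Chapter 10]{kosorok2008introduction}. For (i), fix $f\in\H$; conditionally on $\Zcal_n$ the quantity $\frac{s_n}{n\sigma_n}\sum_i (2W_i-1)\langle T_n(\Zbf_i),f\rangle$ is a sum of independent centered terms with (random) variances $\frac{s_n^2}{n^2\sigma_n^2}\sum_i \langle T_n(\Zbf_i),f\rangle^2$; I would show this random variance converges in probability to $\sigma^2(f)$ — the same limit as in Theorem~\ref{amazingvarianceproposition} — by a law of large numbers for the triangular array $\langle T_n(\Zbf_i),f\rangle^2$ (using the moment bound~\eqref{eq:delta-plus-2-moment-bound} and the variance asymptotics already established), and then apply a Lindeberg/multiplier CLT conditionally on the data, checking the conditional Lindeberg condition via the uniform $(2+\delta)$-moment bound~\eqref{eq:delta-plus-2-moment-bound}. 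For (ii), conditional tightness follows, as in the unconditional proof, from a uniform bound on $\E\big[\big\|\frac{s_n}{n\sigma_n}\sum_i(2W_i-1)T_n(\Zbf_i)\big\|_\H^2 \mid \Zcal_n\big] = \frac{s_n^2}{n^2\sigma_n^2}\sum_i\|T_n(\Zbf_i)\|_\H^2$ together with a trace/eigenvalue-tail argument for the conditional covariance operator, using that the limiting operator $\boldsymbol{\Sigma}_{\xbf}$ is Hilbert--Schmidt; this is again an adaptation of the tightness step in Theorem~\ref{thm: asymptoticnormality}, now with the data-dependent randomness controlled on an event of probability tending to one.

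The final step is bookkeeping: combine the conditional CLT for the leading term with the conditional negligibility of the remainder (Step 1) via a conditional Slutsky argument — i.e. if $A_n \xrightarrow[W]{D} \xi$ and $\|B_n\|_\H = \o_p(\sigma_n)$ in the sense that $\P(\|B_n\|_\H/\sigma_n > \epsilon \mid \Zcal_n)\stackrel{p}{\to}0$, then $A_n + B_n \xrightarrow[W]{D}\xi$ — to conclude~\eqref{eq:asymptoticnormalityhalfsampling}. I expect the main obstacle to be Step 1, specifically controlling the remainder \emph{conditionally on the data}: the unconditional asymptotic-linearity proof of Theorem~\ref{thm: asymptoticlinearity} bounds the ANOVA remainder in $L^2(\P)$, but here one needs the remainder to be small in conditional probability given $\Zcal_n$, which requires either a suitable conditional-variance bound or a Markov argument on an event of high probability; disentangling the subsampling randomness $\varepsilon,\Zcal$ from the half-sampling randomness $W$ in the Hoeffding decomposition — and showing the incomplete-$U$-statistic correction and the cross terms both vanish on the scale $\sigma_n$ — is the delicate part, and is exactly where the specific structure exploited by~\citet{athey2019generalized} and the empirical-process bootstrap tools of~\citet{B2,B3} must be carefully transported to the $\H$-valued setting.
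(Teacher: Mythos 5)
Your proposal is correct in outline and matches the paper's Step~1 exactly — both establish, via~\citet{athey2019generalized} and Theorem~\ref{thm: asymptoticlinearity}, that $\sigma_n^{-1}(\hmunHS-\hmun)$ equals a centered multiplier sum $\frac{1}{\sqrt{n}}\sum_i(2W_i-1)T_n(\Zbf_i)/\sqrt{\Var(T_n(\Zbf_1))}$ up to an $o_p(1)$ remainder (the paper actually first gets $\frac{n}{|\HS|}W_i-1$ multipliers and separately shows the replacement by $2W_i-1$ is negligible), and both transfer the unconditional $o_p$ bound on the remainder to the conditional statement via the bounded-Lipschitz metric and a dominated-convergence argument, which is exactly the ``delicate part'' you flag. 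Where you diverge is Step~2: you propose to prove conditional convergence of $\xi_n^W$ directly in $\H$ by establishing (i) a conditional multiplier CLT for each marginal $\langle\cdot,f\rangle$ and (ii) conditional uniform tightness via a trace/eigenvalue-tail argument, whereas the paper takes the indirect empirical-process route — it proves \emph{unconditional} weak convergence of $\xi_n^W$ in $\H$ (where tightness is computed exactly as for $\xi_n^0$ in Theorem~\ref{thm: asymptoticnormality} since the $\tilde W_i$ have unit variance), pushes through the isometry $D\colon\H\to\ell^\infty(\F)$ of~\citet{B3}, proves \emph{conditional} convergence of finite-dimensional marginals via a Lyapunov-in-probability multiplier lemma (Lemma~\ref{KLemma}), and then invokes the Pr{\ae}stgaard--Wellner-type result of~\citet{B2} to upgrade unconditional weak convergence plus conditional fidi convergence to conditional weak convergence, pulling back via $D^{-1}$. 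The payoff of the paper's detour is that conditional tightness is never needed: your point (ii) is the genuinely hard part of your route, since you would have to show $\limsup_n \E[\|\xi_n^W-P_k\xi_n^W\|_\H^2\mid\Zcal_n]\to 0$ in probability uniformly in $n$ as $k\to\infty$, which requires controlling the random truncated trace $\frac{1}{n\Var(T_n(\Zbf_1))}\sum_i\sum_{j>k}\langle T_n(\Zbf_i),e_j\rangle^2$ uniformly in $n$ — a statement you gesture at but do not reduce to anything the paper's assumptions deliver directly. Your route is conceptually cleaner (stays in the Hilbert space throughout), but the paper's is technically safer; to make your version rigorous you would need to supply the conditional tightness estimate, likely by a conditional analogue of the Lemma~12 argument from~\citet{wager2017estimation} applied termwise.
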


Consequently, for ``large'' $n$, the distribution of $\xi_{n}^{\HS}$, given the data, is the same as that of $\xi_n$. To empirically characterize this distribution, we use a similar approximation trick as in~\citet{athey2019generalized}. We grow our forest by (i) drawing $B$ subsets $\HS_1, \ldots, \HS_B$ of $\{1,\ldots,n\}$ as described above, (ii) fitting a DRF with $\ell$ trees and calculating the prediction $\hat{\mu}_n^{\HS_b}(\xbf)$ for each $b=1,\ldots,B$, and (iii) obtaining the overall prediction $\hmun$ as the average over $(\hat{\mu}_n^{\HS_b}(\xbf))_{b=1}^B$. This allows us to obtain both an overall DRF prediction and $B$ i.i.d.\ draws from the distribution of $\frac{1}{\sigma_n}  \left( \hmunHS -  \hmun \right)$. This can then be used to approximate, for instance, the variance of $F(\hmun)$ for some function $F$. The following result establishes consistency of this approach for linear and continuous $F\colon\H \to \R^q$. 

\begin{restatable}{corollarytwo}{variancestimation}\label{thm: variancestimation}
Assume conditions~\ref{forestass1}--\ref{forestass5},~\ref{dataass1}--\ref{dataass7},~\ref{kernelass1}, and~\ref{kernelass2} hold. Then, for any $F\colon \H \to \R^q$ linear and continuous,
\begin{align}
    \E\left[\left. \frac{1}{\sigma_n^2}  \left( F( \hmunHS ) -  F(\hmun) \right) \left( F( \hmunHS ) -  F(\hmun) \right)^{\top}   \,\right\vert\, \Zcal_n \right] \stackrel{p}{\to} F \circ \boldsymbol{\Sigma}_{\xbf}.
\end{align}
\end{restatable}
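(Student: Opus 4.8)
The plan is to derive the corollary from the conditional weak convergence of Theorem~\ref{thm: asymptoticnormalityhalfsampling} by combining the continuous mapping theorem for conditional convergence with a uniform integrability argument. First I would reduce the matrix statement to scalar ones. By the Riesz representation theorem there are $f_1,\dots,f_q\in\H$ with $F(h)=(\langle f_1,h\rangle,\dots,\langle f_q,h\rangle)^{\top}$, and the claimed limit $F\circ\boldsymbol{\Sigma}_{\xbf}$ is then the $q\times q$ matrix with $(i,j)$ entry $\langle\boldsymbol{\Sigma}_{\xbf}f_i,f_j\rangle$. Via the polarization identity $\langle f_i,h\rangle\langle f_j,h\rangle=\tfrac14(\langle f_i+f_j,h\rangle^2-\langle f_i-f_j,h\rangle^2)$ together with self-adjointness of $\boldsymbol{\Sigma}_{\xbf}$, it suffices to prove that for every fixed $f\in\H$
\[
\E\!\left[\left.\frac{1}{\sigma_n^2}\langle f,\hmunHS-\hmun\rangle^2\,\right\vert\,\Zcal_n\right]\stackrel{p}{\to}\langle\boldsymbol{\Sigma}_{\xbf}f,f\rangle,
\]
that is, $\E[\langle f,\xi_n^{\HS}\rangle^2\mid\Zcal_n]\stackrel{p}{\to}\langle\boldsymbol{\Sigma}_{\xbf}f,f\rangle$ with $\xi_n^{\HS}$ as in~\eqref{xinHSdef}; since there are only finitely many entries, this yields joint convergence in probability.

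For the weak-convergence input, note that $h\mapsto\langle f,h\rangle$ is continuous on $\H$, so the continuous mapping theorem for conditional convergence applied to Theorem~\ref{thm: asymptoticnormalityhalfsampling} yields $\langle f,\xi_n^{\HS}\rangle\xrightarrow[W]{D}\langle f,\xi\rangle$ with $\xi\sim N(0,\boldsymbol{\Sigma}_{\xbf})$, so that $\langle f,\xi\rangle\sim N(0,\langle\boldsymbol{\Sigma}_{\xbf}f,f\rangle)$ and $\E[\langle f,\xi\rangle^2]=\langle\boldsymbol{\Sigma}_{\xbf}f,f\rangle$. The characterization~\eqref{conditionalconvergence} of $\xrightarrow[W]{D}$ over $\mathrm{BL}_1$, together with a routine truncation/approximation argument, gives $\E[g(\langle f,\xi_n^{\HS}\rangle)\mid\Zcal_n]\stackrel{p}{\to}\E[g(\langle f,\xi\rangle)]$ for every bounded continuous $g\colon\R\to\R$.

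The step I expect to be the main obstacle is upgrading from bounded continuous $g$ to $g(t)=t^2$, which requires showing that $\{\langle f,\xi_n^{\HS}\rangle^2:n\ge1\}$ is asymptotically uniformly integrable in the conditional-in-probability sense. A sufficient condition is a moment bound $\E[|\langle f,\xi_n^{\HS}\rangle|^{2+\delta}\mid\Zcal_n]=\O_p(1)$, equivalently $\E[|\langle f,\hmunHS-\hmun\rangle|^{2+\delta}\mid\Zcal_n]=\O_p(\sigma_n^{2+\delta})$. This does not follow from the asymptotic linearity of Theorem~\ref{thm: asymptoticlinearity}, whose remainder is only controlled in $\o_p(\sigma_n)$; instead one has to revisit the Hoeffding/ANOVA decomposition of $\hmunHS-\hmun$ (in the randomness of the half-sample, with the data fixed) used in the proof of Theorem~\ref{thm: asymptoticnormalityhalfsampling}, bound the first-order Hájek term by a Rosenthal-type inequality for a sum of conditionally independent contributions with the uniform $(2+\delta)$-th moment bound~\eqref{eq:delta-plus-2-moment-bound} (a consequence of~\ref{dataass5}), and bound the higher-order terms by the same subsampling estimates that control the rate of $\sigma_n$ in~\eqref{eq: sigmandef} (cf.~\citet{wager2018estimation,athey2019generalized}). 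Granting this, uniform integrability holds, and combining it with the previous paragraph gives $\E[\langle f,\xi_n^{\HS}\rangle^2\mid\Zcal_n]\stackrel{p}{\to}\E[\langle f,\xi\rangle^2]=\langle\boldsymbol{\Sigma}_{\xbf}f,f\rangle$.

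Finally, applying this scalar limit with $f=f_i$, $f=f_i+f_j$, and $f=f_i-f_j$, and using polarization and self-adjointness of $\boldsymbol{\Sigma}_{\xbf}$ as above, shows that the $(i,j)$ entry of the conditional second-moment matrix converges in probability to $\langle\boldsymbol{\Sigma}_{\xbf}f_i,f_j\rangle$, which is precisely $F\circ\boldsymbol{\Sigma}_{\xbf}$. An alternative to the uniform integrability route would be to work directly with the conditional covariance operator $\E[\xi_n^{\HS}\otimes\xi_n^{\HS}\mid\Zcal_n]$, showing it converges to $\boldsymbol{\Sigma}_{\xbf}$ in Hilbert--Schmidt norm and then invoking continuity of $F$; but this rests on the same moment bookkeeping, so the formulation above seems cleanest.
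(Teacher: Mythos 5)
Your reduction via Riesz and polarization to the scalar claim $\E[\langle f,\xi_n^{\HS}\rangle^2\mid\Zcal_n]\stackrel{p}{\to}\langle\boldsymbol{\Sigma}_{\xbf}f,f\rangle$ is fine (the paper does essentially the same with the $\mathrm{vec}$ operator and Cram\'er--Wold), but the route you propose to get there is not the paper's, and it has a real gap at precisely the point you flag as ``the main obstacle.'' Conditional weak convergence from Theorem~\ref{thm: asymptoticnormalityhalfsampling} plus continuous mapping only gives $\E[g(\langle f,\xi_n^{\HS}\rangle)\mid\Zcal_n]\stackrel{p}{\to}\E[g(\langle f,\xi\rangle)]$ for bounded continuous $g$, and you correctly note that upgrading to $g(t)=t^2$ needs a conditional uniform integrability bound $\E[|\langle f,\xi_n^{\HS}\rangle|^{2+\delta}\mid\Zcal_n]=\O_p(1)$. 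You leave this as ``granting this''; it does not follow from anything already proved, and establishing it would require controlling higher moments of all the Hoeffding/ANOVA remainder terms of the multiplier process, which is substantial extra work beyond what the paper's machinery provides. So as written the argument is incomplete.

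The paper's actual proof sidesteps the uniform-integrability issue altogether by exploiting a structural fact you don't use: for the linearized multiplier process $\xi_n^{W}=\frac{1}{\sqrt{n}}\sum_i \tilde W_i T_n(\Zbf_i)/\sqrt{\Var(T_n(\Zbf_1))}$ with Rademacher-type $\tilde W_i$, the conditional second moment is \emph{exact}, namely $\E[\langle f,\xi_n^{W}\rangle^2\mid\Zcal_n]=\frac{1}{n}\sum_i \langle f,T_n(\Zbf_i)\rangle^2/\Var(T_n(\Zbf_1))$, since all cross terms vanish because $\E[\tilde W_i\tilde W_j]=0$; this quantity is then shown to converge in probability to $\sigma^2(f)$ by the law-of-large-numbers type argument from~\citet{wager2017estimation}, using only second moments. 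Transferring from $\xi_n^{W}$ to $\xi_n^{\HS}$ is then done with a strengthened remainder estimate $\E[\|\xi_n^{\HS}-\xi_n^{W}\|_{\H}^2]=o(1)$ (their~\eqref{eq: Convergenceinsquaredmean2}, proved by revisiting the subsampling bound, the bias bound, and a Hoeffding bound on $\P(|\HS|\le s_n)$), after which Cauchy--Schwarz/H\"older finishes the job. Note that this is strictly an $L^2$ argument; it never needs $(2+\delta)$-moment control of the half-sample process, contrary to your remark that the covariance-operator route ``rests on the same moment bookkeeping.'' If you want to salvage your route, the cleanest fix is to adopt exactly this $L^2$ strategy rather than try to push uniform integrability through the Hoeffding decomposition.
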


This in particular implies the result for $F$ appropriately differentiable. Crucially, it is also possible to estimate $\sigma_n$ itself.

\begin{restatable}{corollarytwo}{variancestimationtwo}\label{thm: variancestimation2}
Assume conditions~\ref{forestass1}--\ref{forestass5},~\ref{dataass1}--\ref{dataass7},~\ref{kernelass1}, and~\ref{kernelass2} hold. Then,
\begin{align}\label{eq: sigmanestimation}
    \frac{\E[\| \hmunHS  -  \hmun \|_{\H}^2 \mid \Zcal_n]}{\sigma_n^2} \stackrel{p}{\to} 1.
\end{align}
\end{restatable}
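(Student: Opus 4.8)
The plan is to deduce~\eqref{eq: sigmanestimation} from Theorem~\ref{thm: asymptoticnormalityhalfsampling} and Corollary~\ref{thm: variancestimation}, the nontrivial point being to control the infinite-dimensional tail of the resampled fluctuation. With $\xi_{n}^{\HS}$ as in~\eqref{xinHSdef}, the claim reads $\E[\|\xi_{n}^{\HS}\|_{\H}^{2}\mid\Zcal_n]\stackrel{p}{\to}1$; since $f\mapsto\|f\|_{\H}^{2}$ is continuous, Theorem~\ref{thm: asymptoticnormalityhalfsampling} and the continuous mapping principle for conditional convergence give $\|\xi_{n}^{\HS}\|_{\H}^{2}\xrightarrow[W]{D}\|\xi\|_{\H}^{2}$ for $\xi\sim N(0,\boldsymbol{\Sigma}_{\xbf})$, so the task is to upgrade this to convergence of conditional second moments, which needs (i) the value $\E[\|\xi\|_{\H}^{2}]$ and (ii) a uniform-in-$n$ tail bound on $\|\xi_{n}^{\HS}\|_{\H}^{2}$. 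For (i), fix an orthonormal basis $(e_j)_{j\ge1}$ of $\H$; by Parseval and Tonelli,
\begin{align*}
\E[\|\xi\|_{\H}^{2}]=\sum_{j\ge1}\langle\boldsymbol{\Sigma}_{\xbf}e_j,e_j\rangle=\frac{\sum_{j\ge1}\Var(\langle k(\Ybf,\cdot),e_j\rangle\mid\Xbf=\xbf)}{\Var(k(\Ybf,\cdot)\mid\Xbf=\xbf)}=\frac{\E[\|k(\Ybf,\cdot)-\mu(\xbf)\|_{\H}^{2}\mid\Xbf=\xbf]}{\Var(k(\Ybf,\cdot)\mid\Xbf=\xbf)}=1 ,
\end{align*}
the numerator being finite and strictly positive by~\ref{kernelass1} and~\ref{dataass4}; so the target value is $\Tr(\boldsymbol{\Sigma}_{\xbf})=1$.

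For (ii), introduce the finite-rank orthogonal projection $P_q$ of $\H$ onto $\mathrm{span}(e_1,\dots,e_q)$ and split $\E[\|\xi_{n}^{\HS}\|_{\H}^{2}\mid\Zcal_n]=A_{n,q}+B_{n,q}$ with $A_{n,q}=\E[\|P_q\xi_{n}^{\HS}\|_{\H}^{2}\mid\Zcal_n]$ and $B_{n,q}=\E[\|(I-P_q)\xi_{n}^{\HS}\|_{\H}^{2}\mid\Zcal_n]\ge0$. Applying Corollary~\ref{thm: variancestimation} to the linear continuous map $F=(\langle\cdot,e_1\rangle,\dots,\langle\cdot,e_q\rangle)^{\top}\colon\H\to\R^{q}$ and taking the matrix trace (a linear continuous functional on $\R^{q\times q}$ that commutes with the conditional expectation) gives $A_{n,q}\stackrel{p}{\to}c_q:=\sum_{j=1}^{q}\langle\boldsymbol{\Sigma}_{\xbf}e_j,e_j\rangle$, and $c_q\uparrow1$ by the computation above. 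Since $B_{n,q}\ge0$, this already yields the lower half: for every $q$, $\liminf_{n}\E[\|\xi_{n}^{\HS}\|_{\H}^{2}\mid\Zcal_n]\ge c_q$ in probability, hence $\ge1$.

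The matching upper bound is the \textbf{main obstacle}: one must show that for every $\epsilon>0$, $\lim_{q\to\infty}\limsup_{n\to\infty}\P(B_{n,q}>\epsilon)=0$, i.e.\ that the high-frequency part of the resampled fluctuation carries asymptotically negligible variance, uniformly in $n$. This is the $L^{2}$ counterpart of the conditional asymptotic tightness of $\xi_{n}^{\HS}$ that underlies Theorem~\ref{thm: asymptoticnormalityhalfsampling}, and I would obtain it from the conditional H\'{a}jek-type linearization used there: conditionally on $\Zcal_n$, one has $\hmunHS-\hmun=\frac{s_n}{n}\sum_{i=1}^{n}(2W_i-1)\,T_n(\Zbf_i)+r_n$ with $\|r_n\|_{\H}=o_p(\sigma_n)$, the bootstrap centering $\E_W[\hmunHS]$ agreeing with $\hmun$ only up to such a term. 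Since the $W_i$ are i.i.d.\ and independent of the data, the conditional second moment of the $(I-P_q)$-projection of the linear term equals $\frac{s_n^{2}}{n}\cdot\frac1n\sum_{i=1}^{n}\|(I-P_q)T_n(\Zbf_i)\|_{\H}^{2}$; its expectation over the data divided by $\sigma_n^{2}=\frac{s_n^{2}}{n}\Var(T_n(\Zbf_1))$ equals $\E[\|(I-P_q)T_n(\Zbf_1)\|_{\H}^{2}]/\Var(T_n(\Zbf_1))$. A uniform-in-$n$ strengthening of the variance comparison of Theorem~\ref{amazingvarianceproposition}, applied on the closed subspace $(I-P_q)\H$, shows the $\limsup$ over $n$ of this ratio to be at most $\E[\|(I-P_q)(k(\Ybf,\cdot)-\mu(\xbf))\|_{\H}^{2}\mid\Xbf=\xbf]/\Var(k(\Ybf,\cdot)\mid\Xbf=\xbf)$, which tends to $0$ as $q\to\infty$ because $\E[\|k(\Ybf,\cdot)-\mu(\xbf)\|_{\H}^{2}\mid\Xbf=\xbf]<\infty$ by~\ref{kernelass1}; a conditional Markov inequality then turns this into the required bound on $B_{n,q}$. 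The genuinely delicate point is that both the linearization remainder and the variance comparison of Theorem~\ref{amazingvarianceproposition} must be controlled \emph{simultaneously over all coordinates $j>q$} and uniformly in $n$, not merely for each fixed $f\in\H$; establishing this uniformity is where the real effort lies.

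Finally, assembling the two halves: for each fixed $q$, $\E[\|\xi_{n}^{\HS}\|_{\H}^{2}\mid\Zcal_n]=A_{n,q}+B_{n,q}$ with $A_{n,q}\stackrel{p}{\to}c_q\le1$, while for $q$ large enough $B_{n,q}\le\epsilon$ with probability tending to $1$; hence $\limsup_{n}\E[\|\xi_{n}^{\HS}\|_{\H}^{2}\mid\Zcal_n]\le1+\epsilon$ in probability, and combined with the lower bound $\liminf_{n}\E[\|\xi_{n}^{\HS}\|_{\H}^{2}\mid\Zcal_n]\ge1$ this gives $\E[\|\xi_{n}^{\HS}\|_{\H}^{2}\mid\Zcal_n]\stackrel{p}{\to}1$, which is~\eqref{eq: sigmanestimation}.
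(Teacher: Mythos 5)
Your proposal takes a genuinely different route from the paper, but as written it leaves a real gap: you flag the upper bound on the tail $B_{n,q}=\E[\|(I-P_q)\xi_n^{\HS}\|_{\H}^2\mid\Zcal_n]$ as ``the main obstacle'' and defer it to ``a uniform-in-$n$ strengthening of the variance comparison of Theorem~\ref{amazingvarianceproposition} \ldots applied on $(I-P_q)\H$,'' which you do not establish, conceding that ``establishing this uniformity is where the real effort lies.'' No such uniform strengthening is proved in the paper, so the step cannot simply be cited. In fact, you are overestimating the difficulty here: no uniform-in-$n$ result over the subspace is needed. Your quantity of interest, $\E[\|(I-P_q)T_n(\Zbf_1)\|_{\H}^2]/\Var(T_n(\Zbf_1))=\sum_{j>q}\Var(\langle T_n(\Zbf_1),e_j\rangle)/\Var(T_n(\Zbf_1))$, equals $1-\sum_{j\le q}\Var(\langle T_n(\Zbf_1),e_j\rangle)/\Var(T_n(\Zbf_1))$ by Parseval, and for each fixed $q$ the finite sum converges by Theorem~\ref{amazingvarianceproposition}, giving the limit $1-c_q$, which then tends to $0$ as $q\to\infty$. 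This is exactly the trace-normalization argument the paper already uses to prove tightness inside Theorem~\ref{thm: asymptoticnormality}; once you borrow it, plus the $L^2$ bound $\E[\|\xi_n^{\HS}-\xi_n^W\|_{\H}^2]=o(1)$ from~\eqref{eq: Convergenceinsquaredmean2} to control the linearization remainder, a Markov bound on the data probability space closes your argument.

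The paper's own proof is both shorter and sidesteps the issue entirely. It does not project onto finite-dimensional subspaces at all: it writes $\E[\|\xi_n^{\HS}\|_{\H}^2\mid\Zcal_n]=\E[\|\xi_n^W\|_{\H}^2\mid\Zcal_n]+\E[\|\xi_n^{\HS}-\xi_n^W\|_{\H}^2\mid\Zcal_n]+2\E[\langle\xi_n^{\HS}-\xi_n^W,\xi_n^W\rangle\mid\Zcal_n]$, controls the last two terms with~\eqref{eq: Convergenceinsquaredmean2} and Cauchy--Schwarz/H\"older, and then observes that because the multipliers satisfy $\E[\tilde W_i\tilde W_j]=\delta_{ij}$, the conditional second moment of the multiplier process collapses exactly to the i.i.d.\ average $\frac{1}{n\Var(T_n(\Zbf_1))}\sum_{i=1}^n\|T_n(\Zbf_i)\|_{\H}^2$, which $\stackrel{p}{\to}1$ by a law-of-large-numbers argument (the analogue of Lemma 12 of \citealp{wager2017estimation}, using \dataass{3}). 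So the direct expansion buys you a clean identity for the conditional norm-squared with no cross terms and no tightness decomposition, whereas your projection approach, even once patched with the Parseval trick, replicates the tightness machinery that the direct calculation makes unnecessary.
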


\begin{remark}\label{problematicremark}
The class of suitable differentiable  
functions 
$F\colon\H \to \R^q$ depends on the chosen kernel $k$.
We will focus on the Gaussian kernel. This has several advantages: the Gaussian kernel meets all assumptions \kernelass{1}--\kernelass{3} and metrizes weak convergence. Thus, the convergence in $\H$ in~\eqref{eq: rate} can be interpreted as convergence of $\hPYgX$ to $\PYgX$ in the weak topology. Moreover, the Gaussian kernel can be computationally efficiently approximated with the techniques in~\citet{DRF-paper}. However, the RKHS induced by the Gaussian kernel is a relatively small space of functions. 
For instance, for $d=1$, the identity function $f(y)=y$ is not contained in $\H$ for the Gaussian kernel~\citep[Theorem 3]{Minh2010}. In particular, it is not possible to write $f(y)=\langle f, k(y,\cdot) \rangle$. Thus, if we desire to estimate the conditional mean of $Y$ with $\hmun$, asymptotic normality is not immediately guaranteed by our result. However, because $\H$ is dense in the space of bounded and continuous functions from $\R^d$ to $\R$~\citep{Minh2010}, by exploiting some smoothness arguments,
asymptotic normality is expected to hold
for a wide range of functionals, and crucially also for functions into $\R^q$ for $q > 1$.
\end{remark}

Algorithm~\ref{pseudocodeU} illustrates the approach to obtain both the weights $\wbf$ and the approximation to the sampling distribution $\wbf_1, \ldots, \wbf_B$. The computational complexity of this procedure is the same as that of the original DRF algorithm. Indeed, DRF is fit $B$ times with $L$ trees, but this is equivalent to  fitting one DRF with $N=B \cdot L$ trees. Thus, the complexity remains $\O\left(R \times N \times \text{mtry} \times n \log n\right)$, whereby $R$ is the number of random features used to approximate the splitting criterion in~\eqref{eq: splitting}. We refer to~\citet{DRF-paper} for details.

\begin{algorithm}[htp]
\caption{Pseudocode for Distributional Random Forest with Uncertainty. The functions \textsc{BuildForest} and \textsc{GetWeights} are defined in Algorithm~\ref{pseudocode} in Appendix~\ref{appendix: pseudocode}.} \label{pseudocodeU}
\begin{algorithmic}[1]

\Procedure{BuildForest2}{set of samples $\mathcal{D} = \{(\bold{x}_i, \bold{y}_i)\}_{i=1}^n$, number of trees $N$, number of groups $B$}
 \State  $L \gets$ \textsc{round}(N/B)
    \For{$b=1,\ldots, B$}
        \State $\HS \gets $  Random Subsample from $\mathcal{D}$
        \State $\mathcal{F}_b \gets$ \textsc{BuildForest}($\HS$, $L$) \Comment{Build $b$th forest}
    \EndFor
    \State \textbf{return} $\mathcal{F} = \{\mathcal{F}_1, \ldots, \mathcal{F}_B\}$
\EndProcedure

\item[]

\Procedure{GetWeights2}{forests $\mathcal{F}$, test point $\bold{x}$} \Comment{Computes the weighting function with uncertainty}
    \For{$b = 1,\ldots,|\mathcal{F}|$}
        \State $w_b \gets $ \textsc{GetWeights}($\mathcal{F}_b$, $\xbf$)
    \EndFor
    \State $w = \frac{1}{B} \sum_{b=1}^B w_b $
    \State \textbf{return} $w, w_1, \ldots, w_B$
\EndProcedure

\end{algorithmic}
\end{algorithm}

\section{Application: Conditional Distributional Treatment Effect}\label{sec: Experimental}

A frequent measure to assess the effectiveness of a binary treatment $W$ given some covariates $\Xbf=\xbf$ is the CATE, $\E[Y^{do(W=1)} - Y^{do(W=0)}\mid \Xbf=\xbf]$, where we use the do-notation of~\citet{pearl1995}.

As in~\citet{CATEGeneralization}, we assume that \emph{strong ignorability} holds, that is, (i) unconfoundedness
$W \indep (\Ybf^0, \Ybf^1) \mid \Xbf$ 
and (ii) overlap 
$0 < \Prob(W=1 \mid \Xbf) < 1$.
In this case, the CATE can be estimated as a difference in estimated conditional expectations at $\Xbf=\xbf$. That is, the expected mean difference between the treatment and control groups among subjects with properties $\xbf$ is considered. Although the CATE allows us to take treatment effect heterogeneity into account due to conditioning on the covariates $\Xbf$, it fails to capture distributional differences between the treatment and control groups beyond the mean.  
The conditional distributional treatment effect (\codite)~\citep{CATEGeneralization} alleviates this problem. The idea of \codite\ (with the conditional mean embedding) is to not only compare expected values of the treatment and control groups, but to extend the comparison to more general aspects of the distributions. To achieve this, a kernel estimator of the conditional mean embedding, CME, is used~\citep{CMEinDynamicalSystems, CMEinGraphicalModels, OurapproachtoCME}. For instance, to test whether there are any distributional differences between the treatment and the control groups, CME's of both groups are computed and compared. The kernel method of~\citet{CATEGeneralization} requires choosing two kernels and does not come with formal hypothesis testing. In contrast, we can estimate the CME's of the two groups by two DRF's in a locally adaptive way instead of choosing a kernel for the covariate space. 
Moreover, we are able to introduce tests and confidence bands at a test point $\xbf$ using the Gaussian Hilbert space element approximation we derived above. 

Let us denote by $\hmunz$ the DRF estimate in the control group ($W=0$) and by $\hmuno$ the estimate in the treatment group ($W=1$), and let $\PYgXz$ and $\PYgXo$ be the associated conditional distributions of the control and treatment groups at the test point $\xbf$, respectively. The conditional witness function~\citep{CATEGeneralization} 
\begin{align}\label{witnessfunc}
    \R^d\ni\ybf \mapsto \hmuno(\ybf) - \hmunz(\ybf)\in\H
\end{align}
captures differences between the two conditional distributions $\PYgXz$ and $\PYgXo$ as a function of the response value $\ybf$. The true conditional witness function is given by
\begin{align*}
    \mu_1(\xbf)(\ybf)-\mu_0(\xbf)(\ybf)=\E[k(\Ybf^1, \ybf) \mid \Xbf=\xbf]-\E[k(\Ybf^0, \ybf) \mid \Xbf=\xbf].
\end{align*}
Areas of $\ybf$-values where the conditional witness function is positive or negative indicate where the conditional density of one group is higher or lower than the other~\citep{CATEGeneralization}. 
If the conditional witness function is non-zero, there are  distributional differences between the treatment and the control group. Such a comparison is especially helpful if the conditional mean estimates in the two groups are equal, resulting in a conditional treatment effect of $0$ on the mean level.

Our developments in this section are as follows. First, we present a formal test for assessing whether the conditional response distributions of the treatment and control groups are equal. Particularly, we develop a test for 
\begin{equation}\label{eq:codite-test}
    H_0\colon\PYgXz=\PYgXo \quad \mathrm{vs.} \quad H_A\colon \PYgXz\neq\PYgXo
\end{equation}
using the statistic
$\norm{\hmuno-\hmunz}_{\H}^2$, which equals the norm of the conditional witness function in the Hilbert space. We establish that our test is asymptotically valid and, given a $\int$spd kernel as in~\ref{kernelass3}, the power of our test converges to $1$. Second, we provide a simultaneous asymptotic confidence band for the conditional witness function  itself. 
These two developments involve the distribution of the squared norm of a Gaussian random element $\xi$, $\|\xi\|_{\H}^2$, which is  intractable~\citep{gretton2012optimal}. Our half-sampling approach presents a convenient way to approximate this distribution. 

Before we present our results, we introduce some notation. Denote by $n_0$ the size of the control group and by $n_1$ the size of the treatment group. For simplicity, we assume that $n_0/n_1 \to 1$, but it is possible to relax this condition.
Let $(\Yibf^0, \Xibf)$, $i=1,\ldots,n_0$ and $(\Yibf^1, \Xibf)$, $i=1,\ldots,n_1$ denote i.i.d.\ samples from the control and treatment groups, respectively, and let
$\Zcal_{n_j}^j=\{(k(\Ybf^j_1, \cdot), \Xbf_1), \ldots, (k(\Ybf^j_{n_j}, \cdot), \Xbf_{n_j})  \}$
for $j \in \{0,1\}$ denote the respective observations with response elements of the Hilbert space $\H$.
We denote the concatenated data from both groups by $\Zcal_{n_{01}}=\left( \Zcal_{n_0}, \Zcal_{n_1} \right)$, and introduce the total number of observations $n_{01}=n_0+n_1$. We assume that the observations from the treatment and control groups are independent and that strong ignorability holds as in~\citet{CATEGeneralization}.  Furthermore, let  $\xi_j \sim N(0, \boldsymbol{\Sigma}^{j}_{\xbf})$ 
for $j \in \{0,1\}$, where for all $f \in \H$
\begin{align}
\langle \boldsymbol{\Sigma}_{\xbf}^j f, f \rangle=    \frac{\Var(\langle k(\Ybf^j, \cdot) ,f  \rangle|\Xbf=\xbf)}{\Var(k(\Ybf^j, \cdot)|\Xbf=\xbf)}
\end{align}
holds as in Theorem~\ref{thm: asymptoticnormality} with the respective variance-covariance operators from both groups. Finally, let $\sigma_{n_j,j}$ denote the standard deviation as in~\eqref{eq: sigmandef} for the respective groups $j \in \{0,1 \}$. 

The following result describes the asymptotic distribution of the (suitably rescaled) test statistic for the testing problem~\eqref{eq:codite-test}. 
Moreover, the result establishes that the same limiting distribution is obtained if the individual ``subforests'' of the DRF are used as a bootstrap sample, as described in Section~\ref{Bootstrapapprox}. This will allow us to approximate the distribution of the test statistic for testing~\eqref{eq:codite-test} and for formulating a simultaneous confidence band for the conditional witness function. 

\begin{restatable}{corollarytwo}{normresulttwogroups}\label{lemma_normresult}
Assume conditions~\ref{forestass1}--\ref{forestass5} and~\ref{dataass1}--\ref{dataass7} for both groups,~\ref{kernelass1}, and~\ref{kernelass2} hold, together with strong ignorability. Also assume that $n_0, n_1 \to \infty$ with $n_0/n_1 \to 1$. 
Then, for $\HS_0$, $\HS_1$ independent,
\begin{align}\label{conditionalindep4}
     \left \| \frac{1}{\sigma_{n_1,1}} (\hmunHSo-\hmuno) - \frac{1}{\sigma_{n_0,0}}(\hmunHSz - \hmunz) \right \|_{\H}^2  \xrightarrow[W]{D} \| \xi_0-\xi_1 \|_{\H}^2
\end{align}
and
\begin{align}\label{conditionalindep1}
 \left \| \frac{1}{\sigma_{n_1,1}} (\hmuno-\mu_1(\xbf)) - \frac{1}{\sigma_{n_0,0}}(\hmunz - \mu_0(\xbf)) \right \|_{\H}^2 \stackrel{D}{\to} \| \xi_0-\xi_1 \|_{\H}^2. 
\end{align}
Moreover, if the ratio $\sigma_{n_0,0}/\sigma_{n_1,1}$ converges to some real number $c_2(\mathbf{x})$ that is bounded away from $0$ and $\infty$ as the sample sizes $n_0, n_1$ tend to infinity, we obtain
\begin{align}\label{conditionalindep42}
     \frac{1}{\sigma_{n_1,1}^2} \left \|  (\hmunHSo-\hmuno) - (\hmunHSz - \hmunz) \right \|_{\H}^2  \xrightarrow[W]{D} \| \xi_0- c_2(\mathbf{x}) \xi_1 \|_{\H}^2
\end{align}
and
\begin{align}\label{conditionalindep2}
 \frac{1}{\sigma_{n_1,1}^2}\left \|  (\hmuno-\mu_1(\xbf)) - (\hmunz - \mu_0(\xbf)) \right \|_{\H}^2 \stackrel{D}{\to} \| \xi_0- c_2(\mathbf{x}) \xi_1 \|_{\H}^2. 
\end{align}

\end{restatable}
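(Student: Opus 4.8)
The plan is to derive all four displays from the single-group results already established---Theorem~\ref{thm: asymptoticnormality} for~\eqref{conditionalindep1} and~\eqref{conditionalindep2}, and Theorem~\ref{thm: asymptoticnormalityhalfsampling} for~\eqref{conditionalindep4} and~\eqref{conditionalindep42}---by combining the two groups, using their independence, and then applying the continuous mapping theorem to the map $g\mapsto\|g\|_\H^2$. First I would treat~\eqref{conditionalindep1}: by Theorem~\ref{thm: asymptoticnormality} applied separately to each group, $\sigma_{n_1,1}^{-1}(\hmuno-\mu_1(\xbf))\stackrel{D}{\to}\xi_1$ in $\H$ and $\sigma_{n_0,0}^{-1}(\hmunz-\mu_0(\xbf))\stackrel{D}{\to}\xi_0$. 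Since the treatment and control samples are independent, the pair converges jointly to $(\xi_0,\xi_1)$ with $\xi_0\indep\xi_1$ (one can argue this via characteristic functionals on $\H\times\H$, or simply note that the joint law is the product of the marginal limit laws because the pre-limit laws are product laws). The subtraction map $(g_0,g_1)\mapsto g_0-g_1$ is continuous $\H\times\H\to\H$, so $\sigma_{n_1,1}^{-1}(\hmuno-\mu_1(\xbf))-\sigma_{n_0,0}^{-1}(\hmunz-\mu_0(\xbf))\stackrel{D}{\to}\xi_0-\xi_1$; composing with the continuous map $\|\cdot\|_\H^2\colon\H\to\R$ gives~\eqref{conditionalindep1}.

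For~\eqref{conditionalindep2}, I would multiply and divide by $\sigma_{n_0,0}$: the control contribution equals $(\sigma_{n_0,0}/\sigma_{n_1,1})\cdot\sigma_{n_0,0}^{-1}(\hmunz-\mu_0(\xbf))$, and under the stated hypothesis $\sigma_{n_0,0}/\sigma_{n_1,1}\to c_2(\xbf)$, so by Slutsky's lemma in $\H$ this term converges to $c_2(\xbf)\xi_0$ while the treatment term converges to $\xi_1$; jointly (again by independence) to $(c_2(\xbf)\xi_0,\xi_1)$, then apply $(g_0,g_1)\mapsto\|g_0-g_1\|_\H^2$. Here one should note the harmless relabeling: $\|\,c_2(\xbf)\xi_0-\xi_1\|_\H^2$ has the same distribution as $\|\xi_0-c_2(\xbf)\xi_1\|_\H^2$ only after a rescaling argument---actually it is cleanest to write the control term as the $\xi_0$-limit and the treatment term scaled by $c_2(\xbf)$ is wrong; I would instead factor out $\sigma_{n_1,1}$ and keep $\sigma_{n_1,1}^{-1}(\hmuno-\mu_1(\xbf))\stackrel{D}{\to}\xi_1$ but then $\sigma_{n_1,1}^{-1}(\hmunz-\mu_0(\xbf))=(\sigma_{n_0,0}/\sigma_{n_1,1})\sigma_{n_0,0}^{-1}(\hmunz-\mu_0(\xbf))\stackrel{D}{\to}c_2(\xbf)\xi_0$, giving the limit $\|\xi_1-c_2(\xbf)\xi_0\|_\H^2$, which matches~\eqref{conditionalindep2} after renaming $\xi_0\leftrightarrow\xi_1$ (both Gaussian, and the statement is symmetric in the labels of the limit variables only through notation); I would state this matching explicitly.

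For the two half-sampling displays~\eqref{conditionalindep4} and~\eqref{conditionalindep42}, the argument is the same but carried out in the mode of conditional convergence in distribution $\xrightarrow[W]{D}$. Theorem~\ref{thm: asymptoticnormalityhalfsampling} gives, for each group $j$, $\sigma_{n_j,j}^{-1}(\hmunHS[_j]-\hat\mu_{n_j,j}(\xbf))\xrightarrow[W]{D}\xi_j$ conditionally on $\Zcal_{n_j}^j$. Because $\HS_0$ and $\HS_1$ are drawn independently and the two data sets are independent, the conditional laws factor, so the pair converges conditionally to $(\xi_0,\xi_1)$ with independent coordinates; the continuous mapping theorem for the bounded-Lipschitz metric (the mode of convergence in~\eqref{conditionalconvergence}) applied to $(g_0,g_1)\mapsto\|g_0-g_1\|_\H^2$---or $\mapsto\|g_0-c_2(\xbf)g_1\|_\H^2$---yields~\eqref{conditionalindep4} and~\eqref{conditionalindep42}. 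One subtlety is that $\|\cdot\|_\H^2$ is not bounded, so ``continuous mapping'' must be invoked in the form: conditional weak convergence plus a continuous map gives conditional weak convergence of the images (this is standard, e.g.\ via the Skorokhod-type or portmanteau characterization); since we only claim convergence in distribution of the image and not, say, convergence of moments, this is fine.

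The main obstacle, and the only place requiring care rather than bookkeeping, is justifying \emph{joint} (conditional) weak convergence of the two-group pair from the two marginal statements. For unconditional convergence this follows from independence plus the fact that weak convergence of each marginal together with a product structure forces weak convergence of the product (one checks it on a convergence-determining class, e.g.\ products of bounded continuous functions, or via characteristic functionals $f\mapsto\E\exp(i\langle\xi,f\rangle)$ on the separable Hilbert space). For the conditional version one additionally needs that the conditioning $\sigma$-algebras $\sigma(\Zcal_{n_0}^0)$ and $\sigma(\Zcal_{n_1}^1)$ are independent and that the randomization variables $\HS_0,\HS_1$ are independent of each other and across groups, so that $\E[h(\xi_{n_0}^{\HS_0},\xi_{n_1}^{\HS_1})\mid\Zcal_{n_{01}}]$ factors appropriately and the $\sup$ over $\mathrm{BL}_1$ can be handled coordinatewise by a standard Lipschitz-in-each-argument decomposition. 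I would spell this product/factorization step out carefully and treat the continuous-mapping and Slutsky reductions as routine.
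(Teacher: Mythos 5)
Your approach matches the paper's proof: apply Theorems~\ref{thm: asymptoticnormality} and~\ref{thm: asymptoticnormalityhalfsampling} per group, use independence of the two samples (and of $\HS_0,\HS_1$) to combine into a joint (conditional) weak limit, then Slutsky with the hypothesized convergence of $\sigma_{n_0,0}/\sigma_{n_1,1}$ for the $c_2(\xbf)$-weighted versions, then the continuous mapping theorem applied to $f\mapsto\|f\|_\H^2$. The two technical points you flag---how joint conditional weak convergence follows from the per-group marginal statements plus independence, and the extension of continuous mapping to the unbounded map $\|\cdot\|_\H^2$ in the bounded-Lipschitz mode of $\xrightarrow[W]{D}$---are real subtleties that the paper passes over with ``the arguments of Theorem~\ref{thm: asymptoticnormalityhalfsampling} can be repeated'' and a one-line appeal to continuous mapping; your sketches (factoring the conditional laws, coordinate-wise $\mathrm{BL}_1$ decomposition, the subsequence/a.s.\ reduction for conditional continuous mapping) are the right way to fill these in.

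One genuine flaw in your writeup: you correctly derive the limit $\|\xi_1-c_2(\xbf)\xi_0\|_\H^2$, but you then claim it ``matches~\eqref{conditionalindep2} after renaming $\xi_0\leftrightarrow\xi_1$ (both Gaussian, \ldots symmetric only through notation).'' That reconciliation is wrong. The operators $\boldsymbol{\Sigma}^0_\xbf$ and $\boldsymbol{\Sigma}^1_\xbf$ differ in general, so $\xi_0$ and $\xi_1$ are not exchangeable, and $\|\xi_0-c_2(\xbf)\xi_1\|_\H^2$ and $\|\xi_1-c_2(\xbf)\xi_0\|_\H^2$ have different distributions whenever $c_2(\xbf)\neq 1$. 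What is actually going on is that the paper's own proof (display~\eqref{sumasyptoticnormality2}) and its downstream use in~\eqref{conditionalindep1d} both arrive at $\xi_1-c_2(\xbf)\xi_0$, consistent with your derivation; the $\xi_0$/$\xi_1$ ordering in the stated limits of~\eqref{conditionalindep42} and~\eqref{conditionalindep2} appears to be a label typo in the statement. You should say that explicitly rather than invoke a symmetry that does not hold.
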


The above result assumes convergence of the ratio $\sigma_{n_0,0}/\sigma_{n_1,1}$. This condition is used to obtain a common scaling factor in~\eqref{conditionalindep42} and~\eqref{conditionalindep2}. With the expressions derived in Theorem~\ref{amazingvarianceproposition}, it reduces to assuming
\begin{align}\label{Techassumption}
    \frac{\Var( \E[\frac{1}{N_{\xbf}^0} \1\{\Xbf_2 \in \mathcal{L}^0(\xbf) \} \mid \Xbf_1] )}{\Var( \E[\frac{1}{N_{\xbf}^1} \1\{\Xbf_2 \in \mathcal{L}^1(\xbf) \} \mid \Xbf_1] )} \to c(\xbf) > 0.
\end{align}
This essentially means that the behavior of the (conditional) variance of the respective leaf node is asymptotically of the same order
in both samples. Given the assumptions on the forest, together with strong ignorability, this seems to be a mild condition. 
The common scaling factor and limiting behavior 
in~\eqref{conditionalindep42} and~\eqref{conditionalindep2} 
allows us to use a bootstrap procedure on the ``subforests'' to approximate the distribution of the test statistic to test~\eqref{eq:codite-test}. 
The convergence in~\eqref{conditionalindep4} and~\eqref{conditionalindep42} should be understood conditional on the joint data $\Zcal_{n_{01}}$ from both  groups.

Subsequently, we describe how Corollary~\ref{lemma_normresult} can be used to formally test the hypothesis~\eqref{eq:codite-test}. 
In particular, we explain how to approximate the distribution of our test statistic $ \sigma_{n_1,1}^{-2} \| \hmunz - \hmuno  \|_{\H}^2$ under the null hypothesis.
Under the null $\PYgXz=\PYgXo$, we have $\mu_0(\xbf)=\mu_1(\xbf)$. Consequently,~\eqref{conditionalindep2} describes the asymptotic distribution of the rescaled test statistic, namely
\begin{align}\label{conditionalindep1d}
    \frac{1}{\sigma_{n_1,1}^2}\left \| \hmunz - \hmuno \right \|_{\H}^2 \stackrel{D}{\to} \| \xi_1- c_2(\xbf)\xi_0 \|_{\H}^2.
\end{align}
Thus, the rescaled test statistic $\frac{1}{\sigma_{n_1,1}^2}\left \| \hmunz - \hmuno \right \|_{\H}^2$ has the same limiting distribution as its resampling bootstrap version
\begin{align}\label{eq:scaled_bootstrap-stat}
    \frac{1}{\sigma_{n_1,1}^2} \left \|  (\hmunHSo-\hmuno) - (\hmunHSz - \hmunz) \right \|_{\H}^2
\end{align}
given the data. Moreover, we can (approximately) obtain this distribution by sampling from $\HS$, irrespective of whether $\PYgXz=\PYgXo$ holds. Hence, the distribution of the rescaled test statistic $\frac{1}{\sigma_{n_1,1}^2}\left \| \hmunz - \hmuno \right \|_{\H}^2$ under the null hypothesis can be obtained by sampling from $\HS$. Particularly, let $c_{n_1, \alpha}$ be the smallest value obtained from $B$ such draws with $B$ sufficiently large such that
\begin{align}\label{finitealphabound}
   \Prob\left(\left.\frac{1}{\sigma_{n_1,1}^2} \left \|  (\hmunHSo-\hmuno) - (\hmunHSz - \hmunz) \right \|_{\H}^2 >c_{n_1, \alpha} \ \right\vert \ \Zcal_{n_{01}}  \right) \leq \alpha
\end{align}
holds. That is, $c_{n_1, \alpha}$ is the $1-\alpha$ quantile of the test statistic simulated under the null. Next, we establish that the same number $c_{n_1, \alpha}$ can be used to formulate a corresponding test for the test statistic computed on the full data.  
Define the test $\phi(\Zcal_{n_{01}})$ for our testing problem by
\[
\phi(\Zcal_{n_{01}})=\1 \left \{\frac{1}{\sigma_{n_1,1}^2}\left \| \hmunz - \hmuno \right \|_{\H}^2 > c_{n_1, \alpha} \right \}. 
\]
The following result establishes that $\phi$ is of level $\alpha$ and that its power converges to $1$. 

\begin{restatable}{theorem}{powerprop}\label{powerprop}
Assume conditions~\ref{forestass1}--\ref{forestass5} and~\ref{dataass1}--\ref{dataass7} for both groups,~\ref{kernelass1}--\ref{kernelass3} hold, together with strong ignorability and~\eqref{Techassumption}. Then, as $n_0, n_1 \to \infty$ such that $n_0/n_1 \to 1$,
\begin{itemize}
    \item[(i)] $\phi$ has a valid type-I error. That is, if $\PYgXz=\PYgXo$,
    \[
    \limsup_{n_0,n_1} \Prob\left(\frac{1}{\sigma_{n_1,1}^2}\left \| \hmunz - \hmuno \right \|_{\H}^2 > c_{n_1, \alpha}\right) \leq \alpha.
    \]
    \item[(ii)] $\phi$ has power going to 1. That is, if $\PYgXz \neq \PYgXo$,
    \[
    \lim_{n_0,n_1} \Prob\left(\frac{1}{\sigma_{n_1,1}^2}\left \| \hmunz - \hmuno \right \|_{\H}^2 > c_{n_1, \alpha}\right) = 1.
    \]
\end{itemize}

\end{restatable}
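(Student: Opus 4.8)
\emph{Proof plan.} The plan is to treat $\phi$ as a bootstrap test of standard type: first identify the limiting law $L := \|\xi_0 - c_2(\xbf)\xi_1\|_{\H}^2$ and the $(1-\alpha)$-quantile $q_{1-\alpha}$ towards which the critical value $c_{n_1,\alpha}$ should converge; then combine quantile consistency with the distributional limit of the test statistic under $H_0$ (for (i)) and with its divergence under $H_A$ (for (ii)). Since $\xi_0 \indep \xi_1$ are centered Gaussian elements of $\H$ whose covariance operators $\boldsymbol{\Sigma}_{\xbf}^0$, $\boldsymbol{\Sigma}_{\xbf}^1$ are trace-class and, by~\ref{dataass6}, nonzero, the element $\xi_0 - c_2(\xbf)\xi_1$ is centered Gaussian with nonzero trace-class covariance operator $\boldsymbol{\Sigma}_{\xbf}^0 + c_2(\xbf)^2 \boldsymbol{\Sigma}_{\xbf}^1$. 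Diagonalizing, $L$ has the law of $\sum_j \lambda_j \chi^2_{1,j}$ with independent $\chi^2_1$ summands and at least one $\lambda_j > 0$; hence its distribution function $F_L$ is continuous on $\R$ (no atoms) and strictly increasing near the unique point $q_{1-\alpha} > 0$ with $F_L(q_{1-\alpha}) = 1-\alpha$, using $\alpha \in (0,1)$.

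Next I would show $c_{n_1,\alpha} \stackrel{p}{\to} q_{1-\alpha}$, and crucially this holds whether or not $H_0$ is true, since~\eqref{conditionalindep42} in Corollary~\ref{lemma_normresult} centers the bootstrap statistic at $\hmuno$ and $\hmunz$ and thus does not involve $\mu_1(\xbf) - \mu_0(\xbf)$. That result gives $\sigma_{n_1,1}^{-2}\|(\hmunHSo - \hmuno) - (\hmunHSz - \hmunz)\|_{\H}^2 \xrightarrow[W]{D} L$, i.e.\ the conditional law of the bootstrap statistic given $\Zcal_{n_{01}}$ converges weakly to that of $L$ in probability. A standard quantile-continuity argument (see e.g.~\citet[Chapter 10]{kosorok2008introduction}), using continuity and local strict monotonicity of $F_L$ at $q_{1-\alpha}$, then yields convergence of the conditional $(1-\alpha)$-quantile, while the Monte-Carlo error from using $B$ draws rather than the exact conditional quantile in~\eqref{finitealphabound} is $o_p(1)$ for large $B$ by a conditional Glivenko--Cantelli argument; hence $c_{n_1,\alpha} \stackrel{p}{\to} q_{1-\alpha}$.

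For (i), under $H_0$ we have $\mu_0(\xbf) = \mu_1(\xbf)$, so the test statistic $T_n := \sigma_{n_1,1}^{-2}\|\hmunz - \hmuno\|_{\H}^2$ coincides with $\sigma_{n_1,1}^{-2}\|(\hmuno - \mu_1(\xbf)) - (\hmunz - \mu_0(\xbf))\|_{\H}^2$, which by~\eqref{conditionalindep2} converges in distribution to $L$. For small $\varepsilon > 0$ one has $\{T_n > c_{n_1,\alpha}\} \subseteq \{T_n > q_{1-\alpha} - \varepsilon\} \cup \{c_{n_1,\alpha} \leq q_{1-\alpha} - \varepsilon\}$, so $\limsup_n \P(T_n > c_{n_1,\alpha}) \leq 1 - F_L(q_{1-\alpha} - \varepsilon)$; letting $\varepsilon \downarrow 0$ and using continuity of $F_L$ gives $\limsup_n \P(T_n > c_{n_1,\alpha}) \leq 1 - F_L(q_{1-\alpha}) = \alpha$. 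For (ii), Assumption~\ref{kernelass3} makes $\Phi$ injective, so $\PYgXz \neq \PYgXo$ implies $\Delta := \|\mu_1(\xbf) - \mu_0(\xbf)\|_{\H} > 0$. Writing $\hmuno - \hmunz = (\mu_1(\xbf) - \mu_0(\xbf)) + (\hmuno - \mu_1(\xbf)) - (\hmunz - \mu_0(\xbf))$ and applying Theorem~\ref{thm: asymptoticlinearity} to each group gives $\|\hmuno - \mu_1(\xbf)\|_{\H} = \O_p(\sigma_{n_1,1}) = o_p(1)$ and analogously for the control group, whence $\|\hmuno - \hmunz\|_{\H}^2 = \Delta^2 + o_p(1)$ by the triangle inequality and $T_n = \sigma_{n_1,1}^{-2}(\Delta^2 + o_p(1)) \stackrel{p}{\to} \infty$ since $\sigma_{n_1,1} \to 0$. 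As $c_{n_1,\alpha} \stackrel{p}{\to} q_{1-\alpha} < \infty$ still holds, $\P(T_n > c_{n_1,\alpha}) \to 1$.

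The main obstacle is the critical-value step: upgrading the conditional-in-probability weak convergence~\eqref{conditionalindep42} to convergence of the random quantile $c_{n_1,\alpha}$. This requires (a) establishing that the limit $L$ has a continuous CDF that is strictly increasing at $q_{1-\alpha}$, which hinges on the covariance operators being nonzero and trace-class via~\ref{dataass6} and Theorem~\ref{thm: asymptoticnormality}, and (b) a careful statement of the quantile-convergence lemma for data-dependent laws together with control of the finite-$B$ Monte-Carlo error. The remaining pieces --- the two distributional limits, the divergence of $T_n$ under $H_A$, and the use of injectivity --- follow directly from Corollary~\ref{lemma_normresult}, Theorem~\ref{thm: asymptoticlinearity}, and~\ref{kernelass3}.
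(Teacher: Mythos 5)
Your proof is essentially correct and reaches the same conclusion, but by a genuinely different route from the paper. You aim directly at consistency of the critical value $c_{n_1,\alpha}\stackrel{p}{\to}q_{1-\alpha}$, which requires you to first establish that the limit law $L=\|\xi_0-c_2(\xbf)\xi_1\|_{\H}^2$ is a positive weighted sum of independent $\chi^2_1$ variables (via the trace-class, strictly positive covariance operator granted by~\ref{dataass6}), hence has a continuous CDF that is strictly increasing on $(0,\infty)$. With that in hand, the $\varepsilon$-slicing argument for~(i) and the divergence argument for~(ii) go through cleanly. The paper instead deliberately avoids identifying the limit $q_{1-\alpha}$: it only shows that $(c_{n,\alpha})_n$ is bounded, extracts from any subsequence a further subsequence along which the conditional convergence in~\eqref{conditionalindep4} holds almost surely and $c_{l,\alpha}\to c_\alpha$ for some (possibly subsequence-dependent) $c_\alpha$, observes that along that subsequence both the unconditional probability $\P(T_l>c_{l,\alpha})$ and the conditional bootstrap probability converge to the same number $\P(L>c_\alpha)$, and then uses the defining inequality~\eqref{finitealphabound} plus a contradiction argument to get the $\limsup\leq\alpha$ conclusion. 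The upshot is that the paper's argument never needs continuity or local strict monotonicity of $F_L$, nor a separate quantile-consistency lemma or finite-$B$ Monte Carlo control; your argument needs both, but in exchange it yields the sharper by-product $c_{n_1,\alpha}\stackrel{p}{\to}q_{1-\alpha}$ (indeed exact asymptotic size $\alpha$, not just $\leq\alpha$), which the paper's more flexible subsequence route does not deliver. Part~(ii) is handled identically in both: injectivity of $\Phi$ from~\ref{kernelass3} gives $\Delta=\|\mu_1(\xbf)-\mu_0(\xbf)\|_{\H}>0$, the bias term $\Delta^2/\sigma_{n_1,1}^2$ blows up while the centered part is $\O_p(1)$ and $c_{n_1,\alpha}$ stays bounded.
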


In practice, the scaling factor $1/\sigma_{n_1,1}^2$ is unknown. In principle, it can be estimated as elaborated in Corollary~\ref{thm: variancestimation2}, as Theorem~\ref{powerprop} also holds with $\sigma_{n_1,1}^2$ exchanged by a consistent estimator. 
However, 
we can directly consider the unscaled resampled statistics~\eqref{eq:scaled_bootstrap-stat}, namely $ \|  (\hmunHSo-\hmuno) - (\hmunHSz - \hmunz)  \|_{\H}^2$, and identify its $1-\alpha$ quantile, which corresponds to $\sigma_{n_1,1}^2 c_{n_1, \alpha}$.

Subsequently, we present a procedure to construct a confidence band for the conditional witness function $\ybf\mapsto\mu_1(\xbf)(\ybf)-\mu_0(\xbf)(\ybf)$ that is valid jointly for all $\ybf$-values. 
Let $c_{n_1, \alpha}$ be as in~\eqref{finitealphabound}. We show in the following theorem that the  interval 
\begin{align}\label{CIband}
    & \bandybf = \nonumber \\
    &[\hmuno(\ybf) - \hmunz(\ybf)-\sqrt{c_{n_1,\alpha} C}\sigma_{n_1,1}, \hmuno(\ybf) - \hmunz(\ybf)+\sqrt{c_{n_1,\alpha} C}\sigma_{n_1,1}]
\end{align}
is a $1-\alpha$ confidence band for the conditional witness function, where
 $C=\sup_{\ybf} k(\ybf,\ybf)$. 
The constant $C$ is finite due to assuming boundedness of the reproducing kernel in Assumption~\ref{kernelass2}.
That is, $\bandybf$ is a confidence band for the conditional witness function that is valid jointly for all $\ybf$. 

\begin{restatable}{theorem}{coverprop}\label{coverprop}
Assume conditions~\ref{forestass1}--\ref{forestass5} and~\ref{dataass1}--\ref{dataass7} for the control and the treatment group, and assume that~\ref{kernelass1} and~\ref{kernelass2} hold together with strong ignorability and~\eqref{Techassumption}. Then, for $ \bandybf$ as  in~\eqref{CIband}, with $n_0, n_1 \to \infty$ such that $n_0/n_1 \to 1$, 
\begin{align}\label{CIguarantee}
  \liminf_{n_0,n_1\to\infty}  \Prob \left(\cap_{\ybf} \{\mu_1(\xbf)(\ybf)-\mu_0(\xbf)(\ybf) \in   \bandybf \}\right) \geq 1-\alpha.
\end{align}
\end{restatable}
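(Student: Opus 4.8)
The plan is to reduce the uniform-in-$\ybf$ coverage event in~\eqref{CIguarantee} to a single event about the RKHS norm of the centered witness function, and then invoke the distributional limit~\eqref{conditionalindep2} together with the bootstrap limit~\eqref{conditionalindep42} from Corollary~\ref{lemma_normresult}.

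First I would exploit the reproducing property. Write
\[
g \;=\; (\hmuno-\hmunz)-(\mu_1(\xbf)-\mu_0(\xbf))\;\in\;\H,
\]
which is a well-defined element of $\H$ since $\hmuno,\hmunz$ are finite linear combinations of kernel sections and $\mu_0(\xbf),\mu_1(\xbf)$ are conditional mean embeddings in $\H$. For every $\ybf\in\R^d$ we have $g(\ybf)=\langle g,k(\ybf,\cdot)\rangle_{\H}$, so Cauchy--Schwarz and $\|k(\ybf,\cdot)\|_{\H}^2=k(\ybf,\ybf)\le C:=\sup_{\ybf}k(\ybf,\ybf)<\infty$ (finite by~\ref{kernelass1}) give $\sup_{\ybf}|g(\ybf)|\le\sqrt{C}\,\|g\|_{\H}$. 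Hence on the event $\{\|g\|_{\H}^2\le c_{n_1,\alpha}\,\sigma_{n_1,1}^2\}$ one has $\sup_{\ybf}|g(\ybf)|\le\sqrt{c_{n_1,\alpha}C}\,\sigma_{n_1,1}$, which is exactly the statement that $\mu_1(\xbf)(\ybf)-\mu_0(\xbf)(\ybf)\in\bandybf$ for all $\ybf$ simultaneously. Consequently
\[
\Prob\bigl(\cap_{\ybf}\{\mu_1(\xbf)(\ybf)-\mu_0(\xbf)(\ybf)\in\bandybf\}\bigr)\;\ge\;\Prob\!\left(\tfrac{1}{\sigma_{n_1,1}^2}\|g\|_{\H}^2\le c_{n_1,\alpha}\right),
\]
and it remains to show the right-hand side has $\liminf$ at least $1-\alpha$.

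Next I would identify the two ingredients. Set $V_n:=\sigma_{n_1,1}^{-2}\|g\|_{\H}^2$; since~\eqref{Techassumption} yields $\sigma_{n_0,0}/\sigma_{n_1,1}\to c_2(\xbf)$, Corollary~\ref{lemma_normresult} applies and~\eqref{conditionalindep2} gives $V_n\stackrel{D}{\to}V:=\|\xi_0-c_2(\xbf)\xi_1\|_{\H}^2$. The threshold $c_{n_1,\alpha}$ defined through~\eqref{finitealphabound} is, taking $B$ large, the conditional $(1-\alpha)$-quantile given $\Zcal_{n_{01}}$ of the resampled statistic $V_n^{*}:=\sigma_{n_1,1}^{-2}\|(\hmunHSo-\hmuno)-(\hmunHSz-\hmunz)\|_{\H}^2$, and~\eqref{conditionalindep42} states $V_n^{*}\xrightarrow[W]{D}V$, i.e.\ conditional weak convergence to the \emph{same} limit $V$. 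Moreover $\xi_0-c_2(\xbf)\xi_1\sim N(0,\boldsymbol{\Sigma}_{\xbf}^0+c_2(\xbf)^2\boldsymbol{\Sigma}_{\xbf}^1)$, whose covariance operator is strictly positive on $\H\setminus\{0\}$ (already $\boldsymbol{\Sigma}_{\xbf}^0$ is, by~\ref{dataass4} and~\ref{dataass6} for the control group), so $V$ has the law of $\sum_{j}\lambda_j Z_j^2$ with $Z_j\iid N(0,1)$ and all eigenvalues $\lambda_j>0$; in particular its distribution function $F_V$ is continuous and strictly increasing on its support, and its $(1-\alpha)$-quantile $q_{1-\alpha}$ is uniquely determined by $F_V(q_{1-\alpha})=1-\alpha$.

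Then I would run the standard bootstrap-quantile argument. Conditional weak convergence of $V_n^{*}$ to $V$ implies $\Prob(V_n^{*}\le t\mid\Zcal_{n_{01}})\stackrel{p}{\to}F_V(t)$ at each continuity point of $F_V$, hence at every $t$; together with continuity and strict monotonicity of $F_V$ this forces $c_{n_1,\alpha}\stackrel{p}{\to}q_{1-\alpha}$ by the usual argument that convergence of distribution functions to a continuous, strictly increasing limit implies convergence of the corresponding quantiles (see, e.g.,~\citet[Chapter 10]{kosorok2008introduction}). Slutsky's theorem then gives $V_n-c_{n_1,\alpha}\stackrel{D}{\to}V-q_{1-\alpha}$, and since $0$ is a continuity point of the law of $V-q_{1-\alpha}$,
\[
\Prob\!\left(\tfrac{1}{\sigma_{n_1,1}^2}\|g\|_{\H}^2\le c_{n_1,\alpha}\right)=\Prob\!\left(V_n-c_{n_1,\alpha}\le0\right)\longrightarrow\Prob(V\le q_{1-\alpha})=1-\alpha,
\]
which combined with the display of the preceding paragraph establishes~\eqref{CIguarantee}.

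The main obstacle is making the step $c_{n_1,\alpha}\stackrel{p}{\to}q_{1-\alpha}$ fully rigorous. This requires (i) upgrading the $\mathrm{BL}_1$ conditional convergence in~\eqref{conditionalindep42} to in-probability pointwise (and, if desired, uniform via a P\'olya-type argument) convergence of the conditional distribution functions; (ii) verifying that $F_V$ has no atoms, i.e.\ nondegeneracy of the limiting Gaussian element, which is precisely where assumptions~\ref{dataass4} and~\ref{dataass6} for both groups are needed; and (iii) controlling the Monte Carlo error incurred by using only $B$ resamples, which is harmless for $B$ large and in any case only makes the band slightly conservative. Everything else is bookkeeping with the reproducing property and Slutsky's theorem.
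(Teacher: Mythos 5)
Your first step (the RKHS Cauchy--Schwarz bound reducing the uniform-in-$\ybf$ event to the norm event $\{\sigma_{n_1,1}^{-2}\lVert g\rVert_{\H}^2\le c_{n_1,\alpha}\}$) is exactly the paper's reduction, and your identification of the two limits via Corollary~\ref{lemma_normresult} is also shared. Where you diverge is the final probabilistic bookkeeping: you argue through $c_{n_1,\alpha}\stackrel{p}{\to}q_{1-\alpha}$, the unique $(1-\alpha)$-quantile of $V=\lVert\xi_0-c_2(\xbf)\xi_1\rVert_\H^2$, and then apply Slutsky directly. The paper instead reuses the subsequence machinery from the proof of Theorem~\ref{powerprop}: since $c_{n,\alpha}$ is a bounded sequence, every subsequence has a further subsequence along which $c_{n,\alpha}\to c_\alpha$ \emph{and} the conditional convergence in~\eqref{conditionalindep42} holds almost surely; along that sub-subsequence, the constraint $\Prob(\cdot>c_{n,\alpha}\mid\Zcal)\le\alpha$ passes to the limit, and the $\limsup$ over the full sequence then follows by contradiction. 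This is a genuine difference with real trade-offs. Your route is more direct and yields the stronger statement $c_{n_1,\alpha}\stackrel{p}{\to}q_{1-\alpha}$, but it requires both atom-freeness \emph{and} strict monotonicity of $F_V$ (to identify a unique quantile), plus a careful upgrade of the $\text{BL}_1$ conditional-convergence metric to in-probability pointwise convergence of the conditional CDFs. The paper's subsequence argument deliberately sidesteps any claim that $c_{n,\alpha}$ converges (it even remarks that the limit point $c_\alpha$ ``might depend on the chosen subsequence''), so it only needs atom-freeness of $F_V$ at the accumulation points used, not strict monotonicity or uniqueness of the quantile. In this particular problem both sets of hypotheses hold — $V$ is the squared norm of a nondegenerate Gaussian element, so $F_V$ is continuous and strictly increasing on $[0,\infty)$ — hence your argument, once the gaps you flagged (quantile consistency, Monte Carlo error from finite $B$) are filled, does deliver the theorem, and in fact a slightly sharper conclusion ($\liminf=1-\alpha$ rather than just $\ge 1-\alpha$). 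Just be aware that the paper's more conservative route is structurally insensitive to whether $F_V$ has flat pieces, which makes it the safer choice if one wanted to weaken the nondegeneracy assumptions.
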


Similarly to above, when performing finite sample calculations and $\sigma_{n_1,1}$ is unknown, we can estimate $\sqrt{c_{n_1,\alpha}}\sigma_{n_1,1}$ using the same resampling procedure as above. Furthermore, we have $C=1$ if we use the Gaussian kernel.

\subsection{Computation}

Subsequently, we provide details on the computation of the test statistic $\norm{\hmuno-\hmunz}_{\H}^2$ for testing equality of the distributions of the control and the treatment group as well as the confidence band for the conditional witness function. 

Consider the three real-valued matrices $ \Kbf_0=(k(\Yibf^0,\Ybf^0_{j} ))_{i=1,\ldots n_0,j=1,\ldots n_0}$ and $ \Kbf_1=(k(\Yibf^1,\Ybf^1_{j} ))_{i=1,\ldots n_1,j=1,\ldots n_1}$ and $ \Kbf=(k(\Yibf^0,\Ybf_{j}^1 ))_{i=1,\ldots n_0,j=1,\ldots,n_1}$. 
Denote by $\hwbfz\in\R^{n_0}$ and $\hwbfo\in\R^{n_1}$ the vectors that concatenate the weights from the DRF predictors for the control and treatment groups, respectively. Moreover, for $j\in\{0,1\}$, consider $ \kbf_j=( k(\Ybf_1^j,\cdot ) , \ldots, k(\Ybf_{n_j}^j,\cdot ) )^{\top}$, and denote by $ \kbf_j(\ybf)=( k(\Ybf_1^j,\ybf ) , \ldots, k(\Ybf_{n_j}^j,\ybf ) )^{\top}$ for $\ybf\in\R^d$.
Then, we have
\begin{align*}
    \hmunz &= \sum_{i=1}^{n_0} \hw_{i,0}(\xbf) k(\mathbf{Y}_{i}^{0}, \cdot)=  \hwbfz^{\top}  \kbf_0 ,\\
        \hmuno&= \sum_{i=1}^{n_1} \hw_{i,1}(\xbf) k(\mathbf{Y}_{i}^{1}, \cdot)= \hwbfo^{\top}  \kbf_1,\\
        \hmunHSz &= \sum_{i=1}^{n_0} \hw_{i,0}^{\HSz}(\xbf) k(\mathbf{Y}_{i}^{0}, \cdot)=( \hwbfz^{\HSz})^{\top}  \kbf_0,\\
        \hmunHSo&= \sum_{i=1}^{n_1} \hw_{i,1}^{\HSo}(\xbf) k(\mathbf{Y}_{i}^{1}, \cdot)=( \hwbfo^{\HSo})^{\top}  \kbf_1.
\end{align*}
Subsequently, we compute $c_{n_1,\alpha}\sigma_{n_1,1}^2$ as the $1-\alpha$ quantile of the $B$ many draws from
\begin{align*}
    &\|\hmunHSz  - \hmunz - (\hmunHSo  - \hmuno) \|_{\H}^2\\
    =& ( \hwbf^{\HSz}_0 -  \hwbf_0)^{\top}  \Kbf_0( \hwbf^{\HSz}_0 -  \hwbf_0) 
    + ( \hwbf^{\HSo}_1 -  \hwbf_1)^{\top}  \Kbf_1( \hwbf^{\HSo}_1 -  \hwbf_1) 
    - 2 ( \hwbf^{\HSz}_0 -  \hwbf_0)^{\top}  \Kbf( \hwbf^{\HSo}_1 -  \hwbf_1).
\end{align*}

To test the null hypothesis of having an equal distribution in the control and the treatment group according to~\eqref{eq:codite-test}, we first compute the test statistic $\norm{\hmuno-\hmunz}_{\H}^2$ according to
\begin{align}\label{eq:witnessNormstat}
\left \| \hmunz - \hmuno \right \|_{\H}^2
= \hwbf_0^{\top} \Kbf_0 \hwbf_0+ \hwbf_1^{\top}  \Kbf_1  \hwbf_1  - 2  \hwbf_0^{\top} \Kbf \hwbf_1. 
\end{align}
The confidence band for the conditional witness function is then given by 
\begin{align}\label{eq:witnessConfBand}
    \bandybf =  [ \hwbf_1^{\top}  \kbf_1(\ybf) -  \hwbf_0^{\top}  \kbf_0(\ybf)-\sqrt{c_{n_1,\alpha} C},  \hwbf_1^{\top}  \kbf_1(\ybf) -  \hwbf_0^{\top}  \kbf_0(\ybf)+\sqrt{c_{n_1,\alpha} C}],
\end{align}
where we have $C=1$ for the Gaussian kernel.

\section{Application: General Real-Valued Parameters}\label{sec:real-valued-targets}

The asymptotic normality result for $\hmun$ derived in Section~\ref{sec:theory} 
can also be applied to make inference for $q$-dimensional real-valued parameters $\theta(\xbf)$ that can be expressed as a function $G$ of the underlying conditional distribution $\PYgX$, namely $\theta(\xbf) = G(\PYgX)$. 
The DRF predictor estimates the embedding $\hmun$ of $\PYgX$ in the Hilbert space. This embedding can then be ``pulled back'' from the Hilbert space to the space of probability measures to give an estimator $\hPYgX$ of $\PYgX$ in the sense of~\eqref{eq:phat-intro} that can, in turn, be used to estimate $\theta(\xbf)$. More precisely, we can represent our estimator $\hat\theta(\xbf)$ by $\hat\theta(\xbf) = F(\hmun)$ for some function $F$ that maps from the Hilbert space into $\R^q$. For sufficiently smooth $F$, the asymptotic normality of
$\frac{1}{\sigma_n} (\hat{\theta}(\xbf)-\theta(\xbf))$
follows from Theorem~\ref{thm: asymptoticnormality}. 

In practice, we estimate $\theta(\xbf)$ by $\hat\theta(\xbf) = G(\hPYgX)$, where $\hPYgX$ is the ``pull-back'' of the DRF predictor $\hmun$ as in~\eqref{eq:phat-intro}. 
To compute confidence intervals for the individual components of $\theta(\xbf)$, we first compute subsample estimators $\hat\theta^{\HS_b}(\xbf) = G(\hPYgX^{\HS_b})$ for $b=1,\ldots,B$, where
$\hPYgX^{\HS_b}$ corresponds to the pullback of the subsample DRF predictor $\hat{\mu}^{\HS_b}(\xbf)$. Then, the empirical variance $\widehat\Var(\hat\theta(\xbf))$ of $\hat\theta(\xbf)$ can be estimated by the variance of the $\hat\theta^{\HS_b}(\xbf)$ over $b=1,\ldots,B$, and confidence intervals can be built using the Gaussian approximation. Alternatively, it is possible to compute confidence intervals via the approximate sampling distribution. To pursue this approach, one first
computes the $1-\alpha/2$ quantile $\hat q_{1-\alpha/2}$ and and the $\alpha/2$ quantile $\hat q_{\alpha/2}$ of $\{\hat\theta^{\HS_b}(\xbf) - \hat\theta(\xbf)\}_{b=1\ldots,B}$. Component-wise $1-\alpha$ confidence intervals for two-sided testing of $\theta(\xbf)=\textbf{0}$ are then given by $[\hat\theta(\xbf)-\hat q_{1-\alpha/2}, \hat\theta(\xbf)-\hat q_{\alpha/2}]$. 

For multi-dimensional parameters $\theta(\xbf)$, which corresponds to $q>1$, one can compute simultaneous elliptical confidence balls. If we denote the $q\times q$ covariance matrix obtained from the sample $\{\hat\theta^{\HS_b}(\xbf) - \hat\theta(\xbf)\}_{b=1\ldots,B}$ by $\widehat\Var(\hat\theta(\xbf))$, these consist of all parameters $\tau$ such that the resulting test statistic $\lvert\lvert\widehat\Var(\hat\theta(\xbf))^{-1/2}(\hat\theta(\xbf) - \tau)\rvert\rvert^2$ is smaller than the $1-\alpha$ quantile of a $\chi^2(q)$ distribution with $q$ degrees of freedom. Analogously to above, one may use the approximate sampling distribution of $\lvert\lvert\widehat\Var(\hat\theta(\xbf))^{-1/2}(\hat\theta^{\HS_b}(\xbf)-\hat\theta(\xbf) )\rvert\rvert^2$ instead of the $\chi^2(q)$ distribution.

\section{Empirical Results}\label{sec:empirical}

In this section, we demonstrate the performance of our DRF confidence intervals for the CATE, conditional quantiles,  conditional correlations, and conditional witness functions for simulated data. 
We consider almost exclusively data generating mechanisms that have already been considered by~\citet{DRF-paper}. 
The only adaptation is that we consider $U(-1,1)^p$ distributed covariates $\Xbf$ instead of $U(0,1)^p$ in Section~\ref{sec:copula}.
In all examples except for the conditional witness functions, we grow a forest that consists of $B=100$ subforests with $\ell = 1000$ trees each, and we choose $\beta=0.9$ in assumption~\ref{forestass5}. 
To fit trees, $10$ random features are used for the approximation of the MMD statistic when splitting the nodes, 
and the minimal node size is $5$. 
Moreover, we consider the Gaussian kernel with the median bandwidth heuristic and compute confidence intervals using the Gaussian approximation. 
For the conditional witness functions, we consider forests that consist of $B=200$ subforests with $\ell=1000$ trees each and choose $\beta=0.9$ because estimating whole confidence bands for the conditional witness function is a complicated task. Code of our analysis is available on GitHub (\url{https://github.com/JeffNaef/drfinference}).

We demonstrate that DRF performs well for a wide range of estimation targets $\theta(\xbf)$. The effort of the user is minimal because estimating a DRF does not depend on the actual target(s). However, we noted that more samples are required for valid inference if the dimension $p$ of the covariates is large. However, this effect appears to be somewhat mitigated if the conditional distribution has low intrinsic dimensionality by depending only on a small number of the covariates.

\subsection{Conditional Average Treatment Effect}\label{sec:cate}

Subsequently, we perform inference for CATE's between a control group $W=0$ and a treatment group $W=1$. We thereby follow the approach used in~\citet{DRF-paper} and consider $W$ as a part of the response, using DRF to find the conditional distribution of $(Y,W)\mid \Xbf=\xbf$. This agrees with our view of seeing the (causal) parameter of interest as a function $F$ of the CME $\hmun$ and, under strong ignorability, consistency of this approach follows from the consistency of $\hmun$. This approach is different from~\citet{wager2018estimation, athey2019generalized} who consider $W$ as a part of the covariates. 

First, we consider a situation where the treatment effect is homogeneous but where $Y$ and $W$ are confounded by $X_3$. We simulate data from 
\begin{equation}\label{eq:emp-cate-sim1}
\begin{array}{l}
    \Xbf\sim\mathrm{Unif}(0, 1)^5, 
    \quad W\mid \Xbf\sim\mathrm{Bernoulli}\big(0.25(1 + \beta_{2, 4}(X_3))\big)\\
    Y\mid (\Xbf, W) \sim 2(X_3 - 0.5) + \mathcal{N}(0, 1),
\end{array}
\end{equation}
where $\beta_{a, b}$ denotes the density of a beta-distributed random variable with parameters $a$ and $b$. 
We consider the test point $\xbf = (0.7, 
    0.3, 0.5, 0.68, 0.43)^T$. 
Our results and comparisons to GRF obtained over $1000$ simulation runs are displayed in Figure~\ref{fig:coverage-cate-sim1}.
The performance of DRF improves as the sample size increases and it reaches the nominal coverage level.  GRF undercovers for $n=5000$. 
However, GRF outperforms DRF with respect to coverage for small sample sizes due to its small bias in this example. Moreover, the confidence intervals of GRF are shorter than the ones with DRF. 

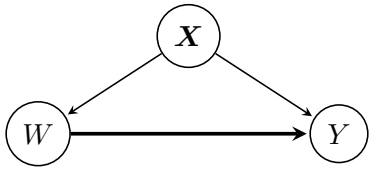
\begin{figure}
    \centering
   \begin{tikzpicture}[
        > = stealth, 
        shorten > = 1pt, 
        auto,
        node distance = 3cm, 
        semithick 
    ]

    \tikzstyle{every state}=[
        draw = black,
        thick,
        fill = white,
        minimum size = 4mm
    ]

    \node[circle, draw=black] (W) at (0,0) {$W$};
    \node[circle, draw=black] (X) at (2,1.3)  {$\boldsymbol{X}$};
    \node[circle, draw=black] (Y) at (4, 0) {$Y$};
    \path[->,line width=1.4pt] (W) edge node {} (Y);
    \path[->] (X) edge node {} (W);
    \path[->] (X) edge node {} (Y);
\end{tikzpicture}
    \caption{Causal graph illustrating the data generating processes in~\eqref{eq:emp-cate-sim1} and~\eqref{eq:emp-cate-sim3}.}
    \label{fig:my_label}
\end{figure}
~
\begin{figure}
\centering
\begin{subfigure}[b]{\textwidth}
\centering
\includegraphics[width=1\textwidth]{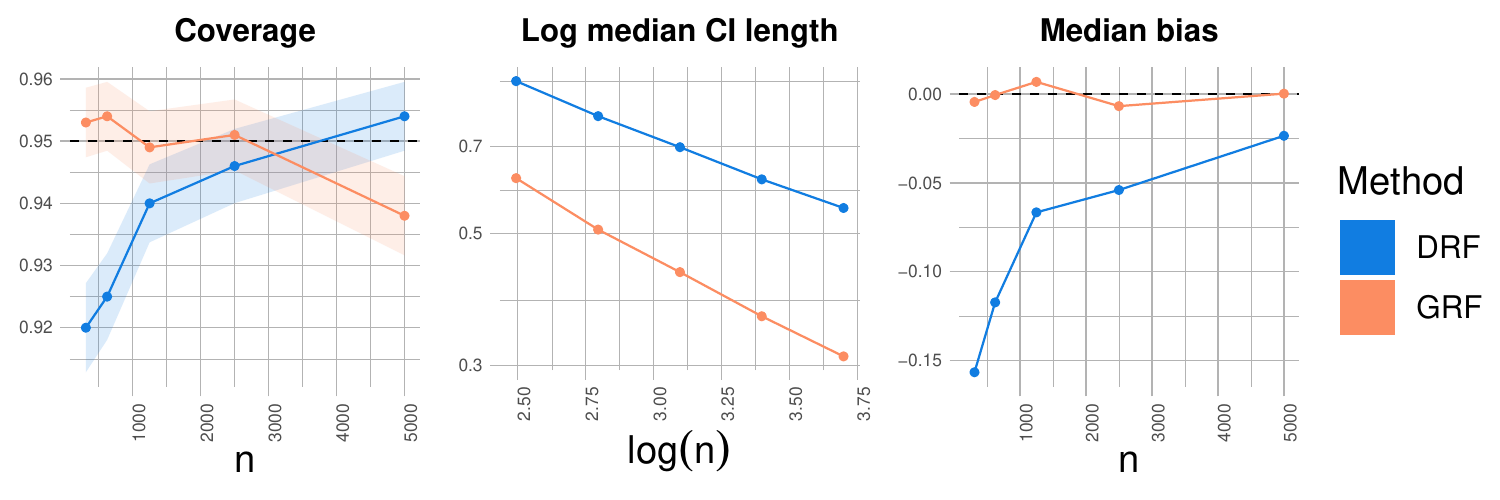}
\subcaption{Without treatment effect}
\label{fig:coverage-cate-sim1}
\end{subfigure}

\begin{subfigure}[b]{\textwidth}
\centering
\includegraphics[width=1\textwidth]{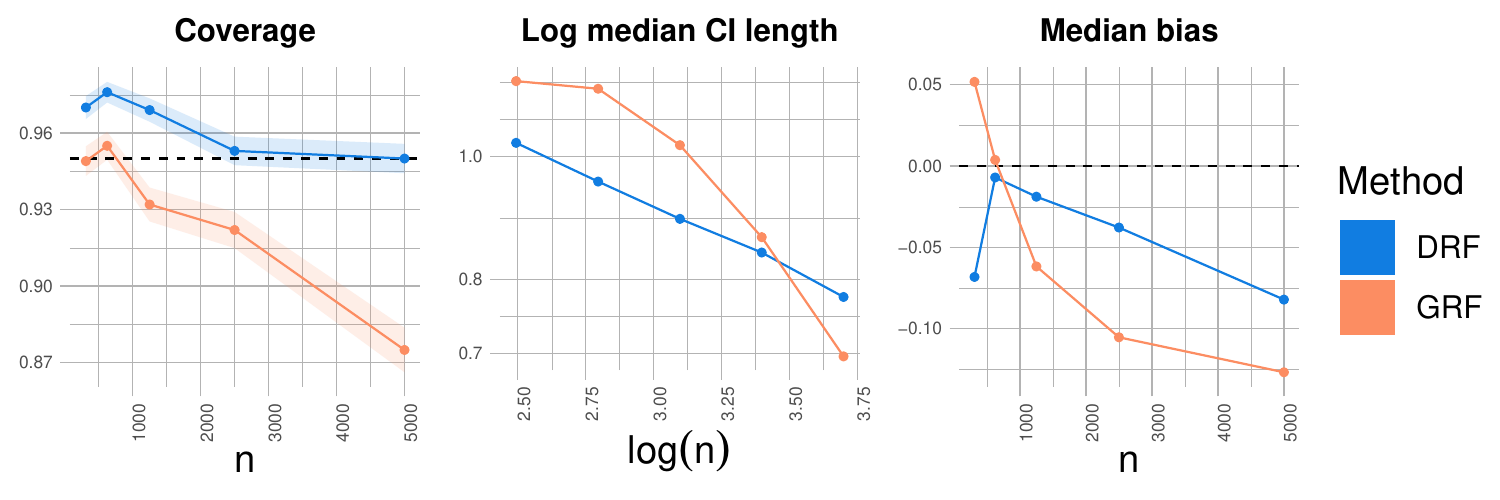}
\subcaption{With treatment effect}
\label{fig:coverage-cate-sim3}
\end{subfigure}

\caption{\label{fig:cate}
	Estimating the CATE of $Y$ given $\Xbf=\xbf = (0.7, 0.3, 0.5, 0.68, 0.43)^T$ with data from~\eqref{eq:emp-cate-sim1} (homogeneous treatment effect and observed confounding) in Figure~\ref{fig:coverage-cate-sim1} and with data from~\eqref{eq:emp-cate-sim3} (heterogeneous treatment effect and observed confounding) in Figure~\ref{fig:coverage-cate-sim3} for different values of $n$ over $1000$ simulation runs. 
	The plots display the coverage (fraction of times the true, and in general unknown, CATE was inside the confidence interval) and log median length of two-sided $95\%$ confidence intervals for the CATE and median bias over $1000$ simulation runs. 
	The shaded regions in the coverage plots represent $95\%$ confidence bands with respect to the $1000$ simulation runs. 
	DRF parameters: $B=100$, $\ell = 1000$, $\beta=0.9$, consider $10$ randomly sampled features to split, minimal node size of $5$. GRF parameters: $50\,000$ trees, other values are left at their default values.
}
\end{figure}

Second,
we consider a situation where the treatment effect is heterogeneous and where $Y$ and $W$ are confounded. 
We simulate data from
\begin{equation}\label{eq:emp-cate-sim3}
\begin{array}{l}
    \Xbf\sim\mathrm{Unif}(0, 1)^5, 
    \quad W\mid \Xbf\sim\mathrm{Bernoulli}\big(0.25(1 + \beta_{2, 4}(X_3))\big)\\
    Y\mid (\Xbf, W) \sim 2(X_3 - 0.5) + (W - 0.2)\cdot\eta(X_1)\eta(X_2) + \mathcal{N}(0, 1),
\end{array}
\end{equation}
where $\eta(x) = 1 + (1 + \exp{-20(x-1/3)})^{-1}$ and $\beta_{a, b}$ denotes the density of a beta-distributed random variable with parameters $a$ and $b$. That is, the treatment effect is heterogeneous because different values of $\Xbf$ result in a different treatment effect, and confounding via $\Xbf$ is present because $W$ also depends on $\Xbf$. We consider the test point $\xbf = (0.7, 
    0.3, 0.5, 0.68, 0.43)^T$. 
Our results and comparisons to GRF obtained over $1000$ simulation runs are displayed in  Figure~\ref{fig:coverage-cate-sim3}.
For small sample sizes $n$, DRF overcovers, but it gradually reaches the nominal $95\%$ level for larger sample sizes. In contrast, GRF fails to reach the nominal $95\%$ level for larger sample sizes due to its bias. 

When estimating the CATE with the GRF algorithm, a centering step to center $Y$ and $W$ with respect to $\Xbf$ is performed. 
With DRF, we found that such an additional centering is not useful. 
With DRF, we used a total number of $10^5$ trees whereas with GRF, we were not able to use as many
due to computational reasons. Since the \texttt{drf} package~\citep{def-package} used is based on \texttt{grf}~\citep{grf-package}, this indicates empirically that the target-tailored splitting criterion of GRF can be computationally considerably more expensive than the general splitting criterion of DRF.

\subsection{Conditional Quantiles}

Subsequently, we consider performing inference for conditional quantiles of $\PYgX$. 
We consider simulated data where the response variable $Y$ experiences a shift in its mean depending on the value of $X_1$, namely
\begin{equation}\label{eq:emp-quantiles}
    \Xbf\sim \mathrm{Unif}(-1, 1)^5, \quad
    Y\sim \mathcal{N}\big(0.8\cdot\one_{X_1>0} , 1\big). 
\end{equation}
The results for estimating three conditional quantiles ($10\%$, $50\%$, and $90\%$), a sample size of $n=5000$, and a range of $x_1$-values are displayed in Figure~\ref{fig:coverage-quantile} and~\ref{fig:coverage-ellipse-quantile}. In Figure~\ref{fig:coverage-quantile}, the coverage for the different quantiles is close to the nominal $95\%$ coverage except at and around the value $x_1=0$ where the mean function of $Y$ experiences a discontinuity. 
Figure~\ref{fig:coverage-ellipse-quantile} displays the joint coverage of all three conditional quantiles $10\%$, $50\%$, and $90\%$.
The coverage is again close to the nominal and slightly higher than it for $x_1$-values away from $0$.
The disturbing effect of the discontinuity at $x_1=0$ is again visible. 

\begin{figure}
	\centering
	\includegraphics[width=\textwidth]{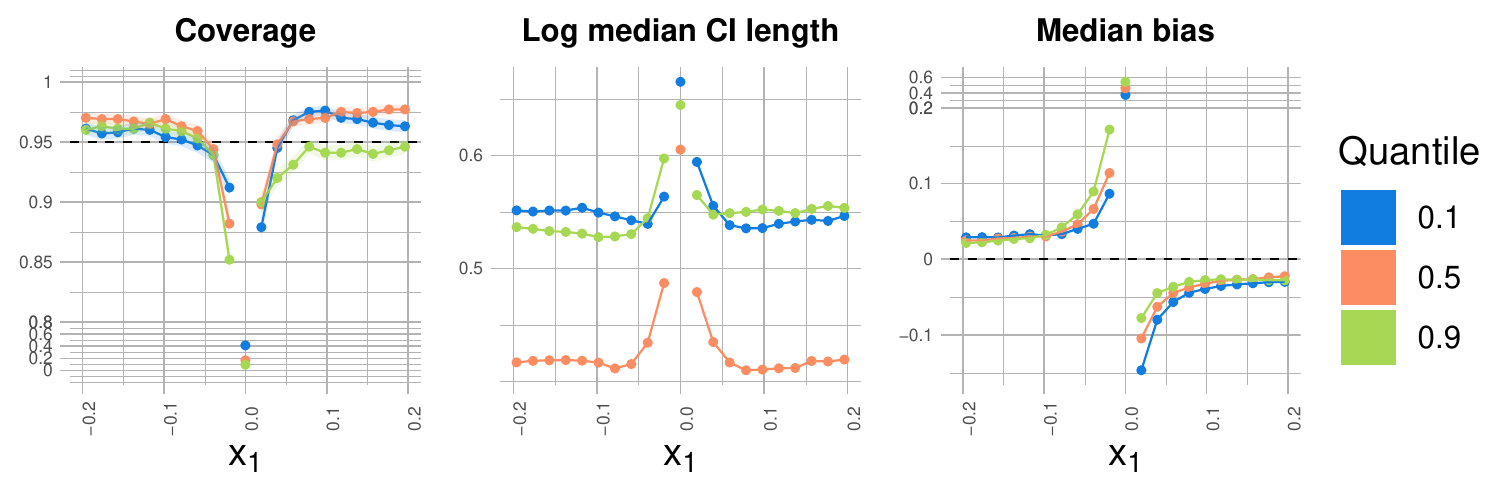}
		\caption[]{\label{fig:coverage-quantile} 
	Estimating conditional quantiles ($10\%$, $50\%$, and $90\%$; differentiated by color) of $Y$ given $X_1=x_1$ from~\eqref{eq:emp-quantiles} (mean shift in $Y$ based on $X_1$) for $n = 5000$ and different values of $x_1$. 
	The plot displays the coverage (fraction of times the true, and in general unknown, conditional quantile was inside the confidence interval) and log median length of two-sided $95\%$ confidence intervals for the conditional quantile and median bias over $1000$ simulation runs. 
	The shaded regions in the coverage plot represent $95\%$ confidence bands with respect to the $1000$ simulation runs. 
	DRF parameters: $B=100$, $\ell = 1000$, $\beta=0.9$, consider $10$ randomly sampled features to split, minimal node size of $5$. 
	}
\end{figure}

\begin{figure}
	\centering
	\includegraphics[width=0.7\textwidth]{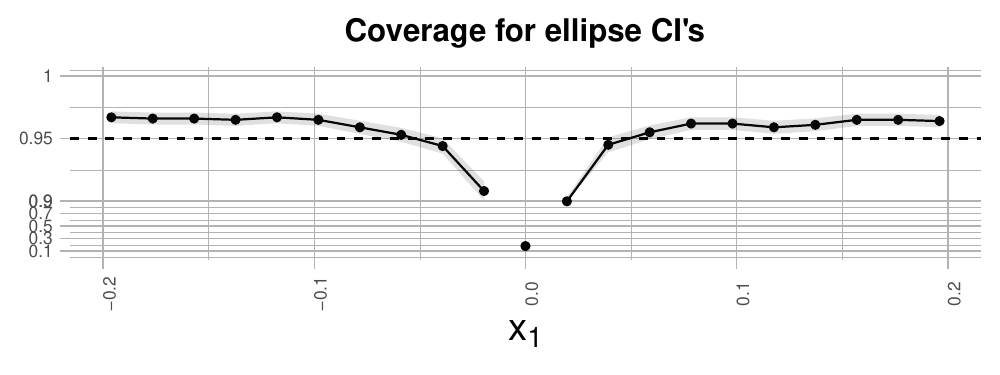}
		\caption[]{\label{fig:coverage-ellipse-quantile} 
	Ellipsoid confidence intervals for the vector of the three conditional quantiles from Figure~\ref{fig:coverage-quantile}.
	The shaded regions in the coverage plot represent $95\%$ confidence bands with respect to the $1000$ simulation runs. 
	}
\end{figure}

\subsection{Conditional Correlation}\label{sec:copula}

Conditional copulas allow us to represent conditional multivariate distributions $\P(\Ybf\le \ybf\mid \Xbf=\xbf) = \P(Y_1\le y_1,\ldots,Y_d\le y_d\mid \Xbf=\xbf)$ in terms of the marginal distributions $\P(Y_i\le y\mid \Xbf = \xbf) = F_{Y_i\mid \Xbf = \xbf}(y)$ for $1\le i\le d$. This technique is frequently employed in fields such as risk analysis or finance~\citep{cherubini2004copula}. 
More precisely, Sklar's theorem~\citep{sklar1959fonctions} asserts the existence of a so-called conditional copula $C_{\xbf}$ at the test point $\xbf$, which is a CDF on $[0,1]^d$, satisfying
\[
    \P(\Ybf\le \ybf \mid  \Xbf=\xbf)
    = C_{\xbf}\big( F_{Y_1\mid \Xbf = \xbf}(y),\ldots,F_{Y_d\mid \Xbf = \xbf}(y)\big).
\]

The DRF algorithm may estimate conditional copulas fully nonparametrically or by estimating the parameters of a certain parametric model. For example, if the data comes from a conditional Gaussian copula $\Ybf\mid\Xbf=\xbf\sim C_{\rho(\xbf)}^{\mathrm{Gauss}}$, it is enough to estimate the conditional correlation function $\rho(\xbf)$ that characterizes distributional heterogeneity.
This is a difficult task because distributional heterogeneity may come from the interdependence of the  marginal CDF's due to the copula and may not exclusively occur in the marginals. Because the MMD splitting criterion of DRF is a distributional metric, DRF is able to detect multivariate distributional changes~\citep{gretton2007kernel}.

Subsequently, we consider the 
conditional Gaussian copula $\Ybf=(Y_1,Y_2)\mid \Xbf=\xbf\sim C_{\rho(\xbf)}^{\mathrm{Gauss}}$ with $\Xbf=(X_1,\ldots, X_5)\sim U(-1,1)^5$ and the conditional correlation function $\rho(\xbf) = \Cor(Y_1,Y_2\mid\Xbf=\xbf) = x_1$. 
That is, both $Y_1$ and $Y_2$ follow a standard Gaussian distribution $N(0,1)$ marginally, but their conditional correlation is characterized by $\rho(\xbf) = x_1$.~\citet{DRF-paper} use a slightly different data generating mechanism because they consider a uniform distribution of the covariates with the support $[0,1]$ instead of $[-1,1]$. We consider $[-1,1]$ such that the conditional correlation at $x_1=0$ does not lie at the boundary of the considered $x_1$-values because this would artificially introduce boundary effects similar to the discontinuity effect with conditional quantile estimation above. 

We estimate and make inference for $\rho(\xbf) = x_1$ for a range of values $x_1$ and different sample sizes $n$. Figure~\ref{fig:coverage-copula} illustrates our results. For a sample size of $n=5000$ (displayed in red), our two-sided DRF confidence intervals achieve the nominal $95\%$ coverage rate for $x_1$-values that are not too close to either $-1$ or $1$. For $x_1$-values, and hence conditional correlation values $\Cor(Y_1,Y_2\mid \Xbf=\xbf)$, that are close to either $-1$ or $1$, we see some degeneration behavior because these values imply the special cases that $Y_1$ and $Y_2$ are completely dependent from each other. 

\begin{figure}
	\centering
	\includegraphics[width=0.9\textwidth]{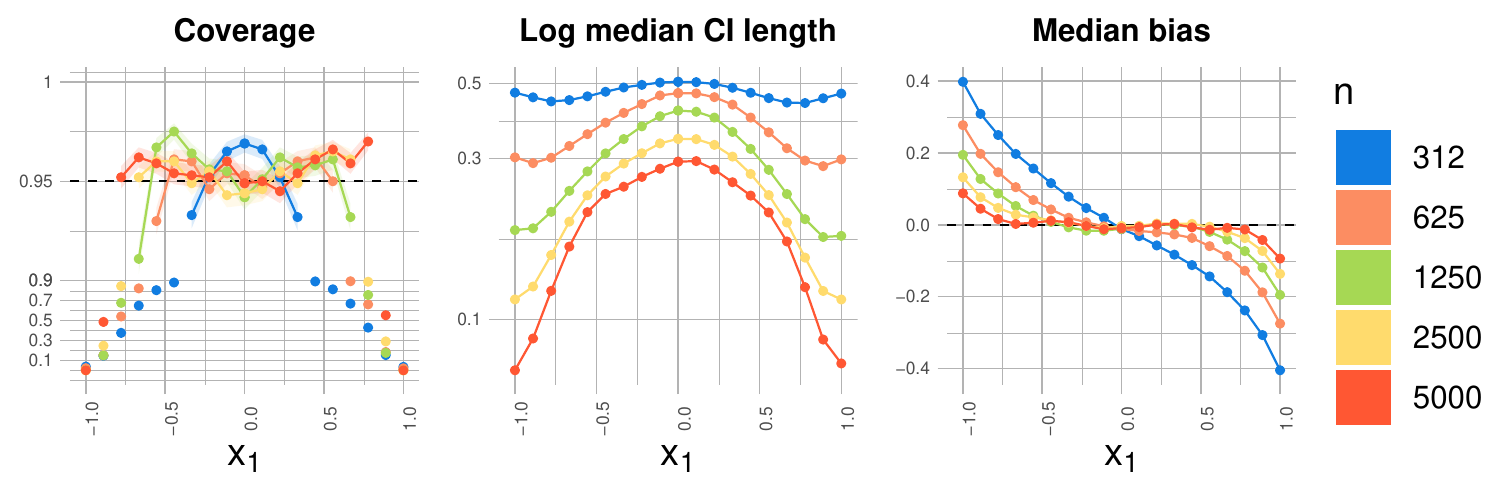}
		\caption[]{\label{fig:coverage-copula} 
	Estimating conditional correlations $\rho(\xbf)=\Cor(Y_1, Y_2\mid \Xbf=\xbf)$ of data from the 
	conditional Gaussian copula  $\Ybf=(Y_1,Y_2)\mid \Xbf=\xbf\sim C_{\rho(\xbf)}^{\mathrm{Gauss}}$ with $\Xbf=(X_1,\ldots, X_5)\sim U(-1,1)^5$ for a range of $x_1$-values ($x$-axis) and sample sizes $n$ (differentiated by color). 
	The plot displays the coverage (fraction of times the true, and in general unknown, conditional correlation was inside the confidence interval) and log median length of two-sided $95\%$ confidence intervals for the conditional correlation and median bias over $1000$ simulation runs. 
	The shaded regions in the coverage plot represent $95\%$ confidence bands with respect to the $1000$ simulation runs. 
	In the coverage plot, for $x_1=-1$ and $x_1=1$, the dots from all three values of $n$ are on top of each other. 
	DRF parameters: $B=100$, $\ell = 1000$, $\beta=0.9$, consider $10$ randomly sampled features to split, minimal node size of $5$. 
	}
\end{figure}

\subsection{Witness Function for conditional distributional treatment effect}\label{sec:witnessfoo}

In Section~\ref{sec: Experimental}, we outlined how to
test for distributional differences between two treatment groups and how to compute simultaneous confidence bands for the corresponding conditional witness function. 
To illustrate the performance of DRF in this use case,  
we revisit the two data generating mechanisms~\eqref{eq:emp-cate-sim1} and~\eqref{eq:emp-cate-sim3} that we considered when we analyzed the CATE in Section~\ref{sec:cate}. 
In the first case with data from~\eqref{eq:emp-cate-sim1}, there is no treatment effect, and the treatment ($W=1$) and the control ($W=0$) groups are equally distributed. In the second case with data from~\eqref{eq:emp-cate-sim3}, there is a treatment effect. 

To formally test if the distributions of the treatment and control groups are different at all, we simulate $1000$ data sets of sample size $n=5000$ each from the two data generating mechanisms and compute the test statistic $\norm{\hmuno-\hmunz}_{\H}^2$ according to~\eqref{eq:witnessNormstat}. For each of the $1000$ runs, we compute a p-value for testing the null hypothesis that the two embeddings from the treatment and control groups are the same against a two-sided alternative using the approximate bootstrap sample distribution of the test statistic obtained from the $B$ many subforests. Figure~\ref{fig:witnessNorm} displays our findings for the data generating mechanism~\eqref{eq:emp-cate-sim1} with equal distributions and illustrates that the p-values are dominated by a $\mathrm{Uniform}(0, 1)$ distribution, which is given by the gray line. Consequently, the p-values seem to be valid.
In particular, $3.6\%$ (this number has a $95\%$ confidence interval of $(0.0311,0.0409)$) of them are below the nominal $0.05$ level. 
With the data generating mechanism~\eqref{eq:emp-cate-sim3}, all p-values equal the smallest possible value, and the null hypothesis is always rejected.

\begin{figure}
\centering
\includegraphics[width=\textwidth]{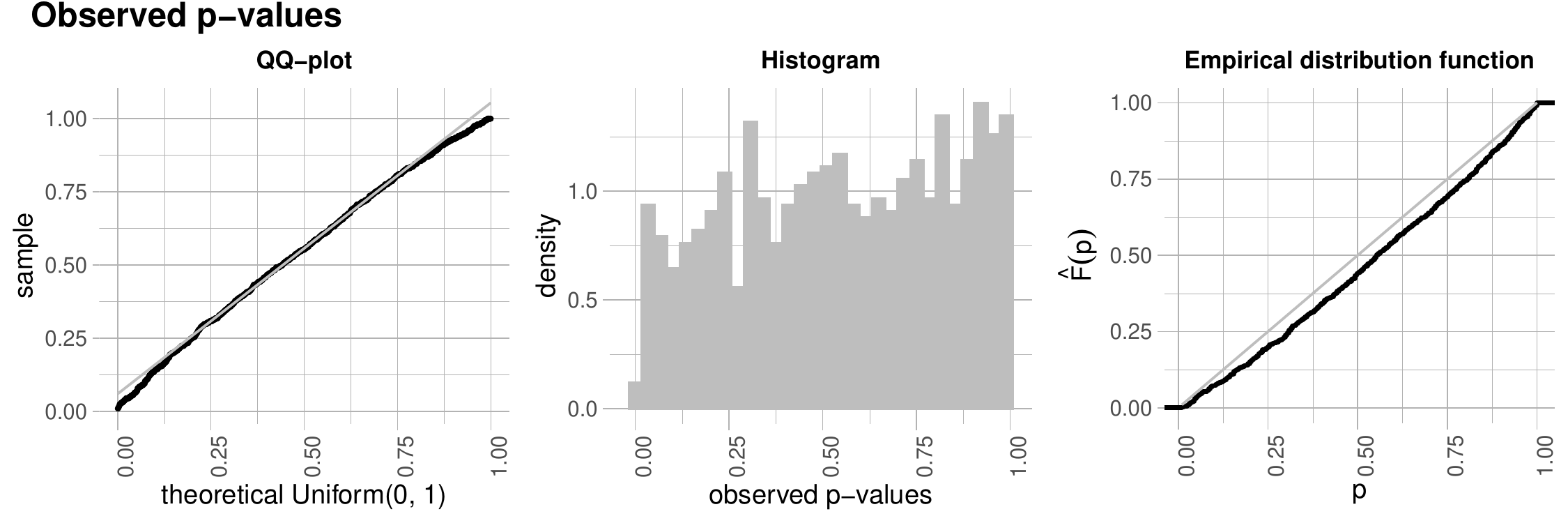}

\caption{\label{fig:witnessNorm}
Two-sided p-values (QQ-plot, histogram, and empirical distribution function) from $1000$ repetitions for testing the null hypothesis that the treatment and control groups have equal distributional embeddings at level $\alpha=5\%$ with data of sample size $n=5000$ from~\eqref{eq:emp-cate-sim1} at the test point $\xbf=(0.7,0.3,0.5,0.68,0.43)^T$. 
DRF parameters: $B = 200$, $\ell=1000$, $\beta=0.9$,
consider $10$ randomly sampled features to split, minimal node size of $5$.
}
\end{figure}

To investigate where the treatment and control distributions differ, we estimate the whole conditional witness function and compute simultaneous confidence bands according to~\eqref{eq:witnessConfBand}. Figure~\ref{fig:witnessfoo} illustrates our results. 
With the data from~\eqref{eq:emp-cate-sim1} where the treatment and control distributions coincide, $99.8\%$ ($95\%$ confidence interval of $(0.9968, 0.9992)$) of the simultaneous $95\%$ confidence bands cover the true underlying conditional witness function that constantly equals 0. 
Although our method overcovers in this situation, Figure~\ref{fig:witness3} illustrates that the power goes to $1$ under the alternative because no simultaneous confidence band contains the constant zero function. In this case, the true conditional witness function is covered in $96.5\%$ of the cases ($95\%$ confidence interval of $(0.9601, 0.9699)$).

These simulations illustrate the practical applicability and usefulness of our developments of the conditional distributional treatment effect in Section~\ref{sec: Experimental}. 
This approach allows us to capture differences between two distributions that may not be represented by mean differences alone. Moreover, our theoretical developments can be directly translated into practice and consequently enable us to perform formal tests that involve test statistics with highly complex and generally intractable distributions. 

\begin{figure}
\centering
\begin{subfigure}[b]{0.48\textwidth}
\centering
\includegraphics[width=1\textwidth]{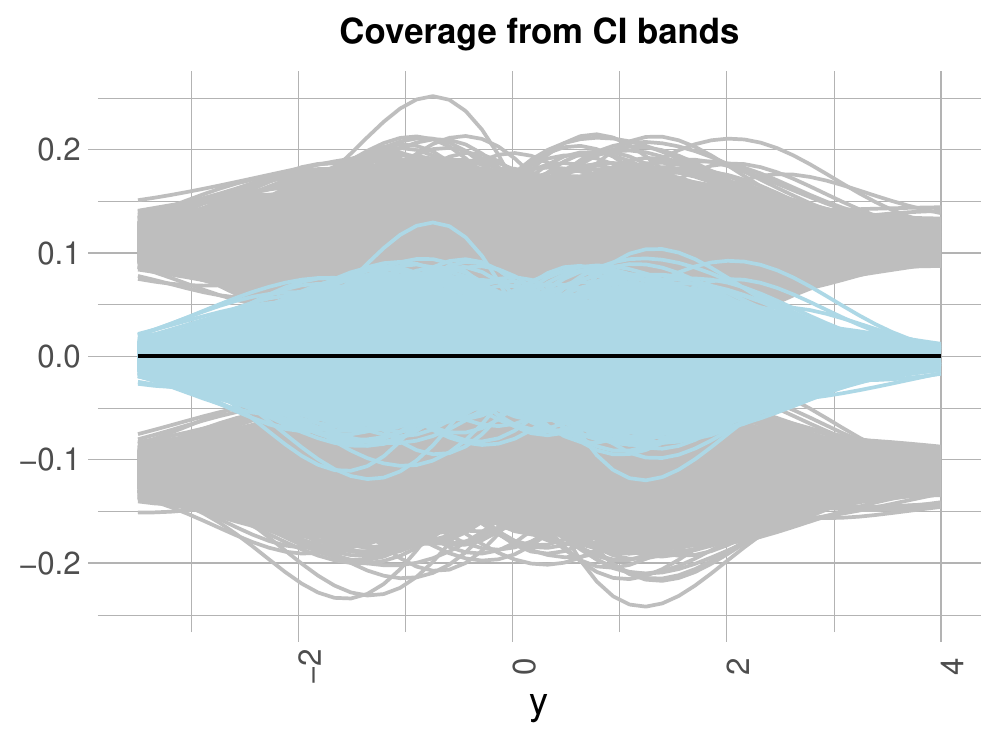}
\subcaption{Without treatment effect}
\label{fig:witness1}
\end{subfigure}
~
\begin{subfigure}[b]{0.48\textwidth}
\centering
\includegraphics[width=1\textwidth]{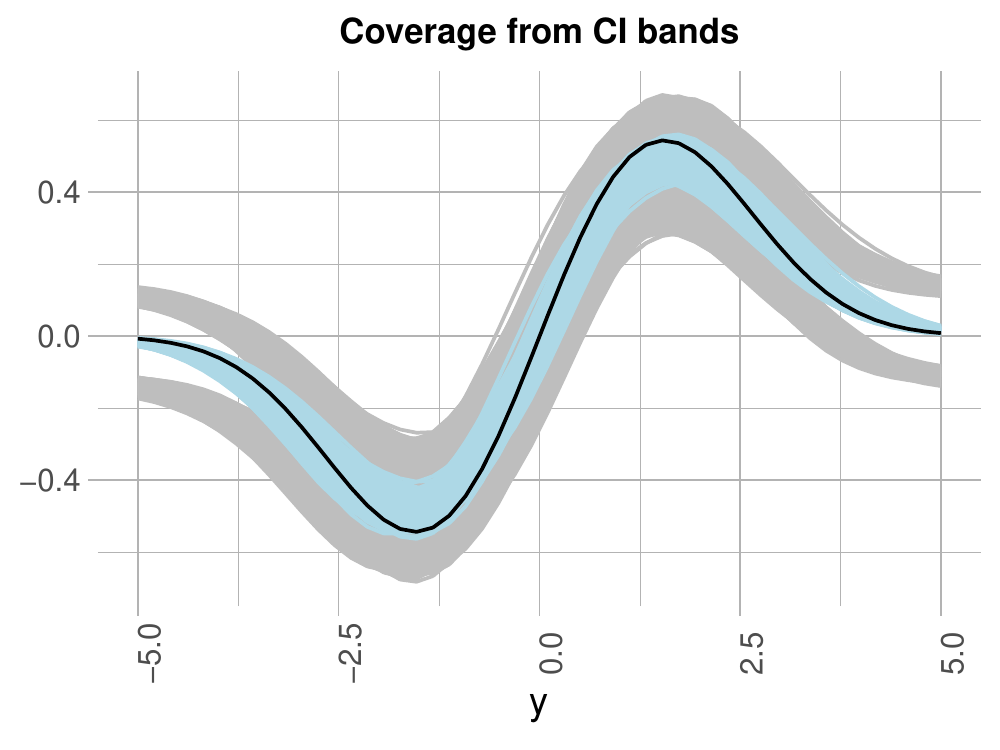}
\subcaption{With treatment effect}
\label{fig:witness3}
\end{subfigure}

\caption{\label{fig:witnessfoo}
	Simultaneous $95\%$ confidence bands (gray) and conditional witness function estimators (blue) over $1000$ repetitions of the true conditional witness function (black) for data of sample size $n=5000$ without~\eqref{eq:emp-cate-sim1} in Figure~\ref{fig:witness1} and with~\eqref{eq:emp-cate-sim3} in Figure~\ref{fig:witness3} treatment effect at the test point $\xbf=(0.7,0.3,0.5,0.68,0.43)^T$. 
 DRF parameters: $B = 200$, $\ell=1000$, $\beta=0.9$,
consider $10$ randomly sampled features to split, minimal node size of $5$.
}
\end{figure}

\section{Conclusion}\label{sec:conclusion}

We developed results about the asymptotic distribution of the Distributional Random Forest (DRF)~\citep{DRF-paper}, which is a forest-based~\citep{breiman2001random} method to nonparametrically estimate Hilbert space embeddings of multivariate conditional distributions in a locally adaptive fashion. 
The general approach of DRF allows us to estimate a wide range of multivariate targets from one and the same DRF estimator. 
Because the DRF prediction is Hilbert space-valued, we formulated and developed new theory for Random Forests operating in Hilbert spaces, building on~\citet{wager2018estimation}. In particular, we explicitly characterized the exact asymptotic behavior of the variance of the DRF prediction. Moreover, we established a bootstrap-type result that allowed us to approximate its distribution in a computationally efficient way.

We presented two strands of applications: 
we formally tested two treatment groups for distributional differences and investigated where these differences occur, 
and we estimated and made inference for low-dimensional parameters like the conditional average treatment effect (CATE), conditional quantiles, and conditional correlations. 
The former application is particularly important to determine differences between the treatment and the control group if the distribution of the two groups are different beyond the mean. 
To simplify the application of our theory in this former use case, 
we fitted two DRF's, one for each treatment group, similar to~\citet{CATEGeneralization}. 
Simulation studies demonstrated the performance and usefulness of our developed inference results for the DRF for these two strands of applications.

\section*{Acknowledgements}

CE and PB received funding from the European Research Council (ERC) under the European Union’s Horizon 2020 research and innovation programme (grant agreement No. 786461).

\clearpage
\appendix

\section{Derivations and Proofs}
\label{appendix: proofs}

\paragraph{Preliminaries.}
First, we recall some of the notation and definitions from the main text. 
Let $\left(\Omega, \mathcal{A}, \mathbb{P}\right)$ denote the underlying probability space.
Throughout, let $(\H, \langle, \cdot, \rangle)$ denote the RKHS associated with the kernel $k$. 
We assume that $k$ is \emph{bounded} and \emph{continuous} in its two arguments. Boundedness of $k$ ensures that $\mu$ is indeed defined on all of $\mathcal{M}_{b}(\R^d)$, and continuity of $k\colon \R^d \times \R^d \to \R$ ensures that $\H$ is \emph{separable}. Thus, we assume throughout that \kernelass{2} holds, such that measurability issues can be avoided. Let us denote by $\xi\colon (\Omega, \mathcal{A}) \to (\H,\mathcal{B}(\H))$ a map from $\Omega$ to $\H$. Separability implies that such a map $\xi$
is measurable if and only if $\langle \xi,f\rangle$ is measurable for all $f \in \H$.  
Moreover, it can easily be checked that $\Phi(P)$ is linear on $\mathcal{M}_{b}(\R^d)$. Separability of $\H$ and $\E[\| \xi \|_{\H}] < \infty$ mean that the integral
\[
\E[\xi] = \int_{\Omega} \xi d \P,
\]
is well defined and that 
\[
F(\E[\xi])=\E[F(\xi)],
\]
for any continuous linear function $F\colon\H \to \R$.\footnote{Here and below, $F(\xi)$ is meant to denote $F(\xi(\omega))$ for all $\omega \in \Omega$. } In particular, $\E[\langle \xi, f  \rangle]= \langle \E[\xi], f \rangle$ for all $f \in \H$. Moreover, for $q \geq 1$, denote by
\begin{align*}
    \mathcal{L}^q(\Omega, \mathcal{A}, \H) &= \{\xi\colon (\Omega, \mathcal{F}) \to (\H,\mathcal{B}(\H)) \text{ measurable, with } \E[\|\xi\|^q] < \infty]  \}\\
    \mathbb{L}^q(\Omega, \mathcal{A}, \H) &= \text{Set of equivalence classes in $\mathcal{L}^q(\Omega, \mathcal{A}, \H)$}\\
    \Var(\xi)&=\E[\|\xi - \E[\xi]\|_{\H}^2]=\E[\|\xi\|_{\H}^2] - \|\E[\xi]\|_{\H}^2, \ \ \xi \in \mathcal{L}^2(\Omega, \mathcal{A}, \H)\\
    \Cov(\xi_1,\xi_2)&= \E[ \langle \xi_1 - \E[\xi_1],\xi_2 - \E[\xi_2] \rangle]=\E[\langle \xi_1 ,\xi_2  \rangle ] - \langle \E[\xi_1], \E[\xi_2] \rangle, \ \ \xi_1,\xi_2 \in \mathcal{L}^2(\Omega, \mathcal{A}, \H).
\end{align*}
Furthermore, it is well-known that $(\mathbb{L}^q, \| \cdot \|_{\mathbb{L}^q(\H)})$ is a Banach space with 
\[
\| \xi \|_{\mathbb{L}^q(\H)}= \E[\| \xi \|_{\H}^q]^{1/q}.
\]
This allows us to
also define \emph{conditional} expectations. For a sub $\sigma$-algebra $\mathcal{F} \subset \mathcal{A}$ and an element $\xi \in \mathcal{L}^1(\Omega, \mathcal{A}, \H) $, the conditional expectation $\E[\xi\mid \mathcal{F}]$ is the (a.s.) unique element  such that
\begin{itemize}
    \item[(C1)]  $\E[\xi\mid \mathcal{F}] \colon (\Omega, \mathcal{F}) \to (\H,\mathcal{B}(\H))$ is measurable and $\E[\xi\mid \mathcal{F}]\in   \mathbb{L}^1(\Omega, \mathcal{F}, \H)$,
    \item[(C2)] $\E[ \xi \1_{F} ] = \E[ \E[\xi\mid \mathcal{F}]\1_{F} ]$ for all $F \in \mathcal{F}$;
\end{itemize} 
see for instance~\citet{UMEGAKI197049} or~\citet[Chapter 1]{pisier_2016}.
Particularly, condition (C2) implies that $\E[ \E[\xi\mid \mathcal{F}] ] = \E[ \E[\xi\mid \mathcal{F}]\1_{\Omega} ] = \E[\xi] $ due to $\Omega \in \F$ for any $\sigma$-algebra. It can also be shown that $F(\E[\xi\mid \mathcal{F}])=\E[F(\xi)\mid\mathcal{F}]$ for all linear and continuous $F\colon \H \to \R$ and that $\|\E[\xi\mid \mathcal{F}]  \|_{\H}\leq \E[ \|\xi \|_{\H} \mid \mathcal{F}]$~\citep[Chapter 1]{pisier_2016}. Moreover, it can be shown that
\begin{itemize}
    \item[(C3)] For $\xi \in \mathbb{L}^2(\Omega, \mathcal{A}, \H)$, $\E[\xi \mid \mathcal{F}]$ is the orthogonal projection onto $\mathbb{L}^2(\Omega, \mathcal{F}, \H)$;
\end{itemize}
see~\citep{UMEGAKI197049}. 
Although the conditional expectation $\E[\xi\mid \mathcal{F}]$, similarly to real-valued conditional expectations, is only defined a.s., we do not explicitly state this in our developments below.

We denote by
$\E[\xi\mid\Xbf]=\E[\xi\mid \sigma(\Xbf)]$. The following Proposition shows that this notion is well defined and establishes further properties of Hilbert space-valued conditional expectations. 

\begin{proposition}[Proposition 6 in~\citet{DRF-paper}]\label{condexp1}
Let $\left(\H_1, \langle\cdot,\cdot\rangle_1\right)$ and $\left(\H_2, \langle\cdot,\cdot\rangle_2\right)$ be two separable Hilbert spaces, $\Xbf, \Xbf_1, \Xbf_2  \in \mathcal{L}^1(\Omega, \mathcal{A}, \H_1) $, and $\xi_1, \xi_2, \xi \in \mathcal{L}^1(\Omega, \mathcal{A}, \H_2)$.\footnote{We recall that all equalities technically only hold almost surely.}
\begin{itemize}
    \item[(C4)] There exists a measurable function $h\colon (\H_1, \mathcal{B}(\H_1)) \to (\H_2, \mathcal{B}(\H_2)) $ such that $\E[\xi\mid\sigma(\Xbf)]=h(\Xbf)=\E[\xi\mid\Xbf]$.
    \item[(C5)] For $\xi_1 \in \mathcal{L}^2(\Omega, \mathcal{A}, \H_1)$ and $\xi_2 \in \mathcal{L}^2(\Omega, \sigma(\Xbf), \H_1)$,  $\E[ \langle \xi_1, \xi_2 \rangle_{\H_1} \mid \Xbf ] =  \langle \E[\xi_1 \mid \Xbf ], \xi_2 \rangle_{\H_1}$ holds.
        \item[(C6)] If $\Xbf_2$ and $(\xi, \Xbf_1)$ are independent, then $\E[\xi\mid \Xbf_1,\Xbf_2 ] =\E[\xi \mid \Xbf_1 ]$. 
        \item[(C7)] $\E[ \E[\xi \mid \Xbf_1, \Xbf_2 ] \mid \Xbf_1]= \E[ \E[\xi \mid \Xbf_1 ] \mid \Xbf_1, \Xbf_2] = \E[\xi \mid \Xbf_1]$.
\end{itemize} 
\end{proposition}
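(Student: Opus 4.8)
The plan is to reduce each of the four claims either to its scalar analogue---by testing against an arbitrary $f \in \H_1$, using that a map into a separable Hilbert space is measurable iff all of its scalar projections are and that $\langle \E[\eta \mid \mathcal{F}], f \rangle = \E[\langle \eta, f \rangle \mid \mathcal{F}]$ for continuous linear functionals---or to the defining properties (C1)--(C3) of the Bochner conditional expectation together with standard facts about Bochner integrals. For (C4), I would invoke the Doob--Dynkin factorization lemma: since $\H_1$ and $\H_2$ are separable they are Polish, hence standard Borel, spaces, and by (C1) the map $\E[\xi \mid \sigma(\Xbf)]$ is $\sigma(\Xbf)$-measurable with values in $(\H_2, \mathcal{B}(\H_2))$; the factorization lemma for Polish-space-valued maps then furnishes a measurable $h\colon (\H_1, \mathcal{B}(\H_1)) \to (\H_2, \mathcal{B}(\H_2))$ with $\E[\xi \mid \sigma(\Xbf)] = h(\Xbf)$, which we denote $\E[\xi \mid \Xbf]$.

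For (C5) I would use the projection characterization (C3): $\E[\xi_1 \mid \Xbf]$ is the orthogonal projection of $\xi_1$ onto the closed subspace $\mathbb{L}^2(\Omega, \sigma(\Xbf), \H_1)$ of $\mathbb{L}^2(\Omega, \mathcal{A}, \H_1)$. The real random variable $\langle \E[\xi_1 \mid \Xbf], \xi_2 \rangle_{\H_1}$ is $\sigma(\Xbf)$-measurable (both arguments are, and the inner product is continuous), and $\langle \xi_1, \xi_2 \rangle_{\H_1} \in \mathcal{L}^1$ by Cauchy--Schwarz, so to identify it with $\E[\langle \xi_1, \xi_2 \rangle_{\H_1} \mid \Xbf]$ it suffices to check $\E[\1_A \langle \xi_1, \xi_2 \rangle_{\H_1}] = \E[\1_A \langle \E[\xi_1 \mid \Xbf], \xi_2 \rangle_{\H_1}]$ for all $A \in \sigma(\Xbf)$; writing $\eta := \1_A \xi_2 \in \mathbb{L}^2(\Omega, \sigma(\Xbf), \H_1)$ this is exactly $\E[\langle \xi_1 - \E[\xi_1 \mid \Xbf], \eta \rangle_{\H_1}] = 0$, i.e.\ orthogonality of the projection residual to the subspace, which is (C3).

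For (C6) I would verify that $\E[\xi \mid \Xbf_1]$---which is $\sigma(\Xbf_1)$-measurable, hence $\sigma(\Xbf_1, \Xbf_2)$-measurable---satisfies the defining relation (C2) with $\mathcal{F} = \sigma(\Xbf_1, \Xbf_2)$. It is enough to check $\E[\xi \1_F] = \E[\E[\xi \mid \Xbf_1] \1_F]$ for $F = A \cap B$ with $A \in \sigma(\Xbf_1)$, $B \in \sigma(\Xbf_2)$, a $\pi$-system generating $\sigma(\Xbf_1, \Xbf_2)$, the extension to the full $\sigma$-algebra following from a monotone-class argument carried out projection-by-projection in $\H_2$. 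On this $\pi$-system one uses that $\E[Y \zeta] = \E[Y] \E[\zeta]$ whenever a real random variable $Y$ is independent of a Bochner-integrable $\zeta$ (obtained by testing against $f \in \H_2$ and invoking the scalar case): since $\1_B$ is independent both of $(\xi, \1_A)$ and of $(\E[\xi \mid \Xbf_1], \1_A)$ by hypothesis, each side equals $\P(B)\,\E[\xi \1_A]$, where for the right-hand side one also uses the defining property of $\E[\xi \mid \Xbf_1]$ with $A \in \sigma(\Xbf_1)$. Finally, (C7) follows from the tower property for Bochner conditional expectations, valid since $\sigma(\Xbf_1) \subset \sigma(\Xbf_1, \Xbf_2)$, for the first equality, and from the fact that $\E[\,\cdot \mid \mathcal{G}]$ acts as the identity on $\mathcal{G}$-measurable elements for the second.

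The genuinely new content relative to the scalar case is purely of an infinite-dimensional measure-theoretic bookkeeping nature---the factorization lemma in (C4), and making the $\pi$-system/monotone-class extension in (C6) legitimate for $\H_2$-valued integrands---both handled by exploiting separability to work through the scalar projections $\langle \,\cdot\,, f \rangle$. I expect the main care to lie not in any single hard step but in never silently using a property of real conditional expectations that has not been transferred to the Bochner setting; accordingly, (C3) and the cited facts on Bochner conditional expectations carry the argument.
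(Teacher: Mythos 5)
The paper itself does not prove Proposition~\ref{condexp1}: it merely recalls Proposition~6 of~\citet{DRF-paper} and defers to that reference, so there is no in-text proof to compare against. Your sketch is nevertheless a correct and complete way to establish the result, and the route you take---Doob--Dynkin factorization for Polish targets for (C4), the $\mathbb{L}^2$-projection characterization (C3) plus the identification $\langle\cdot,\cdot\rangle_{\mathbb{L}^2(\H_1)} = \E[\langle\cdot,\cdot\rangle_{\H_1}]$ for (C5), a $\pi$-system/Dynkin argument on rectangles for (C6), and the tower property together with invariance on $\mathcal{G}$-measurable elements for (C7)---is exactly the standard machinery one would expect the cited source to use. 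Two small remarks: in (C6) the monotone-class step does not in fact need to be carried out ``projection-by-projection''; dominated convergence holds directly for Bochner integrals of the form $\E[\xi\1_{F_n}]$ because $\|\xi\1_{F_n}\|_{\H_2}\le\|\xi\|_{\H_2}\in\mathcal{L}^1$, so the collection of $F$ satisfying the defining identity is already a $\lambda$-system in the vector-valued setting. And in (C4) it is worth saying explicitly that the factorization lemma imposes no structural requirement on the domain $\H_1$---only the target $\H_2$ needs to be standard Borel---so separability of $\H_2$ (guaranteed by continuity of $k$) is the only hypothesis actually in play.
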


Condition (C4) in particular allows us to consider $\E[\xi \mid \sigma(\Xbf)]$ as a function in $\Xbf$ and thus justifies the notation $\E[\xi \mid \Xbf]$ and all the subsequent derivations. We may also define \emph{conditional independence} through conditional expectation: with the notation of Proposition~\ref{condexp1}, $\xi$ and $\Xbf_1$ are conditionally independent given $\Xbf_2$ if $\E[f(\xi) \mid \Xbf_1, \Xbf_2]=\E[f(\xi) \mid \Xbf_1]$ for all bounded and measurable $f\colon(\H_2, \mathcal{B}(\H_2)) \to (\R, \mathcal{B}(\R))$; see~\citet[Proposition 2.3]{conditionalindep}. This leads to two further important properties:

\begin{proposition}[Proposition 7 in~\citet{DRF-paper}]\label{condexp2}
Let $\left(\H_1, \langle\cdot,\cdot\rangle_1\right)$ and $\left(\H_2, \langle\cdot,\cdot\rangle_2\right)$ be two separable Hilbert spaces, $\Xbf, \Xbf_1, \Xbf_2  \in \mathcal{L}^1(\Omega, \mathcal{A}, \H_1) $, and $\xi_1, \xi_2, \xi \in \mathcal{L}^1(\Omega, \mathcal{A}, \H_2)$.
\begin{itemize}
    \item[(C8)] If $\xi$ and $\Xbf_2$ are conditionally independent given $\Xbf_1$, then $\E[\xi \mid \Xbf_1,\Xbf_2 ] =\E[\xi \mid \Xbf_1 ] $,
    \item[(C9)] If $\xi_1$, $\xi_2$ are conditionally independent given $\Xbf$, then $ \E[ \langle \xi_1, \xi_2 \rangle \mid \Xbf  ]=\langle \E[ \xi_1 \mid \Xbf ], \E[ \xi_2 \mid \Xbf ] \rangle .$
\end{itemize} 
\end{proposition}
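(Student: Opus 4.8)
The plan is to prove (C8) first and then obtain (C9) from it by an iterated-conditioning argument using Proposition~\ref{condexp1}. By definition, the hypothesis of (C8) --- that $\xi$ and $\Xbf_2$ are conditionally independent given $\Xbf_1$ --- unwinds to $\E[f(\xi)\mid\Xbf_1,\Xbf_2]=\E[f(\xi)\mid\Xbf_1]$ for every bounded measurable $f\colon(\H_2,\mathcal{B}(\H_2))\to(\R,\mathcal{B}(\R))$, and the goal is to upgrade this to the $\H_2$-valued identity $\E[\xi\mid\Xbf_1,\Xbf_2]=\E[\xi\mid\Xbf_1]$. First I would reduce to scalars: for $g\in\H_2$ the map $v\mapsto\langle v,g\rangle$ is linear and continuous, so $\langle\E[\xi\mid\mathcal{G}],g\rangle=\E[\langle\xi,g\rangle\mid\mathcal{G}]$ for any sub-$\sigma$-algebra $\mathcal{G}$, and it therefore suffices to show $\E[\langle\xi,g\rangle\mid\Xbf_1,\Xbf_2]=\E[\langle\xi,g\rangle\mid\Xbf_1]$ almost surely for each fixed $g$; separability of $\H_2$ then lets me take $g$ in a countable dense set, so that the vector-valued identity holds off a single null set.

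To get the scalar identity I would apply the conditional-independence hypothesis to the truncations $f_N(v)=\max\{-N,\min\{N,\langle v,g\rangle\}\}$, $N\in\N$, which are bounded and continuous, hence admissible. This gives $\E[f_N(\xi)\mid\Xbf_1,\Xbf_2]=\E[f_N(\xi)\mid\Xbf_1]$, and since $f_N(\xi)\to\langle\xi,g\rangle$ pointwise with $|f_N(\xi)|\le|\langle\xi,g\rangle|\le\|g\|_{\H_2}\,\|\xi\|_{\H_2}\in\mathcal{L}^1$ (because $\xi\in\mathcal{L}^1(\Omega,\mathcal{A},\H_2)$), conditional dominated convergence yields the equality of $\E[\langle\xi,g\rangle\mid\cdot]$ under the two $\sigma$-algebras. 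This completes (C8).

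For (C9), I would work under square-integrability of $\xi_1,\xi_2$ so that $\langle\xi_1,\xi_2\rangle\in\mathcal{L}^1$ and every invocation of (C5) is licensed. Starting from the tower property (C7) with $(\Xbf,\xi_2)$ as the finer conditioning pair, $\E[\langle\xi_1,\xi_2\rangle\mid\Xbf]=\E[\,\E[\langle\xi_1,\xi_2\rangle\mid\Xbf,\xi_2]\mid\Xbf\,]$. In the inner conditional expectation $\xi_2$ is measurable with respect to $\sigma(\Xbf,\xi_2)$, so (C5) pulls it out: $\E[\langle\xi_1,\xi_2\rangle\mid\Xbf,\xi_2]=\langle\E[\xi_1\mid\Xbf,\xi_2],\xi_2\rangle$. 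Now (C8), applied with $(\xi,\Xbf_1,\Xbf_2)=(\xi_1,\Xbf,\xi_2)$ and the hypothesis that $\xi_1$ and $\xi_2$ are conditionally independent given $\Xbf$, collapses $\E[\xi_1\mid\Xbf,\xi_2]$ to $\E[\xi_1\mid\Xbf]$. Substituting back and applying (C5) once more --- this time with the $\sigma(\Xbf)$-measurable element $\E[\xi_1\mid\Xbf]$ --- gives $\E[\langle\xi_1,\xi_2\rangle\mid\Xbf]=\E[\,\langle\E[\xi_1\mid\Xbf],\xi_2\rangle\mid\Xbf\,]=\langle\E[\xi_1\mid\Xbf],\E[\xi_2\mid\Xbf]\rangle$, which is the claim.

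The one genuinely delicate step is the scalar reduction in (C8): conditional independence is only assumed against \emph{bounded} test functions, while the marginal maps $\langle\cdot,g\rangle$ are unbounded, so the truncation together with conditional dominated convergence is indispensable, and separability of $\H_2$ is exactly what bridges ``a.s.\ equality of every scalar marginal'' and ``a.s.\ equality of the two $\H_2$-valued conditional expectations''. The derivation of (C9) is then just bookkeeping: keeping track of which $\sigma$-algebra each factor is measurable with respect to, and of the square-integrability needed to apply (C5).
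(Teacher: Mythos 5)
This proposition is imported verbatim from the DRF paper (cited as Proposition~7) and no proof is reproduced in the present manuscript, so there is no paper-internal argument to compare against; I will assess your proof on its own merits. Your argument is correct. For (C8), the reduction to scalar marginals via $\langle\E[\xi\mid\mathcal{G}],g\rangle=\E[\langle\xi,g\rangle\mid\mathcal{G}]$, the truncation $f_N$ of the unbounded linear functional $\langle\cdot,g\rangle$, and conditional dominated convergence is exactly the right way to bridge the bounded-test-function formulation of conditional independence (as defined just above the statement) to the $\H_2$-valued identity, and separability of $\H_2$ is precisely what upgrades ``a.s.\ equality for each $g$'' to ``a.s.\ equality of the two $\H_2$-valued conditional expectations'' off a single null set. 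For (C9), the chain (C7) $\to$ (C5) $\to$ (C8) $\to$ (C5) is clean and correct, including the observation that (C8), though stated with $\Xbf_2\in\mathcal{L}^1(\Omega,\mathcal{A},\H_1)$, applies verbatim with $\xi_2\in\mathcal{L}^1(\Omega,\mathcal{A},\H_2)$ in the role of $\Xbf_2$.

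One remark you make deserves emphasis: the hypotheses of the proposition as stated only assume $\xi_1,\xi_2\in\mathcal{L}^1(\Omega,\mathcal{A},\H_2)$, which is not enough for $\langle\xi_1,\xi_2\rangle$ to be integrable, nor for either invocation of (C5) to be licensed (that property requires $\mathcal{L}^2$). Strengthening to $\xi_1,\xi_2\in\mathcal{L}^2$ (so that $\langle\xi_1,\xi_2\rangle\in\mathcal{L}^1$ by Cauchy--Schwarz and (C5) applies) is the minimal sufficient fix and is what the paper implicitly relies on whenever it invokes (C9). This is a defect of the statement's hypotheses rather than of your proof; you handled it correctly.
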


For $\xbf\in \R^p$, denote by $P_{\xbf}$ the conditional distribution of $\Ybf$ given $\Xbf=\xbf$ on $\R^d$. For two functions $f$ and $g$ with $\liminf_{s \to \infty} g(s) > 0$, we denote 
$f(s)= \mathcal{O}(g(s))$ if 
\[
\limsup_{s \to \infty} \frac{|f(s)|}{g(s)} \leq C
\]
for some $C > 0$. If $C=1$, we write $f(s) \precsim g(s)$. For a sequence of random variables $X_n\colon \Omega \to \R$ and a sequence of real numbers $a_n \in (0,+\infty)$, $n \in \N$, we write  $X_n=\O_p(a_n)$ if 
\[
\lim_{M \to \infty} \sup_{n} \P(a_n^{-1} |X_n| > M) = 0,
\]
that is, $X_n$ is bounded in probability. We write $X_n=\o_p(a_n)$ if $a_n^{-1} X_n$ converges to zero in probability. Similarly, for $(S,d)$ a separable metric space, $\Xbf_n\colon (\Omega, \mathcal{A}) \to (S, \mathcal{B}(S))$, $n \in \N$ and $\Xbf\colon (\Omega, \mathcal{A}) \to (S, \mathcal{B}(S))$ measurable, we write $\Xbf_n \stackrel{p}{\to} \Xbf$, if $d(\Xbf_n, \Xbf)=o_{p}(1)$.

Finally, let $\Xbf \in \mathcal{L}^2(\Omega, \mathcal{A}, \H_1)$ and $\xi \in \mathcal{L}^2(\Omega, \mathcal{A}, \H_2)$, and assume that $A \subset \Omega$ depends on $\Xbf$, $A=A(\Xbf)$. Thus, for $\Xbf$ fixed to a certain value, $A$ is a fixed set. If $ \P(A \mid \Xbf) > 0$ almost everywhere, we define
\[
\E[\xi \mid  A ]=\E[\xi \mid  \Xbf, A ] = \frac{\E[\xi \1_A \mid \Xbf ]}{\P( A \mid \Xbf)} \in \mathcal{L}^2(\Omega, \sigma(\Xbf), \H_2).
\]
Then, we have by construction that
\begin{align}
    \E[\xi \1_A \mid \Xbf ] = \E[\xi \mid  \Xbf, A ] \cdot \P( A \mid \Xbf).
\end{align}

Let again $\Phi(\xbf)=\Phi(P_{\xbf})$ be the embedding of the true conditional distribution into $\H$. 
It has the following three properties.

\begin{lemma}[Lemma 8 in~\citet{DRF-paper}]
It holds that
$\E[\Phi(\diracY) \mid\Xbf \myeq \xbf]= \Phi(P_{\xbf}).$
\end{lemma}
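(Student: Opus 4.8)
The plan is to verify the claimed identity between elements of $\H$ by testing it against an arbitrary $f \in \H$, exploiting separability of $\H$ together with the fact that every continuous linear functional commutes with Bochner (conditional) expectations and with the Bochner integral defining $\Phi$. Note first that $\|\Phi(\diracY)\|_\H^2 = k(\Ybf,\Ybf) \leq \sup_{\ybf_1,\ybf_2} k(\ybf_1,\ybf_2) < \infty$ by boundedness of $k$, so the conditional expectation $\E[\Phi(\diracY)\mid\Xbf]$ is well defined and, by (C4), may be written as a measurable function of $\Xbf$; the notation $\E[\Phi(\diracY)\mid\Xbf\myeq\xbf]$ refers to the value of that function at $\xbf$, so the asserted equality is an identity between functions of $\xbf$ that we will establish for $\P_{\Xbf}$-almost every $\xbf$.

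First I would fix $f \in \H$ and apply the relation $F(\E[\xi \mid \Xbf]) = \E[F(\xi)\mid\Xbf]$ for linear continuous $F\colon\H\to\R$, recalled in the preliminaries, to the functional $F = \langle\,\cdot\,,f\rangle$ and to $\xi = \Phi(\diracY) = \kY$. Using the reproducing property $\langle \kY, f\rangle = f(\Ybf)$, this gives
\[
\langle \E[\Phi(\diracY)\mid\Xbf\myeq\xbf], f\rangle = \E[f(\Ybf)\mid\Xbf\myeq\xbf]
\]
for $\P_{\Xbf}$-almost every $\xbf$. Since $P_{\xbf}$ is, by definition, a regular version of the conditional law of $\Ybf$ given $\Xbf\myeq\xbf$, the disintegration identity for scalar conditional expectations yields $\E[f(\Ybf)\mid\Xbf\myeq\xbf] = \int_{\R^d} f(\ybf)\,dP_{\xbf}(\ybf)$ for $\P_{\Xbf}$-almost every $\xbf$.

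Next I would treat the right-hand side. Boundedness and continuity of $k$ make $\ybf\mapsto k(\ybf,\cdot)$ Bochner integrable against $P_{\xbf}$, so $\Phi(P_{\xbf}) = \int_{\R^d} k(\ybf,\cdot)\,dP_{\xbf}(\ybf)$ is a well-defined element of $\H$; commuting the continuous linear functional $\langle\,\cdot\,,f\rangle$ with this Bochner integral gives
\[
\langle \Phi(P_{\xbf}), f\rangle = \int_{\R^d}\langle k(\ybf,\cdot), f\rangle\,dP_{\xbf}(\ybf) = \int_{\R^d} f(\ybf)\,dP_{\xbf}(\ybf).
\]
Chaining the three displays shows $\langle \E[\Phi(\diracY)\mid\Xbf\myeq\xbf], f\rangle = \langle \Phi(P_{\xbf}), f\rangle$ for each fixed $f$ outside a $\P_{\Xbf}$-null set. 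Running $f$ over a countable dense subset of $\H$ (which exists by separability) and using continuity in $f$ of both sides, the identity holds simultaneously for all $f\in\H$ on one common full-measure set of $\xbf$, which forces $\E[\Phi(\diracY)\mid\Xbf\myeq\xbf] = \Phi(P_{\xbf})$.

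The remaining work is routine: verifying Bochner integrability and joint measurability of $\ybf\mapsto k(\ybf,\cdot)$, and invoking the standard disintegration formula for $\E[f(\Ybf)\mid\Xbf\myeq\xbf]$. I expect the only point needing mild care to be bookkeeping the ``$\P_{\Xbf}$-almost every $\xbf$'' qualifiers and the passage from a countable dense family of test functions $f$ back to all of $\H$; nothing here is conceptually difficult once separability of $\H$ is available.
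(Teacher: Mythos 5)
Your proof is correct. The paper itself does not re-prove this lemma (it cites Lemma 8 of the DRF paper), but your argument—testing the identity against an arbitrary $f\in\H$, commuting the linear functional $\langle\,\cdot\,,f\rangle$ with both the conditional Bochner expectation and the Bochner integral defining $\Phi(P_{\xbf})$, invoking the reproducing property and the scalar disintegration formula, and then passing from a countable dense family of $f$ to all of $\H$ via separability—is the standard and essentially canonical route, and is the same kind of argument used throughout the preliminaries of this paper (e.g.\ the commuting relations (C1)--(C5)).
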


For a more compact notation in the following Lemma, let $N=\{1,\ldots,n\}$, and let for $A \subset N$ and $k \leq |A|$, let $C_{k}(A)$ be the set of all subsets of size $k$ drawn from $A$ without replacement, with $C_0=\emptyset$. The following lemma presents a U-statistic expansion that we afterward apply to an individual tree of our DRF forest.

\begin{lemma}[Lemma 9 in~\citet{DRF-paper}]\label{Hdecomposition}

Let $\left(\H_1, \langle\cdot,\cdot\rangle_1\right)$ and $\left(\H_2, \langle\cdot,\cdot\rangle_2\right)$ be two separable Hilbert spaces, and let $\Zbf_1, \ldots, \Zbf_n$ be i.i.d.\ copies of a random element $\Zbf\colon (\Omega, \A) \to (\H_1, \mathcal{B}(\H_1))$. Write $\mathcal{Z}_n=(\Zbf_1, \ldots, \Zbf_n)$, and let $T\colon(\H_1^n, \mathcal{B}(\H_1^n)) \to  (\H_2, \mathcal{B}(\H_2))$ measurable with 
$\E[\| T(\mathcal{Z}_n) \|^2_{\H_2} ] < \infty$. If $T$ is symmetric, there exist functions $T_j$, $j=1,\ldots,n$, such that
\begin{equation}\label{ANOVA}
    T(\mathcal{Z}_n) = \E[T(\mathcal{Z}_n)] + \sum_{i=1}^n T_1(\Zbf_i)  + \sum_{i_1 < i_2} T_2(\Zbf_{i_1}, \Zbf_{i_2}) + \cdots + T_n(\mathcal{Z}_n),
\end{equation}
and it holds that
\begin{align}\label{vardecomp}
    \Var(T(\mathcal{Z}_n))=\sum_{i=1}^n \binom{n}{i} \Var(T_i(\Zbf_1, \ldots, \Zbf_i) )
\end{align}
and
\[
T_{1}(\Zbf_i)=\E[T(\mathcal{Z}_n)\mid\Zbf_i] - \E[T(\mathcal{Z}_n)].
\]

\end{lemma}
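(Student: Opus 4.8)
The plan is to build the Hoeffding (ANOVA) decomposition of $T(\mathcal{Z}_n)$ in $\mathbb{L}^2(\Omega,\A,\H_2)$ by the standard Möbius-inversion device, adapted to Hilbert-space-valued conditional expectations through the calculus (C1)--(C9) recalled above. Write $N=\{1,\dots,n\}$, and for $v\subseteq N$ set $\mathcal{G}_v=\sigma\big((\Zbf_i)_{i\in v}\big)$ and $g_v=\E[T(\mathcal{Z}_n)\mid\mathcal{G}_v]$, with $g_\emptyset=\E[T(\mathcal{Z}_n)]$. This is legitimate since $T(\mathcal{Z}_n)\in\mathbb{L}^2$ by hypothesis and, by (C3), $\E[\,\cdot\mid\mathcal{G}_v]$ is the orthogonal projection onto $\mathbb{L}^2(\Omega,\mathcal{G}_v,\H_2)$, in particular an $L^2$-contraction, so $g_v\in\mathbb{L}^2(\Omega,\mathcal{G}_v,\H_2)$. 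Define
\[
\pi_u:=\sum_{v\subseteq u}(-1)^{|u|-|v|}\,g_v\ \in\ \mathbb{L}^2(\Omega,\mathcal{G}_u,\H_2),\qquad u\subseteq N.
\]
Using the elementary identity $\sum_{S\subseteq A}(-1)^{|S|}=\one\{A=\emptyset\}$, for every $w\subseteq N$ one gets $\sum_{u\subseteq w}\pi_u=\sum_{v\subseteq w}g_v\sum_{u:\,v\subseteq u\subseteq w}(-1)^{|u|-|v|}=g_w$; taking $w=N$ and noting $g_N=T(\mathcal{Z}_n)$ (as $T(\mathcal{Z}_n)$ is $\mathcal{G}_N$-measurable) yields $T(\mathcal{Z}_n)=\sum_{u\subseteq N}\pi_u$, with $\pi_\emptyset=\E[T(\mathcal{Z}_n)]$.

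Next I would use symmetry and the i.i.d.\ structure to identify the summands. By (C4), each $g_v$, and hence each $\pi_u$, is a measurable function of $(\Zbf_i)_{i\in u}$; moreover it is \emph{symmetric} in those arguments, because $T$ is symmetric and conditioning on $\mathcal{G}_v$ ignores the ordering within $v$. By exchangeability of $(\Zbf_1,\dots,\Zbf_n)$, for any $u,u'$ with $|u|=|u'|=j$ this function is one and the same symmetric map $T_j\colon\H_1^{\,j}\to\H_2$ (up to relabeling of coordinates), so $\pi_u=T_j\big((\Zbf_i)_{i\in u}\big)$ whenever $|u|=j$. Grouping $\sum_{u\subseteq N}\pi_u$ according to $|u|$ then produces exactly the expansion~\eqref{ANOVA}, and the case $|u|=1$ reads off $T_1(\Zbf_i)=\pi_{\{i\}}=g_{\{i\}}-g_\emptyset=\E[T(\mathcal{Z}_n)\mid\Zbf_i]-\E[T(\mathcal{Z}_n)]$.

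The core of the argument is the orthogonality $\Cov(\pi_u,\pi_{u'})=0$ for $u\neq u'$; given it, $\Var(T(\mathcal{Z}_n))=\sum_{u\subseteq N}\Var(\pi_u)=\sum_{j=1}^n\binom{n}{j}\Var\big(T_j(\Zbf_1,\dots,\Zbf_j)\big)$ follows from the Hilbert-space identity $\Var(\xi+\eta)=\Var(\xi)+\Var(\eta)+2\Cov(\xi,\eta)$ by induction, together with exchangeability and $\Var(\pi_\emptyset)=0$. To obtain the orthogonality I would first establish $\E[g_v\mid\mathcal{G}_{u'}]=g_{v\cap u'}$: since $(\Zbf_i)_{i\in u'\setminus v}$ is independent of the pair $\big(g_v,(\Zbf_i)_{i\in v\cap u'}\big)$ (it is independent of $(\Zbf_i)_{i\in v}$, of which both components are functions), property (C6) gives $\E[g_v\mid\mathcal{G}_{u'}]=\E[g_v\mid\mathcal{G}_{v\cap u'}]$, and then the tower property (C7), now with $\mathcal{G}_{v\cap u'}\subseteq\mathcal{G}_v$, gives $\E[g_v\mid\mathcal{G}_{v\cap u'}]=g_{v\cap u'}$. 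Substituting this into the definition of $\pi_u$ and regrouping the signs via $\sum_{S\subseteq u\setminus u'}(-1)^{|S|}=\one\{u\subseteq u'\}$ shows $\E[\pi_u\mid\mathcal{G}_{u'}]=\one\{u\subseteq u'\}\,\pi_u$; in particular $\E[\pi_u\mid\mathcal{G}_{u'}]=0$ whenever $u\not\subseteq u'$, and (with $u'=\emptyset$) $\E[\pi_u]=0$ for $u\neq\emptyset$. For $u\neq u'$, at least one, say $u$, is not contained in the other, so using $\E[\pi_u]=0$ and the pull-out identity (C5) (applicable since $\pi_u,\pi_{u'}\in\mathbb{L}^2$ and $\pi_{u'}$ is $\mathcal{G}_{u'}$-measurable),
\[
\Cov(\pi_u,\pi_{u'})=\E\big[\langle\pi_u,\pi_{u'}\rangle_{\H_2}\big]=\E\big[\langle\E[\pi_u\mid\mathcal{G}_{u'}],\pi_{u'}\rangle_{\H_2}\big]=0.
\]

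I expect the only genuinely delicate step to be the identity $\E[g_v\mid\mathcal{G}_{u'}]=g_{v\cap u'}$, i.e.\ justifying the "average out the coordinates not in $u'$" manipulation rigorously in the Bochner/Hilbert setting; this is where independence enters essentially, and it should be handled via (C6) combined with (C7) exactly as above. Everything else is bookkeeping: Möbius inversion on the Boolean lattice and the already-established calculus of Hilbert-space conditional expectations. A secondary point worth stating explicitly in the write-up is that all the objects $g_v$ and $\pi_u$ lie in $\mathbb{L}^2(\Omega,\A,\H_2)$, which is what makes (C3), (C5), and the variance decomposition meaningful.
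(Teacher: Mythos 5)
Your proof is correct and follows the canonical Hoeffding/ANOVA decomposition route, correctly adapted to the Bochner setting via the conditional-expectation calculus (C1)--(C9). The paper itself does not reproduce a proof (it cites the lemma from the DRF paper), but your Möbius-inversion construction of the $\pi_u$, the identity $\E[g_v\mid\mathcal{G}_{u'}]=g_{v\cap u'}$ obtained by combining (C6) with the tower property (C7), and the orthogonality $\E[\pi_u\mid\mathcal{G}_{u'}]=\one\{u\subseteq u'\}\pi_u$ together with (C5) to kill the cross-covariances is exactly the standard argument one would give here; I see no gaps.
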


\noindent
Subsequently, we apply this expansion to an individual tree of our DRF predictor. Let $\hmun$ be as in~\eqref{finalestimator}, namely
\begin{align}\label{finalestimator2}
    \hmun = \binom{n}{s_n}^{-1}  \sum_{i_1 < i_2 < \ldots < i_{s_n}} \E_{\varepsilon} \left[ T(\xbf, \varepsilon; \Zbf_{i_1}, \ldots, \Zbf_{i_{s_n}}) \right],
\end{align}
where the sum is taken over all $\binom{n}{s_n}$ possible subsamples $\Zbf_{i_1}, \ldots, \Zbf_{i_{s_n}}$ of $\Zbf_{1}, \ldots, \Zbf_{n}$ and $s_n \to \infty$ with $n$ and where
\begin{align*}
    T(\xbf, \varepsilon; \Zbf_{1}, \ldots, \Zbf_{s_n})=\sum_{j=1}^{s_n} \frac{\1(\Xbf_{j} \in \mathcal{L}(\xbf))}{|\mathcal{L}(\xbf)|} k(\Ybf_j, \cdot ).
\end{align*}
For simplicity we write here the sum from $j=1, \ldots, s_n$, though it should be understood that $\1(\Xbf_{j} \in \mathcal{L}(\xbf))=0$ for $j$ that are used for tree building and not to populate the leaves. 

We introduce the following additional notation similar to Section~\ref{sec:theory}. Let $\Zcal_{s_n}=\left(\Zbf_{1}, \ldots,\Zbf_{s_n}  \right)$ concatenate $s_n$ i.i.d.\ copies of $\Zbf$, and define for $j=1,\ldots, s_n$
\begin{align*}
    \Var(T)&=\Var(T(\xbf, \varepsilon; \Zcal_{s_n})),\\
    \Var(T_j)&=\Var(\E[T(\xbf, \varepsilon; \Zcal_{s_n}) \mid \Zbf_1, \ldots, \Zbf_j ])
\end{align*}
We note that, due to i.i.d.\ sampling, 
what kind of subset $\Zbf_{i_1}, \ldots,\Zbf_{i_{s_n}}$ we are considering
affects neither variance nor expectation, as long as $\Zbf_1, \ldots, \Zbf_j$ are part of $\Zcal_{s_n}$. As such, we always take $\Zcal_{s_n}$ in a slight abuse of notation.
Using composition~\eqref{ANOVA} on $\hmun$ gives
\begin{align}\label{ANOVAmux}
\hmun&= \E[T(\Zcal_{s_n}) ] + \binom{n}{s_n}^{-1}\Big( \binom{n-1}{s_n-1} \sum_{i=1}^n T_{1}(\Zbf_i) +  \binom{n-2}{s_n-2} \sum_{i_1 < i_2} T_{2}(\Zbf_{i_1}, \Zbf_{i_2}) \nonumber \\
&\quad+ \ldots + \sum_{i_1 < i_2 < \ldots < i_{s_n}} T_{s_n}(\Zbf_{i_1}, \ldots, \Zbf_{i_{s_n}}) \Big). 
\end{align}
This representation was used in~\citet{DRF-paper} to prove that the variance of $\hmun$ can be bounded by the scaled variance of a single tree:
\begin{lemma}[Lemma 10 in~\citet{DRF-paper}]\label{variancebound}
Let $\hmun$ be as in~\eqref{finalestimator2}, and assume $T(\xbf, \varepsilon; \Zcal_{s_n})$ satisfies \forestass{3} and
    $\Var(T) < \infty$.
Then,
\begin{align}
   \Var(\hmun )&\leq \frac{s_n^2}{n} \Var(T_1 ) + \frac{s_n^2}{n^2} \Var(T) \\
   &\leq \left( \frac{s_n}{n} + \frac{s_n^2}{n^2} \right) \Var(T). 
\end{align}
\end{lemma}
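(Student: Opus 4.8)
The plan is to exploit the Hoeffding--ANOVA decomposition \eqref{ANOVAmux} of $\hmun$, which is valid because the symmetrized tree kernel $\E_{\varepsilon}[T(\xbf,\varepsilon;\cdot)]$ is symmetric by \forestass{3} and square-integrable by the standing assumption $\Var(T)<\infty$, so that Lemma~\ref{Hdecomposition} applies. Write $T_j$ for the order-$j$ component appearing in \eqref{ANOVAmux}; it is degenerate, $\E[T_j(\Zbf_1,\ldots,\Zbf_j)\mid\Zbf_1,\ldots,\Zbf_{j-1}]=0$, and for $j=1$ it equals $T_1(\Zbf_1)=\E[T\mid\Zbf_1]-\E[T]$, so that $\Var(T_1)$ coincides with the quantity in the statement. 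Using this degeneracy together with the Hilbert-space conditional-expectation rules (C5)--(C9), one checks, exactly as in the derivation of \eqref{vardecomp}, that in $\H$ one has $\Cov(T_j(\Zbf_S),T_{j'}(\Zbf_{S'}))=0$ whenever $(j,S)\neq(j',S')$, where $\Zbf_S=(\Zbf_i:i\in S)$.

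First I would use this orthogonality together with i.i.d.\ sampling to evaluate the variance of \eqref{ANOVAmux} term by term,
\begin{align*}
  \Var(\hmun)=\sum_{j=1}^{s_n}\left(\binom{n}{s_n}^{-1}\binom{n-j}{s_n-j}\right)^{2}\binom{n}{j}\,\Var(T_j(\Zbf_1,\ldots,\Zbf_j)),
\end{align*}
the factor $\binom{n}{j}$ counting the index sets $S$ of size $j$. Next I would simplify using the elementary identity $\binom{n}{s_n}^{-1}\binom{n-j}{s_n-j}=\binom{s_n}{j}/\binom{n}{j}$, which gives $\Var(\hmun)=\sum_{j=1}^{s_n}\frac{\binom{s_n}{j}^{2}}{\binom{n}{j}}\Var(T_j)$. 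The $j=1$ summand is exactly $\frac{s_n^{2}}{n}\Var(T_1)$. For $j\geq 2$ I would bound the combinatorial coefficient: since $s_n\leq n$,
\begin{align*}
  \frac{\binom{s_n}{j}}{\binom{n}{j}}=\prod_{k=0}^{j-1}\frac{s_n-k}{n-k}\leq\left(\frac{s_n}{n}\right)^{j}\leq\left(\frac{s_n}{n}\right)^{2},
\end{align*}
so that $\binom{s_n}{j}^{2}/\binom{n}{j}\leq\frac{s_n^{2}}{n^{2}}\binom{s_n}{j}$. Summing over $j\geq 2$ and invoking \eqref{vardecomp} for the $s_n$-argument tree kernel, i.e.\ $\sum_{j\geq 2}\binom{s_n}{j}\Var(T_j)\leq\Var(T)$, yields $\sum_{j\geq 2}\frac{\binom{s_n}{j}^{2}}{\binom{n}{j}}\Var(T_j)\leq\frac{s_n^{2}}{n^{2}}\Var(T)$. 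Adding the $j=1$ term gives the first asserted inequality.

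For the second inequality I would use that conditioning does not increase variance in a Hilbert space: by property (C3), $\E[\,\cdot\mid\sigma(\Zbf_1)]$ is the orthogonal projection onto $\mathbb{L}^{2}(\Omega,\sigma(\Zbf_1),\H)$ and hence a contraction on $\mathbb{L}^{2}(\Omega,\mathcal{A},\H)$, so $\Var(T_1)=\Var(\E[T\mid\Zbf_1])\leq\Var(T)$. Substituting this bound into the first inequality gives $\Var(\hmun)\leq\frac{s_n^{2}}{n}\Var(T)+\frac{s_n^{2}}{n^{2}}\Var(T)=\left(\frac{s_n}{n}+\frac{s_n^{2}}{n^{2}}\right)\Var(T)$.

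The only step needing genuine care is the orthogonality bookkeeping in $\H$, namely checking that all cross-covariances among the summands of \eqref{ANOVAmux} vanish; but this is entirely parallel to the argument already carried out to obtain \eqref{vardecomp} in Lemma~\ref{Hdecomposition}, and rests only on the degeneracy of the ANOVA components and the conditional-expectation identities (C5)--(C9). Everything else is the combinatorial identity and the elementary inequality above.
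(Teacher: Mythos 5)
The approach — applying the orthogonal ANOVA expansion~\eqref{ANOVAmux}, computing $\Var(\hmun)$ exactly as $\sum_{j}\frac{\binom{s_n}{j}^2}{\binom{n}{j}}\Var(T_j)$, bounding $\binom{s_n}{j}/\binom{n}{j}\le (s_n/n)^2$ for $j\ge 2$, and finishing with~\eqref{vardecomp} — is correct for the first inequality, and is the standard route. However, the last step contains a genuine error. You bound $\Var(T_1)\le\Var(T)$ by the contraction property of Hilbert-space conditional expectation, and then claim
\begin{align*}
\tfrac{s_n^{2}}{n}\Var(T)+\tfrac{s_n^{2}}{n^{2}}\Var(T)
=\left(\tfrac{s_n}{n}+\tfrac{s_n^{2}}{n^{2}}\right)\Var(T),
\end{align*}
which is false unless $s_n=1$: the left-hand side equals $\bigl(\tfrac{s_n^2}{n}+\tfrac{s_n^2}{n^2}\bigr)\Var(T)$. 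The contraction bound $\Var(T_1)\le\Var(T)$ is too weak to produce the stated constant $\tfrac{s_n}{n}$. What is actually needed is $s_n\,\Var(T_1)\le\Var(T)$, i.e.\ $\Var(T_1)\le\Var(T)/s_n$, which does \emph{not} follow from the projection argument alone but is immediate from the variance identity~\eqref{vardecomp} applied to the $s_n$-argument tree kernel: $\Var(T)=\sum_{j=1}^{s_n}\binom{s_n}{j}\Var(T_j)\ge\binom{s_n}{1}\Var(T_1)=s_n\Var(T_1)$, since every summand is nonnegative. Substituting $\Var(T_1)\le\Var(T)/s_n$ into $\tfrac{s_n^{2}}{n}\Var(T_1)$ gives $\tfrac{s_n}{n}\Var(T)$, and the second asserted inequality follows. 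With that one replacement, the proof is complete.
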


Subsequently, we derive a first-order approximation of the whole forest and of an individual tree. In the following, we denote the second element of~\eqref{ANOVAmux} by 
\begin{align}\label{firstorderapproxmu}
    \tilde{\mu}_n(\xbf)=\binom{n}{s_n}^{-1} \binom{n-1}{s_n-1} \sum_{i=1}^n T_{1}(\Zbf_i)=\frac{s_n}{n} \sum_{i=1}^n T_{1}(\Zbf_i),
\end{align}
which is the first order approximation of $\mu_n(\xbf)$.
Similarly,  applying~\eqref{ANOVAmux} to a tree $T(\Zcal_{s_n})=\E_{\varepsilon} \left[ T(\xbf, \varepsilon; \Zcal_{s_n}) \right]$, we obtain the expansion
\begin{align*}
    T(\Zcal_{s_n})= \E[T(\Zcal_{s_n})] + \sum_{i=1}^{s_n} T_1(\Zbf_i)  + \sum_{i_1 < i_2} T_2(\Zbf_{i_1}, \Zbf_{i_2}) + \ldots T_{s_n}(\Zcal_{s_n}). 
\end{align*}
Consequently, we define
\begin{align}\label{firstorderapproxT}
    \tilde{T}(\Zcal_{s_n})= \sum_{i=1}^{s_n} T_1(\Zbf_i)=\sum_{i=1}^{s_n} \E[T(\Zcal_{s_n})\mid\Zbf_i] - \E[T(\Zcal_{s_n})].
\end{align}
Contrary to $T(\Zcal_{s_n})$, $\tilde{T}(\Zcal_{s_n})$ is a sum of independent random elements on $\H$ and thus much easier to handle. A key argument will thus be to show that $\tilde{T}(\Zcal_{s_n})$ approximates $T(\Zcal_{s_n})$ asymptotically. 

Consider the leaf $\Lcal(\xbf)$ of the tree $T(\Zcal_{s_n})$ that contains the test point $\xbf$. To emphasize the dependence of such a leaf node on the training data, we will sometimes write $\Lcal(\xbf, \Zcal_{s_n})$ instead of $\Lcal(\xbf)$ in the following.

As in~\citet{meinshausen2006quantile, wager2017estimation}, the crucial part of proving that a Random Forest is consistent is to establish that the diameter of the leaf $\Lcal(\xbf, \Zcal_{s_n})$ goes to zero in probability. In particular, we need a refined result from~\citet{wager2018estimation} below. To save space in our proofs, we subsequently use
\begin{align}
    \xi_i=k(\Ybf_i, \cdot),
\end{align}
for $i=1,\ldots, n$. 

\begin{lemma}[Lemma 2 of~\citet{wager2017estimation}, adapted] \label{lemma2}
Let $T$ be a tree satisfying \forestass{2} and \forestass{4} 
that is trained on data $\Zcal_{s_n}=(\xi_1,\Xbf_1), \ldots, (\xi_{s_n},\Xbf_{s_n})$, and let $\Lcal(\xbf, \Zcal_{s_n})$ be the leaf of $T(\xbf, \varepsilon; \Zcal_{s_n})$ containing $\xbf$. Suppose that assumption~\dataass{1} holds for $\Xbf_1,\ldots, \Xbf_{s_n}$. Then,
\begin{align}
    \P \left( \text{\textup{diam}} (\Lcal(\xbf, \Zcal_{s_n})) \geq \sqrt{p} \left( \frac{s_n}{2k-1}\right) ^{-0.51 \frac{\log((1-\alpha)^{-1})}{\log(\alpha^{-1})} \frac{\pi}{p}}   \right) \leq p \left( \frac{s_n}{2k-1}\right) ^{-1/2 \frac{\log((1-\alpha)^{-1})}{\log(\alpha^{-1})} \frac{\pi}{p}}.
\end{align}
\end{lemma}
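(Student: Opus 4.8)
The plan is to bound $\mathrm{diam}\big(\Lcal(\xbf,\Zcal_{s_n})\big)$ one coordinate at a time: for $j=1,\ldots,p$ let $\pi_j$ denote the number of splits performed along coordinate $X_j$ on the path from the root of the tree to the leaf $\Lcal(\xbf,\Zcal_{s_n})$, and let $c(\xbf)=\sum_{j=1}^p\pi_j$ be the total number of splits on that path. First I would obtain a \emph{deterministic} lower bound on $c(\xbf)$. By $\alpha$-regularity \forestass{4}, at every split the child we descend into keeps at least a fraction $\alpha$ of its parent's training points, so iterating down the path the leaf keeps at least $\alpha^{c(\xbf)}s_n$ of the $s_n$ points; since the same assumption caps the leaf at $2k-1$ points (where $k=\kappa$ is the minimal-node-size parameter of \forestass{4}), we get $\alpha^{c(\xbf)}s_n\le 2k-1$, i.e.
\[
c(\xbf)\ \ge\ \underline c\ :=\ \frac{\log\big(s_n/(2k-1)\big)}{\log(\alpha^{-1})}.
\]

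Next I would bring in the random-split condition \forestass{2}: conditional on the tree topology, each split is along coordinate $X_j$ independently with probability at least $\pi/p$, so $\pi_j$ stochastically dominates a $\mathrm{Binomial}(\floor{\underline c},\pi/p)$ variable. A multiplicative Chernoff bound then gives, for each fixed $j$ and all $s_n$ large enough,
\[
\P\!\left(\pi_j<0.51\,\underline c\,\tfrac{\pi}{p}\right)\ \le\ \exp\!\left(-\tfrac{(0.49)^2}{2}\,\underline c\,\tfrac{\pi}{p}\right),
\]
where the restriction to large $s_n$ only absorbs the rounding $\floor{\underline c}\le\underline c$ (and when $s_n/(2k-1)$ is close to $1$ the lemma is vacuous, both displayed quantities being then dominated by trivial bounds). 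Since $\alpha\le 0.2$ forces $\tfrac{(0.49)^2}{2}\ge\tfrac12\log\big((1-\alpha)^{-1}\big)$ and $\underline c\log(\alpha^{-1})=\log\big(s_n/(2k-1)\big)$, the right-hand side is at most $\big(s_n/(2k-1)\big)^{-\frac12\frac{\log((1-\alpha)^{-1})}{\log(\alpha^{-1})}\frac{\pi}{p}}$; a union bound over $j=1,\ldots,p$ then yields exactly the asserted failure probability, and on its complement $\pi_j\ge 0.51\,\underline c\,\tfrac{\pi}{p}$ for every $j$.

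The remaining --- and only non-routine --- step is to convert this into a bound on the side lengths of the leaf. On the above event I would like to conclude $\mathrm{side}_j\big(\Lcal(\xbf,\Zcal_{s_n})\big)\le(1-\alpha)^{\pi_j}$, which with the lower bound on $\pi_j$ and $\underline c\log(\alpha^{-1})=\log(s_n/(2k-1))$ gives $\mathrm{side}_j\le\big(s_n/(2k-1)\big)^{-0.51\frac{\log((1-\alpha)^{-1})}{\log(\alpha^{-1})}\frac{\pi}{p}}$ for all $j$, and then $\mathrm{diam}\big(\Lcal(\xbf,\Zcal_{s_n})\big)\le\sqrt p\,\max_j\mathrm{side}_j$ finishes the proof. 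The obstacle is that \forestass{4} controls numbers of training points, not Lebesgue volumes, so this per-split contraction of a side by $1-\alpha$ is not literally deterministic; making it rigorous is precisely where \dataass{1} enters, the density being bounded away from $0$ and $\infty$ allowing one to compare empirical point counts with Lebesgue measure on axis-aligned boxes, uniformly over the $\O(\log s_n)$ nodes along the path (a DKW/Vapnik--Chervonenkis-type concentration argument). This is the part of the proof of Lemma~2 in \citet{wager2017estimation} that the present statement adapts, and the slack it leaves, together with $\alpha\le 0.2$, is what forces the exponents $0.51$ and $\tfrac12$ rather than larger values.
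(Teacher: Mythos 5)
The paper does not actually prove Lemma~\ref{lemma2}: it restates Lemma~2 of \citet{wager2017estimation} under the tag ``adapted'' and cites it, because the statement concerns only the covariate-space geometry of the tree under \forestass{2}, \forestass{4}, and \dataass{1}, which is completely insensitive to whether the responses live in $\R$ or in the RKHS $\H$. So your proposal is being weighed against the external Wager--Athey argument rather than a proof in this paper.

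Your sketch reproduces the Wager--Athey skeleton faithfully: the deterministic depth lower bound $c(\xbf)\ge\log(s_n/(2k-1))/\log(\alpha^{-1})$ from $\alpha$-regularity (the leaf retains at least an $\alpha^{c(\xbf)}$ fraction of the $s_n$ points while capped at $2\kappa-1$), the stochastic domination of $\pi_j$ by $\mathrm{Binomial}(\floor{\underline c},\pi/p)$ from random-split, the multiplicative Chernoff lower tail with $\delta=0.49$, the arithmetic step $\alpha\le 0.2\Rightarrow (0.49)^2/2\ge\tfrac12\log((1-\alpha)^{-1})$, the union bound over coordinates supplying the factor $p$, and $\mathrm{diam}\le\sqrt p\,\max_j\mathrm{side}_j$. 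You also correctly flag the one genuinely non-routine step --- upgrading $\alpha$-regularity, a statement about point counts, to a per-split geometric contraction of the $j$-th side length --- as the place where \dataass{1} and a VC/DKW-type uniform comparison of empirical counts and Lebesgue measure over axis-aligned boxes is needed; that is indeed the machinery Wager \& Athey supply. Two small caveats: (i) your closing attribution of the exponents $0.51$ and $\tfrac12$ to the density slack is off --- those constants are entirely a Chernoff trade-off (pick $1-\delta=0.51$, pay $\delta^2/2$ in the rate, use $\alpha\le0.2$ to make $\delta^2/2$ dominate $\tfrac12\log((1-\alpha)^{-1})$); the density-bound slack is absorbed elsewhere; and (ii) the per-split side-contraction factor is $(1-\alpha+o(1))$ after the empirical-to-Lebesgue comparison rather than exactly $1-\alpha$, which is part of why the ``$0.51$'' rather than a cleaner constant appears. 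Neither caveat undermines the reconstruction.
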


\begin{lemma}[Lemma 12 in~\citet{DRF-paper}]\label{helperlemma}
Let $T(\xbf, \varepsilon; \Zcal_{s_n})$ be a tree satisfying \forestass{1} and \forestass{5}, 
and let $\Lcal(\xbf, \Zcal_{s_n})$ be the leaf of $T(\xbf, \varepsilon; \Zcal_{s_n})$ containing $\xbf$. Then,
\begin{align}\label{star1star}
    \E[T(\Zcal_{s_n})] = \E[ \E[\xi_1 \mid \Xbf_1 \in \Lcal(\xbf, \Zcal_{s_n}) ] ]
\end{align}
and 
\begin{align}\label{star2star}
    \Var(T(\Zcal_{s_n}) ) \leq  \sup_{\xbf \in [0,1]^p} \E[ \| \xi_1 \|_{\H}^2 \mid \Xbf=\xbf].
\end{align}
\end{lemma}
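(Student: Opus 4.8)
The plan is to establish the two displayed relations separately. Throughout, write $\mathcal{J}\subset\{1,\dots,s_n\}$ for the index set of the subsample used to determine the splits and $\I$ for the disjoint index set of size $s_n/2$ used to populate the leaves, as guaranteed by honesty \forestass{1}; write $\xi_j=k(\Ybf_j,\cdot)$, $\Lcal(\xbf)=\Lcal(\xbf,\Zcal_{s_n})$, and $N_\xbf=|\{j\in\I\colon\Xbf_j\in\Lcal(\xbf)\}|\ge 1$ for the number of leaf-populating points in the leaf containing $\xbf$. The common starting point is the observation that $T(\Zcal_{s_n})=\E_\varepsilon\bigl[\tfrac1{N_\xbf}\sum_{j\in\I}\1(\Xbf_j\in\Lcal(\xbf))\,\xi_j\bigr]=\sum_{j\in\I}w_j\,\xi_j$ with weights $w_j=\E_\varepsilon[\1(\Xbf_j\in\Lcal(\xbf))/N_\xbf]$ that are nonnegative, sum to one, and are measurable with respect to $\mathcal{G}_1:=\sigma(\varepsilon,(\Xbf_j,\Ybf_j)_{j\in\mathcal{J}},(\Xbf_j)_{j\in\I})$ — i.e.\ they do not involve the leaf-populating responses.

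For the variance bound \eqref{star2star}, I would use $\Var(T(\Zcal_{s_n}))\le\E[\|T(\Zcal_{s_n})\|_\H^2]$ and then the pointwise inequality $\|\sum_{j\in\I}w_j\xi_j\|_\H^2\le\sum_{j\in\I}w_j\|\xi_j\|_\H^2$, which is just convexity of $\|\cdot\|_\H^2$ together with $w_j\ge 0$ and $\sum_j w_j=1$. Conditioning on $\mathcal{G}_1$ and invoking honesty, for $j\in\I$ the response $\Ybf_j$ depends on the rest only through $\Xbf_j$, so $\E[\|\xi_j\|_\H^2\mid\mathcal{G}_1]=\E[\|\xi_j\|_\H^2\mid\Xbf_j]\le\sup_{\xbf'\in[0,1]^p}\E[\|\xi_1\|_\H^2\mid\Xbf=\xbf']$; hence $\E[\sum_j w_j\|\xi_j\|_\H^2]=\E[\sum_j w_j\,\E[\|\xi_j\|_\H^2\mid\mathcal{G}_1]]\le\sup_{\xbf'}\E[\|\xi_1\|_\H^2\mid\Xbf=\xbf']\cdot\E[\sum_j w_j]=\sup_{\xbf'}\E[\|\xi_1\|_\H^2\mid\Xbf=\xbf']$. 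This direction is essentially immediate.

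For the expectation identity \eqref{star1star}, I would again condition on $\mathcal{G}_1$ and use honesty together with Lemma 8 in~\citet{DRF-paper} — which says $\E[\xi_j\mid\Xbf_j=\xbf']=\Phi(P_{\xbf'})=\mu(\xbf')$ since $\xi_j=\Phi(\delta_{\Ybf_j})$ — to obtain $\E[T(\Zcal_{s_n})]=\E[\sum_{j\in\I}w_j\,\mu(\Xbf_j)]=\E\bigl[\E_\varepsilon[\tfrac1{N_\xbf}\sum_{j\in\I}\1(\Xbf_j\in\Lcal(\xbf))\mu(\Xbf_j)]\bigr]$. I would then coarsen to $\mathcal{G}_0:=\sigma(\varepsilon,(\Xbf_j,\Ybf_j)_{j\in\mathcal{J}})$, the randomness that determines the splits: under the honest construction $\Lcal(\xbf)$ is $\mathcal{G}_0$-measurable while $(\Xbf_j)_{j\in\I}$ is i.i.d.\ and independent of $\mathcal{G}_0$. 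Conditioning further on $N_\xbf=m\ge 1$, exchangeability of the $\I$-covariates makes the $m$ of them lying in $\Lcal(\xbf)$ i.i.d.\ with law $\P(\Xbf\in\cdot\mid\Xbf\in\Lcal(\xbf))$, so $\E\bigl[\tfrac1m\sum_{j\in\I,\,\Xbf_j\in\Lcal(\xbf)}\mu(\Xbf_j)\bigm|\mathcal{G}_0,\,N_\xbf=m\bigr]=\E[\mu(\Xbf)\mid\Xbf\in\Lcal(\xbf)]$, independently of $m$; averaging over $m$ and then over $\mathcal{G}_0$ gives $\E[T(\Zcal_{s_n})]=\E\bigl[\E[\mu(\Xbf)\mid\Xbf\in\Lcal(\xbf,\Zcal_{s_n})]\bigr]$. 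Finally, unwinding the definition of the conditional expectation given a data-dependent event from the Preliminaries with the same $\sigma$-algebra $\mathcal{G}_0$ and again using Lemma 8 gives $\E[\xi_1\mid\Xbf_1\in\Lcal(\xbf)]=\E[\xi_1\1(\Xbf_1\in\Lcal(\xbf))\mid\mathcal{G}_0]/\P(\Xbf_1\in\Lcal(\xbf)\mid\mathcal{G}_0)=\E[\mu(\Xbf)\mid\Xbf\in\Lcal(\xbf),\mathcal{G}_0]$, which is exactly the right-hand side of \eqref{star1star}.

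The main obstacle is the bookkeeping around $N_\xbf$ in the proof of \eqref{star1star}: one has to condition on the leaf count before invoking exchangeability, so that the empirical average of $\mu$ over the leaf's data points matches the conditional integral of $\mu$ over the leaf region, and one has to make sure that the data-dependent conditional expectation $\E[\xi_1\mid\Xbf_1\in\Lcal(\xbf,\Zcal_{s_n})]$ is actually well defined — which forces the choice of the coarse $\sigma$-algebra $\mathcal{G}_0$ with respect to which the leaf is measurable yet $(\Xbf_1,\Ybf_1)$ still falls into the leaf with positive conditional probability (here honesty is used twice: so that the leaf is a function of the split data alone, and so that the leaf-populating responses enter only through $\mu(\Xbf_j)$). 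The bound \eqref{star2star}, by contrast, needs nothing beyond the convex-weight representation and a single application of honesty.
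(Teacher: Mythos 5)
The paper does not reproduce a proof of this lemma (it is cited verbatim as Lemma~12 of \citet{DRF-paper}), so there is no in-text proof to compare against; I therefore evaluate your argument on its own merits. Your argument is essentially correct and is the natural one: the variance bound is a one-liner once $T$ is written as a convex combination $\sum_{j\in\I}w_j\xi_j$ and convexity of $\|\cdot\|_\H^2$ is applied pointwise, and your reduction of $\E[T]$ to $\E[\E[\mu(\Xbf)\mid\Xbf\in\Lcal(\xbf)]]$ by conditioning first on $\mathcal{G}_1$ (to swap $\xi_j$ for $\mu(\Xbf_j)$ via honesty) and then on $\mathcal{G}_0$ and $N_\xbf$ (to invoke exchangeability of the leaf-populating covariates) is exactly the standard honest-forest argument. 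You also correctly observe that $\E[\xi_1\mid\Xbf_1\in\Lcal(\xbf,\Zcal_{s_n})]$ in the statement must be read against the split $\sigma$-algebra $\mathcal{G}_0$, matching the paper's Preliminaries.

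The one point that deserves more care: as stated, \forestass{1} permits the covariates of the second half to be ``used for the splits, to enforce the subsequent assumptions,'' which would make $\Lcal(\xbf)$ depend on $(\Xbf_j)_{j\in\I}$ and would invalidate the exchangeability step where you argue that, conditional on $\mathcal{G}_0$ and $N_\xbf=m$, the $m$ covariates in the leaf are i.i.d.\ with law $\P(\Xbf\in\cdot\mid\Xbf\in\Lcal(\xbf))$. Your proof tacitly requires the leaf to be $\mathcal{G}_0$-measurable, i.e.\ determined by the split sample $\mathcal{J}$ alone. This is the intended reading in the Wager--Athey framework that the DRF paper adopts, and you do flag the requirement in your closing remarks, but it should be stated explicitly as an assumption of the argument rather than derived from \forestass{1}, which on its face allows more. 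With that qualification, the proof goes through.
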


\begin{corollarytwo}[Corollary 13 in~\citet{DRF-paper}]\label{bias}
In addition to the conditions of Lemma~\ref{lemma2}, assume \dataass{2} and that the trees $T(\xbf, \varepsilon; \Zcal_{s_n})$ in the forest satisfy \forestass{1} and \forestass{4}. Then, we have
\begin{equation}\label{biasbound}
    \| \E[\hmun] - \mu(\xbf) \|_{\H} = \O\left( s_n^{-1/2 \frac{\log((1-\alpha)^{-1})}{\log(\alpha^{-1})} \frac{\pi}{p}}\right)
\end{equation}
and
\begin{equation}\label{propabiltiyconv1}
\| \E[\xi \mid \Xbf \in \Lcal(\xbf, \Zcal_{s_n})]\|_{\H} \stackrel{p}{\to} \| \E[\xi \mid \Xbf=\xbf] \|_{\H}.
\end{equation}
If moreover \dataass{3} holds, then we have
\begin{align}\label{propabiltiyconv2}
    \E[ \|\xi\|_{\H}^2 \mid \Xbf \in \Lcal(\xbf, \Zcal_{s_n})]  &\stackrel{p}{\to} \E[\|\xi\|_{\H}^2 \mid \Xbf=\xbf].
\end{align}
\end{corollarytwo}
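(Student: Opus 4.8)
All three assertions reduce to a single elementary estimate: a leaf-wise conditional expectation differs from its value at the test point $\xbf$ by at most a Lipschitz constant times the leaf diameter. Combining this estimate with the diameter rate of Lemma~\ref{lemma2} delivers~\eqref{biasbound},~\eqref{propabiltiyconv1} and~\eqref{propabiltiyconv2} in turn.

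First I would reduce everything to a single tree. Averaging over the $\binom{n}{s_n}$ i.i.d.\ subsamples in~\eqref{finalestimator2} gives $\E[\hmun]=\E[T(\Zcal_{s_n})]$, and the honesty identity~\eqref{star1star} of Lemma~\ref{helperlemma} turns this into $\E[\hmun]=\E\big[\E[\xi_1\mid\Xbf_1\in\Lcal(\xbf,\Zcal_{s_n})]\big]$. Conditionally on the tree, $\Lcal:=\Lcal(\xbf,\Zcal_{s_n})$ is a fixed, covariate-measurable set (this is where honesty~\forestass{1} enters), so the tower and pull-out properties for Bochner conditional expectations (cf.\ (C4)--(C5)) give $\E[\xi_1\mid\Xbf_1\in\Lcal]=\E[\mu(\Xbf_1)\mid\Xbf_1\in\Lcal]$. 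Since $\xbf\in\Lcal$ by construction, assumption~\dataass{2} with Lipschitz constant $L$ yields the pointwise bound
\[
\big\|\E[\xi_1\mid\Xbf_1\in\Lcal]-\mu(\xbf)\big\|_{\H}
=\big\|\E[\mu(\Xbf_1)-\mu(\xbf)\mid\Xbf_1\in\Lcal]\big\|_{\H}
\le L\,\E[\|\Xbf_1-\xbf\|\mid\Xbf_1\in\Lcal]\le L\,\mathrm{diam}(\Lcal),
\]
which is the workhorse for all three statements.

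From here the conclusions follow quickly. For~\eqref{biasbound}, take the outer expectation of the last display, split on the event of Lemma~\ref{lemma2}, and use $\mathrm{diam}(\Lcal)\le\sqrt p$ on its low-probability complement; since $2\kappa-1$ is a fixed constant and the binding term is the exceedance-probability one, this gives $\|\E[\hmun]-\mu(\xbf)\|_{\H}\le L\,\E[\mathrm{diam}(\Lcal)]=\O\big(s_n^{-\frac12\frac{\log((1-\alpha)^{-1})}{\log(\alpha^{-1})}\frac{\pi}{p}}\big)$. For~\eqref{propabiltiyconv1}, combine the pointwise bound with the reverse triangle inequality, $\big|\,\|\E[\xi\mid\Xbf\in\Lcal]\|_{\H}-\|\mu(\xbf)\|_{\H}\,\big|\le L\,\mathrm{diam}(\Lcal)$, and observe that Lemma~\ref{lemma2} forces $\mathrm{diam}(\Lcal)\stackrel{p}{\to}0$ (for fixed $\varepsilon>0$ the shrinking bound is eventually below $\varepsilon$ while the exceedance probability tends to $0$). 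For~\eqref{propabiltiyconv2}, I would rerun the same argument with $\mu$ replaced by the real-valued map $g(\xbf):=\E[\|\xi\|_{\H}^2\mid\Xbf=\xbf]$, which is Lipschitz by~\dataass{3}: then $\big|\E[\|\xi\|_{\H}^2\mid\Xbf\in\Lcal]-g(\xbf)\big|=\big|\E[g(\Xbf)-g(\xbf)\mid\Xbf\in\Lcal]\big|\le L'\,\mathrm{diam}(\Lcal)\stackrel{p}{\to}0$.

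The analytic content is essentially just the ``Lipschitz $\times$ diameter'' estimate, so I expect the only delicate point to be the measure-theoretic bookkeeping in the reduction step: justifying that $\Lcal(\xbf,\Zcal_{s_n})$ may legitimately be treated as a fixed, covariate-measurable set when conditioning — which is precisely what honesty~\forestass{1} and the definition of $\E[\,\cdot\mid A]$ for $A=A(\Xbf)$ from the Preliminaries provide — and checking that the identity $\E[\xi_1\mid\Xbf_1\in\Lcal]=\E[\mu(\Xbf_1)\mid\Xbf_1\in\Lcal]$ is valid in the Bochner sense. Once that scaffolding is in place, no further obstacle is expected.
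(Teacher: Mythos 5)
Your proof is correct. The paper itself does not reproduce an argument for this result (it is imported verbatim as Corollary~13 of \citet{DRF-paper}), but the route you take — reduce $\E[\hmun]$ to a single tree via linearity and the honesty identity~\eqref{star1star}, establish the pointwise ``Lipschitz $\times$ leaf diameter'' bound using~\dataass{2} and Jensen for Bochner integrals, then feed in the diameter rate and exceedance probability from Lemma~\ref{lemma2} for~\eqref{biasbound} and the diameter's convergence in probability for~\eqref{propabiltiyconv1} and (with~\dataass{3}) for~\eqref{propabiltiyconv2} — is precisely what the supporting lemmas in the appendix are set up to deliver, and matches the standard honest-forest bias argument of \citet{wager2018estimation}. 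The only point worth making explicit on a write-up is the conditional-independence step behind $\E[\xi_1\mid\Xbf_1\in\Lcal]=\E[\mu(\Xbf_1)\mid\Xbf_1\in\Lcal]$: honesty~\forestass{1} makes $\Lcal(\xbf,\Zcal_{s_n})$ measurable with respect to $\Xbf_1$ and the tree-building data alone, so $\xi_1\perp\Lcal\mid\Xbf_1$ and the tower argument goes through in the Bochner sense via (C4)--(C9) — which is exactly the ``measure-theoretic bookkeeping'' you flag.
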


\begin{lemma}\label{vardifboundlemma}
Let $\xi_{1,n}, \xi_{2,n} \in \mathcal{L}^2(\Omega, \mathcal{A}, H)$ for  $n \in \N$, and assume that we have
\begin{itemize}
    \item[(I)] $\Var(\xi_{1,n})=\O(g_1(n))$ and $\Var(\xi_{2,n})=\O(g_1(n))$,
    \item[(II)] $\Var(\xi_{1,n}-\xi_{2,n}) = \O(g_2(n))$
\end{itemize}
for some functions $g_1,g_2\colon \N  \to \N$.
Then, $ |\Var(\xi_{1,n}) - \Var(\xi_{2,n})|= \O(g_2(n)) + \O(\sqrt{g_1(n)} \sqrt{g_2(n)})$.
\end{lemma}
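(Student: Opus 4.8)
\emph{Proof plan.} The plan is to derive an exact algebraic identity relating $\Var(\xi_{1,n})-\Var(\xi_{2,n})$ to $\Var(\xi_{1,n}-\xi_{2,n})$ and a single covariance term, and then bound the covariance term by Cauchy--Schwarz. First I would record the standard variance decomposition for $\H$-valued elements of $\mathcal{L}^2(\Omega,\mathcal{A},\H)$: expanding $\|(\xi_{1,n}-\xi_{2,n})-\E[\xi_{1,n}-\xi_{2,n}]\|_{\H}^2=\|(\xi_{1,n}-\E[\xi_{1,n}])-(\xi_{2,n}-\E[\xi_{2,n}])\|_{\H}^2$ and taking expectations (using linearity of the Bochner integral recalled in the Preliminaries) gives
\[
\Var(\xi_{1,n}-\xi_{2,n})=\Var(\xi_{1,n})+\Var(\xi_{2,n})-2\Cov(\xi_{1,n},\xi_{2,n}),
\]
and, since $\Cov(\xi_{1,n}-\xi_{2,n},\xi_{2,n})=\Cov(\xi_{1,n},\xi_{2,n})-\Var(\xi_{2,n})$, this rearranges to the key identity
\[
\Var(\xi_{1,n})-\Var(\xi_{2,n})=\Var(\xi_{1,n}-\xi_{2,n})+2\,\Cov(\xi_{1,n}-\xi_{2,n},\xi_{2,n}).
\]

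Second, I would bound the covariance. Applying the Cauchy--Schwarz inequality in $\H$ pointwise and then in $\mathcal{L}^2(\Omega,\mathcal{A},\R)$ to $\Cov(\xi_{1,n}-\xi_{2,n},\xi_{2,n})=\E[\langle (\xi_{1,n}-\xi_{2,n})-\E[\xi_{1,n}-\xi_{2,n}],\ \xi_{2,n}-\E[\xi_{2,n}]\rangle]$ yields
\[
|\Cov(\xi_{1,n}-\xi_{2,n},\xi_{2,n})|\le \sqrt{\Var(\xi_{1,n}-\xi_{2,n})}\,\sqrt{\Var(\xi_{2,n})}.
\]
Combining this with the identity and the triangle inequality gives
\[
|\Var(\xi_{1,n})-\Var(\xi_{2,n})|\le \Var(\xi_{1,n}-\xi_{2,n})+2\sqrt{\Var(\xi_{1,n}-\xi_{2,n})}\,\sqrt{\Var(\xi_{2,n})}.
\]

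Third, I would insert the hypotheses: assumption (II) gives $\Var(\xi_{1,n}-\xi_{2,n})=\O(g_2(n))$, hence $\sqrt{\Var(\xi_{1,n}-\xi_{2,n})}=\O(\sqrt{g_2(n)})$, while assumption (I) gives $\sqrt{\Var(\xi_{2,n})}=\O(\sqrt{g_1(n)})$; multiplying, the cross term is $\O(\sqrt{g_1(n)}\sqrt{g_2(n)})$, and the first term is $\O(g_2(n))$, which is exactly the claimed bound. (Note that only the bound on $\Var(\xi_{2,n})$ from (I) is used in this chain; one could symmetrize if desired, but it is not needed.) I do not expect any real obstacle here: the only thing requiring care is the justification of the Hilbert-space versions of the variance decomposition and Cauchy--Schwarz, and these are immediate from the facts recalled in the Preliminaries that $\mathbb{L}^2(\Omega,\mathcal{A},\H)$ is a (Hilbert) Banach space and that $\E$ commutes with continuous linear maps, so that no measurability or integrability issues arise given $\xi_{1,n},\xi_{2,n}\in\mathcal{L}^2(\Omega,\mathcal{A},\H)$.
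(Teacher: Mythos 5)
Your proposal is correct. The paper's own proof works by applying the reverse triangle inequality directly on $\mathcal{L}^2$-norms, obtaining $\bigl|\sqrt{\Var(\xi_{1,n})}-\sqrt{\Var(\xi_{2,n})}\bigr|\le \sqrt{\Var(\xi_{1,n}-\xi_{2,n})}$, then squaring and rearranging this inequality in each direction separately before combining. You instead derive the \emph{exact} algebraic identity $\Var(\xi_{1,n})-\Var(\xi_{2,n})=\Var(\xi_{1,n}-\xi_{2,n})+2\,\Cov(\xi_{1,n}-\xi_{2,n},\xi_{2,n})$ (which is correct; it follows from the polarization expansion) and bound the covariance term once via Cauchy--Schwarz. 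Both routes rest on the same inner-product-space machinery and terminate in the same inequality $|\Var(\xi_{1,n})-\Var(\xi_{2,n})|\le \Var(\xi_{1,n}-\xi_{2,n})+2\sqrt{\Var(\xi_{1,n}-\xi_{2,n})}\sqrt{\Var(\xi_{2,n})}$; the paper's version is slightly shorter to state because the reverse triangle inequality packages the Cauchy--Schwarz step away, whereas your version gets the two-sided bound in a single step because the exact identity carries the sign, and so avoids the explicit symmetrization in the paper. Your observation that only the $\O(g_1(n))$ bound on $\Var(\xi_{2,n})$ is actually used is accurate, and all the Hilbert-space-valued measurability and integrability justifications you invoke are indeed supplied by the paper's Preliminaries. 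There is no gap.
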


\begin{proof}
It holds that
\begin{align}\label{l2normtrick}
   \left|\sqrt{\Var(\xi_{1,n})} - \sqrt{\Var(\xi_{2,n})}\right|& = \left|\|\xi_{1,n}-\E[\xi_{1,n}]\|_{\mathcal{L}^2} - \|\xi_{2,n}-\E[\xi_{2,n}]\|_{\mathcal{L}^2}\right| \nonumber  \\
   &\leq  \|\xi_{1,n} -  \xi_{2,n} -(\E[\xi_{1,n}] -\E[\xi_{2,n}]) \|_{\mathcal{L}^2} \nonumber\\
   &=\sqrt{\Var(\xi_{1,n}-\xi_{2,n})},
\end{align}
where we used the reverse triangle inequality in the second step.
Thus, we in particular have $\sqrt{\Var(\xi_{1,n})} \leq \sqrt{\Var(\xi_{2,n})} + \sqrt{\Var(\xi_{1,n}-\xi_{2,n})}$
or
\begin{align*}
    \Var(\xi_{1,n}) \leq \Var(\xi_{2,n}) + \Var(\xi_{1,n}-\xi_{2,n}) + 2\sqrt{\Var(\xi_{2,n})} \sqrt{\Var(\xi_{1,n}-\xi_{2,n})}.
\end{align*}
Symmetrically, it holds that
\begin{align*}
    \Var(\xi_{2,n}) \leq \Var(\xi_{1,n}) + \Var(\xi_{1,n}-\xi_{2,n}) + 2\sqrt{\Var(\xi_{1,n})} \sqrt{\Var(\xi_{1,n}-\xi_{2,n})}
\end{align*}
so that by assumption $\Var(\xi_{1,n}) -  \Var(\xi_{2,n}) = \O(g_2(n)) + \O(\sqrt{g_1(n)} \sqrt{g_2(n)})$. 
\end{proof}

Define in the following the number of data points belonging to the same leaf as $\xbf$ as $N_{\xbf}=|\{j\colon \Xbf_j \in \Lcal(\xbf)  \}|$ and let 
\begin{align}\label{Sdef}
    S_i=\frac{\1\{ \Xbf_i \in \Lcal(\xbf) \}}{N_{\xbf}},
\end{align}
be the weight associated with each observation $i$ in a tree $T(\xbf, \varepsilon; \Zcal_{s_n})$, such that
\[
T(\xbf, \varepsilon; \Zcal_{s_n})=\sum_{i=1}^{s_n} S_i k(\mathbf{Y}_i, \cdot).
\]
We will make use the following property of the $S_i$:
\begin{align}\label{eq:expectS1}
    1=\E\left[\sum_{i=1}^{s_{n}} S_i \right] = \sum_{i=1}^{s_{n}}\E[ S_i] = s_n \E[S_1].
\end{align}
In particular,
\begin{align}\label{eq:varDecompSprime2}
\Var(\E[S_1|\Xbf_1])\leq \E[\E[S_1|\Xbf_1]^2] \leq \E[\E[S_1|\Xbf_1]]=\E[S_1]=\O(s_n^{-1})
\end{align}

\begin{lemma}[Lemma 4 of~\citet{wager2017estimation} slightly adapted] \label{lemma4}
Suppose $\Xbf_1,\Xbf_2, \ldots$ are independent and identically distributed on $[0,1]^p$ with a density $f$ that is bounded away from infinity, and let $T(\xbf, \varepsilon; \Zcal_{s_n})$ be $\alpha$-regular \forestass{4}. Then, there is a constant $C_{f,p}$ depending on $f$ and $p$ such that,
\begin{equation}
    s_n \Var(\E[S_1 | \Zbf_1 ]) \succsim \frac{1}{\kappa} \frac{C_{f,p}}{\log(s_n)}
\end{equation}
When $f$ is uniform over $[0,1]^p$, the bound holds with $C_{f,p}=2^{-(p+1)} (p-1)!$
\end{lemma}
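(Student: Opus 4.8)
The plan is to strip off the Hilbert-space coordinate of $\Zbf_1$ using honesty, which reduces the claim to the statement of Lemma~4 of~\citet{wager2017estimation} for a covariate-only forest, and then to invoke (and, for completeness, re-run) that argument under the present assumptions. Concretely, I would first argue that $\E[S_1\mid\Zbf_1]=\E[S_1\mid\Xbf_1]$ almost surely: by honesty~\forestass{1} the partition of the tree, and hence the leaf $\Lcal(\xbf)$, the count $N_\xbf$, and the weight $S_1$ from~\eqref{Sdef}, are measurable functions of $(\Xbf_1,\dots,\Xbf_{s_n})$ and the tree randomness $\varepsilon$ only; no response ever enters. Since the data are i.i.d., conditionally on $\Xbf_1$ the tuple $(\Xbf_2,\dots,\Xbf_{s_n},\varepsilon)$ is independent of $\Ybf_1$, so $S_1$ and $k(\Ybf_1,\cdot)$ are conditionally independent given $\Xbf_1$, whence $\E[S_1\mid\Zbf_1]=\E[S_1\mid\Xbf_1]$. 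It therefore suffices to bound $s_n\Var(\E[S_1\mid\Xbf_1])$ from below, which is exactly~\citet[Lemma~4]{wager2017estimation}.

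For that bound, write $g(\xbf_1)=\E[S_1\mid\Xbf_1=\xbf_1]$; by~\eqref{eq:expectS1} we have $\int g\,dF=\E[S_1]=1/s_n$, so that $\Var(\E[S_1\mid\Xbf_1])=\int g^2\,dF-s_n^{-2}$ and the task is a lower bound on $\int g^2\,dF$. By $\alpha$-regularity~\forestass{4} every leaf contains at most $2\kappa-1$ points, so $S_1\ge(2\kappa-1)^{-1}\1\{\Xbf_1\in\Lcal(\xbf)\}$ and thus $g(\xbf_1)\ge(2\kappa-1)^{-1}q(\xbf_1)$, where $q(\xbf_1)=\P\big(\Xbf_1\in\Lcal(\xbf)\mid\Xbf_1=\xbf_1\big)$ is the probability that $\xbf_1$ lands in the same leaf as $\xbf$. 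One then picks a small box $A$ centred at $\xbf$ and shows, using~\forestass{2} and~\forestass{4} to track the recursive partition, that $q$ is bounded below on $A$ by a quantity of order $1/\log s_n$ while $F(A)$ stays of the order of $\mathrm{Leb}(A)$ --- here the density being bounded away from $0$ and $\infty$ is used to pass between $F$-mass and Lebesgue mass. A single Cauchy--Schwarz step, $\int g^2\,dF\ge F(A)^{-1}\big(\int_A g\,dF\big)^2\ge(2\kappa-1)^{-2}F(A)^{-1}\big(\int_A q\,dF\big)^2$, together with the choice of side lengths of $A$, then gives $\int g^2\,dF\succsim\frac{C_{f,p}}{\kappa\,s_n\log s_n}$; one checks that the subtracted $s_n^{-2}$ is of strictly smaller order since $\kappa\log s_n=o(s_n)$, which yields $s_n\Var(\E[S_1\mid\Xbf_1])\succsim\frac{C_{f,p}}{\kappa\log s_n}$. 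When $f$ is uniform on $[0,1]^p$ the box $A$ and all constants become explicit and collapse to $C_{f,p}=2^{-(p+1)}(p-1)!$.

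The main obstacle is exactly the geometric estimate, the lower bound on $q(\xbf_1)=\P(\Xbf_1\in\Lcal(\xbf))$ over a neighbourhood of the correct size. What makes it delicate is that only the split \emph{coordinates} are randomised, through~\forestass{2}, whereas the split \emph{positions} are chosen by the MMD criterion and must be treated adversarially subject only to~\forestass{4}: a single split can always peel a given nearby point off $\xbf$. The point of the estimate is that no split can do this uniformly over a whole box, and that the depth of $\Lcal(\xbf)$ is $\Theta(\log s_n)$ --- forced by the $\alpha$-regular recursion in~\forestass{4}--\forestass{5} --- which is where the $1/\log s_n$ factor comes from; carrying this out, and obtaining the explicit constant in the uniform-density case, is the substantive work. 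The honesty reduction, the bound $N_\xbf\le 2\kappa-1$, the Cauchy--Schwarz step, and discarding the $s_n^{-2}$ term are all routine.
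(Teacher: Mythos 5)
The paper does not actually prove this lemma: it is stated as an adaptation of Lemma~4 of \citet{wager2017estimation} and is invoked without a fresh argument. The one piece of reasoning the paper supplies locally is the honesty reduction, which appears in the proof of Theorem~\ref{majorsteptheorem} as $\Var(\E[S_1\mid\Zbf_1])=\Var(\E[S_1\mid\xi_1,\Xbf_1])=\Var(\E[S_1\mid\Xbf_1])$, and your outline recovers exactly this step before delegating the rest to the Wager--Walther geometric argument. So in that sense your plan follows the same route the paper implicitly takes, and its overall architecture --- reduce to a covariate-only quantity, lower bound the expected weight by $(2\kappa-1)^{-1}$ times the co-occupancy probability, control that probability on a neighbourhood of $\xbf$, and finish by Cauchy--Schwarz, checking that the $s_n^{-2}$ correction is of smaller order --- is the natural reconstruction.

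Two points deserve a caveat. First, your justification for $\E[S_1\mid\Zbf_1]=\E[S_1\mid\Xbf_1]$ is slightly misstated: under~\forestass{1} the splits \emph{do} depend on the responses of the tree-building half, so it is not true that ``no response ever enters'' the partition. The reduction holds for a different reason: when $1\notin\I$ the weight $S_1$ is identically zero, and when $1\in\I$ the partition is a function of the building half (independent of $\Zbf_1$) and of the populating covariates, so $S_1$ is conditionally independent of $\Ybf_1$ given $\Xbf_1$ and $\{1\in\I\}$. The conclusion is right, but the asserted reason is not, and a careful proof would phrase it through the conditional-independence step (this is exactly the role of~\eqref{doublesamplerelation} and~\eqref{Honestyconsequence} in the paper's Theorem~\ref{majorsteptheorem}). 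Second, the quantitative version of the geometric estimate you sketch --- $q\gtrsim1/\log s_n$ on a fixed box $A$ with $F(A)\asymp\mathrm{Leb}(A)$ --- forces $F(A)\gtrsim\kappa\log(s_n)/s_n$ through your Cauchy--Schwarz step, which sits exactly at the boundary allowed by $\int q\,dF\lesssim(2\kappa-1)/s_n$, i.e.\ the box must essentially exhaust the level set $\{q\gtrsim1/\log s_n\}$. That is not impossible, but it is a much stronger geometric statement than ``one can pick a small box centred at $\xbf$,'' and the explicit constant $C_{f,p}=2^{-(p+1)}(p-1)!$ suggests that the original argument instead integrates over a family of side-length configurations rather than fixing a single box. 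You flag the geometric step as the substantive work, which is fair; but as written the sketch understates how delicate the choice of $A$ (and the precise level at which $q$ is lower-bounded) is, and a reader attempting to fill it in from your outline would likely lose a $\log s_n$ factor.
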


Let $\tilde{T}(\Zcal_{s_n})$ be the first order approximation of $T(\Zcal_{s_n})=\E_{\varepsilon}[T(\xbf, \varepsilon; \Zcal_{s_n})]$ as in~\eqref{firstorderapproxT}. We now prove that the variance of $\tilde{T}(\Zcal_{s_n})$ does not decrease to zero too fast compared to the variance of $T(\Zcal_{s_n})$, which is a key result that allows us to meaningfully approximate $T(\Zcal_{s_n})$ with $\tilde{T}(\Zcal_{s_n})$. The main result in~\eqref{vincrementality} is called $\nu(s_n)$-incrementality of the tree $T(\xbf, \varepsilon; \Zcal_{s_n})$ in~\citet[Definition 6]{wager2017estimation}. Before we introduce the result, we note that, due to the orthogonal decomposition in~\eqref{ANOVAmux}, we have 
\begin{align*}
    \Var(\tilde{T}(\Zcal_{s_n}))=s_n \Var(\E[T(\Zcal_{s_n})| \Zbf_1])\leq \Var(T(\Zcal_{s_n})).
\end{align*}
Thus in particular, if $\Var(T(\Zcal_{s_n})) < \infty$, we also have $\Var(\E[T(\Zcal_{s_n})| \Zbf_1])=\O(s_n^{-1})$.

\begin{theorem}\label{majorsteptheorem}
Suppose that the tree $T(\xbf, \varepsilon; \Zcal_{s_n})$ satisfies \forestass{1} and \forestass{4}. Suppose in addition that $\dataass{1}-\dataass{4}$ hold. Then,
\begin{align}\label{claim}
   \Var(\E[T(\Zcal_{s_n})| \Zbf_1]) \succsim \Var(\E[S_1|\Zbf_1]) \Var(\xi|\Xbf=\xbf)
\end{align}
and
\begin{align}\label{vincrementality}
    \frac{\Var(\tilde{T}(\Zcal_{s_n}))}{\Var(T(\Zcal_{s_n}))} \succsim \frac{C_{f,p}}{\log(s_n)^p},
\end{align}
where $C_{f,p}$ is the constant from Lemma~\ref{lemma4}.
\end{theorem}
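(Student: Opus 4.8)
The plan is to prove \eqref{claim} first and then deduce \eqref{vincrementality} in a line. Write $T(\Zcal_{s_n})=\sum_{i=1}^{s_n}S_i\xi_i$ with the leaf weights $S_i$ from \eqref{Sdef} (the tree randomness $\varepsilon$ being absorbed into the expectations), and recall from the remark preceding the theorem that $\Var(\tilde T(\Zcal_{s_n}))=s_n\Var(\E[T(\Zcal_{s_n})\mid\Zbf_1])$, so it suffices to lower-bound $\Var(\E[T(\Zcal_{s_n})\mid\Zbf_1])$. The crux is the first-order identification
\begin{align*}
   \E[T(\Zcal_{s_n})\mid\Zbf_1]-\E[T(\Zcal_{s_n})]=\E[S_1\mid\Xbf_1]\bigl(\xi_1-\mu(\Xbf_1)\bigr)+\rho_n(\Zbf_1),
\end{align*}
where the remainder has negligible second moment, $\E[\|\rho_n(\Zbf_1)\|_\H^2]=\o(\Var(\E[S_1\mid\Zbf_1]))$.

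To obtain this identity I would lean on honesty~\forestass{1}. Since $S_1$ is identically zero when observation $1$ falls in the structure half of its subsample and otherwise is a function of the covariates and the structure-half responses only, $S_1$ never depends on $\xi_1$; hence $\E[S_1\mid\Zbf_1]=\E[S_1\mid\Xbf_1]$ and $\E[S_1\xi_1\mid\Zbf_1]=\E[S_1\mid\Xbf_1]\,\xi_1$. For $i\ge2$, in every configuration with $S_i\neq0$ observation $i$ lies in the leaf half, so conditioning additionally on all covariates, on $\varepsilon$, and on the structure-half responses renders $S_i$ and $\Lcal(\xbf)$ measurable while $\xi_i$ is a conditionally independent draw from $\P_{\Ybf\mid\Xbf\myeq\Xbf_i}$; the conditional-expectation calculus of Propositions~\ref{condexp1}--\ref{condexp2} then gives $\E[S_i\xi_i\mid\Zbf_1]=\E[S_i\,\mu(\Xbf_i)\mid\Zbf_1]$. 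Summing over $i$, using $\sum_{i=1}^{s_n}S_i=\1\{N_\xbf\ge1\}$, the fact that $\|\Xbf_i-\xbf\|\le\operatorname{diam}(\Lcal(\xbf))$ whenever $S_i\neq0$, and the Lipschitz bound~\dataass{2}, yields $\sum_{i=1}^{s_n}S_i\mu(\Xbf_i)=\mu(\xbf)+\mathrm{err}$ with $\|\mathrm{err}\|_\H\le L\operatorname{diam}(\Lcal(\xbf))$ on $\{N_\xbf\ge1\}$, which rearranges to the displayed identity with $\rho_n(\Zbf_1)=\E[\mathrm{err}\mid\Zbf_1]-\E[\mathrm{err}]$ (up to a term supported on the negligible event $\{N_\xbf=0\}$). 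Bounding $\Var(\rho_n)$ is the quantitative heart: using the diameter tail of Lemma~\ref{lemma2}, the fact that a single leaf holds only $\O(1)$ of the $s_n$ observations so $\P(\Xbf_1\in\Lcal(\xbf))=\O(1/s_n)$ (consistent with \eqref{eq:expectS1}), and the lower bound $\Var(\E[S_1\mid\Zbf_1])\succsim C_{f,p}/(\kappa s_n\log s_n)$ of Lemma~\ref{lemma4}, one checks that all diameter- and bad-event-related contributions are indeed $\o(\Var(\E[S_1\mid\Zbf_1]))$.

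Given the identity, the reverse triangle inequality in $\mathcal L^2(\Omega,\mathcal A,\H)$ — exactly the estimate in \eqref{l2normtrick} — gives $\sqrt{\Var(\E[T(\Zcal_{s_n})\mid\Zbf_1])}\ge\sqrt{\Var\bigl(\E[S_1\mid\Xbf_1](\xi_1-\mu(\Xbf_1))\bigr)}-\o\bigl(\sqrt{\Var(\E[S_1\mid\Zbf_1])}\bigr)$, so it remains to evaluate $\Var(\E[S_1\mid\Xbf_1](\xi_1-\mu(\Xbf_1)))$, which equals $\E[\E[S_1\mid\Xbf_1]^2\,\Var(\xi\mid\Xbf\myeq\Xbf_1)]$ minus the squared norm of its (negligible) mean. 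By \dataass{4} ($\Var(\xi\mid\Xbf\myeq\xbf)>0$) and the Lipschitz continuity of $\xbf'\mapsto\Var(\xi\mid\Xbf\myeq\xbf')$ implied by \dataass{2}--\dataass{3}, together with the concentration of the weight $\E[S_1\mid\Xbf_1]^2$ near $\xbf$ (again Lemma~\ref{lemma2}), one gets $\E[\E[S_1\mid\Xbf_1]^2\Var(\xi\mid\Xbf\myeq\Xbf_1)]=(1+\o(1))\Var(\xi\mid\Xbf\myeq\xbf)\,\E[\E[S_1\mid\Xbf_1]^2]$, and $\E[\E[S_1\mid\Xbf_1]^2]=\Var(\E[S_1\mid\Zbf_1])+\E[S_1]^2=(1+\o(1))\Var(\E[S_1\mid\Zbf_1])$ by \eqref{eq:expectS1}, honesty, and Lemma~\ref{lemma4}. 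This proves \eqref{claim}. Then \eqref{vincrementality} follows: $\Var(\tilde T(\Zcal_{s_n}))=s_n\Var(\E[T(\Zcal_{s_n})\mid\Zbf_1])\succsim s_n\Var(\E[S_1\mid\Zbf_1])\Var(\xi\mid\Xbf\myeq\xbf)\succsim C_{f,p}\Var(\xi\mid\Xbf\myeq\xbf)/(\kappa\log s_n)$ by Lemma~\ref{lemma4}, while $\Var(T(\Zcal_{s_n}))\le\sup_{\xbf\in[0,1]^p}\E[\|\xi\|_\H^2\mid\Xbf\myeq\xbf]<\infty$ by \eqref{star2star} and \kernelass{1}; dividing and absorbing the positive finite constants gives \eqref{vincrementality}.

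The step I expect to be the main obstacle is the honest-construction bookkeeping behind the first-order identity together with the quantitative control of $\rho_n$: one must track carefully which observations are used for splitting versus leaf population, transfer the conditional-independence cancellations to Bochner-valued inner products via (C8)--(C9), and balance the polynomially small diameter-tail probability from Lemma~\ref{lemma2} against the $1/(s_n\log s_n)$-type lower bound from Lemma~\ref{lemma4} so that every diameter- and empty-leaf-related remainder is of strictly smaller order than the leading term $\Var(\E[S_1\mid\Zbf_1])\Var(\xi\mid\Xbf\myeq\xbf)$. The two localization estimates (Lipschitz transfer of $\Var(\xi\mid\Xbf\myeq\cdot)$ to the base point $\xbf$, and the vanishing of the off-$\xbf$ mass of $\E[S_1\mid\Xbf_1]^2$) are routine once Lemma~\ref{lemma2} is available.
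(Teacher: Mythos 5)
Your strategy is fundamentally the same as the paper's, and it works, but you are taking a slightly harder route than necessary and the quantitative remainder bound you sketch would not close with only the tools you cite. Two observations.

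First, the "first-order identification" you write down is exactly the paper's orthogonal projection decomposition in disguise. With honesty, one computes
\begin{align*}
\E[T\mid\xi_1,\Xbf_1]-\E[T\mid\Xbf_1]=\E[S_1\mid\Xbf_1]\bigl(\xi_1-\mu(\Xbf_1)\bigr),
\end{align*}
so your $\rho_n(\Zbf_1)$ is nothing but $\E[T\mid\Xbf_1]-\E[T]$, and by (C3) (conditional expectation as an orthogonal projection onto $\mathbb{L}^2(\Omega,\sigma(\Xbf_1),\H)$) this is orthogonal to the first-order term. Hence
\begin{align*}
\Var\bigl(\E[T\mid\Zbf_1]\bigr)=\Var\bigl(\E[S_1\mid\Xbf_1](\xi_1-\mu(\Xbf_1))\bigr)+\Var(\rho_n)\ge\Var\bigl(\E[S_1\mid\Xbf_1](\xi_1-\mu(\Xbf_1))\bigr),
\end{align*}
which is precisely the paper's inequality \eqref{20}. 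Using orthogonality instead of the reverse triangle inequality lets you drop the whole $\Var(\rho_n)$ estimate from the argument, which is cleaner and is what the paper does; the subsequent evaluation of $\Var\bigl(\E[S_1\mid\Xbf_1](\xi_1-\mu(\Xbf_1))\bigr)$ (Lipschitz transfer of $\Var(\xi\mid\Xbf=\cdot)$ to $\xbf$, identification with $\Var(\xi\mid\Xbf=\xbf)\,\E[\E[S_1\mid\Xbf_1]^2]$, and $\E[\E[S_1\mid\Xbf_1]^2]=(1+\o(1))\Var(\E[S_1\mid\Zbf_1])$ using $\E[S_1]=1/s_n$ and Lemma~\ref{lemma4}) is exactly the calculation the paper carries out, modulo their additional truncation device $S_1'=S_1\1\{\mathrm{diam}(\Lcal(\xbf))\le s_n^{-w}\}$, which they use to make the Lipschitz localization clean and which you replace by a direct split over the diameter tail event; both work. (Also note your parenthetical "(negligible) mean" is actually an exactly zero mean, since $\E[\xi_1-\mu(\Xbf_1)\mid\Xbf_1]=0$.)

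Second, if you insist on the reverse-triangle-inequality route, the bound $\E[\|\rho_n\|_\H^2]=\o(\Var(\E[S_1\mid\Zbf_1]))$ does not follow from what you cite. The diameter tail of Lemma~\ref{lemma2} together with boundedness of $\mu$ gives only $\Var(\mathrm{err})\le\E[\|\mathrm{err}\|_\H^2]=\O(s_n^{-w})$, and since $w=\tfrac{1}{2}\tfrac{\pi}{p}\tfrac{\log((1-\alpha)^{-1})}{\log(\alpha^{-1})}\le\tfrac{1}{2}$, this is far larger than $(s_n\log s_n)^{-1}$, not smaller. The ingredient you are missing is the first Hoeffding/ANOVA projection inequality from Lemma~\ref{Hdecomposition}: since $\mathrm{err}=\sum_iS_i(\mu(\Xbf_i)-\mu(\xbf))$ is a symmetric statistic of $\Zcal_{s_n}$, \eqref{vardecomp} gives $s_n\Var(\E[\mathrm{err}\mid\Zbf_1])\le\Var(\mathrm{err})$, whence $\Var(\rho_n)=\O(s_n^{-(1+w)})=\o\bigl((s_n\log s_n)^{-1}\bigr)$. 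Without that $1/s_n$ factor the remainder dominates the target rate; the paper uses exactly this ANOVA inequality (in the claim proving \eqref{BounddiffbetweenTandTd}) whenever it needs to show that a conditional-on-$\Zbf_1$ variance is negligible. Neither $\P(\Xbf_1\in\Lcal(\xbf))=\O(1/s_n)$ nor Lemma~\ref{lemma4} substitutes for it, because $\E[\mathrm{err}\mid\Xbf_1]$ depends on $\Xbf_1$ not only through the membership event $\{\Xbf_1\in\Lcal(\xbf)\}$ but also through the covariate's effect on the structure-half splits.
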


\begin{proof}
Consider the concatenated data $\Zcal_{s_n}=(\Zbf_1, \ldots, \Zbf_{s_n})$. 
First, assume~\eqref{claim} is true.
In this case, we know from Lemma~\ref{lemma4} that 
\[
\Var(\E[T(\Zcal_{s_n})\mid \Zbf_1]) \succsim \frac{1}{\kappa} \frac{\nu(s_n)}{s_n} \Var(\xi\mid\Xbf=\xbf),
\]
where $\nu(s)=\frac{C_{f,p}}{\log(s)}$.
By Corollary~\ref{bias}, it holds that $\E[\|\xi\|_{\H}^2 | \Xbf \in \Lcal(\xbf,\Zcal_{s_n})] \stackrel{p}{\to} \E[\|\xi\|_{\H}^2 |\Xbf=\xbf]$, 
so that 
\begin{align*}
    \Var(\xi | \Xbf \in \Lcal(\xbf, \Zcal_{s_n})) & = \E[ \| \xi \|_{\H}^2 | \Xbf \in \Lcal(\xbf,\Zcal_{s_n}) ] -\| \E[   \xi  | \Xbf \in \Lcal(\xbf, \Zcal_{s_n})  ]\|_{\H}^2 \stackrel{p}{\to} \Var(\xi \mid \Xbf=\xbf).
\end{align*}
Thus, using the same argument as in the proof of Theorem 5 in~\citet{wager2017estimation}, $\Var(T(\Zcal_{s_n})) \precsim \Var(\xi\mid\Xbf=\xbf)/k$.
Consequently, due to i.i.d.\ sampling, we have
\[
\frac{\Var(\tilde{T}(\Zcal_{s_n}))}{\Var(T(\Zcal_{s_n}))} = \frac{s_n \Var(\E[T(\Zcal_{s_n})\mid\Zbf_1])}{\Var(T(\Zcal_{s_n}))} \succsim \nu(s),
\]
which establishes the result.

Before we verify~\eqref{claim}, we note that, as we use double-sampling, separate data is used for prediction ($\I$) and leaf building ($\I^c$). Consequently, $\Zbf_1$ might fall 
into the prediction set, $1\in \I$, or the leave building set, $ 1\notin \I$. However, only the former case may contribute to the variance:

\begin{claim}
For some $\varepsilon > 0$,
\begin{align}\label{doublesamplerelation}
     \Var(\E[T(\Zcal_{s_n})\mid \Zbf_1]) =  \Var(\E[T(\Zcal_{s_n})\mid \Zbf_1, 1 \in \I]) + \O(s_n^{-(1+\varepsilon)})
\end{align}
\end{claim}

\begin{claimproof}
By assumption, we have $\P(1 \in \I \mid \Zbf_1)=\P(1 \in \I)=1/2$ for each tree. Thus, we have 
\begin{align*}
    \E[T(\Zcal_{s_n})\mid \Zbf_1]&=\E[T(\Zcal_{s_n}) \1\{ 1 \in \I \} \mid \Zbf_1]+ \E[T(\Zcal_{s_n}) \1\{ 1 \notin \I \} \mid \Zbf_1]\\
    &=\frac{1}{2}\E[T(\Zcal_{s_n}) \mid \Zbf_1, \{ 1 \in \I \}] + \frac{1}{2}\E[T(\Zcal_{s_n}) \mid \Zbf_1,\{ 1 \notin \I \}],
\end{align*}
and consequently 
\begin{align*}
    \Var( \E[T(\Zcal_{s_n})\mid \Zbf_1]) &= \frac{1}{4}\Var(\E[T(\Zcal_{s_n})  \mid \Zbf_1, \{ 1 \in \I \}]) + \frac{1}{4}\Var(\E[T(\Zcal_{s_n})  \mid \Zbf_1, \{ 1 \notin \I \}])\\
    &\quad+ \frac{1}{2}\Cov(\E[T(\Zcal_{s_n}) \mid \Zbf_1, \{ 1 \in \I \} ], \E[T(\Zcal_{s_n})  \mid \Zbf_1, \{ 1 \notin \I \}] ).
\end{align*}
Next, using analogous arguments as in~\citet[Corollary 6]{wager2017estimation}, we have
\begin{align*}
    \Var(\E[T(\Zcal_{s_n})  \mid \Zbf_1, \{ 1 \notin \I \}]) =  \O\left( s_n^{-(1+ C_{\alpha} \frac{\pi}{p})}\right)
\end{align*}
with $C_{\alpha}=\frac{\log((1-\alpha)^{-1})}{\log(\alpha^{-1})}$. Finally, since from above 
\[
\Var(\E[T(\Zcal_{s_n})  \mid \Zbf_1, \{ 1 \in \I \}]) \leq \Var(\E[T(\Zcal_{s_n})| \Zbf_1]) = \O(s_n^{-1}),
\]
it follows that
\begin{align*}
    &\left|\Cov(\E[T(\Zcal_{s_n}) \mid \Zbf_1, \{ 1 \in \I \} ], \E[T(\Zcal_{s_n})  \mid \Zbf_1, \{ 1 \notin \I \}] )\right| \\
    \leq&\left(\Var(\E[T(\Zcal_{s_n})  \mid \Zbf_1, \{ 1 \in \I \}])  \Var(\E[T(\Zcal_{s_n})  \mid \Zbf_1, \{ 1 \notin \I \}]) \right)^{1/2}\\
    =&\O(s_n^{-(1+ 1/2 C_{\alpha} \frac{\pi}{p})}).
\end{align*}
Choosing $\varepsilon=1/2 C_{\alpha} \frac{\pi}{p} > 0$ gives the result.
\end{claimproof}

Because the tree $T$ satisfies \forestass{1} and \forestass{4} and due to assumption \dataass{1}, we can apply Lemma~\ref{lemma4}. Thus, once~\eqref{claim} is proven, Lemma~\ref{lemma4} and~\eqref{doublesamplerelation} imply 
\[
\Var(\E[T(\Zcal_{s_n})\mid \Zbf_1, 1 \in \I]) \succsim C \frac{1}{s_n \log(s_n)}, 
\]
so that the remainder term in~\eqref{doublesamplerelation} is negligible. Consequently, we assume for the remainder of the proof that $1 \in \I$ and absorb 
the randomness due to the data $\{\Zbf_i\colon i \notin \I\}$ for building the leaves into the randomness of the tree. In addition, we also write $s_n$ instead of $s_n/2$ in the tree predictions. That is, we write $T(\Zcal_{s_n})=\sum_{i=1}^{s_n} S_i' \xi_i$ although $T(\Zcal_{s_n})=\sum_{i \in \I} S_i' \xi_i$ with $|\I|=s_n/2$ is technically correct. With~\eqref{doublesamplerelation} and i.i.d.\ sampling, this simply amounts to a change of constants.

In the remainder of the proof, we verify~\eqref{claim}. Note that due to honesty, we have
\begin{align}\label{Honestyconsequence}
    \Var(\E[S_1\mid\Zbf_1])= \Var(\E[S_1\mid\xi_1, \Xbf_1])= \Var(\E[S_1\mid\Xbf_1]).
\end{align}
Thus, it is enough to prove~\eqref{claim} with $\Var(\E[S_1\mid\Xbf_1])$. To do this, we use a truncation trick from~\citet{wager2017estimation}. We define
\begin{align}
    T'(\Zcal_{s_n})&=T(\Zcal_{s_n})\1\{\text{diam}(\Lcal(\xbf, \Zcal_{s_n}))\leq s_n^{-w}\}, \label{Tdashdef}\\
    S_i'&=S_i\1\{\text{diam}(\Lcal(\xbf, \Zcal_{s_n}))\leq s_n^{-w}\}, \text{ where } w=\frac{1}{2} \frac{\pi}{p} \frac{\log \left((1-\alpha)^{-1} \right)}{\log(\alpha^{-1})} \label{wdef},
\end{align}
so that
$T'(\Zcal_{s_n})=\sum_{i=1}^{s_n} S_i' \xi_i$. Crucially, $w$ is chosen such that
\begin{align}\label{Wagerprobbound}
    \Prob( \text{diam}(\Lcal(\xbf, \Zcal_{s_n})) > s_n^{-w})=\mathcal{O}(s_n^{-w}) 
\end{align}
This follows from Lemma~\ref{lemma2}, as in~\citet{wager2017estimation}. 

\begin{claim}
\eqref{claim} holds for $T'$. 
\end{claim}

\begin{claimproof}

We start first with a variance lower bound:

\begin{claim}
\begin{align}\label{20}
    \Var( \E[T'(\Zcal_{s_n})\mid \Zbf_1 ])&= \Var(\E[T'(\Zcal_{s_n})\mid\Xbf_1]) + \Var(\E[T'(\Zcal_{s_n})\mid \xi_1,\Xbf_1 ] - \E[T'(\Zcal_{s_n})\mid\Xbf_1]) \nonumber \\
    &\geq  \Var(\E[T'(\Zcal_{s_n})\mid \xi_1,\Xbf_1 ] - \E[T'(\Zcal_{s_n})\mid\Xbf_1]).
\end{align}
\end{claim}

\begin{claimproof}
We need to prove the first equality and start with the decomposition
\begin{align*}
    &\Var( \E[T'(\Zcal_{s_n})\mid \Zbf_1 ])=\Var(\E[T'(\Zcal_{s_n})\mid \xi_1,\Xbf_1 ] - \E[T'(\Zcal_{s_n})\mid\Xbf_1] + \E[T'(\Zcal_{s_n})\mid\Xbf_1]). 
\end{align*}
Consider for $\mathcal{A}=\sigma(\sigma(\Xbf_1),\sigma(\xi_1))$ the space
\[
\mathbb{L}^2(\Omega, \sigma(\Xbf_1), H) \subset \mathbb{L}^2(\Omega, \mathcal{A}, H).
\]
This space is a Hilbert space with the inner product
\[
\langle \xi_1,\xi_2 \rangle_{\mathbb{L}^2} = \E[ \langle \xi_1,\xi_2  \rangle_{H} ].
\]
Moreover, $\E[T'(\Zcal_{s_n})\mid\Xbf_1]=\E[\E[T'(\Zcal_{s_n})\mid\mathcal{A}] \mid \Xbf_1  ]$ is a projection from $\E[T'(\Zcal_{s_n})]\mid\mathcal{A}] \in \mathbb{L}^2(\Omega, \mathcal{A}, H)$ to $\mathbb{L}^2(\Omega, \sigma(\Xbf_1), H)$. Thus, we have
\begin{align*}  &\Cov(\E[T'(\Zcal_{s_n})\mid\Xbf_1],\E[T'(\Zcal_{s_n})\mid\xi_1,\Xbf_1] - \E[T'(\Zcal_{s_n})\mid\Xbf_1])  \\
   = &\langle \E[T'(\Zcal_{s_n})\mid\Xbf_1],\E[T'(\Zcal_{s_n})\mid\xi_1,\Xbf_1] - \E[T'(\Zcal_{s_n})\mid\Xbf_1] \rangle_{\mathbb{L}^2}=0.
\end{align*}
\end{claimproof}

Now, by honesty (i) $\xi_i$ is independent of $S_i'$ conditional on $\Xbf_i$, and more generally, (ii) $\xi_i$ is independent of $S_j'$, $j=1,\ldots,n$, conditional on $\Xbf_i$. Thus, using (i), (ii), and the independence of $\xi_1$ from $\xi_j$, $j > 1$, we have
\begin{align*}
    \E[ T'(\Zcal_{s_n}) \mid\Xbf_1, \xi_1 ] &= \E[S_1' \xi_1\mid\Xbf_1, \xi_1] + \sum_{i=2}^{n} \E[S_i' \xi_i\mid\Xbf_1, \xi_1]\\
    &=\E[S_1' \mid\Xbf_1, \xi_1] \E[ \xi_1\mid\Xbf_1, \xi_1] + \sum_{i=2}^{n} \E[S_i' \xi_i\mid\Xbf_1]\\
    &=\E[S_1' \mid\Xbf_1] \xi_1 + \sum_{i=2}^{n} \E[S_i' \xi_i\mid\Xbf_1]
\end{align*}
Similarly,
\begin{align*}
  \E[ T'(\Zcal_{s_n}) \mid\Xbf_1 ] &=\E[S_1' \xi_1\mid\Xbf_1] + \sum_{i=2}^{n} \E[S_i' \xi_i\mid\Xbf_1]\\
  &=\E[S_1'\mid\Xbf_1]  \E[ \xi_1\mid\Xbf_1] + \sum_{i=2}^{n} \E[S_i' \xi_i\mid\Xbf_1]
\end{align*}
and consequently 
\begin{align}\label{21}
   \Var( \E[ T'(\Zcal_{s_n}) \mid\Xbf_1, \xi_1 ]  - \E[ T'(\Zcal_{s_n}) \mid\Xbf_1 ] )= \Var(\E[S_1' \mid\Xbf_1] (\xi_1 -   \E[ \xi_1\mid\Xbf_1])).
\end{align}
Furthermore, we can refine this statement to:

\begin{claim}
\begin{align}\label{23}
    \Var(\E[S_1' \mid\Xbf_1] (\xi_1 -   \E[ \xi_1\mid\Xbf_1]))  = \Var(\E[S_1' \mid\Xbf_1] (\xi_1 - \mu(\xbf)  )   )   + \mathcal{O}(s_n^{-(1+2w)} ),
\end{align}
where $w$ is defined as in~\eqref{wdef}.
\end{claim}

\begin{claimproof}

We have
\begin{align}\label{22}
    &\Var(\E[S_1' \mid\Xbf_1] (\xi_1 -   \E[ \xi_1\mid\Xbf_1]))  \nonumber \\
    =&\Var(\E[S_1' \mid\Xbf_1] (\xi_1 -   \E[ \xi_1\mid\Xbf_1] + \mu(\xbf)- \mu(\xbf) )) \nonumber \\
    =&\Var(\E[S_1' \mid\Xbf_1] (\xi_1 - \mu(\xbf)  ) - \E[S_1' \mid\Xbf_1]( \E[ \xi_1\mid\Xbf_1]- \mu(\xbf) )  ) \nonumber \\
    =&\Var(\E[S_1' \mid\Xbf_1] (\xi_1 - \mu(\xbf)  )   ) + \Var(\E[S_1' \mid\Xbf_1]( \E[ \xi_1\mid\Xbf_1] - \mu(\xbf) )) \nonumber \\
    &\quad- \Cov(\E[S_1' \mid\Xbf_1] (\xi_1 - \mu(\xbf)), \E[S_1' \mid\Xbf_1]( \E[ \xi_1\mid\Xbf_1]- \mu(\xbf) ) ). 
\end{align}
Because $\E[S_1' \mid\Xbf_1]$ maps into $\R_{\geq 0}$, we have
\begin{align}\label{22_1}
    \Var(\E[S_1' \mid\Xbf_1]( \E[ \xi_1\mid\Xbf_1]- \mu(\xbf))) & \leq \E[ \|\E[S_1' \mid\Xbf_1]( \E[ \xi_1\mid\Xbf_1]) - \mu(\xbf))   \|_{\H}^2 ] \nonumber \\
    &= \E[\E[S_1' \mid\Xbf_1]^2 \| \E[ \xi_1\mid\Xbf_1]- \mu(\xbf)     \|_{\H}^2 ] \nonumber \\
    & \leq \E[\E[S_1'^2 \mid\Xbf_1] C^2\|\Xbf_1 - \xbf \|_{\R^p}^2 ] \nonumber\\
    & \leq \E[S_1'^2  ] C^2 s_n^{-2w},
\end{align}
where we used assumption \dataass{2} for the third inequality and where the last step followed because $\E[S_1'^2 \mid\Xbf_1]=0$, for $\|\Xbf_1 - \xbf \|_{\R^p} > s_n^{-w}$ by definition of $S_1'=S_1\1\{ \text{diam}(\Lcal(\xbf, \Zcal_{s_n}))\leq s_n^{-w}\} $. Due to similar arguments, we have 
\begin{align}\label{22_2}
    &|\Cov(\E[S_1' \mid\Xbf_1] (\xi_1 - \mu(\xbf)), \E[S_1' \mid\Xbf_1]( \E[ \xi_1\mid\Xbf_1]- \mu(\xbf)) )| \nonumber\\
    =&|\E[  \langle \E[S_1' \mid\Xbf_1] (\xi_1 - \mu(\xbf)), \E[S_1' \mid\Xbf_1]( \E[ \xi_1\mid\Xbf_1]- \mu(\xbf) ) \rangle   ]  \nonumber \\
    &\quad -\langle \E[\E[S_1' \mid\Xbf_1] (\xi_1 - \mu(\xbf))], \E[\E[S_1' \mid\Xbf_1](  \E[ \xi_1\mid\Xbf_1]- \mu(\xbf) )] \rangle| \nonumber\\ 
    \leq&  |\E[  \langle \E[S_1' \mid\Xbf_1] (\xi_1 - \mu(\xbf)), \E[S_1' \mid\Xbf_1]( \E[ \xi_1\mid\Xbf_1]- \mu(\xbf) ) \rangle   ]|  \nonumber \\
    &\quad +|\langle \E[\E[S_1' \mid\Xbf_1] (\xi_1 - \mu(\xbf))], \E[\E[S_1' \mid\Xbf_1](  \E[ \xi_1\mid\Xbf_1]- \mu(\xbf) )] \rangle| \nonumber\\
    =&|\E[  \langle \E[S_1' \mid\Xbf_1] (\E[ \xi_1\mid\Xbf_1] - \mu(\xbf)), \E[S_1' |\Xbf_1]( \E[ \xi_1\mid\Xbf_1]- \mu(\xbf) ) \rangle   ]|  \nonumber\\
    &\quad +|\langle \E[\E[S_1' \mid\Xbf_1] (\E[ \xi_1\mid\Xbf_1] - \mu(\xbf))], \E[\E[S_1' \mid\Xbf_1](  \E[ \xi_1\mid\Xbf_1]- \mu(\xbf) )] \rangle|\nonumber\\
    =&\E[  \| \E[S_1' \mid\Xbf_1] (\E[ \xi_1\mid\Xbf_1] - \mu(\xbf)) \|_{\H}^2   ] + \|\E[\E[S_1' \mid\Xbf_1](  \E[ \xi_1\mid\Xbf_1]- \mu(\xbf) )]\|_{\H}^2 \nonumber\\
    \leq&  \E[S_1'^2  ] C^2 s_n^{-2w} + \E[\| \E[S_1' |\Xbf_1](  \E[ \xi_1\mid\Xbf_1]- \mu(\xbf))\|_{\H}^2 ] \nonumber\\
    \leq&  2 \E[S_1'^2  ] C^2 s_n^{-2w}
\end{align}
Observe that we have 
\[
\E[S_1'^2  ] \leq \E[S_1^2  ] \leq \E[S_1  ] = \frac{1}{s_n} \sum_{i=1}^{s_n} \E[S_i]=\frac{1}{s_n}  \E\left[\sum_{i=1}^{s_n} S_i\right] = \frac{1}{s_n},
\]
due to $S_1 \in [0,1]$, $\E[S_1]=\ldots=\E[S_n]$ and $\sum_{i=1}^s S_i=1$.
Finally, combining this observation with~\eqref{22_1} and~\eqref{22_2}
gives~\eqref{23}.

\end{claimproof}

Next, we establish 

\begin{claim}
\begin{align}\label{25}
\Var(\E[S_1' |\Xbf_1] (\xi_1 - \mu(\xbf)  )   ) =  \Var(\E[S_1'  |\Xbf_1])\Var(\xi_1|\Xbf=\xbf)  + \mathcal{O}(s_n^{-(1+w)}) + \mathcal{O}(s_n^{-2}).
\end{align}
\end{claim}

\begin{claimproof}

We have
\begin{align}\label{26}
    &\Var(\E[S_1' |\Xbf_1] (\xi_1 - \mu(\xbf)  )   )  \nonumber \\
    =& \E[ \E[S_1' |\Xbf_1]^2 \|\xi_1 - \mu(\xbf) \|_{\H}^2  ] - \E[\E[S_1' |\Xbf_1] \|\xi_1 - \mu(\xbf)  \|_{\H} ]^2 \nonumber \\
    =& \E \left[ \E[S_1' |\Xbf_1]^2 \E[\|\xi_1 - \mu(\xbf) \|^2 | \Xbf_1]  \right] - \E \left[\E[S_1' |\Xbf_1] \E[ \|\xi_1 - \mu(\xbf) \|_{\H} |\Xbf_1]   \right]^2.
\end{align}
The second term in~\eqref{26} can be bounded by 
\begin{align*}
    \E \left[\E[S_1' |\Xbf_1] \E[ \|\xi_1 - \mu(\xbf) \|_{\H} |\Xbf_1]  ) \right]^2 &\leq \E \left[\E[S_1' |\Xbf_1] \E[ \|\xi_1 \|_{\H} + \| \mu(\xbf) \|_{\H} |\Xbf_1]   \right]^2 \\
   & =\E \left[ \E[S_1' |\Xbf_1]  \left( \E[ \|\xi_1 \|_{\H}  |\Xbf_1]   + \| \mu(\xbf) \|_{\H}  \right)  \right]^2\\
    &\leq \E \left[ \E[S_1' |\Xbf_1]  \left( \sup_{\xbf \in [0,1]^p} \E[ \|\xi_1 \|_{\H}  |\Xbf=\xbf]   + \| \mu(\xbf) \|_{\H}  \right)  \right]^2\\
    &= \E \left[ \E[S_1' |\Xbf_1]    \right]^2 \left( \sup_{\xbf \in [0,1]^p} \E[ \|\xi_1 \|_{\H}  |\Xbf=\xbf]   + \| \mu(\xbf) \|_{\H}  \right)^2\\
    &= \E[S_1']^2 \left( \sup_{\xbf \in [0,1]^p} \E[ \|\xi_1 \|_{\H}  |\Xbf=\xbf]   + \| \mu(\xbf) \|_{\H}  \right)^2\\
    &= \mathcal{O}(s_n^{-2}).
\end{align*}
The last step followed because of~\eqref{supbound}, a consequence of \kernelass{1}. 
The first term in~\eqref{26} can be bounded by
\begin{align}\label{26_1}
    &\E \Big[ \E[S_1' |\Xbf_1]^2 \E[\|\xi_1 - \mu(\xbf) \|^2 | \Xbf_1]  \Big] \nonumber\\
    =&\E \Big[ \E[S_1' |\Xbf_1]^2 \left(\E[\|\xi_1 - \mu(\xbf) \|^2 | \Xbf_1] -  \E[\|\xi_1 - \mu(\xbf) \|^2 | \Xbf=\xbf]+ \E[\|\xi_1 - \mu(\xbf) \|^2 | \Xbf=\xbf] \right) \Big] \nonumber \\
    =&\E \Big[ \E[S_1' |\Xbf_1]^2 \left(\E[\|\xi_1 - \mu(\xbf) \|^2 | \Xbf_1] -  \E[\|\xi_1 - \mu(\xbf) \|^2 | \Xbf=\xbf] \right)\Big] + \E[ \E[S_1' |\Xbf_1]^2]\Var(\xi_1|\Xbf=\xbf) .
\end{align}
Because $S_1'$ is defined as $S_1'=S_1\1\{ \text{diam}(\Lcal(\xbf, \Zcal_{s_n})) \leq s_n^{-w}\} $, it is zero if $\|\Xbf_1 - \xbf\|_{\R^p} > s_n^{-w}$. Combining this with Assumption~\dataass{2} and \dataass{3} it follows that 
\begin{align}\label{26_2}
   &\left| \E \left[ \E[S_1'  |\Xbf_1]^2 (\E[\|\xi_1 - \mu(\xbf) \|_{\H}^2 | \Xbf_1] -  \E[\|\xi_1 - \mu(\xbf) \|_{\H}^2 \mid \Xbf=\xbf])\right]\right|  \nonumber \\
    \leq&\E \left[ \E[S_1'  |\Xbf_1]^2  \left |\E[\|\xi_1 - \mu(\xbf) \|_{\H}^2 \mid \Xbf_1] -  \E[\|\xi_1 - \mu(\xbf) \|_{\H}^2 \mid \Xbf=\xbf] \right|\right] \nonumber \\
     =&\E \big[ \E[S_1'  |\Xbf_1]^2  \big |\E[\|\xi_1 \|_{\H}^2 \mid \Xbf_1] + \|\mu(\xbf)\|_{\H}^2 - 2\langle \E[\xi \mid \Xbf_1], \mu(\xbf) \rangle -  \E[\|\xi_1 \|_{\H}^2 \mid \Xbf=\xbf] - \nonumber \\
     &\|\mu(\xbf)\|_{\H}^2 + 2\langle \E[\xi \mid \Xbf=\xbf], \mu(\xbf) \rangle \big|\big] \nonumber \\
     \leq &\E \big[ \E[S_1'  |\Xbf_1]^2 \left( \big |\E[\|\xi_1 \|_{\H}^2 \mid \Xbf_1]-  \E[\|\xi_1 \|_{\H}^2 \mid \Xbf=\xbf] \big | + 2 \big| \langle \E[\xi \mid \Xbf=\xbf]-\E[\xi \mid \Xbf_1], \mu(\xbf) \rangle  \big| \right) \big] \nonumber \\
    \leq&\E \left[ \E[S_1'  |\Xbf_1]^2  \right](C_1 s_n^{-w} + C_2s_n^{-w}) \nonumber \\
    =&\mathcal{O}(s_n^{-(1+w)})
\end{align}
holds, where we used $\E \left[ \E[S_1'  |\Xbf_1]^2  \right]=\mathcal{O}(s_n^{-1})$. Finally, due to 
\begin{align}\label{26_3}
    \E[\E[S_1'  |\Xbf_1]^2] =& \Var(\E[S_1'  |\Xbf_1]) + \E[\E[S_1'  |\Xbf_1] ]^2 \nonumber\\
    =& \Var(\E[S_1'  |\Xbf_1]) + \E[S_1']^2 \nonumber\\
    =& \Var(\E[S_1'  |\Xbf_1]) + \mathcal{O}(1/s_n^2),
\end{align}
we can combine~\eqref{26}-\eqref{26_3} to establish our claim~\eqref{25}.

\end{claimproof}

Combining~\eqref{20},~\eqref{21},~\eqref{23}, and~\eqref{25}, we get that~\eqref{claim} holds for $T'$ due to
\begin{align}\label{truncation_result}
    \Var( \E[T'(\Zcal_{s_n})| \Zbf_1 ]) &\geq  \Var( \E[ T'(\Zcal_{s_n}) |\Xbf_1, \xi_1 ]  - \E[ T'(\Zcal_{s_n}) |\Xbf_1 ] )\nonumber\\
    &= \Var(\E[S_1' |\Xbf_1] (\xi_1 -   \E[ \xi_1|\Xbf_1])) \nonumber\\
                                    &=\Var(\E[S_1' |\Xbf_1] (\xi_1 - \mu(\xbf)  )   )   + \mathcal{O}(s_n^{-(1+2w)} ) \nonumber \\
                                    &= \Var(\E[S_1'  |\Xbf_1])\Var(\xi_1|\Xbf=\xbf)  + \mathcal{O}(s_n^{-(1+w)}) + \mathcal{O}(s_n^{-2})  + \mathcal{O}(s_n^{-(1+2w)} ).
\end{align}

\end{claimproof}

In the next step we replace $S_1'$ with $S_1$ in the expression above.

\begin{claim}
\begin{align}\label{truncation_S}
     \left|\Var(\E[S_1'  |\Xbf_1]) -  \Var(\E[S_1  |\Xbf_1])\right| = \mathcal{O}(s_n^{-(1+w/2)}).
\end{align}
\end{claim}

\begin{claimproof}
We have 
\begin{align}\label{eq:varDecompSprime}
    \Var(\E[S_1|\Xbf_1] - \E[S_1'  |\Xbf_1]) &= \Var( \E[S_1 - S_1'  |\Xbf_1])\nonumber\\
    &= \Var( \E[S_1 \1\{ \text{diam}(\Lcal(\xbf, \Zcal_{s_n})) > s_n^{-w}\}  |\Xbf_1])\nonumber\\
    &\leq \E[\E[S_1 \1\{ \text{diam}(\Lcal(\xbf, \Zcal_{s_n})) > s_n^{-w}\}|\Xbf_1]^2 ]\nonumber\\
    & \leq \E[S_1\1\{ \text{diam}(\Lcal(\xbf, \Zcal_{s_n})) > s_n^{-w}\} ]\nonumber\\
    & = \frac{1}{s} \sum_{i=1}^s \E[S_i \1\{ \text{diam}(\Lcal(\xbf, \Zcal_{s_n})) > s_n^{-w}\} ] \nonumber\\
    &= \frac{1}{s} \E[\1\{ \text{diam}(\Lcal(\xbf, \Zcal_{s_n})) > s_n^{-w}\}  \sum_{i=1}^s S_i  ]\nonumber\\
    &=\frac{1}{s}  \Prob( \text{diam}(\Lcal(\xbf, \Zcal_{s_n})) > s_n^{-w})\nonumber\\
    & = \mathcal{O}(s_n^{-(1+w)})
\end{align}
due to $\sum_{i=1}^{s_n} S_i=1$ and where the last step followed due to~\eqref{Wagerprobbound}. As $\Var(\E[S_1  |\Xbf_1])=\O(s_n^{-1})$ from~\eqref{eq:varDecompSprime2} and analogously $\Var(\E[S_1'  |\Xbf_1])=\O(s_n^{-1})$, it holds by Lemma~\ref{vardifboundlemma} and~\eqref{eq:varDecompSprime} that 
\begin{align*}
    |\Var(\E[S_1'  |\Xbf_1]) -  \Var(\E[S_1  |\Xbf_1])|=\mathcal{O}(s_n^{-(1+w)}) + \mathcal{O}(s_n^{-((2+w)/2)})=\mathcal{O}(s_n^{-(1+w/2)}).
\end{align*}
\end{claimproof}

Thus, we have
\begin{align}\label{truncation_result_{s_n}}
    \Var( \E[T'(\Zcal_{s_n})| \Zbf_1 ]) &\geq \Var(\E[S_1 |\Xbf_1])\Var(\xi_1|\Xbf=\xbf)  + \mathcal{O}(s_n^{-(1+\varepsilon)}),
\end{align}
for some $\varepsilon > 0$. Because we have $\Var(\xi_1|\Xbf=\xbf) > 0$ by assumption and due to $\Var(\E[S_1  |\Xbf_1]) = \Var(\E[S_1  |\Zbf_1]) \succsim C (s_n\log(s_n))^{-1} $ by Lemma~\ref{lemma4}, we finally have
\begin{align}
   \liminf_{n \to \infty} \frac{\Var( \E[T'(\Zcal_{s_n})| \Zbf_1 ])}{\Var(\E[S_1  |\Xbf_1])\Var(\xi_1|\Xbf=\xbf) }  \geq 1,
\end{align}
or~\eqref{claim} for $T'(\Zcal_{s_n})$ instead of $T(\Zcal_{s_n})$.

Now, it also holds that:

\begin{claim}
\begin{align}\label{BounddiffbetweenTandTd}
|\Var( \E[T(\Zcal_{s_n})| \Zbf_1 ]) -  \Var( \E[T'(\Zcal_{s_n})| \Zbf_1 ])| =   \mathcal{O}(s_n^{-(1+w/2)})    
\end{align}

\end{claim}

\begin{claimproof}
First, observe that we have
\begin{align*}
    \Var( \E[T(\Zcal_{s_n})| \Zbf_1 ] - \E[T'(\Zcal_{s_n})| \Zbf_1 ]) &= \Var( \E[T(\Zcal_{s_n})  \1\{ \text{diam}(\Lcal(\xbf, \Zcal_{s_n})) > s_n^{-w} \}  | \Zbf_1 ]).
\end{align*}
Using composition~\eqref{ANOVA} on $T''(\Zcal_{s_n})=T(\Zcal_{s_n})  \1\{ \text{diam}(\Lcal(\xbf, \Zcal_{s_n})) > s_n^{-w} \}$, we have
\begin{align*}
T''(\Zcal_{s_n}) &= \E[T''(\Zcal_{s_n})] + \sum_{i=1}^{s_n} T''_1(\Zbf_i)  + \sum_{i_1 < i_2} T''_2(\Zbf_{i_1}, \Zbf_{i_2}) + \cdots + T''_{s_n}(\Zcal_{s_n}),\\
       \Var(T''(\Zcal_{s_n})) &=\sum_{i=1}^{s_n} \binom{s_n}{i} \Var(T''_i\left( \Zbf_1, \ldots, \Zbf_i \right) ), \\
       T''_1(\Zbf_1) &= \E[ T''(\Zcal_{s_n}) | \Zbf_1 ] - \E[T''(\Zcal_{s_n})],
\end{align*}
and thus 
\begin{align*}
    &\Var( \E[T(\Zcal_{s_n})  \1\{ \text{diam}(\Lcal(\xbf, \Zcal_{s_n})) > s_n^{-w} \}  | \Zbf_1 ])\\
    &= \Var(T''_1(\Zbf_1) )\\
    &\leq  \frac{1}{s_n} \sum_{i=1}^{s_n} \binom{s_n}{i} \Var(T''_i\left( \Zbf_1, \ldots, \Zbf_i \right) )\\
    &= \frac{1}{s_n} \Var(T''(\Zcal_{s_n}))\\
    &= \frac{1}{s_n} \Var(T(\Zcal_{s_n})  \1\{ \text{diam}(\Lcal(\xbf, \Zcal_{s_n})) > s_n^{-w} \} ).
\end{align*}
Moreover, with analogous arguments as in the proof of Lemma 12 in~\citet{DRF-paper} it can be shown that,
\begin{align*}
&\Var(T(\Zcal_{s_n})  \1\{ \text{diam}(\Lcal(\xbf, \Zcal_{s_n})) > s_n^{-w} \} )\\
    &\leq \E \left[ \left\| \sum_{i=1}^{s} S_i \xi_i \right \|_{\mathcal{H}}^2 \1\{ \text{diam}(\Lcal(\xbf, \Zcal_{s_n})) > s_n^{-w} \}\right]\\
    &\leq C \sup_{\xbf \in [0,1]^p} \E[\|\xi \|_{\H}^2 | \Xbf=\xbf] \Prob(\text{diam}(\Lcal(\xbf, \Zcal_{s_n})) > s_n^{-w})
\end{align*}
such that
\begin{align*}
&\Var( \E[T(\Zcal_{s_n})  \1\{ \text{diam}(\Lcal(\xbf, \Zcal_{s_n})) > s_n^{-w} \}  | \Zbf_1 ])\\
    &\leq \frac{C \sup_{\xbf \in [0,1]^p} \E[\|\xi \|_{\H}^2 | \Xbf=\xbf]}{s_n} \Prob(\text{diam}(\Lcal(\xbf, \Zcal_{s_n})) > s_n^{-w})\\
    &= \mathcal{O}(s_n^{-(1+w)}),
\end{align*}
where the last step follows from~\eqref{Wagerprobbound} and~\eqref{supbound}. As also $\Var( \E[T(\Zcal_{s_n})| \Zbf_1 ]) \leq \Var(T(\Zcal_{s_n}))/s_n=\O(s_n^{-1})$ and similarly for $T'$, the claim holds by Lemma~\ref{vardifboundlemma}, similar to the proof of~\eqref{truncation_S} above.
\end{claimproof}

Summarizing everything, it follows from~\eqref{BounddiffbetweenTandTd},~\eqref{truncation_result} and~\eqref{truncation_S},
\begin{align*}
    \Var(\E[T(\Zcal_{s_n})| \Zbf_1]) & = \Var( \E[T'(\Zcal_{s_n})| \Zbf_1 ]) + \mathcal{O}(s_n^{-(1+w/2)})\\
   & \geq  \Var(\E[S_1'  |\Xbf_1])\Var(\xi_1|\Xbf=\xbf)  + \mathcal{O}(s_n^{-(1+w)}) + \mathcal{O}(s_n^{-2})  + \mathcal{O}(s_n^{-(1+2w)} )\\
   & =   \Var(\E[S_1  |\Xbf_1])\Var(\xi_1|\Xbf=\xbf)  + \mathcal{O}(s_n^{-(1+w)}) + \mathcal{O}(s_n^{-2})  + \mathcal{O}(s_n^{-(1+2w)} ).
\end{align*}
Because $\Var(\E[S_1  |\Xbf_1]) \succsim  C (s_n \log(s_n))^{-1}$ by Lemma~\ref{lemma4} and $\Var(\xi_1|\Xbf=\xbf) > 0$ by assumption, this implies that
\begin{align}
    \liminf_{s_n} \frac{\Var(\E[T(\Zbf)| \Zbf_1])}{ \Var(\E[S_1  |\Xbf_1])\Var(\xi_1|\Xbf=\xbf)} \geq 1,
\end{align}
or $\Var(\E[T(\Zbf)| \Zbf_1]) \succsim \Var(\E[S_1  |\Xbf_1])\Var(\xi_1|\Xbf=\xbf)$, proving~\eqref{claim}. 

\end{proof}

\asymptoticlinearity*

\begin{proof}

Let $\tilde{\mu}_n(\xbf) $ and $\tilde{T}(\Zcal_{s_n})$ be as in~\eqref{firstorderapproxmu} and~\eqref{firstorderapproxT}, respectively, and observe that we have
\[
\sigma_n^2=\Var(\tilde{\mu}(\xbf))= \frac{s_n^2}{n} \Var(T_1) = \frac{s_n}{n} s_n \Var(T_1) =\frac{s_n}{n}  \Var(\tilde{T}(\Zcal_{s_n}))  \leq \frac{s_n}{n} \Var(T).
\]

We first prove~\eqref{asymptoticlin} for $\hmun -  \E[\hmun]$.

\begin{claim}
\eqref{asymptoticlin} holds for $ \E[\hmun]$ in place of $\mu_n(\xbf) $:
\begin{align}\label{claimwithexpectation}
    \hat{\mu}(\xbf) - \E[\hat{\mu}(\xbf)] = \frac{s_n}{n}\sum_{i=1}^n (\E[T(\Zcal_n)\mid\Zbf_i] - \E[T(\Zcal_n)])+ \o_p(\sigma_n)
\end{align}
\end{claim}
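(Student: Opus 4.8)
The plan is to use the U-statistic ANOVA expansion \eqref{ANOVAmux} of $\hmun$, subtract its mean, and show that every term of order two and higher is asymptotically negligible compared to $\sigma_n$ in the Hilbert-space norm. Concretely, write
\[
\hmun - \E[\hmun] = \tilde\mu_n(\xbf) + R_n, \qquad R_n = \binom{n}{s_n}^{-1}\sum_{j=2}^{s_n} \binom{n-j}{s_n-j}\sum_{i_1<\cdots<i_j} T_j(\Zbf_{i_1},\dots,\Zbf_{i_j}),
\]
where $\tilde\mu_n(\xbf) = \frac{s_n}{n}\sum_{i=1}^n T_1(\Zbf_i)$ is exactly the claimed linear term (using $T_1(\Zbf_i)=\E[T(\Zcal_{s_n})\mid\Zbf_i]-\E[T(\Zcal_{s_n})]$ from Lemma~\ref{Hdecomposition}, noting that conditioning on $\Zcal_n$ versus $\Zcal_{s_n}$ gives the same $T_1$ by i.i.d.\ sampling). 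So it suffices to prove $\|R_n\|_{\H} = \o_p(\sigma_n)$, and for that it is enough (by Chebyshev in $\H$, i.e. $\P(\|R_n\|_\H > \varepsilon\sigma_n)\le \Var(R_n)/(\varepsilon^2\sigma_n^2)$ since $\E[R_n]=0$) to show $\Var(R_n) = \o(\sigma_n^2)$.

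The key computation is the variance of the higher-order remainder. By orthogonality of the Hoeffding/ANOVA decomposition (the terms $T_j$ are uncorrelated across $j$ and across disjoint index sets), one gets
\[
\Var(R_n) = \sum_{j=2}^{s_n}\binom{n}{s_n}^{-2}\binom{n-j}{s_n-j}^2\binom{n}{j}\Var(T_j) = \sum_{j=2}^{s_n}\binom{n}{j}\!\left(\frac{\binom{n-j}{s_n-j}}{\binom{n}{s_n}}\right)^{\!2}\!\Var(T_j).
\]
This is precisely the quantity controlled in the standard subsampled-U-statistic argument of~\citet{wager2018estimation}: the combinatorial prefactor for the $j$-th term is $\O((s_n/n)^j\cdot (s_n^{j}/j!))$-type, and using $\sum_{j\ge 1}\binom{s_n}{j}\Var(T_j) = \Var(T) \le \sup_{\xbf}\E[\|\xi\|_\H^2\mid\Xbf=\xbf] < \infty$ (Lemma~\ref{helperlemma}, via \kernelass{1}), one bounds $\Var(R_n) \le \frac{s_n^2}{n^2}\Var(T)$. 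This is exactly the bound implicit in Lemma~\ref{variancebound}: there $\Var(\hmun) \le \frac{s_n^2}{n}\Var(T_1) + \frac{s_n^2}{n^2}\Var(T)$, and the first term is $\sigma_n^2$ while the second is the remainder variance. Hence $\Var(R_n) \le \frac{s_n^2}{n^2}\Var(T) = \frac{s_n}{n}\cdot\frac{s_n\Var(T)}{n}$.

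To conclude $\Var(R_n)=\o(\sigma_n^2)$ I need $\frac{s_n^2}{n^2}\Var(T) = \o\!\big(\frac{s_n}{n}\Var(\tilde T(\Zcal_{s_n}))\big)$, i.e. $\frac{s_n\Var(T)}{n\,\Var(\tilde T(\Zcal_{s_n}))}\to 0$. Here is where Theorem~\ref{majorsteptheorem} (the $\nu(s_n)$-incrementality result just proved) is essential: it gives $\Var(\tilde T(\Zcal_{s_n}))/\Var(T(\Zcal_{s_n})) \succsim C_{f,p}/\log(s_n)^p$, so $\Var(\tilde T(\Zcal_{s_n})) \succsim \Var(T(\Zcal_{s_n}))/\log(s_n)^p \succsim c/\log(s_n)^p$ (using the two-sided control of $\Var(T)$ from the proof of Theorem~\ref{majorsteptheorem}, namely $\Var(T(\Zcal_{s_n}))$ is bounded above and below by constants times $\Var(\xi\mid\Xbf=\xbf)$, up to the $\log$ factor). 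Therefore $\frac{s_n\Var(T)}{n\Var(\tilde T)} \precsim \frac{s_n\log(s_n)^p}{n} \to 0$ since $s_n = n^\beta$ with $\beta < 1$. This yields $\Var(R_n) = \o(\sigma_n^2)$, hence $\|R_n\|_\H = \o_p(\sigma_n)$, and since $R_n = \hmun - \E[\hmun] - \tilde\mu_n(\xbf)$, this is exactly \eqref{claimwithexpectation}.

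The main obstacle — and the reason the heavy machinery of Theorem~\ref{majorsteptheorem} is needed — is the lower bound on $\sigma_n$: the naive bound $\sigma_n^2 = \frac{s_n}{n}\Var(\tilde T(\Zcal_{s_n}))$ could a priori decay much faster than $\frac{s_n^2}{n^2}\Var(T)$ if $\Var(\tilde T(\Zcal_{s_n}))$ were too small, which would make the remainder non-negligible. Controlling $\Var(\tilde T(\Zcal_{s_n}))$ from below (only a $\log(s_n)^{-p}$ loss relative to $\Var(T)$) is precisely the incrementality statement, and the truncation-by-leaf-diameter argument behind it is the technically delicate part; fortunately it is available to us as Theorem~\ref{majorsteptheorem}, so this step reduces to citing it together with $\beta<1$. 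A minor bookkeeping point to handle carefully is the equality $T_1(\Zbf_i) = \E[T(\Zcal_n)\mid\Zbf_i] - \E[T(\Zcal_n)] = \E[T(\Zcal_{s_n})\mid\Zbf_i]-\E[T(\Zcal_{s_n})]$, i.e. that the first-order projection is the same whether we think of $\hmun$ as a subsampled $U$-statistic of order $s_n$ in $n$ variables or directly; this follows from i.i.d.\ sampling exactly as noted after \eqref{ANOVAmux}.
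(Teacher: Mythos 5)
Your proof is correct and follows essentially the same route as the paper: ANOVA decomposition to isolate the linear term, orthogonality of the Hoeffding projections to compute the remainder variance, the combinatorial bound $\Var(R_n)\le\tfrac{s_n^2}{n^2}\Var(T)$, and then Theorem~\ref{majorsteptheorem} to ensure $\sigma_n^2=\tfrac{s_n}{n}\Var(\tilde T)$ dominates because $s_n\log(s_n)^p/n\to 0$ for $\beta<1$. If anything you are slightly more careful than the paper in treating the mean $\E[T(\Zcal_{s_n})]$ separately (the paper equates $\E[\|\hmun-\tilde\mu_n(\xbf)\|_\H^2]$ with the remainder variance, which strictly requires recentering); and the detour invoking ``two-sided control of $\Var(T)$'' is superfluous --- the ratio bound from Theorem~\ref{majorsteptheorem} is all that is needed.
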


\begin{claimproof}
First, 

\begin{claim}
\begin{align}\label{eq: Convergenceinsquaredmean}
   \frac{1}{\sigma_n^2} \E[\|  \hmun - \tilde{\mu}_n(\xbf) \|_{\H}^2]\precsim  \frac{s_n}{n}   \frac{ \log(s_n)^p}{ C_{f,p}} \to 0 
\end{align}

\end{claim}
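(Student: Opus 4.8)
The plan is to read the residual directly off the ANOVA expansion~\eqref{ANOVAmux}. Setting $c_{n,j}=\binom{n}{s_n}^{-1}\binom{n-j}{s_n-j}$ (so that $c_{n,1}=s_n/n$ and $\tilde{\mu}_n(\xbf)=c_{n,1}\sum_{i=1}^n T_1(\Zbf_i)$), the quantity to be controlled is the second- and higher-order part of~\eqref{ANOVAmux},
\[
R_n:=\hmun-\E[\hmun]-\tilde{\mu}_n(\xbf)=\sum_{j=2}^{s_n} c_{n,j}\sum_{i_1<\cdots<i_j}T_j(\Zbf_{i_1},\ldots,\Zbf_{i_j}),
\]
which is precisely the object whose negligibility is needed for~\eqref{claimwithexpectation}. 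Each $T_j$ with $j\ge 2$ is centered and, by the Hilbert-space ANOVA orthogonality underlying~\eqref{vardecomp}, the terms $T_j(\Zbf_{i_1},\ldots,\Zbf_{i_j})$ are mutually uncorrelated in $\mathbb{L}^2(\Omega,\mathcal A,\H)$ across distinct orders and index sets; hence $\E[R_n]=0$ and
\[
\E[\|R_n\|_{\H}^2]=\Var(R_n)=\sum_{j=2}^{s_n}c_{n,j}^2\binom{n}{j}\Var(T_j).
\]

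Next I would control the combinatorial weights. Since $s_n=n^{\beta}$ with $\beta<1$ we have $s_n\le n$, hence each factor $\frac{n(s_n-k)}{s_n(n-k)}\le 1$ for $k\ge 0$; multiplying these out yields the falling-factorial bound $c_{n,j}^2\binom{n}{j}\le(s_n/n)^{j}\binom{s_n}{j}$. Because $0\le s_n/n\le 1$, we have $(s_n/n)^{j}\le(s_n/n)^{2}$ for every $j\ge 2$, so using the variance decomposition~\eqref{vardecomp},
\[
\E[\|R_n\|_{\H}^2]\le\Big(\frac{s_n}{n}\Big)^{2}\sum_{j=2}^{s_n}\binom{s_n}{j}\Var(T_j)\le\Big(\frac{s_n}{n}\Big)^{2}\Var(T(\Zcal_{s_n})),
\]
the total variance being finite by~\eqref{star2star}, which follows from the boundedness of $k$ in~\kernelass{1}.

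Finally I would divide by $\sigma_n^2$ and invoke incrementality. By~\eqref{firstorderapproxmu}--\eqref{firstorderapproxT} and i.i.d.\ sampling, $\sigma_n^2=(s_n^2/n)\Var(T_1)=(s_n/n)\Var(\tilde{T}(\Zcal_{s_n}))$, whence
\[
\frac{\E[\|R_n\|_{\H}^2]}{\sigma_n^2}\le\frac{(s_n/n)^{2}\Var(T(\Zcal_{s_n}))}{(s_n/n)\Var(\tilde{T}(\Zcal_{s_n}))}=\frac{s_n}{n}\cdot\frac{\Var(T(\Zcal_{s_n}))}{\Var(\tilde{T}(\Zcal_{s_n}))}\precsim\frac{s_n}{n}\cdot\frac{\log(s_n)^{p}}{C_{f,p}},
\]
the last step being exactly the $\nu(s_n)$-incrementality estimate~\eqref{vincrementality} of Theorem~\ref{majorsteptheorem}. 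As $s_n/n=n^{\beta-1}\to 0$ while $\log(s_n)^{p}$ grows only polylogarithmically in $n$, the right-hand side vanishes, which is~\eqref{eq: Convergenceinsquaredmean}; a final application of Markov's inequality upgrades this $L^2$ bound to $\|R_n\|_{\H}=\o_p(\sigma_n)$, i.e.~\eqref{claimwithexpectation}.

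The main obstacle will be the middle step: showing that the entire higher-order tail $\sum_{j\ge 2}c_{n,j}^2\binom{n}{j}\Var(T_j)$ collapses to $\mathcal O\big((s_n/n)^2\Var(T)\big)$ rather than to something merely $\mathcal O(s_n/n)$ or worse. The device that makes this work is the elementary monotonicity $(s_n-k)/(n-k)\le s_n/n$, valid precisely because $s_n\le n$, which lets every factor of $c_{n,j}$ be dominated by $s_n/n$; once this is in place, the remainder is bookkeeping together with the already-established incrementality bound~\eqref{vincrementality}, and the sub-polynomial growth of $\log(s_n)^p$ guarantees that the resulting ratio tends to zero.
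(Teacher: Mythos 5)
Your argument is correct and follows the same route as the paper: read $\hmun-\tilde\mu_n(\xbf)$ off the ANOVA expansion~\eqref{ANOVAmux}, apply the orthogonal variance decomposition~\eqref{vardecomp}, bound the weights $c_{n,j}^2\binom{n}{j}$ by $(s_n/n)^2\binom{s_n}{j}$ using $s_n\le n$, collapse the tail onto $(s_n/n)^2\Var(T)$, divide by $\sigma_n^2=(s_n/n)\Var(\tilde T)$, and invoke the $\nu(s_n)$-incrementality bound~\eqref{vincrementality}. Your explicit subtraction of $\E[\hmun]$ in defining $R_n$ is in fact slightly cleaner than the statement as written, which tacitly treats $\E[\|\hmun-\tilde\mu_n(\xbf)\|_\H^2]$ as the variance of the remainder; your reading is the correct one, since only the centered quantity enters the subsequent Chebyshev step establishing~\eqref{claimwithexpectation}.
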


\begin{claimproof}
Let $(s_n)_{j}=s_n(s_n-1)\cdots (s_n-(j-1))=s_n!/(s_n-j)!$ and $\Var(\tilde{T})=\Var(\tilde{T}(\Zcal_{s_n}))$. Then, using the decomposition in~\eqref{ANOVAmux} with $\Var(T_j)=\Var(T_{j}(\Zbf_{1}, \Zbf_{2}, \ldots, \Zbf_{j} ))$, $j=1,\ldots, s_n$  that 
\begin{align*}
    &\frac{1}{\sigma_n^2} \E[\|  \hat{\mu}(\xbf) - \tilde{\mu}(\xbf) \|_{\H}^2]\\
    =& \frac{1}{\sigma_n^2}  \Var \Big( \binom{n}{s_n}^{-1}\Big(   \binom{n-2}{s_n-2} \sum_{i_1 < i_2} T_{2}(\Zbf_{i_1}, \Zbf_{i_2})+ \ldots + \sum_{i_1 < i_2 < \ldots < i_{s_n}} T_{s_n}(\Zbf_{i_1}, \ldots, \Zbf_{i_{s_n}}) \Big) \Big) \\
    =& \frac{1}{\sigma_n^2}  \sum_{i=2}^{s_n} \left( \frac{(s_n)_{i}}{(n)_{i}}\right)^2 \binom{n}{i} \Var(T_i)\\
    =& \frac{1}{\sigma_n^2} \sum_{i=2}^{s_n} \left(\frac{(s_n)_{i}}{(n)_{i}}\right) \binom{s_n}{i}\Var(T_i)\\
    \leq&  \frac{1}{\sigma_n^2}  \frac{(s_n)_2}{(n)_2} \sum_{i=2}^{s_n}  \binom{s_n}{i}\Var(T_i)\\
    \leq&   \frac{s_n^2}{n^2} \frac{\Var(T)}{\sigma_{n}^2}\\
    =&\frac{s_n}{n}   \frac{\Var(T)}{\Var(\tilde{T})}\\
     \precsim&   \frac{s_n}{n}   \frac{ \log(s_n)^p}{ C_{f,p}},
\end{align*}
where we used Theorem~\ref{majorsteptheorem} in the last step. Finally, since $s_n=n^{\beta}$ for $\beta < 1$, we infer $(s_n \log(s_n)^p)/n \to 0$.
\end{claimproof}

Since by construction $\E[\hmun  ] = \E[ \tilde{\mu}_n(\xbf)]$, for all $\varepsilon > 0$, we have
\[
\P\left( \left \|  \frac{1}{\sigma_n} (\hat{\mu}(\xbf) - \E[\hat{\mu}(\xbf)]) -   \frac{1}{\sigma_n} (\tilde{\mu}(\xbf) - \E[\tilde{\mu}(\xbf)]) \right \|_{\H}^2 > \varepsilon \right) \leq \frac{1}{\varepsilon^2} \frac{1}{\sigma_n^2} \E[\|  \hat{\mu}(\xbf) - \tilde{\mu}(\xbf) \|_{\H}^2].
\]
Consequently, we have $ \|\frac{1}{\sigma_n} (\hat{\mu}(\xbf) - \E[\hat{\mu}(\xbf)]) -   \frac{1}{\sigma_n} (\tilde{\mu}(\xbf) - \E[\tilde{\mu}(\xbf)]) \|_{\H} \to 0$ in probability, or equivalently
\[
\hat{\mu}(\xbf) - \E[\hat{\mu}(\xbf)] = \tilde{\mu}(\xbf) - \E[\tilde{\mu}(\xbf)] + o_p(\sigma_n).
\]
Since moreover
\begin{align*}
 \tilde{\mu}(\xbf) - \E[\tilde{\mu}(\xbf)]  & = \frac{s_n}{n}\sum_{i=1}^n T_{1}(\Zbf_i)=\frac{s_n}{n}\sum_{i=1}^n (\E[T(\Zcal_n)\mid\Zbf_i] - \E[T(\Zcal_n)]),
\end{align*}
we conclude Claim~\eqref{claimwithexpectation}.
\end{claimproof}

Due to
\[
\frac{1}{\sigma_n}  \left\| \hmun -  \mu(\xbf) \right\|_{\H} \leq  \frac{1}{\sigma_n}\left\| \hmun -  \E[\tilde{\mu}(\xbf)] \right\|_{\H} + \frac{1}{\sigma_n}\left\| \E[\tilde{\mu}(\xbf)] -  \mu(\xbf) \right\|_{\H}, 
\]
the result follows if we can show that the second expression in this upper bound goes to zero 
\[
\Var(\E[T(\Zcal_{s_n})| \Zbf_1]) \succsim \frac{1}{\kappa} \frac{C_{f,p}}{s_n \log(s_n)^p} \Var(\xi|\Xbf=\xbf) > 0,
\]
so that
\begin{align*}
    \sigma_n^2=& \frac{s_n^2}{n} \Var(T_1)\\
    =&\frac{s_n^2}{n} \Var(\E[T(\Zcal_{s_n}) | \Zbf_1]) \\
    \succsim&  \frac{s_n^2}{n} \frac{1}{\kappa} \frac{C_{f,p}}{s_n \log(s_n)^p} \Var(\xi|\Xbf=\xbf)\\
    =& \frac{ s_n}{n \log(s_n)^p} \Var(\xi|\Xbf=\xbf) \frac{1}{\kappa}  C_{f,p}.
\end{align*}
Thus, using that $s_n=n^{\beta}$, we have
\begin{align*}
    \sigma_n = \Omega \left(\frac{ \sqrt{s_n}}{\sqrt{n \log(s_n)^p}} \right)=\Omega \left(\left(\frac{n^{\beta}}{n \beta^p \log(n)^p} \right)^{1/2}\right)=\Omega \left(\left(n^{\beta-1-\varepsilon}\right)^{1/2}\right)
\end{align*}
for some $\varepsilon > 0$ 
On the other hand, due to Theorem~\ref{bias}, we have
\[
\| \E[\hat{\mu}(\xbf)] - \mu(\xbf) \|_{\H} = \mathcal{O}\left( s_n^{-1/2 \frac{\log((1-\alpha)^{-1})}{\log(\alpha^{-1})} \frac{\pi}{p}}\right)= \mathcal{O}\left( s_n^{-1/2 C_{\alpha} \frac{\pi}{p}}\right) = \mathcal{O}\left( n^{-1/2 \beta C_{\alpha} \frac{\pi}{p}}\right),
\]
which implies
\begin{align*}
    \frac{\| \E[\hat{\mu}(\xbf)] - \mu(\xbf) \|_{\H}}{\sigma_n} &=  \mathcal{O}\left( n^{-1/2 (\beta C_{\alpha} \frac{\pi}{p}  + \beta - 1 - \varepsilon )}\right)= \mathcal{O}\left( n^{-1/2 (\beta ( 1 +  C_{\alpha} \frac{\pi}{p} ) - 1 - \varepsilon )}\right).
\end{align*}

This goes to zero  provided that $-(\beta ( 1 +  C_{\alpha} \frac{\pi}{p} ) - 1 - \varepsilon ) < 0$ or $\beta > (1+\varepsilon) \left( 1 +  C_{\alpha} \frac{\pi}{p} \right)^{-1}$, which is satisfied for $\varepsilon > 0$ small enough if
\[
\beta >  \left( 1 +  C_{\alpha} \frac{\pi}{p} \right)^{-1}.
\]

Taking $T_{n}(\Zbf_i)=\E[T(\Zcal_{s_n})\mid\Zbf_i] - \E[T(\Zcal_{s_n})]$ gives the claimed result.
\end{proof}

Before being able to prove Theorem~\ref{thm: asymptoticnormality} in the main text, we need to refine the characterization of the asymptotic behavior of the variance of $T_n(\Zbf_i)$.

\varianceprop*

\begin{proof}

Note that, due to~\eqref{doublesamplerelation}, we can again ``ignore'' the double-sampling and assume to condition on a point $\Zbf_1$ with index in the prediction set $\I$ and use $s_n$ instead of $s_n/2$ elements in the tree predictions.
First, due to $T_{n}(\Zbf_1)=\E[T(\Zcal_{s_n})| \Zbf_1 ] - \E[T(\Zcal_{s_n})]$, we infer
\begin{align*}
    \frac{\Var(\langle T_{n}(\Zbf_1), f \rangle)}{\Var(T_{n}(\Zbf_1))}= \frac{\Var( \E[\langle T(\Zcal_{s_n}), f \rangle \mid \Zbf_1 ])}{\Var( \E[T(\Zcal_{s_n})| \Zbf_1 ])}.
\end{align*}
Combining~\eqref{20} with~\eqref{truncation_result} in Theorem~\ref{majorsteptheorem}, we have 
\begin{align}\label{varexpansion}
    \Var( \E[\langle T'(\Zcal_{s_n}), f \rangle \mid \Zbf_1 ])&= \Var(\E[ \langle T'(\Zcal_{s_n}), f \rangle \mid \Xbf_1 ])+ \Var(\E[S_1'  \mid \Xbf_1])\Var(\langle \xi_1, f \rangle\mid \Xbf=\xbf)  + \mathcal{O}(s_n^{-(1+\epsilon)}), \nonumber \\
\Var( \E[T'(\Zcal_{s_n})| \Zbf_1 ])&= \Var(\E[ T'(\Zcal_{s_n}) |\Xbf_1 ])+ \Var(\E[S_1'  \mid \Xbf_1])\Var(\xi_1 \mid \Xbf=\xbf)  + \mathcal{O}(s_n^{-(1+\epsilon)})
\end{align}
for some $\epsilon > 0$. Let in the following $\1_{w,s_n}=\1\{\text{diam}(\Lcal(\xbf, \Zcal_{s_n}))\leq s_n^{-w}\}$ such that $ S_i'= S_i \1_{w,s_n}$.
We now show that
\begin{claim}
\begin{align}\label{Varboundnew}
    \Var(\E[ \langle T'(\Zcal_{s_n}), f \rangle \mid \Xbf_1 ]) = \mathcal{O}(s_n^{-(1+\epsilon)}) \nonumber\\
\Var( \E[T'(\Zcal_{s_n})| \Xbf_1 ])= \mathcal{O}(s_n^{-(1+\epsilon)}).
\end{align}

\end{claim}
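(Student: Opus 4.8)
The plan is to first wash out the responses by conditioning on all covariates and using honesty \forestass{1}, and then to decompose $\E[T'(\Zcal_{s_n})\mid\Xbf_1]$ into a part that is \emph{exactly} independent of $\Xbf_1$ (hence invisible to the variance) plus a handful of remainder terms, each of which is of size $\O(s_n^{-(1+\epsilon)/2})$ in $\mathbb{L}^2$. Throughout I use that, after the reductions already made in the proof of Theorem~\ref{majorsteptheorem} (conditioning on $1\in\I$ and absorbing the leaf‑building randomness into $\varepsilon$), the cell $\Lcal(\xbf)=\Lcal(\xbf,\Zcal_{s_n})$ and the truncation indicator $\1_{w,s_n}=\1\{\text{diam}(\Lcal(\xbf))\le s_n^{-w}\}$ are functions of the tree randomness and the leaf‑building data only, so that the only channel through which $T'(\Zcal_{s_n})=\sum_i S_i'\xi_i$ couples to the prediction covariate $\Xbf_1$ is the count $N_{\xbf}=|\{j\colon\Xbf_j\in\Lcal(\xbf)\}|$ inside the weights $S_i=\1\{\Xbf_i\in\Lcal(\xbf)\}/N_{\xbf}$, $S_i'=S_i\1_{w,s_n}$.

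First I would condition on $\Xbf_1,\dots,\Xbf_{s_n}$ and $\varepsilon$. Since by honesty $\xi_i\indep(\text{everything else})\mid\Xbf_i$, this gives $\E[T'(\Zcal_{s_n})\mid\varepsilon,\Xbf_1,\dots,\Xbf_{s_n}]=\sum_i S_i'\mu(\Xbf_i)$ with $\mu(\cdot)=\E[k(\Ybf,\cdot)\mid\Xbf=\cdot]$. On $\{\1_{w,s_n}=1\}$ every $\Xbf_i$ with $S_i'\neq0$ lies in $\Lcal(\xbf)$, which also contains $\xbf$ and has diameter $\le s_n^{-w}$, so \dataass{2} yields $\|\mu(\Xbf_i)-\mu(\xbf)\|_\H\le Ls_n^{-w}$ there; using $\sum_i S_i'=\1_{w,s_n}$ (away from the negligible event $\{N_{\xbf}=0\}$) this produces the decomposition
\begin{align*}
   \E[T'(\Zcal_{s_n})\mid\varepsilon,\Xbf_1,\dots,\Xbf_{s_n}] = \1_{w,s_n}\,\mu(\xbf) + \sum_{i=1}^{s_n}S_i'\big(\mu(\Xbf_i)-\mu(\xbf)\big),
\end{align*}
the second summand having $\H$‑norm at most $Ls_n^{-w}\1_{w,s_n}$. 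Taking $\E[\cdot\mid\Xbf_1]$ and using that $\1_{w,s_n}$ does not depend on $\Xbf_1$, the first term is the deterministic element $\mu(\xbf)\,\Prob(\1_{w,s_n}=1)$ and drops out of $\Var(\E[T'(\Zcal_{s_n})\mid\Xbf_1])$ entirely.

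It then remains to bound the variance of $\E[\sum_i S_i'(\mu(\Xbf_i)-\mu(\xbf))\mid\Xbf_1]$, which I would do by peeling off the $i=1$ term and performing a leave‑one‑out swap on the rest. For $i=1$, $\E[S_1'(\mu(\Xbf_1)-\mu(\xbf))\mid\Xbf_1]=(\mu(\Xbf_1)-\mu(\xbf))\,\E[S_1'\mid\Xbf_1]$ has norm $\le Ls_n^{-w}\E[S_1'\mid\Xbf_1]$ (the factor $\1\{\|\Xbf_1-\xbf\|\le s_n^{-w}\}$ is implicit in $S_1'$), so its variance is $\le L^2 s_n^{-2w}\E[S_1']=\O(s_n^{-(1+2w)})$ using $\E[S_1']\le\E[S_1]=\O(s_n^{-1})$ from \eqref{eq:expectS1}. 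For $i\ge2$, set $N_{\xbf}^{-}=N_{\xbf}-\1\{\Xbf_1\in\Lcal(\xbf)\}$ and $S_i'^{-}=\1_{w,s_n}\1\{\Xbf_i\in\Lcal(\xbf)\}/N_{\xbf}^{-}$ (taken to be $0$ when $N_{\xbf}^{-}=0$); then $\sum_{i\ge2}S_i'^{-}(\mu(\Xbf_i)-\mu(\xbf))$ depends only on $\varepsilon$ and $(\Xbf_j,\xi_j)_{j\ge2}$, hence is independent of $\Xbf_1$ and drops out of the variance as well. The remaining correction is
\begin{align*}
   \sum_{i\ge2}\big(S_i'-S_i'^{-}\big)\big(\mu(\Xbf_i)-\mu(\xbf)\big) = -\frac{\1_{w,s_n}\1\{\Xbf_1\in\Lcal(\xbf)\}}{N_{\xbf}N_{\xbf}^{-}}\sum_{i\ge2}\1\{\Xbf_i\in\Lcal(\xbf)\}\big(\mu(\Xbf_i)-\mu(\xbf)\big),
\end{align*}
whose $\H$‑norm is at most $Ls_n^{-w}\,\1\{\Xbf_1\in\Lcal(\xbf)\}$ (bound the inner sum by $N_{\xbf}^{-}Ls_n^{-w}$ on $\{\1_{w,s_n}=1\}$ and use $N_{\xbf}\ge1$ on $\{\Xbf_1\in\Lcal(\xbf)\}$). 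Its conditional expectation given $\Xbf_1$ therefore has norm $\le Ls_n^{-w}\,\Prob(\Xbf_1\in\Lcal(\xbf)\mid\Xbf_1)$, and since $\1\{\Xbf_1\in\Lcal(\xbf)\}\le(2\kappa-1)S_1$ by the leaf‑size part of \forestass{4} one gets $\Prob(\Xbf_1\in\Lcal(\xbf))\le(2\kappa-1)\E[S_1]=\O(s_n^{-1})$, so this term contributes $\le L^2 s_n^{-2w}\Prob(\Xbf_1\in\Lcal(\xbf))=\O(s_n^{-(1+2w)})$ to the variance. Summing the three contributions yields $\Var(\E[T'(\Zcal_{s_n})\mid\Xbf_1])=\O(s_n^{-(1+2w)})$, i.e.\ \eqref{Varboundnew} with $\epsilon=2w>0$; the scalar statement for $\langle T'(\Zcal_{s_n}),f\rangle$ follows verbatim by replacing $\mu(\cdot)$ with $\langle\mu(\cdot),f\rangle$, picking up only a factor $\|f\|_\H^2$.

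I expect the genuinely delicate point to be the very first reduction, namely arguing that $\Lcal(\xbf)$ and $\1_{w,s_n}$ carry no information about $\Xbf_1$, so that \emph{all} of the $\Xbf_1$‑dependence of $T'(\Zcal_{s_n})$ sits in $N_{\xbf}$ and can be removed by the leave‑one‑out replacement $N_{\xbf}\rightsquigarrow N_{\xbf}^{-}$ at a cost governed by the rare event $\{\Xbf_1\in\Lcal(\xbf)\}$ of probability $\O(s_n^{-1})$. If the splitting rule is permitted to use the prediction‑sample covariates (to enforce \forestass{4}), this requires the additional step of comparing $\Lcal(\xbf)$ with the cell $\Lcal^{-}(\xbf)$ obtained from the tree grown without $\Xbf_1$ and showing $\Prob(\Lcal(\xbf)\neq\Lcal^{-}(\xbf))=\O(s_n^{-1})$, which is where honesty \forestass{1} together with the bounded leaf size in \forestass{4} do the real work; note also that the truncation defining $T'$ is essential here, since without it $\|\mu(\Xbf_i)-\mu(\xbf)\|_\H$ would not be uniformly $\O(s_n^{-w})$ over the cell and every norm bound above would fail.
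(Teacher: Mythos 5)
Your proof is correct and takes a genuinely different, arguably cleaner route than the paper's. The paper decomposes $\E[T'(\Zcal_{s_n})\mid\Xbf_1]$ into the $i=1$ piece $\E[S_1'\mid\Xbf_1]\E[\xi_1\mid\Xbf_1]$ and the tail $\sum_{i\ge 2}\E[S_i'\xi_i\mid\Xbf_1]$, then computes all three of the variance of the first, the variance of the second, and their covariance to leading order: each turns out to be $\pm\Var(\E[S_1'\mid\Xbf_1])\,\|\mu(\xbf)\|_\H^2 + \O(s_n^{-(1+\epsilon)})$, and the leading $\Var(\E[S_1'\mid\Xbf_1])\|\mu(\xbf)\|_\H^2$ contributions cancel exactly when summed. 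The key identity the paper exploits for the cancellation is $\sum_{i\ge2}\E[S_i'\mid\Xbf_1] = p_n - \E[S_1'\mid\Xbf_1]$, which is the same constraint $\sum_i S_i'=\1_{w,s_n}$ you use. You exploit that constraint \emph{up front}: you peel off $\1_{w,s_n}\,\mu(\xbf)$, note that (after the reduction to $1\in\I$) $\E[\1_{w,s_n}\mid\Xbf_1]$ is constant in $\Xbf_1$, and so the whole potentially $\O(s_n^{-1})$-sized piece of the conditional mean never contributes to the variance. What remains is then \emph{uniformly} $\O(s_n^{-w})$ in $\H$-norm by the truncation and \dataass{2}, and a leave-one-out swap $N_\xbf\rightsquigarrow N_\xbf^{-}$ together with $\Prob(\Xbf_1\in\Lcal(\xbf))=\O(s_n^{-1})$ finishes the bound. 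Your route avoids having to evaluate the covariance between the $i=1$ and $i\geq 2$ pieces (the paper's $(I)$--$(IV)$ bookkeeping) at the cost of an extra leave-one-out step; the trade is favorable. Both arguments rest on the same implicit use of honesty, namely that (after conditioning on $1\in\I$) the cell $\Lcal(\xbf)$ and the indicator $\1_{w,s_n}$ carry no information about $\Xbf_1$; the paper invokes this silently (``the last step follows due to independence of $\Lcal(\xbf,\Zcal_{s_n})$ and $\Xbf_1$ by \forestass{1}''), so the caveat you raise at the end applies equally to the paper's own proof and is not a gap specific to yours. One cosmetic remark: your final rate is $\O(s_n^{-(1+2w)})$, so you may take $\epsilon=2w$; the paper is content with any positive $\epsilon$, so this is fine.
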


\begin{claimproof}
First, due to honesty, we have
\begin{align*}
   \E[ \langle T'(\Zcal_{s_n}),f \rangle \mid \Xbf_1 ] &=\E[S_1'\mid \Xbf_1]  \E[ \langle \xi_1,f \rangle\mid  \Xbf_1] + \sum_{i=2}^{s_n} \E[S_i' \langle\xi_i,f \rangle\mid \Xbf_1]
\end{align*}
and
\begin{align*}
  \E[ T'(\Zcal_{s_n}) \mid \Xbf_1 ] &=\E[S_1'\mid \Xbf_1]  \E[ \xi_1\mid \Xbf_1] + \sum_{i=2}^{s_n} \E[S_i' \xi_i\mid \Xbf_1].
\end{align*}
Subsequently, we consider the variance of the two terms and their covariance individually. First, we study the variance of the first terms. The variances satisfy
\begin{claim}
\begin{align}\label{variance1term_1}
    \Var(\E[S_1'|\Xbf_1]  \E[ \xi_1|\Xbf_1])&= \Var(\E[S_1'|\Xbf_1]) \|\E[\xi_1|\Xbf=\xbf]\|_{\H}^2 + \mathcal{O}(s_n^{-(1+w)})
\end{align}
and
\begin{align}\label{variance1term_2}
        \Var(\E[S_1'|\Xbf_1]  \E[ \langle \xi_1,f \rangle|\Xbf_1]) &= \Var(\E[S_1'|\Xbf_1]) \E[\langle \xi_1, f \rangle|\Xbf=\xbf]^2 + \mathcal{O}(s_n^{-(1+w)}).
\end{align}
\end{claim}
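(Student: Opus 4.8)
The plan is to exploit the truncation built into $S_1'=S_1\1_{w,s_n}$. Since $S_1'$ can be nonzero only when $\Xbf_1$ lies in a leaf $\Lcal(\xbf,\Zcal_{s_n})$ of diameter at most $s_n^{-w}$ that also contains $\xbf$, we have $\|\Xbf_1-\xbf\|\le s_n^{-w}$ on $\{S_1'>0\}$, and hence $\E[S_1'\mid\Xbf_1]=0$ almost surely on $\{\|\Xbf_1-\xbf\|>s_n^{-w}\}$; this is precisely the observation already used around~\eqref{22_1} in the proof of Theorem~\ref{majorsteptheorem}. On the complementary event, Assumption~\dataass{2} yields $\|\E[\xi_1\mid\Xbf_1]-\mu(\xbf)\|_{\H}=\|\mu(\Xbf_1)-\mu(\xbf)\|_{\H}\le L\|\Xbf_1-\xbf\|\le Ls_n^{-w}$. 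Therefore the $\H$-valued remainder $R_1:=\E[S_1'\mid\Xbf_1]\bigl(\E[\xi_1\mid\Xbf_1]-\mu(\xbf)\bigr)$ obeys $\|R_1\|_{\H}\le Ls_n^{-w}\,\E[S_1'\mid\Xbf_1]$ almost surely.

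First I would decompose $\E[S_1'\mid\Xbf_1]\,\E[\xi_1\mid\Xbf_1]=\E[S_1'\mid\Xbf_1]\,\mu(\xbf)+R_1$ and expand the (Hilbert-space) variance bilinearly:
\[
\Var\bigl(\E[S_1'\mid\Xbf_1]\E[\xi_1\mid\Xbf_1]\bigr)=\Var\bigl(\E[S_1'\mid\Xbf_1]\mu(\xbf)\bigr)+2\Cov\bigl(\E[S_1'\mid\Xbf_1]\mu(\xbf),R_1\bigr)+\Var(R_1).
\]
Because $\mu(\xbf)$ is a fixed element of $\H$, the first term equals $\Var(\E[S_1'\mid\Xbf_1])\,\|\mu(\xbf)\|_{\H}^2$, which is the asserted leading term since $\mu(\xbf)=\E[\xi_1\mid\Xbf=\xbf]$. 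For the third term, $\Var(R_1)\le\E[\|R_1\|_{\H}^2]\le L^2s_n^{-2w}\,\E\bigl[\E[S_1'\mid\Xbf_1]^2\bigr]$, and $\E\bigl[\E[S_1'\mid\Xbf_1]^2\bigr]\le\E[S_1']\le\E[S_1]=\O(s_n^{-1})$ by~\eqref{eq:expectS1} together with $0\le S_1'\le S_1\le 1$; hence $\Var(R_1)=\O(s_n^{-(1+2w)})$. For the cross term, Cauchy--Schwarz for the covariance gives $|\Cov(\E[S_1'\mid\Xbf_1]\mu(\xbf),R_1)|\le\sqrt{\Var(\E[S_1'\mid\Xbf_1])}\,\|\mu(\xbf)\|_{\H}\,\sqrt{\Var(R_1)}=\O(s_n^{-1/2})\cdot\O(s_n^{-(1/2+w)})=\O(s_n^{-(1+w)})$, using $\Var(\E[S_1'\mid\Xbf_1])\le\E[\E[S_1'\mid\Xbf_1]^2]=\O(s_n^{-1})$. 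Summing the three contributions proves~\eqref{variance1term_1}.

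For~\eqref{variance1term_2} I would repeat the argument verbatim with the scalar random variable $\langle\xi_1,f\rangle$ in place of $\xi_1$, using $\E[\langle\xi_1,f\rangle\mid\Xbf_1]=\langle\E[\xi_1\mid\Xbf_1],f\rangle$, the bound $|\langle\E[\xi_1\mid\Xbf_1]-\mu(\xbf),f\rangle|\le\|f\|_{\H}\,Ls_n^{-w}$ valid on the support of $\E[S_1'\mid\Xbf_1]$ (again by~\dataass{2}, cf.~\eqref{eq:Lipschitzcontinuityf}), and $\E[\langle\xi_1,f\rangle\mid\Xbf=\xbf]=\langle\mu(\xbf),f\rangle$; since $f$ is fixed, $\|f\|_{\H}L$ is a constant and the identical bookkeeping gives $\Var(\E[S_1'\mid\Xbf_1])\langle\mu(\xbf),f\rangle^2+\O(s_n^{-(1+w)})$. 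I do not anticipate a serious obstacle; the one point requiring care is the order accounting, where the genuinely small remainder $\Var(R_1)=\O(s_n^{-(1+2w)})$ is degraded by the cross term to $\O(s_n^{-(1+w)})$, so $s_n^{-(1+w)}$, and not better, is what this route delivers — which is exactly the form demanded by the subsequent identities~\eqref{varexpansion} and~\eqref{Varboundnew}.
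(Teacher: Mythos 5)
Your proof is correct and follows essentially the same route as the paper: add and subtract $\mu(\xbf)$ inside the variance, identify $\Var(\E[S_1'\mid\Xbf_1])\|\mu(\xbf)\|_{\H}^2$ as the leading term, bound the remainder variance to $\O(s_n^{-(1+2w)})$ via the truncation-induced support restriction $\|\Xbf_1-\xbf\|\le s_n^{-w}$ plus \dataass{2}, and control the cross term by Cauchy--Schwarz to $\O(s_n^{-(1+w)})$. (Incidentally, you correctly carry the factor $2$ on the covariance in the bilinear expansion, whereas the paper's display drops it --- a harmless typo since the term is absorbed into the $\O$.)
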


\begin{claimproof}

We only show~\eqref{variance1term_1} because~\eqref{variance1term_2} follows analogously. We have
\begin{align}
    \Var(\E[S_1' |\Xbf_1]   \E[ \xi_1|\Xbf_1]) &=\Var(\E[S_1' |\Xbf_1] (  \E[ \xi_1|\Xbf_1] - \mu(\xbf)+ \mu(\xbf) )) \nonumber \\
    &=  \Var(\E[S_1' |\Xbf_1] \mu(\xbf)     ) + \Var(\E[S_1' |\Xbf_1]( \E[ \xi_1|\Xbf_1] - \mu(\xbf) )) \nonumber \\
    &\quad+ \Cov\left(\E[S_1' |\Xbf_1] \mu(\xbf), \E[S_1' |\Xbf_1]( \E[ \xi_1|\Xbf_1] - \mu(\xbf) ) \right) . 
\end{align}
Because 
\begin{align*}
    \Var(\E[S_1' |\Xbf_1] \mu(\xbf)     )
    =&\E[ \|(\E[S_1' |\Xbf_1] - \E[S_1']) \mu(\xbf)\|_{\H}^2 ]\\
    =&\E[ (\E[S_1' |\Xbf_1] - \E[S_1'])^2] \|\mu(\xbf)\|_{\H}^2\\
    =&\Var(\E[S_1' |\Xbf_1])\|\mu(\xbf)\|_{\H}^2,
\end{align*}
it follows that 
\begin{align*}
    \Var(\E[S_1' |\Xbf_1]   \E[ \xi_1|\Xbf_1]) &= \Var(\E[S_1' |\Xbf_1])\|\mu(\xbf)\|_{\H}^2+ \Var(\E[S_1' |\Xbf_1]( \E[ \xi_1|\Xbf_1] - \mu(\xbf) )) \nonumber \\
    &\quad+ \Cov\left(\E[S_1' |\Xbf_1] \mu(\xbf), \E[S_1' |\Xbf_1]( \E[ \xi_1|\Xbf_1] - \mu(\xbf) ) \right).
\end{align*}
Because $\E[S_1' |\Xbf_1]$ maps into $\R_{\geq 0}$, we have
\begin{align*}
    \Var(\E[S_1' |\Xbf_1]( \E[ \xi_1|\Xbf_1]- \mu(\xbf))) & \leq \E[ \|\E[S_1' |\Xbf_1]( \E[ \xi_1|\Xbf_1]) - \mu(\xbf))   \|_{\H}^2 ] \\
    &= \E[\E[S_1' |\Xbf_1]^2 \|( \E[ \xi_1|\Xbf_1]- \mu(\xbf)    ) \|_{\H}^2 ]\\
    & \leq \E[\E[S_1'^2 |\Xbf_1] C^2\|\Xbf_1 -x \|_{\R^p}^2 ]\\
    & \leq \E[S_1'^2  ] C^2 s_n^{-2w},
\end{align*}
where the last step followed because $\E[S_1'^2 |\Xbf_1]=0$, for $\|\Xbf_1 -x \|_{\R^p} > s_n^{-w}$ by definition of $S_1'=S_1\1\{ \text{diam}(\Lcal(\xbf, \Zcal_{s_n}))\leq s_n^{-w}\} $. Since $\E[S_1'^2  ]\leq \E[S_1'] \leq \E[S_1]=\O(s_n^{-1})$ from~\eqref{eq:expectS1}, we have 
\begin{align*}
      \Var(\E[S_1' |\Xbf_1]( \E[ \xi_1|\Xbf_1]- \mu(\xbf)))=\O(s_n^{-(1+2w)}).
\end{align*}
Finally, we infer
\begin{align*}
    &\left| \Cov\left(\E[S_1' |\Xbf_1] \mu(\xbf), \E[S_1' |\Xbf_1]( \E[ \xi_1|\Xbf_1] - \mu(\xbf) ) \right)\right| \\
    \leq& \sqrt{\Var(\E[S_1' |\Xbf_1] \mu(\xbf)     )} \sqrt{\Var(\E[S_1' |\Xbf_1]( \E[ \xi_1|\Xbf_1]- \mu(\xbf) )) }\\
    =& \O(s_n^{-(1+w)}), 
\end{align*}
due to
\begin{align}\label{whatweneed3}
    \Var(\E[S_1' |\Xbf_1] \mu(\xbf)     )&= \Var(\E[S_1' |\Xbf_1]) \cdot \O(1)= \O(s_n^{-1}),
\end{align}
again using~\eqref{eq:varDecompSprime2}.
Thus, our Claim~\eqref{variance1term_1} holds.
\end{claimproof}

Before we continue proving the theorem, we note that, due to honesty, we have
\begin{align}\label{honestchange}
 \sum_{i=2}^{s_n} \E[S_i' \xi_i\mid \Xbf_1] &= \sum_{i=2}^{s_n} \E[ \E[S_i' \xi_i \mid  \Xbf_i, \Xbf_1]  \mid \Xbf_1] \nonumber \\
 &=\sum_{i=2}^{s_n} \E[ \E[S_i' \mid  \Xbf_i, \Xbf_1] \E[ \xi_i \mid  \Xbf_i, \Xbf_1]  \mid \Xbf_1] \nonumber \\
  &=\sum_{i=2}^{s_n} \E[ \E[S_i' \E[ \xi_i \mid  \Xbf_i]   \mid  \Xbf_i, \Xbf_1] \mid \Xbf_1]\nonumber \\
  &=\sum_{i=2}^{s_n} \E[ S_i' \E[  \xi_i \mid  \Xbf_i]  \mid \Xbf_1].
\end{align}

Now, we consider the variance of the sum in~\eqref{honestchange}:

\begin{claim}
\begin{align}\label{variance2term_1}
    \Var\left( \sum_{i=2}^{s_n} \E[S_i' \xi_i\mid \Xbf_1] \right)&= \Var(\E[S_1'|\Xbf_1]) \|\E[\xi_1|\Xbf=\xbf]\|_{\H}^2 + \mathcal{O}(s_n^{-(1+w)})
\end{align}
and
\begin{align}\label{variance2term_2}
        \Var\left( \sum_{i=2}^{s_n} \E[S_i' \langle\xi_i,f \rangle\mid \Xbf_1] \right) &= \Var(\E[S_1'|\Xbf_1]) \E[\langle \xi_1, f \rangle|\Xbf=\xbf]^2 + \mathcal{O}(s_n^{-(1+w)}).
\end{align}

\end{claim}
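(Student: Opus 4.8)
Both identities are proved the same way, so I describe the argument for \eqref{variance2term_1}, the modifications for \eqref{variance2term_2} being only notational. The starting point is the honesty identity \eqref{honestchange}, which already gives $\sum_{i=2}^{s_n}\E[S_i'\xi_i\mid\Xbf_1]=\sum_{i=2}^{s_n}\E[S_i'\,\mu(\Xbf_i)\mid\Xbf_1]$, where I write $\mu(\Xbf_i):=\E[\xi_i\mid\Xbf_i]$. Now set $\eta_i:=\mu(\Xbf_i)-\mu(\xbf)$. The crucial observation is that $S_i'=S_i\1\{\mathrm{diam}(\Lcal(\xbf,\Zcal_{s_n}))\le s_n^{-w}\}$ is nonzero only when $\Xbf_i$ lies in a leaf of diameter at most $s_n^{-w}$ that also contains $\xbf$, which forces $\|\Xbf_i-\xbf\|\le s_n^{-w}$; hence by the Lipschitz property \dataass{2}, $S_i'\|\eta_i\|_\H\le C\,s_n^{-w}\,S_i'$ pointwise. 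Splitting off the $\mu(\xbf)$ term gives $\sum_{i=2}^{s_n}\E[S_i'\xi_i\mid\Xbf_1]=\mu(\xbf)\,A(\Xbf_1)+R(\Xbf_1)$ with $A(\Xbf_1):=\sum_{i=2}^{s_n}\E[S_i'\mid\Xbf_1]$ and $R(\Xbf_1):=\sum_{i=2}^{s_n}\E[S_i'\eta_i\mid\Xbf_1]$, and it remains to compute the variances of the two pieces.

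For the leading term I would use $\sum_{i=1}^{s_n}S_i'=\1\{\mathrm{diam}(\Lcal(\xbf,\Zcal_{s_n}))\le s_n^{-w}\}=:D$, which holds because $\sum_{i=1}^{s_n}S_i=1$ by \eqref{eq:expectS1}. Then $A(\Xbf_1)=\E[D\mid\Xbf_1]-\E[S_1'\mid\Xbf_1]$, so $\Var(\mu(\xbf)A(\Xbf_1))=\|\mu(\xbf)\|_\H^2\,\Var(\E[D\mid\Xbf_1]-\E[S_1'\mid\Xbf_1])$. Here $\Var(\E[S_1'\mid\Xbf_1])=\O(s_n^{-1})$ by \eqref{eq:varDecompSprime2}, while $\Var(\E[D\mid\Xbf_1])$ is bounded by a first-order ANOVA argument: $1-D=\1\{\mathrm{diam}(\Lcal(\xbf,\Zcal_{s_n}))> s_n^{-w}\}$ is response-free and symmetric in the $s_n$ i.i.d.\ prediction-sample points (by \forestass{3} and the bookkeeping — already in force at this point in the proof — that conditions on $1\in\I$ and absorbs the leaf-building data and the tree randomness $\varepsilon$), so by the variance decomposition \eqref{vardecomp} of Lemma~\ref{Hdecomposition} its first-order projection satisfies $\Var(\E[D\mid\Xbf_1])=\Var(\E[D\mid\Zbf_1])\le\Var(D)/s_n\le\Prob(\mathrm{diam}(\Lcal(\xbf,\Zcal_{s_n}))> s_n^{-w})/s_n=\O(s_n^{-(1+w)})$ by \eqref{Wagerprobbound}. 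Expanding the variance of the difference and bounding the cross-covariance by Cauchy--Schwarz yields $\Var(\mu(\xbf)A(\Xbf_1))=\|\mu(\xbf)\|_\H^2\Var(\E[S_1'\mid\Xbf_1])+\O(s_n^{-(1+\epsilon)})$ for some $\epsilon>0$; since $\|\mu(\xbf)\|_\H=\|\E[\xi_1\mid\Xbf=\xbf]\|_\H$, this is precisely the leading term appearing in \eqref{variance2term_1} plus a remainder of strictly smaller order than $\Var(\E[S_1'\mid\Xbf_1])\succsim(s_n\log s_n)^{-1}$ (Lemma~\ref{lemma4}).

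For the remainder, the naive bound $\|R(\Xbf_1)\|_\H\le C s_n^{-w}$ only gives $\Var(R)=\O(s_n^{-2w})$, which is not small enough since $w<1$. Instead I would write $R(\Xbf_1)=\E[\tilde G\mid\Xbf_1]-\E[S_1'\mid\Xbf_1]\,\eta_1$ with $\tilde G:=\sum_{i=1}^{s_n}S_i'\eta_i$, using that $\eta_1=\mu(\Xbf_1)-\mu(\xbf)$ is $\Xbf_1$-measurable so $\E[S_1'\eta_1\mid\Xbf_1]=\E[S_1'\mid\Xbf_1]\eta_1$. Now $\tilde G$ is again response-free and symmetric in the prediction sample and satisfies $\|\tilde G\|_\H\le C s_n^{-w}$ a.s.\ (because $\sum_iS_i'\le1$ and $S_i'\|\eta_i\|_\H\le Cs_n^{-w}S_i'$), so the same ANOVA bound gives $\Var(\E[\tilde G\mid\Xbf_1])\le\E[\|\tilde G\|_\H^2]/s_n=\O(s_n^{-(1+2w)})$; and $\E[S_1'\mid\Xbf_1]$ vanishes off $\{\|\Xbf_1-\xbf\|\le s_n^{-w}\}$ (as used in \eqref{22_1}), on which $\|\eta_1\|_\H\le C s_n^{-w}$, so $\Var(\E[S_1'\mid\Xbf_1]\eta_1)\le C^2 s_n^{-2w}\,\E[\E[S_1'\mid\Xbf_1]^2]=\O(s_n^{-(1+2w)})$ using \eqref{eq:varDecompSprime2}. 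Hence $\Var(R)=\O(s_n^{-(1+2w)})$. Expanding $\Var(\mu(\xbf)A+R)$ and bounding the cross term by $\sqrt{\Var(\mu(\xbf)A)}\sqrt{\Var(R)}=\O(s_n^{-(1+w)})$ gives \eqref{variance2term_1}. For \eqref{variance2term_2} one runs the same argument with $\langle\xi_i,f\rangle$, $\langle\mu(\xbf),f\rangle=\E[\langle\xi_1,f\rangle\mid\Xbf=\xbf]$ and $\langle\eta_i,f\rangle$ in place of $\xi_i,\mu(\xbf),\eta_i$, using $|\langle\eta_i,f\rangle|\le\|f\|_\H\|\eta_i\|_\H$, so that every error term merely acquires a $\|f\|_\H$-factor.

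The main obstacle — the step I would be most careful about — is the first-order ANOVA control of $\Var(\E[D\mid\Xbf_1])$ and $\Var(\E[\tilde G\mid\Xbf_1])$: one must check that, under the double-sampling bookkeeping used throughout this proof, the leaf-diameter indicator and $\tilde G$ are genuinely symmetric, response-free functions of the i.i.d.\ prediction covariates, so that Lemma~\ref{Hdecomposition} applies and each first-order projection inherits the extra factor $1/s_n$ relative to $\Var(D)=\O(s_n^{-w})$. Everything else is bookkeeping of orders, the essential feature being that every error term carries a spare $s_n^{-w/2}$ or $s_n^{-w}$ relative to $s_n^{-1}$, which outpaces the $\log s_n$ in the lower bound on $\Var(\E[S_1'\mid\Xbf_1])$.
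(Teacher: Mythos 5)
Your decomposition into a $\mu(\xbf)$-term and a Lipschitz remainder, and the identity $\sum_{i=1}^{s_n}S_i'=\1_{w,s_n}=:D$, match the paper's structure, but the two pieces are handled differently from the paper's proof, in one case to your disadvantage. For the leading term, the paper does not bound $\Var(\E[D\mid\Xbf_1])$ at all: it observes (see the step preceding~\eqref{Sdproperty}) that, under \forestass{1} and the conditioning $1\in\I$, the leaf and hence its diameter are built exclusively from the absorbed leaf-building data, so $\E[D\mid\Xbf_1]=\Prob(\mathrm{diam}(\Lcal(\xbf,\Zcal_{s_n}))\le s_n^{-w})=p_n$ is \emph{exactly} a constant and the whole term $\sum_{i\ge2}\E[S_i'\mid\Xbf_1]$ equals $p_n-\E[S_1'\mid\Xbf_1]$ with zero extra error. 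Your ANOVA bound $\Var(\E[D\mid\Xbf_1])\le\Var(D)/s_n=\O(s_n^{-(1+w)})$ is correct but much looser, and after the Cauchy--Schwarz cross-term against $\Var(\E[S_1'\mid\Xbf_1])=\O(s_n^{-1})$ it only yields $\O(s_n^{-(1+w/2)})$ rather than the claimed $\O(s_n^{-(1+w)})$ — still good enough for every downstream use (which only needs $\O(s_n^{-(1+\epsilon)})$ for some $\epsilon>0$ against the $\Omega((s_n\log s_n)^{-1})$ lower bound), but strictly weaker than the stated rate. For the remainder $R(\Xbf_1)=\sum_{i\ge2}\E[S_i'\eta_i\mid\Xbf_1]$, your route is genuinely different: you write $R=\E[\tilde G\mid\Xbf_1]-\E[S_1'\mid\Xbf_1]\eta_1$ with $\tilde G=\sum_iS_i'\eta_i$, exploit $\|\tilde G\|_\H\le Cs_n^{-w}$ a.s.\ and symmetry to apply the Hoeffding/ANOVA projection bound of Lemma~\ref{Hdecomposition}. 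The paper instead decomposes each $\E[S_i'\Delta(\Xbf_i)\mid\Xbf_1]$ on the events $\{\Xbf_1\in\Lcal\}$ and $\{\Xbf_1\notin\Lcal\}$ and exploits a factorization into a nonrandom Hilbert element times $\Prob(\Xbf_1\in\Lcal\mid\Xbf_1)$; both give $\O(s_n^{-(1+2w)})$. Your ANOVA route is arguably cleaner and more mechanical — it doesn't require the somewhat delicate conditioning on the event $\{\Xbf_1\in\Lcal\}$ — but it does make you pay the price of first confirming that, after the double-sampling bookkeeping, $\tilde G$ (suitably $\varepsilon$-averaged) really is a symmetric function of the i.i.d.\ prediction covariates; you correctly flag this as the sensitive step, and it does go through (with $s_n/2$ rather than $s_n$ effective samples, which only changes constants). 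Bottom line: the proof is valid, but you should replace the ANOVA bound on $\Var(\E[D\mid\Xbf_1])$ with the paper's exact-constancy observation if you want to recover the stated rate $\O(s_n^{-(1+w)})$.
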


\begin{claimproof}

First we note that, using the definition of $S_i'$, it holds that  
\begin{align*}
\sum_{i=1}^{s_n} \E[S_i'\mid \Xbf_1]&=\E[\sum_{i=1}^{s_n} S_i' \mid \Xbf_1] \nonumber \\
&=\P\left(\text{diam}(\Lcal(\xbf, \Zcal_{s_n}))\leq s_n^{-w} \mid \Xbf_1\right)\\
&=\P\left(\text{diam}(\Lcal(\xbf, \Zcal_{s_n}))\leq s_n^{-w}\right),
\end{align*}
where the last step follows from \forestass{1} and the fact that $1 \in \I$. Thus abbreviating $p_n=\P\left(\text{diam}(\Lcal(\xbf, \Zcal_{s_n}))\leq s_n^{-w}\right)$, it follows that
\begin{align}\label{Sdproperty}
   \sum_{i=2}^{s_n} \E[S_i'\mid \Xbf_1]=p_n-\E[S_1' \mid \Xbf_1]] 
\end{align}

We only show~\eqref{variance2term_1}, because~\eqref{variance2term_2} follows analogously. By~\eqref{Sdproperty} and since $p_n$ is a constant,
\begin{align*}
    \Var(\E[S_1'|\Xbf_1]) \|\mu(\xbf)\|_{\H}^2&= \Var( (p_n-\E[S_1'|\Xbf_1]) \mu(\xbf) )\\
    &=\Var\left(\mu(\xbf) \sum_{i=2}^{s_n} \E[S_i' \mid \Xbf_1]  \right).
\end{align*}
Thus, we need to show that
\begin{align}
     \Var\left( \sum_{i=2}^{s_n} \E[S_i' \xi_i\mid \Xbf_1] \right)=\Var\left(\mu(\xbf) \sum_{i=2}^{s_n} \E[S_i' \mid \Xbf_1]  \right)  + \mathcal{O}(s_n^{-(1+w)}),
\end{align}
which according to Lemma~\ref{vardifboundlemma} is implied by
\begin{align}\label{whatweneed}
    \Var\left( \sum_{i=2}^{s_n} \E[S_i' \xi_i\mid \Xbf_1] -  \mu(\xbf) \sum_{i=2}^{s_n} \E[S_i' \mid \Xbf_1]  \right) =\mathcal{O}(s_n^{-(1+2w)}),
\end{align}
\begin{align}\label{whatweneed2}
    \Var\left( \sum_{i=2}^{s_n} \E[S_i' \xi_i\mid \Xbf_1]  \right) =\mathcal{O}(s_n^{-1}),
\end{align}
and~\eqref{whatweneed3}. Subsequently, we establish~\eqref{whatweneed} and~\eqref{whatweneed2}. 
Now, with~\eqref{honestchange}, we have 
\begin{align}\label{whatweneed22}
      \Var\left( \sum_{i=2}^{s_n} \E[S_i' \xi_i\mid \Xbf_1] -  \mu(\xbf) \sum_{i=2}^{s_n} \E[S_i' \mid \Xbf_1]  \right)&=\Var\left( \sum_{i=2}^{s_n} \E[ S_i' \E[  \xi_i \mid  \Xbf_i]  \mid \Xbf_1] -   \E[S_i' \mu(\xbf) \mid \Xbf_1]  \right) \nonumber \\
      &=\Var\left( \sum_{i=2}^{s_n} \E[ S_i'( \E[  \xi_i \mid  \Xbf_i] - \mu(\xbf) )   \mid \Xbf_1]  \right)\nonumber \\
      &=\Var\left( \sum_{i=2}^{s_n} \E[ S_i'\Delta(\Xbf_i)  \mid \Xbf_1]  \right),
\end{align}
with $\Delta(\Xbf_i)=\E[  \xi_i \mid  \Xbf_i] - \mu(\xbf) $. Next, we note that for each $i$, we have
\begin{align*}
    \E[ S_i'\Delta(\Xbf_i)  \mid \Xbf_1] &=  \E[ S_i'\Delta(\Xbf_i) \1\{\Xbf_1 \in \Lcal(\xbf,\Zcal_{s_n}) \}  \mid \Xbf_1] +   \E[ S_i'\Delta(\Xbf_i)  \1\{\Xbf_1 \notin \Lcal(\xbf,\Zcal_{s_n}) \}  \mid \Xbf_1].
\end{align*}
With $N_j= j + \sum_{i=2}^{s_n} \1\{\Xbf_i \in \Lcal(\xbf,\Zcal_{s_n}) \}$, $j \in \{0,1\}$, we have
\begin{align*}
    &\E[ S_i'\Delta(\Xbf_i) \1\{\Xbf_1 \in \Lcal(\xbf,\Zcal_{s_n}) \}  \mid \Xbf_1]\\ 
    =& \E\left.\left[ \frac{\1\{\Xbf_i \in \Lcal(\xbf,\Zcal_{s_n}) \}\1_{w,s_n}}{N_1}\Delta(\Xbf_i) \1\{\Xbf_1 \in \Lcal(\xbf,\Zcal_{s_n}) \}  \,\right\vert\, \Xbf_1 \right]\\
    =&\E\left.\left[ \frac{\1\{\Xbf_i \in \Lcal(\xbf,\Zcal_{s_n}) \} \1_{w,s_n}}{N_1}\Delta(\Xbf_i)\, \right\vert\, \Xbf_1, \{\Xbf_1 \in \Lcal(\xbf,\Zcal_{s_n}) \}  \right] \Prob(\Xbf_1 \in \Lcal(\xbf,\Zcal_{s_n}) \mid \Xbf_1)\\
      =&\E\left[ \frac{\1\{\Xbf_i \in \Lcal(\xbf,\Zcal_{s_n}) \} \1_{w,s_n}}{N_1}\Delta(\Xbf_i)  \right] \Prob(\Xbf_1 \in \Lcal(\xbf,\Zcal_{s_n}) \mid \Xbf_1),
\end{align*}
where the last step follows due to independence of $\Lcal(\xbf,\Zcal_{s_n})$ and $\Xbf_1$ by~\forestass{1}. Define the element
\begin{align*}
   E_i^1= \E\left[ \frac{\1\{\Xbf_i \in \Lcal(\xbf,\Zcal_{s_n}) \} \1_{w,s_n}}{N_1}\Delta(\Xbf_i) \right].
\end{align*}
Because this is nonrandom element of $\H$ and $\Prob(\Xbf_1 \in \Lcal(\xbf,\Zcal_{s_n}) \mid \Xbf_1)$ does not depend on the index $i\in\I$,  
it follows that 
\begin{align}\label{100}
&\Var\left( \sum_{i=2}^{s_n} \E[ S_i'\Delta(\Xbf_i)   \1\{\Xbf_1 \in \Lcal(\xbf,\Zcal_{s_n}) \}  \mid \Xbf_1]  \right)\nonumber\\
=&\left \|\sum_{i=2}^{s_n} E_i^1 \right \|^2_{\H} \Var(\Prob(\Xbf_1 \in \Lcal(\xbf,\Zcal_{s_n}) \mid \Xbf_1)) \nonumber \\
\leq&  \left( \sum_{i=2}^{s_n}\| E_i^1 \|_{\H} \right )^2  \E[\E[\1\{\Xbf_1 \in \Lcal(\xbf,\Zcal_{s_n})\} \mid \Xbf_1]^2]
\end{align}
Due to Jensen's inequality, 
\begin{align}\label{100d1}
    \E[\E[\1\{\Xbf_1 \in \Lcal(\xbf,\Zcal_{s_n})\} \mid \Xbf_1]^2] &\leq \E[\E[\1\{\Xbf_1 \in \Lcal(\xbf,\Zcal_{s_n})\} \mid \Xbf_1]] \nonumber\\
    &=\Prob(\Xbf_1 \in \Lcal(\xbf,\Zcal_{s_n}) ) \nonumber \\
    &=\mathcal{O}(s_n^{-1}),
\end{align}
where the last step followed because $2\kappa-1 \geq \E[N_{\xbf}]= \sum_{i=1}^{s_n} \E[\1\{\Xbf_i \in \Lcal(\xbf,\Zcal_{s_n})\}]=s_n \Prob(\Xbf_1 \in \Lcal(\xbf,\Zcal_{s_n}) )$ by \forestass{4}.
On the other hand, we have 
\begin{align}\label{100d2}
  \sum_{i=2}^{s_n}\| E_i^1 \|_{\H}  
  & \leq  \sum_{i=2}^{s_n} \E\left[ \frac{\1\{\Xbf_i \in \Lcal(\xbf,\Zcal_{s_n}) \} \1_{w,s_n}}{N_1} \left \| \Delta(\Xbf_i)\right\|_{\H} \right] \nonumber\\
  &\leq \sum_{i=2}^{s_n} \E\left[ \frac{\1\{\Xbf_i \in \Lcal(\xbf,\Zcal_{s_n}) \} \1_{w,s_n}}{N_1} C \left \| \Xbf_i -  \xbf\right\|_{\R^p} \right] \nonumber\\
  &\leq C s_n^{-w} \E\left[\sum_{i=2}^{s_n} \frac{\1\{\Xbf_i \in \Lcal(\xbf,\Zcal_{s_n}) \} \1_{w,s_n}}{N_1}  \right]  \nonumber\\
  &\leq  C s_n^{-w}
\end{align}
as $0 \leq \sum_{i=2}^{s_n} \1\{\Xbf_i \in \Lcal(\xbf,\Zcal_{s_n}) \}/N_1\leq 1$. Combining Equations~\eqref{100d1} and~\eqref{100d2} with~\eqref{100} gives
\begin{align}\label{100_end}
    \Var\left( \sum_{i=2}^{s_n} \E[ S_i'\Delta(\Xbf_i)   \1\{\Xbf_1 \in \Lcal(\xbf,\Zcal_{s_n}) \}  \mid \Xbf_1]  \right)&=\mathcal{O}(s_n^{-(1+2w)}).
\end{align}
Similarly, we have 
\begin{align}\label{101}
\Var\left( \sum_{i=2}^{s_n} \E[ S_i'\Delta(\Xbf_i)   \1\{\Xbf_1 \notin \Lcal(\xbf,\Zcal_{s_n}) \}  \mid \Xbf_1]  \right)&=\left \|\sum_{i=2}^{s_n} E_i^0 \right \|^2_{\H} \Var(\Prob(\Xbf_1 \notin \Lcal(\xbf,\Zcal_{s_n}) \mid \Xbf_1))
\end{align}
with 
\begin{align*}
   E_i^0= \E\left[ \frac{\1\{\Xbf_i \in \Lcal(\xbf,\Zcal_{s_n}) \} \1_{w,s_n}}{N_0}\Delta(\Xbf_i) \right] \in \H.
\end{align*}
With the same arguments as before, it follows that
\begin{align*}
    \left \|\sum_{i=2}^{s_n} E_i^0 \right\|_{\H}^2=\mathcal{O}(s_n^{-2w}).
\end{align*}
Combining this with
\begin{align*}
    \Var(\Prob(\Xbf_1 \notin \Lcal(\xbf,\Zcal_{s_n}) \mid \Xbf_1)) &= \Var(1-\Prob(\Xbf_1 \in \Lcal(\xbf,\Zcal_{s_n}) \mid \Xbf_1))\\
    &=\Var(\Prob(\Xbf_1 \in \Lcal(\xbf,\Zcal_{s_n}) \mid \Xbf_1))\\
    & \leq \E[ \E[ \1\{ \Xbf_1 \in \Lcal(\xbf,\Zcal_{s_n})\} \mid \Xbf_1  ]^2]\\
    & \leq \E[\1\{ \Xbf_1 \in \Lcal(\xbf,\Zcal_{s_n})\}]\\
    &=\mathcal{O}(s_n^{-1})
\end{align*}
results in
\begin{align}\label{101_end}
     \Var\left( \sum_{i=2}^{s_n} \E[ S_i'\Delta(\Xbf_i)   \1\{\Xbf_1 \notin \Lcal(\xbf,\Zcal_{s_n}) \}  \mid \Xbf_1]  \right)&=\mathcal{O}(s_n^{-(1+2w)}).
\end{align}
Consequently, we have 
\begin{align*}
  &\left |  \Cov\left( \sum_{i=2}^{s_n} \E[ S_i'\Delta(\Xbf_i)   \1\{\Xbf_1 \in \Lcal(\xbf,\Zcal_{s_n}) \}  \mid \Xbf_1] , \sum_{i=2}^{s_n} \E[ S_i'\Delta(\Xbf_i)   \1\{\Xbf_1 \notin \Lcal(\xbf,\Zcal_{s_n}) \}  \mid \Xbf_1]\right)  \right |\\
  \leq& \left( \Var\left(\sum_{i=2}^{s_n} \E[ S_i'\Delta(\Xbf_i)   \1\{\Xbf_1 \in \Lcal(\xbf,\Zcal_{s_n}) \}  \mid \Xbf_1]\right) \Var\left(\sum_{i=2}^{s_n} \E[ S_i'\Delta(\Xbf_i)   \1\{\Xbf_1 \notin \Lcal(\xbf,\Zcal_{s_n}) \}  \mid \Xbf_1]\right) \right)^{1/2}\\
  =&\mathcal{O}(s_n^{-(1+2w)}),
\end{align*}
so that~\eqref{whatweneed} holds. Finally, using the reverse triangle inequality as in~\eqref{l2normtrick} in the proof of Lemma~\ref{vardifboundlemma}, we obtain
\begin{align*}
     \Var\left( \sum_{i=2}^{s_n} \E[S_i' \xi_i\mid \Xbf_1]  \right)^{1/2} = \Var(\E[S_1' |\Xbf_1] \mu(\xbf)     )^{1/2} +\O(s_{n}^{-(1/2+w)}) = \O(s_{n}^{-1/2}),
\end{align*}
by~\eqref{whatweneed} and~\eqref{whatweneed3}. This shows~\eqref{whatweneed2} and thus~\eqref{variance2term_1} in the claim holds true.
\end{claimproof}

Finally, we consider the covariance between $\E[S_1' \xi_i\mid \Xbf_1]$ and $\sum_{i=2}^{s_n} \E[S_i' \xi_i\mid \Xbf_1]$.

\begin{claim}
For some $\varepsilon > 0$, we have
\begin{align}\label{variance3term_1}
        \Cov\left( \sum_{i=2}^{s_n} \E[S_i' \xi_i\mid \Xbf_1], \E[S_1'|\Xbf_1]  \E[ \xi_1|\Xbf_1] \right)&= -\Var(\E[S_1'|\Xbf_1]) \|\E[\xi_1|\Xbf=\xbf]\|_{\H}^2 + \mathcal{O}(s_n^{-(1+\varepsilon)})
\end{align}
and
\begin{align}\label{variance3term_2}
    \Cov\left( \sum_{i=2}^{s_n} \E[S_i' \langle\xi_i,f \rangle\mid \Xbf_1], \E[S_1'|\Xbf_1]  \E[ \langle \xi_1, f \rangle|\Xbf_1] \right) &= -\Var(\E[S_1'|\Xbf_1]) \E[\langle \xi_1, f \rangle|\Xbf=\xbf]^2 + \mathcal{O}(s_n^{-(1+\varepsilon)}).
\end{align}
\end{claim}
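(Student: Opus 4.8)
The plan is to build \eqref{variance3term_1} and \eqref{variance3term_2} out of estimates already established in the proof of Theorem~\ref{majorsteptheorem} and in the claims leading to \eqref{variance2term_1}--\eqref{variance2term_2}. I would treat only the vector statement \eqref{variance3term_1}; the scalar statement \eqref{variance3term_2} follows verbatim after replacing $\xi_i$ by $\langle\xi_i,f\rangle$ and $\mu(\xbf)$ by $\langle\mu(\xbf),f\rangle=\E[\langle\xi_1,f\rangle\mid\Xbf=\xbf]$, invoking the Lipschitz continuity of $\xbf\mapsto\langle\mu(\xbf),f\rangle$ from \eqref{eq:Lipschitzcontinuityf} in place of \dataass{2}, together with the scalar analogue of \eqref{whatweneed} noted in the proof of \eqref{variance2term_2}.

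First I would peel off the ``main'' part of each factor. With $\Delta(\Xbf_i)=\E[\xi_i\mid\Xbf_i]-\mu(\xbf)$ as before, and using honesty \forestass{1} exactly as in \eqref{honestchange}, write $\E[S_1'\mid\Xbf_1]\E[\xi_1\mid\Xbf_1]=A_0+R_A$ and $\sum_{i=2}^{s_n}\E[S_i'\xi_i\mid\Xbf_1]=B_0+R_B$, where $A_0=\E[S_1'\mid\Xbf_1]\mu(\xbf)$, $R_A=\E[S_1'\mid\Xbf_1]\Delta(\Xbf_1)$, $B_0=\mu(\xbf)\sum_{i=2}^{s_n}\E[S_i'\mid\Xbf_1]$, and $R_B=\sum_{i=2}^{s_n}\E[S_i'\Delta(\Xbf_i)\mid\Xbf_1]$. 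Since $S_i'=0$ unless $\Xbf_i$ lies in a leaf of diameter at most $s_n^{-w}$ containing $\xbf$, on $\{S_i'\neq0\}$ one has $\|\Xbf_i-\xbf\|\le s_n^{-w}$ and hence $\|\Delta(\Xbf_i)\|_{\H}\le Cs_n^{-w}$ by \dataass{2}; with $\E[\E[S_1'\mid\Xbf_1]^2]=\O(s_n^{-1})$ this gives $\Var(R_A)=\O(s_n^{-(1+2w)})$, while $\Var(R_B)=\O(s_n^{-(1+2w)})$ is exactly \eqref{whatweneed}. Moreover, writing $\sum_{i=2}^{s_n}\E[S_i'\mid\Xbf_1]=p_n-\E[S_1'\mid\Xbf_1]$ via \eqref{Sdproperty}, with $p_n=\P(\mathrm{diam}(\Lcal(\xbf,\Zcal_{s_n}))\le s_n^{-w})$ a constant, \eqref{whatweneed3} gives $\Var(A_0)=\Var(B_0)=\Var(\E[S_1'\mid\Xbf_1])\|\mu(\xbf)\|_{\H}^2=\O(s_n^{-1})$.

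Then I would expand the covariance bilinearly into $\Cov(A_0,B_0)+\Cov(A_0,R_B)+\Cov(R_A,B_0)+\Cov(R_A,R_B)$ and control the three remainder terms by the Cauchy--Schwarz inequality for the Hilbert-space covariance: $|\Cov(A_0,R_B)|\le\sqrt{\Var(A_0)\Var(R_B)}=\O(s_n^{-(1+w)})$, and likewise $|\Cov(R_A,B_0)|=\O(s_n^{-(1+w)})$ and $|\Cov(R_A,R_B)|=\O(s_n^{-(1+2w)})$. For the leading term, since $\mu(\xbf)$ is a fixed element of $\H$,
\[
\Cov(A_0,B_0)=\|\mu(\xbf)\|_{\H}^2\,\Cov\!\left(\E[S_1'\mid\Xbf_1],\,p_n-\E[S_1'\mid\Xbf_1]\right)=-\|\mu(\xbf)\|_{\H}^2\,\Var(\E[S_1'\mid\Xbf_1]),
\]
and $\|\mu(\xbf)\|_{\H}^2=\|\E[\xi_1\mid\Xbf=\xbf]\|_{\H}^2$ since $\mu(\xbf)=\E[\xi_1\mid\Xbf=\xbf]$; collecting the four contributions yields \eqref{variance3term_1} with $\varepsilon=w$.

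I do not expect a genuine obstacle: this is a bookkeeping assembly of already-available estimates. The only point deserving care is the minus sign, whose single source is that $\sum_{i=1}^{s_n}\E[S_i'\mid\Xbf_1]=p_n$ is deterministic, so that $\E[S_1'\mid\Xbf_1]$ and $\sum_{i\ge2}\E[S_i'\mid\Xbf_1]$ are perfectly anticorrelated up to the constant $p_n$; one must also remember that the remainders here are compared against the $\O(s_n^{-1/2})$ standard deviation of $A_0$ and $B_0$ (rather than against $\sigma_n$), which is what produces the final rate $\O(s_n^{-(1+w)})$.
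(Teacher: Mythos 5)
Your proof is correct and follows essentially the same route as the paper: expand both factors around $\mu(\xbf)$ via $\Delta(\Xbf_i)$, extract the leading term $-\Var(\E[S_1'\mid\Xbf_1])\|\mu(\xbf)\|_\H^2$ from the perfect anticorrelation $\sum_{i\ge 2}\E[S_i'\mid\Xbf_1]=p_n-\E[S_1'\mid\Xbf_1]$, and bound the three remainder covariances by Cauchy--Schwarz together with $\Var(R_A),\Var(R_B)=\O(s_n^{-(1+2w)})$. The only (immaterial) deviation is that you bound the paper's term $(III)$ via Cauchy--Schwarz at rate $\O(s_n^{-(1+w)})$ rather than attempting the sharper direct computation in the paper, which still gives the claimed $\O(s_n^{-(1+\varepsilon)})$ with $\varepsilon=w$.
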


\begin{claimproof}
Again, we only show~\eqref{variance3term_1}, because~\eqref{variance3term_2} follows analogously. Using~\eqref{honestchange}, we can subtract and add $\mu(\xbf)\sum_{i=2}^{s_n} \E[S_i'|\Xbf_1]$ and $\E[S_1'|\Xbf_1] \mu(\xbf)$ to obtain
\begin{align*}
     &\Cov\left( \sum_{i=2}^{s_n} \E[S_i' \xi_i\mid \Xbf_1], \E[S_1'|\Xbf_1]  \E[ \xi_1|\Xbf_1] \right)\\
     =&  \Cov\left( \sum_{i=2}^{s_n} \E[ S_i' \E[  \xi_i \mid  \Xbf_i]  \mid \Xbf_1], \E[S_1'|\Xbf_1]  \E[ \xi_1|\Xbf_1] \right)\\
    =&  \Cov\left( \sum_{i=2}^{s_n} \E[ S_i' \Delta(\Xbf_i)  \mid \Xbf_1] + \mu(\xbf) \sum_{i=2}^{s_n} \E[S_i' \mid \Xbf_1 ], \E[S_1'  \Delta(\Xbf_1)|\Xbf_1]  + \mu(\xbf) \E[S_1'|\Xbf_1]  \right)\\
    =&\Cov\left( \sum_{i=2}^{s_n} \E[ S_i' \Delta(\Xbf_i)  \mid \Xbf_1] , \E[S_1' \Delta(\Xbf_1)|\Xbf_1]    \right) +\Cov\left( \sum_{i=2}^{s_n} \E[ S_i' \Delta(\Xbf_i)  \mid \Xbf_1] , \mu(\xbf) \E[S_1'|\Xbf_1] \right)\\
    &\quad+\Cov\left( \mu(\xbf) \sum_{i=2}^{s_n} \E[S_i' \mid \Xbf_1 ] , \E[S_1' \Delta(\Xbf_1)|\Xbf_1]    \right) +  \Cov\left( \mu(\xbf) \sum_{i=2}^{s_n} \E[S_i' \mid \Xbf_1 ], \mu(\xbf) \E[S_1'|\Xbf_1]  \right)\\
    =:& (I) + (II) + (III) + (IV),
\end{align*}
where again $\Delta(\Xbf_i)=\E[  \xi_i \mid  \Xbf_i]  - \mu(\xbf)$. Since from~\eqref{Sdproperty},
\begin{align}
    \mu(\xbf)\sum_{i=2}^{s_n} \E[S_i'|\Xbf_1] = \mu(\xbf)(p_n-\E[S_1'|\Xbf_1]),
\end{align}
it holds that
\begin{align*}
    (IV)=\Cov\left( \mu(\xbf) (p_n-\E[S_1'|\Xbf_1]), \mu(\xbf) \E[S_1'|\Xbf_1]  \right)=-\Var(\E[S_1'|\Xbf_1]) \| \mu(\xbf) \|_{\H}^2.
\end{align*}
Subsequently, we show that the remaining terms are negligible.
Due to the Cauchy--Schwarz inequality, we have
\begin{align*}
    |(I)| \leq \left( \Var\left(\sum_{i=2}^{s_n} \E[ S_i' \Delta(\Xbf_i)  \mid \Xbf_1]\right) \Var(\E[S_1' \Delta(\Xbf_1)|\Xbf_1]) \right)^{1/2}.
\end{align*}
As proven above (combining~\eqref{whatweneed} and~\eqref{whatweneed22}), $\Var(\sum_{i=2}^{s_n} \E[ S_i' \Delta(\Xbf_i)  \mid \Xbf_1])=\mathcal{O}(s_n^{-(1+2w)})$, and it can be established that
\[
\Var(\E[S_1' \Delta(\Xbf_1)|\Xbf_1]) \leq \E[  \E[S_1' \|\Delta(\Xbf_1)\|_{\H}|\Xbf_1]^2]=\mathcal{O}(s_n^{-(1+2w)})
\]
holds. Consequently, $(I)=\mathcal{O}(s_n^{-(1+2w)})$. Similarly,
\begin{align*}
    |(II)| \leq \left( \Var\left(\sum_{i=2}^{s_n} \E[ S_i' \Delta(\Xbf_i)  \mid \Xbf_1] \right) \Var(\mu(\xbf) \E[S_1'|\Xbf_1]) \right)^{1/2}=\mathcal{O}(s_n^{-(1+w)}),
\end{align*}
as $\Var(\E[S_1'|\Xbf_1])\leq \E[(S_1')^2]=\mathcal{O}(s_n^{-1}).$
Finally, 
\begin{align*}
    |(III)|&=|\Cov\left( \mu(\xbf) (1-\E[S_1' \Delta(\Xbf_1)|\Xbf_1] ) , \E[S_1' \Delta(\Xbf_1)|\Xbf_1]    \right)| \\
    &=|- \|\mu(\xbf)\|_{\H}^2 \Var(\E[S_1' \Delta(\Xbf_1)|\Xbf_1])|\\
    &=\mathcal{O}(s_n^{-(1+2w)})
\end{align*}
as above.

\end{claimproof}

Combining~\eqref{variance1term_1},~\eqref{variance2term_1}, and~\eqref{variance3term_1}, we obtain
\begin{align*}
        \Var( \E[T'(\Zcal_{s_n})| \Xbf_1 ])&= 2\Var(\E[S_1'|\Xbf_1]) \|\E[\xi_1|\Xbf=\xbf]\|_{\H}^2 - 2\Var(\E[S_1'|\Xbf_1]) \|\E[\xi_1|\Xbf=\xbf]\|_{\H}^2  + \mathcal{O}(s_n^{-(1+\epsilon)})\\
        &=\mathcal{O}(s_n^{-(1+\epsilon)})
\end{align*}
and analogously
\begin{align*}
       \Var( \E[\langle T'(\Zcal_{s_n}), f \rangle \mid \Zbf_1 ])&= \mathcal{O}(s_n^{-(1+\epsilon)}),
\end{align*}
proving~\eqref{Varboundnew}.

\end{claimproof}

We recall that $\Var(\E[S_1'|\Xbf_1]) \sim \Var(\E[S_1|\Xbf_1])=\Var(\E[S_1|\Zbf_1])=\Omega((s_n\log(s_n))^{-1})$, by~\eqref{truncation_S},~\eqref{Honestyconsequence}, and Lemma~\ref{lemma4} respectively. This together with Claim~\eqref{Varboundnew} and the expansion in~\eqref{varexpansion} establishes~\eqref{varianceassumption}.
\end{proof}

This leads us to the proof of Theorem~\ref{thm: asymptoticnormality} in the main text.

\asymptoticnormality*

\begin{proof}

First, by the definition of $\sigma_n$, we have
\[
\xi_n^0 := \sum_{i=1}^n   \frac{s_n}{n \sigma_n} T_{n}(\Zbf_i) =   \sum_{i=1}^{n} \frac{T_{n}(\Zbf_i)}{\sqrt{n \Var(T_n(\Zbf_1))}}.
\]

Define $\sigma_{n}^2(f)= \frac{s_n^2}{n} \Var( \langle T_n(\Zbf_1), f \rangle)$.
Subsequently, we establish univariate convergence for all $f \in \H$:

\begin{claim}
For all $f \in \H$, we have $\sum_{i=1}^n   \frac{s_n}{n \sigma_n(f)} \langle T_{n}(\Zbf_i),f \rangle \stackrel{D}{\to} N(0, \sigma(f)^2)$.
\end{claim}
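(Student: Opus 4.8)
The plan is to regard $\langle T_n(\Zbf_1),f\rangle,\ldots,\langle T_n(\Zbf_n),f\rangle$ as a triangular array of i.i.d.\ mean-zero real random variables and to prove the claim by a Lindeberg--Feller central limit theorem for the normalised partial sum $W_n:=\sum_{i=1}^n\frac{s_n}{n\sigma_n(f)}\langle T_n(\Zbf_i),f\rangle$; throughout one reads $f\in\H\setminus\{0\}$, since for $f=0$ the normalisation is undefined (as in Theorem~\ref{amazingvarianceproposition}). Two preliminary observations reduce most of the work to bookkeeping. First, by the choice $\sigma_n^2(f)=\frac{s_n^2}{n}\Var(\langle T_n(\Zbf_1),f\rangle)$ and independence of the $\Zbf_i$, one has $\E[W_n]=0$ and $\Var(W_n)=1$ for every $n$. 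Second, each $\langle T_n(\Zbf_1),f\rangle$ is bounded uniformly in $n$: by boundedness of $k$ (\kernelass{1}), every $f\in\H$ satisfies $|f(\ybf)|=|\langle f,k(\ybf,\cdot)\rangle|\le\|f\|_\H\sqrt{k(\ybf,\ybf)}\le C_f:=\|f\|_\H\sqrt{\sup_\ybf k(\ybf,\ybf)}$, and since a tree output $T(\Zcal_{s_n})$ is an average over the tree randomness of convex combinations $\sum_j S_j k(\Ybf_j,\cdot)$ with $S_j\ge 0$, $\sum_j S_j=1$, one gets $|\langle T(\Zcal_{s_n}),f\rangle|\le C_f$, hence $|\langle T_n(\Zbf_1),f\rangle|=|\E[\langle T(\Zcal_{s_n}),f\rangle\mid\Zbf_1]-\E[\langle T(\Zcal_{s_n}),f\rangle]|\le 2C_f$.

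With these in place, I would reduce the Lindeberg condition to asymptotic negligibility of the array, i.e.\ $\max_{i\le n}|Y_{n,i}|\to 0$ for $Y_{n,i}:=\frac{s_n}{n\sigma_n(f)}\langle T_n(\Zbf_i),f\rangle$. Since $\frac{s_n}{n\sigma_n(f)}=\bigl(n\Var(\langle T_n(\Zbf_1),f\rangle)\bigr)^{-1/2}$, the bound from the previous paragraph gives $|Y_{n,i}|\le 2C_f\,\bigl(n\Var(\langle T_n(\Zbf_1),f\rangle)\bigr)^{-1/2}$, so it suffices to show $n\Var(\langle T_n(\Zbf_1),f\rangle)\to\infty$. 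This is the only substantive input, and it follows from the results already proven: Theorem~\ref{amazingvarianceproposition} gives $\Var(\langle T_n(\Zbf_1),f\rangle)=(1+o(1))\,\sigma^2(f)\,\Var(T_n(\Zbf_1))$ with $\sigma^2(f)>0$, while the rate for $\sigma_n$ recorded after Theorem~\ref{thm: asymptoticlinearity}, together with $\sigma_n^2=\frac{s_n^2}{n}\Var(T_n(\Zbf_1))$ from \eqref{eq: sigmandef}, gives $\Var(T_n(\Zbf_1))\succsim C_1^2\,\bigl(s_n\log(s_n)^p\bigr)^{-1}$; with $s_n=n^{\beta}$, $\beta<1$, this yields $n\Var(\langle T_n(\Zbf_1),f\rangle)\succsim\sigma^2(f)C_1^2\,n^{1-\beta}/(\beta\log n)^p\to\infty$. (Equivalently, the lower bound can be read off \eqref{truncation_result}, \eqref{Varboundnew} and Lemma~\ref{lemma4} in the proof of Theorem~\ref{amazingvarianceproposition}, using $\Var(f(\Ybf)\mid\Xbf=\xbf)>0$ from \ref{dataass6}.)

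Once $\max_{i\le n}|Y_{n,i}|\to 0$ and $\Var(\sum_i Y_{n,i})=1$ are established, the truncating indicators in the Lindeberg sum are identically zero for all large $n$, so the Lindeberg--Feller CLT for triangular arrays yields $W_n=\sum_{i=1}^n\frac{s_n}{n\sigma_n(f)}\langle T_n(\Zbf_i),f\rangle\stackrel{D}{\to}N(0,1)$; a Lyapunov version is equally immediate, since $\E[|\langle T_n(\Zbf_1),f\rangle|^{2+\delta}]\le(2C_f)^{\delta}\Var(\langle T_n(\Zbf_1),f\rangle)$ makes the Lyapunov ratio $\O\bigl((n\Var(\langle T_n(\Zbf_1),f\rangle))^{-\delta/2}\bigr)\to 0$. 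Replacing $\sigma_n(f)$ by $\sigma_n$, which differs by the factor $\sigma_n(f)/\sigma_n\to\sigma(f)$ supplied by Theorem~\ref{amazingvarianceproposition}, then upgrades this to $\sum_{i=1}^n\frac{s_n}{n\sigma_n}\langle T_n(\Zbf_i),f\rangle\stackrel{D}{\to}N(0,\sigma^2(f))$, the form used in the remainder of the argument.

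The hard part is essentially confined to the variance lower bound $n\Var(\langle T_n(\Zbf_1),f\rangle)\to\infty$, which ultimately rests on the incrementality estimate (Theorem~\ref{majorsteptheorem} and Lemma~\ref{lemma4}) and on \ref{dataass6}; the boundedness of the summands via \kernelass{1}, the trivial Lindeberg verification, and the scale change are all routine. The only point needing a little care is the boundedness step: if one does not wish to invoke \kernelass{1}, the $\langle T_n(\Zbf_1),f\rangle$ need no longer be bounded, and one should instead run the Lyapunov argument using the $(2+\delta)$-moment bound \eqref{eq:delta-plus-2-moment-bound} (a consequence of \ref{dataass5}), exactly as when Theorem~3.1 of \citet{wager2018estimation} is invoked in the main text.
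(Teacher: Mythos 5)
Your proof is correct, and the final Slutsky step (converting the $\sigma_n(f)$-normalised $N(0,1)$ limit into the $\sigma_n$-normalised $N(0,\sigma^2(f))$ limit via Theorem~\ref{amazingvarianceproposition}) is identical to the paper's. Where you diverge is in how the first display, $\sum_i \frac{s_n}{n\sigma_n(f)}\langle T_n(\Zbf_i),f\rangle\stackrel{D}{\to}N(0,1)$, is established. The paper does not prove this directly: it observes that $\langle T_n(\Zbf_i),f\rangle$ is the first-order projection of a univariate honest forest with response $f(\Ybf_i)$, verifies that \dataass{1}--\dataass{7} together with~\eqref{eq:Lipschitzcontinuityf} and~\eqref{eq:delta-plus-2-moment-bound} imply the hypotheses of Theorem~3.1 of \citet{wager2018estimation}, and imports the conclusion wholesale. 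You instead give a self-contained Lindeberg argument, exploiting the observation that under \kernelass{1} a tree output is a convex combination of elements $k(\Ybf_j,\cdot)$ of bounded norm, so $|\langle T_n(\Zbf_1),f\rangle|\le 2\|f\|_\H\sup_\ybf k(\ybf,\ybf)^{1/2}$ deterministically; together with the variance lower bound $n\Var(\langle T_n(\Zbf_1),f\rangle)\to\infty$ (which both arguments ultimately trace back to the incrementality estimate, Theorem~\ref{majorsteptheorem} and Lemma~\ref{lemma4}), this makes the Lindeberg condition vacuous for large $n$. Your version is more elementary for the bounded-kernel setting and makes the role of \ref{dataass6} and the incrementality bound explicit; the paper's route via Wager--Athey is shorter to write but relies on the reader tracking the $(2+\delta)$-moment machinery. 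You also correctly note that the Lyapunov form via~\eqref{eq:delta-plus-2-moment-bound} recovers the paper's approach when boundedness is not available. One minor remark: the claim as stated normalises by $\sigma_n(f)$ but asserts a limit of $N(0,\sigma(f)^2)$, which is internally inconsistent; you resolved this the same way the paper does, first obtaining $N(0,1)$ under $\sigma_n(f)$ and then changing the scaling.
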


\begin{claimproof}

Due to linearity, $\langle T_n(\Zbf_1), f \rangle$ is the first order approximation of a tree using the univariate response $f(\Ybf_i)$. Thus, it follows from Assumption~\forestass{1}--\forestass{5} and \dataass{1}--\dataass{7} with the implications~\eqref{eq:Lipschitzcontinuityf}--\eqref{supbound} and the arguments in the proof of Theorem 8 in~\citet{wager2017estimation} that
\begin{align}
     \sum_{i=1}^n   \frac{s_n}{n \sigma_n(f)} \langle T_{n}(\Zbf_i),f \rangle \stackrel{D}{\to} N(0, 1).
\end{align}
From Theorem~\ref{amazingvarianceproposition}, we have
\begin{align*}
   \frac{\sigma_n(f)}{\sigma_n} = \frac{\Var(\langle T_{n}(\Zbf_1), f \rangle)}{\Var(T_{n}(\Zbf_1))} \to \sigma^2(f)  > 0,
\end{align*}
so that due to Slutsky's theorem,
\begin{align}
    \sum_{i=1}^n   \frac{s_n}{n \sigma_n} \langle T_{n}(\Zbf_i),f \rangle = \frac{\sigma_n(f)}{\sigma_n}  \sum_{i=1}^n   \frac{s_n}{n \sigma_n(f)} \langle T_{n}(\Zbf_i),f \rangle \to N(0, \sigma^2(f))
\end{align}
with $\sigma^2(f) > 0$.
\end{claimproof}

Now, we proof uniform tightness:

\begin{claim}
$\left( \xi_n^0   \right)_{n \in \N}$ is uniformly tight.
\end{claim}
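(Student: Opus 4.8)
The plan is to use the standard characterization of uniform tightness of a sequence of Hilbert-space-valued random elements built from sums of i.i.d.\ terms, as found in~\citet[Chapter 7]{hilbertspacebook} or~\citet{HilbertspaceCLTs}: since $\H$ is separable, it suffices to show that for every $\varepsilon>0$ there exists a finite-dimensional subspace $V\subset\H$ (spanned by a finite subset of a fixed orthonormal basis $(e_j)_{j\ge 1}$) such that $\sup_n \E[\|\Pi_{V^\perp}\xi_n^0\|_\H^2]<\varepsilon$, where $\Pi_{V^\perp}$ is the orthogonal projection onto the complement. Equivalently, writing $\xi_n^0=\sum_{i=1}^n \frac{T_n(\Zbf_i)}{\sqrt{n\Var(T_n(\Zbf_1))}}$ with the $T_n(\Zbf_i)$ i.i.d.\ and mean zero, we have by orthogonality of independent centered summands
\begin{align*}
\E\big[\|\Pi_{V^\perp}\xi_n^0\|_\H^2\big] = \frac{n\,\E[\|\Pi_{V^\perp}T_n(\Zbf_1)\|_\H^2]}{n\,\Var(T_n(\Zbf_1))} = \frac{\sum_{j:\,e_j\notin V}\Var(\langle T_n(\Zbf_1),e_j\rangle)}{\Var(T_n(\Zbf_1))}.
\end{align*}
So the whole problem reduces to controlling the \emph{tail} of the series $\sum_j \Var(\langle T_n(\Zbf_1),e_j\rangle)$, uniformly in $n$, relative to its total $\Var(T_n(\Zbf_1))=\sum_j\Var(\langle T_n(\Zbf_1),e_j\rangle)$.

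The key input is Theorem~\ref{amazingvarianceproposition}: for each fixed $f\in\H\setminus\{0\}$,
\[
\frac{\Var(\langle T_n(\Zbf_1),f\rangle)}{\Var(T_n(\Zbf_1))}\;\longrightarrow\;\sigma^2(f)=\frac{\Var(\langle k(\Ybf,\cdot),f\rangle\mid\Xbf=\xbf)}{\Var(k(\Ybf,\cdot)\mid\Xbf=\xbf)}.
\]
Note $\sum_j\sigma^2(e_j)=\Var(k(\Ybf,\cdot)\mid\Xbf=\xbf)/\Var(k(\Ybf,\cdot)\mid\Xbf=\xbf)=1$ (using that the covariance operator of $k(\Ybf,\cdot)\mid\Xbf=\xbf$ is trace class with trace $\Var(k(\Ybf,\cdot)\mid\Xbf=\xbf)$, since $\E[\|k(\Ybf,\cdot)\|_\H^2\mid\Xbf=\xbf]\le\sup k<\infty$ by~\ref{kernelass1}). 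Hence the limiting sequence of ratios $(\sigma^2(e_j))_j$ is itself a probability sequence, so its tail $\sum_{j>J}\sigma^2(e_j)$ can be made $<\varepsilon/2$ by choosing $J$ large. First I would fix such a $J$, set $V=\mathrm{span}(e_1,\dots,e_J)$, and then argue $\E[\|\Pi_{V^\perp}\xi_n^0\|_\H^2]=1-\sum_{j\le J}\frac{\Var(\langle T_n(\Zbf_1),e_j\rangle)}{\Var(T_n(\Zbf_1))}\to 1-\sum_{j\le J}\sigma^2(e_j)<\varepsilon/2$ by finitely many applications of Theorem~\ref{amazingvarianceproposition}, hence this quantity is $<\varepsilon$ for all $n$ large; the finitely many small indices $n$ are handled by enlarging $V$ further (each $\xi_n^0$ is tight individually by separability). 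Combined with the finitely-many-basis-elements projection criterion, this gives uniform tightness.

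The main obstacle is justifying that the normalization $\Var(T_n(\Zbf_1))$ is \emph{exactly} the total of the coordinate variances and that no mass ``escapes to infinity'' in $n$ in a way not captured by the fixed-$f$ limits — i.e., turning the finite collection of coordinatewise limits into a genuine uniform tail bound. This is precisely what the probability-sequence normalization $\sum_j\sigma^2(e_j)=1$ buys us: the limit object has no tail mass, so the uniform tail control follows from Theorem~\ref{amazingvarianceproposition} applied coordinate by coordinate plus the monotone structure $\sum_{j\le J}(\cdot)\le\sum_j(\cdot)$. A secondary technical point is ensuring $T_n(\Zbf_1)\in\mathcal{L}^2(\Omega,\A,\H)$ uniformly so the Bochner-space orthogonality computation is valid, which follows from $\Var(T_n(\Zbf_1))=\Var(\E[T(\Zcal_{s_n})\mid\Zbf_1])\le\Var(T(\Zcal_{s_n}))\le\sup_\xbf\E[\|k(\Ybf,\cdot)\|_\H^2\mid\Xbf=\xbf]<\infty$ via Lemma~\ref{helperlemma} and~\ref{kernelass1}.
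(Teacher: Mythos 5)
Your proposal is correct and follows essentially the same route as the paper: both reduce uniform tightness to controlling $\E\big[\|\xi_n^0 - P_k(\xi_n^0)\|_\H^2\big] = 1 - \sum_{j\le k}\Var(\langle T_n(\Zbf_1),e_j\rangle)/\Var(T_n(\Zbf_1))$, apply Theorem~\ref{amazingvarianceproposition} coordinatewise, and invoke the trace identity $\Var(k(\Ybf,\cdot)\mid\Xbf=\xbf) = \sum_j \Var(\langle k(\Ybf,\cdot),e_j\rangle\mid\Xbf=\xbf)$ to push the tail to zero. The only cosmetic difference is that the paper phrases the uniformity over $n$ directly via the $\limsup_n$ criterion of~\citet[Lemma~3.2]{HilbertspaceCLTs}, whereas you handle the finitely many small $n$ by enlarging $V$; these are interchangeable.
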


\begin{claimproof}

Because $\H$ is separable due to our assumptions on the kernel, there exists a complete orthogonal basis $\left( e_j\right)_{j \in \N}$ of $\H$; see for instance~\citet{hilbertspacebook}. Let $P_k$ be the projection operator onto the linear span of the first $k$ elements of $ \left( e_j\right)_{j \in \N} $, $S_k=\mbox{span}(e_1, \ldots, e_k)$. Because $S_k$ is closed and linear, $P_k$ is well defined. Moreover, for all $f \in \H$, we have $\langle f - P_k(f), P_k(f) \rangle=0$. Furthermore, it can be shown that $P_k(f)=\sum_{j=1}^k \langle f,e_j \rangle e_j$.

We now verify condition (c) of~\citet[Lemma 3.2]{HilbertspaceCLTs}, which is a sufficient condition for tightness:

\begin{claim}
 $\limsup_n \E[\| \xi_n^0 - P_k(\xi_n^0) \|_{\H}^2] \to 0$, as $k \to \infty$.
\end{claim}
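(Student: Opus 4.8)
The plan is to compute $\E[\|\xi_n^0 - P_k(\xi_n^0)\|_\H^2]$ exactly and thereby reduce the claim to a statement about the coordinate variances of $T_n(\Zbf_1)$ in the basis $(e_j)_j$. Since $P_k$ is a bounded linear operator it commutes with the Bochner integral and with the finite sum, and since the $T_n(\Zbf_i)$ are i.i.d.\ and centered by~\eqref{Tn}, I would write $\xi_n^0 - P_k(\xi_n^0) = \frac{s_n}{n\sigma_n}\sum_{i=1}^n (I-P_k)T_n(\Zbf_i)$ as a sum of i.i.d.\ centered elements of $\H$. Using independence (the cross terms $\E[\langle (I-P_k)T_n(\Zbf_i),(I-P_k)T_n(\Zbf_j)\rangle]$ vanish for $i\ne j$) and then the definition~\eqref{eq: sigmandef} of $\sigma_n$,
\begin{align*}
   \E[\|\xi_n^0 - P_k(\xi_n^0)\|_\H^2] = \frac{s_n^2}{n\sigma_n^2}\,\E[\|(I-P_k)T_n(\Zbf_1)\|_\H^2] = \frac{\E[\|(I-P_k)T_n(\Zbf_1)\|_\H^2]}{\Var(T_n(\Zbf_1))}.
\end{align*}
Here $\Var(T_n(\Zbf_1))>0$ for all large $n$ by Theorem~\ref{amazingvarianceproposition}, and all these second moments are finite by~\eqref{star2star} together with Jensen's inequality. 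Expanding $(I-P_k)T_n(\Zbf_1)$ in the orthonormal basis and using Parseval together with $\E[T_n(\Zbf_1)]=0$ gives
\begin{align}\label{eq:tailidentityxx}
   \E[\|\xi_n^0 - P_k(\xi_n^0)\|_\H^2] = \sum_{j>k} r_{n,j}, \qquad r_{n,j} := \frac{\Var(\langle T_n(\Zbf_1), e_j\rangle)}{\Var(T_n(\Zbf_1))}.
\end{align}

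Next I would observe that, for each fixed $n$, $(r_{n,j})_{j\ge 1}$ is a probability mass function on $\N$: summing the numerators over $j$ and using the orthonormal-basis variance decomposition of a Hilbert-space-valued random element yields $\sum_j \Var(\langle T_n(\Zbf_1), e_j\rangle) = \Var(T_n(\Zbf_1))$, so $\sum_j r_{n,j}=1$. By Theorem~\ref{amazingvarianceproposition}, for each fixed $j$ we have $r_{n,j}\to r_j := \Var(\langle k(\Ybf,\cdot), e_j\rangle\mid\Xbf=\xbf)/\Var(k(\Ybf,\cdot)\mid\Xbf=\xbf)$ as $n\to\infty$, and the same variance decomposition applied to $k(\Ybf,\cdot)\mid\Xbf=\xbf$ gives $\sum_j r_j = 1$. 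So $(r_{n,\cdot})_n$ is a sequence of probability mass functions on $\N$ converging pointwise to a probability mass function $(r_j)_j$.

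I would then invoke Scheff\'e's lemma: pointwise convergence of nonnegative sequences whose total masses converge (here all equal to $1$) forces $\ell^1$-convergence, i.e.\ $\sum_j |r_{n,j} - r_j| \to 0$. Hence, for every fixed $k$,
\begin{align*}
   \limsup_{n\to\infty}\sum_{j>k} r_{n,j} \le \limsup_{n\to\infty}\Bigl(\sum_{j>k} r_j + \sum_j |r_{n,j}-r_j|\Bigr) = \sum_{j>k} r_j,
\end{align*}
and the right-hand side tends to $0$ as $k\to\infty$ since $\sum_j r_j = 1<\infty$. Combined with~\eqref{eq:tailidentityxx}, this gives precisely $\limsup_n \E[\|\xi_n^0 - P_k(\xi_n^0)\|_\H^2]\to 0$ as $k\to\infty$.

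The only genuinely delicate point — and the step I expect to need the most care — is the passage from pointwise convergence of the ratios $r_{n,j}$ to uniform smallness of their tails: a naive ``reverse Fatou'' bound $\limsup_n\sum_{j>k} r_{n,j}\le\sum_{j>k}\limsup_n r_{n,j}$ is false in general, because mass can escape to arbitrarily large coordinates $j$. Conservation of total mass, which is free from the two orthonormal-basis variance decompositions, is exactly what rules this out and makes Scheff\'e's lemma applicable; everything else is a routine combination of independence, Parseval's identity, and Theorem~\ref{amazingvarianceproposition}.
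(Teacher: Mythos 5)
Your proof is correct and shares the same backbone as the paper's: both write $\E[\|\xi_n^0 - P_k \xi_n^0\|_\H^2]$ as a tail sum of the ratios $r_{n,j}=\Var(\langle T_n(\Zbf_1),e_j\rangle)/\Var(T_n(\Zbf_1))$, and both close the argument by invoking Theorem~\ref{amazingvarianceproposition} for the pointwise limits $r_j$ and the orthonormal-basis variance decomposition for conservation of total mass. The only place you diverge is in the final step: you invoke Scheff\'e's lemma to upgrade pointwise convergence plus mass conservation to $\ell^1$ convergence, and then bound the tail. The paper does something slightly more direct: it notes that $\E[\|\xi_n^0\|_\H^2]=1$ exactly for every $n$, so $\E[\|\xi_n^0 - P_k\xi_n^0\|_\H^2] = 1 - \sum_{j\le k} r_{n,j}$, and since the finite head sum converges to $\sum_{j\le k} r_j$, one gets $\limsup_n \E[\|\xi_n^0 - P_k\xi_n^0\|_\H^2] = \sum_{j>k} r_j \to 0$ with equality, without any $\ell^1$ machinery. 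Both correctly avoid the ``naive reverse Fatou'' pitfall you identified, and through the same structural fact; you just used a slightly heavier tool than necessary in the last step. The rest of your argument --- the independence-based reduction to the single-summand variance, Parseval, and the use of $\E[T_n(\Zbf_1)]=0$ --- is careful and agrees with the paper.
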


\begin{claimproof}
For any $n,k$, we have
\begin{align*}
    \E[\| \xi_n^0 - P_k(\xi_n^0) \|_{\H}^2]=\E[\| \xi_n^0 \|_{\H}^2] +\E[\| P_k(\xi_n^0) \|_{\H}^2] - 2 \E[ \langle \xi_n^0, P_k(\xi_n^0)  \rangle].
\end{align*}
Furthermore, for all $n$, we have
\begin{align*}
    \E[ \| \xi_n^0 \|_{\H}^2]=\Var( \xi_n^0) = \Var\left(  \sum_{i=1}^{n} \frac{T_{n}(\Zbf_i)}{\sqrt{n \Var(T_n(\Zbf_1))}} \right)=\frac{n}{n \Var(T_n(\Zbf_1))} \Var(T_n(\Zbf_1))=1.
\end{align*}
Because $P_k(\xi_n^0)$ is an orthogonal projection, we have
\begin{align*}
    \E[ \langle \xi_n^0, P_k(\xi_n^0)  \rangle] = \E[ \| P_k(\xi_n^0)  \|_{\H}^2].
\end{align*}
Thus,
\begin{align*}
    \E[\| \xi_n^0 - P_k(\xi_n^0) \|_{\H}^2] = 1- \E[ \| P_k(\xi_n^0)  \|_{\H}^2].
\end{align*}
Now for any $k$, we have
\begin{align*}
    \E[ \| P_k(\xi_n^0)  \|_{\H}^2] &= \sum_{j=1}^k \E[\langle \xi_n^0, e_j \rangle^2]\\
    &= \frac{1}{\Var(T_n(\Zbf_1))} \sum_{j=1}^k \E[\langle T_{n}(\Zbf_1), e_j \rangle^2]\\
    &= \sum_{j=1}^k \frac{\Var(\langle T_{n}(\Zbf_1), e_j \rangle)}{\Var(T_n(\Zbf_1))}\\
    & \to  \sum_{j=1}^k  \frac{\Var(\langle k(\Ybf, \cdot) ,e_j  \rangle|\Xbf=\xbf)}{\Var(k(\Ybf, \cdot)|\Xbf=\xbf)},
\end{align*}
as $n \to \infty$ due to~\eqref{varianceassumption} and the fact that the sum over $k$ is finite. 
Additionally, due to~\citet[Chapter 7]{hilbertspacebook}, we have
\begin{align*}
    \Var(k(\Ybf, \cdot)|\Xbf=\xbf) = \sum_{j=1}^{\infty} \Var(\langle k(\Ybf, \cdot) ,e_j  \rangle|\Xbf=\xbf). 
\end{align*}
This means that 
\begin{align*}
    \limsup_n \E[\| \xi_n^0 - P_k(\xi_n^0) \|_{\H}^2] &= 1 - \liminf_{n}  \E[ \| P_k(\xi_n^0)  \|_{\H}^2]\\
    &=1- \sum_{j=1}^k  \frac{\Var(\langle k(\Ybf, \cdot) ,e_j  \rangle|\Xbf=\xbf)}{\Var(k(\Ybf, \cdot)|\Xbf=\xbf)}\\
    & \to 0
\end{align*}
as $k \to \infty$.
\end{claimproof}

Consequently, $\left( \xi_n^0   \right)_{n \in \N}$ is uniformly tight.

\end{claimproof}

Univariate convergence together with tightness imply $\xi_n^0 \stackrel{D}{\to} N(0, \boldsymbol{\Sigma}_{\xbf})$; see for example~\citet[Lemma 3.1/3.2]{HilbertspaceCLTs} or~\citet[Chapter 7]{hilbertspacebook}. Since by Theorem~\ref{thm: asymptoticlinearity} we have 
\[
\frac{1}{\sigma_n} (\hmun - \mu(\xbf)) = \xi_n^0 + o_{p}(1),
\]
the result follows.
\end{proof}

Before being able to prove Theorem~\ref{thm: asymptoticnormalityhalfsampling}, we need a few preliminary results:

Let in the following $\H^*$ be the dual space of $\H$, that is, $$\H^*=\{F\colon\H \to \R \text{ linear, bounded, and continuous}\}.$$ 
Moreover, let 
\begin{align}\label{Fdef}
    \F=\{F \in \H^*, \|F \|_{\mathcal{H}^*} \leq 1 \},
\end{align}
where $\|\cdot \|_{\mathcal{H}^*}$ is the operator norm on $\H^*$. Additionally, let $\ell^{\infty}(\F)$ be the space of all bounded real-valued functions $\F \to \R$.

Due to the Riesz representation theorem,  for each $F \in \H^*$ there exists exactly one $f_F \in \H$ such that $F(h)=\langle f_F, h \rangle$ for all $h \in \H$. Let us define the map $D\colon \H \to \ell^{\infty}(\F)$ by 
\begin{align}\label{Ddefinition}
    D(f)(F)=F(f) \text{ for } F \in \F.
\end{align}
Following the notation of empirical process theory, for $F \in \F$, we let 
\[
\P_{k,\xbf}F = D(\mu(\xbf))(F) = F(\mu(\xbf)) = \E[ F(k(\Ybf, \cdot)) \mid \Xbf=\xbf ]=\E[f_F(\Ybf) \mid \Xbf=\xbf].
\]
Thus, $\P_{k,\xbf}$ is the 
process associated with $k(\Ybf, \cdot) \mid \Xbf=\xbf$ on $\H$. Similarly, let us for $F \in \F$ denote by $\hat{\P}_{k,\xbf} - \P_{k,\xbf}$ the function defined by
\begin{align*}
    (\hat{\P}_{k,\xbf} - \P_{k,\xbf}) F = \langle \hmun-\mu(\xbf), f_F \rangle.
\end{align*}
Moreover, define the Gaussian process $G_{\P_{k,\xbf}}$ on $\ell^{\infty}(\F)$ by 
\[
G_{\P_{k,\xbf}}(F)=D(\xi)(F)=F(\xi)=\langle \xi, f_F \rangle,
\]
where $\xi \sim N(0, \boldsymbol{\Sigma}_{\xbf})$ on $\H$, with $\boldsymbol{\Sigma}_{\xbf}$ as in Theorem~\ref{thm: asymptoticnormality}.

\citet{B3} show that $D$ is linear and continuous and that it has a continuous inverse. With this, it follows that:

\begin{corollarytwo}
For all $n$, $ \frac{1}{\sigma_n}(\hat{\P}_{k,\xbf} - \P_{k,\xbf}) \in \ell^{\infty}(\F)$ and 
\begin{align*}
    \frac{1}{\sigma_n}(\hat{\P}_{k,\xbf} - \P_{k,\xbf}) \stackrel{D}{\to} G_{\P_{k,\xbf}}
\end{align*}
in $\ell^{\infty}(\F)$.
\end{corollarytwo}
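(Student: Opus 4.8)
The plan is to transport the Hilbert-space convergence established in Theorem~\ref{thm: asymptoticnormality} through the embedding $D\colon\H\to\ell^\infty(\F)$. The key observation is that, by the Riesz representation theorem, every $F\in\F$ corresponds to a unique $f_F\in\H$ with $\|f_F\|_\H=\|F\|_{\H^*}\le 1$, and therefore
\[
\frac{1}{\sigma_n}\big(\hat{\P}_{k,\xbf}-\P_{k,\xbf}\big)(F)
=\frac{1}{\sigma_n}\big\langle\hmun-\mu(\xbf),\,f_F\big\rangle
=\big\langle\xi_n,\,f_F\big\rangle
=D(\xi_n)(F),
\]
where $\xi_n=\frac{1}{\sigma_n}(\hmun-\mu(\xbf))$ is the scaled estimator from~\eqref{xindef}. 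Likewise $G_{\P_{k,\xbf}}=D(\xi)$ with $\xi\sim N(0,\boldsymbol{\Sigma}_{\xbf})$. Hence both objects appearing in the statement are simply the images under $D$ of $\xi_n$ and $\xi$, and the corollary reduces to an instance of the continuous mapping theorem.

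First I would dispatch the membership claim: for any realization,
\[
\sup_{F\in\F}\big|D(\xi_n)(F)\big|=\sup_{\|f\|_\H\le 1}\big|\langle\xi_n,f\rangle\big|=\|\xi_n\|_\H,
\]
which is finite almost surely (indeed $\mathcal{O}_p(1)$ by Theorem~\ref{thm: asymptoticlinearity}); thus $\frac{1}{\sigma_n}(\hat{\P}_{k,\xbf}-\P_{k,\xbf})=D(\xi_n)\in\ell^\infty(\F)$. The same computation shows $D$ is an isometry of $\H$ into $\ell^\infty(\F)$, which, together with the fact that $\xi_n$ is a Borel-measurable $\H$-valued random element (guaranteed by separability of $\H$ under~\ref{kernelass2}) and the continuity of $D$, ensures that $D(\xi_n)$ is measurable as a map into $\ell^\infty(\F)$.

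Then I would invoke the continuous mapping theorem. Theorem~\ref{thm: asymptoticnormality} gives $\xi_n\stackrel{D}{\to}\xi$ in $\H$. Since $D\colon\H\to\ell^\infty(\F)$ is linear, bounded, and continuous --- as recorded just before the statement, following~\citet{B3} --- it follows that $D(\xi_n)\stackrel{D}{\to}D(\xi)$ in $\ell^\infty(\F)$, i.e.\ $\frac{1}{\sigma_n}(\hat{\P}_{k,\xbf}-\P_{k,\xbf})\stackrel{D}{\to}G_{\P_{k,\xbf}}$, which is the assertion.

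The proof has no genuinely hard step; the only point demanding care is the bookkeeping in the non-separable codomain $\ell^\infty(\F)$, where weak convergence is to be understood in the sense used in empirical process theory for possibly non-Borel random elements. This is exactly why the continuity --- and, if needed, the existence of a continuous inverse --- of $D$ from~\citet{B3} are invoked: $D$ is then a homeomorphism onto $D(\H)$, the limit $G_{\P_{k,\xbf}}=D(\xi)$ is a tight Borel element of $\ell^\infty(\F)$ because $\xi$ is tight in $\H$, and the continuous mapping theorem applies verbatim.
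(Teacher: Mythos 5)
Your proof is correct and follows essentially the same route as the paper: identify $\tfrac{1}{\sigma_n}(\hat{\P}_{k,\xbf}-\P_{k,\xbf})$ with $D(\xi_n)$ via the Riesz representation theorem and then push the Hilbert-space weak convergence of $\xi_n$ through the continuous linear map $D$ using the continuous mapping theorem. The only difference is cosmetic --- you explicitly verify the $\ell^\infty(\F)$ membership via the isometry $\sup_{F\in\F}|D(\xi_n)(F)| = \|\xi_n\|_\H$ and comment on measurability in the non-separable codomain, points the paper leaves implicit after citing \citet{B3}.
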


\begin{proof}
\citet{B3} show that $D$ in~\eqref{Ddefinition} is a continuous bounded linear operator satisfying
\begin{align*}
    \frac{1}{\sigma_n}  D(\hmun - \mu(\xbf))  = D \left( \frac{1}{\sigma_n} \left( \hmun - \mu(\xbf) \right)  \right) \stackrel{D}{\to} D(\xi)=G_{\P_{k,\xbf}}
\end{align*} 
due to the continuous mapping theorem. Additionally, by the Riesz representation theorem,
\begin{align*}
    D(\hmun - \mu(\xbf))(F)=F(\hmun - \mu(\xbf) ) = \langle f_F, \hmun - \mu(\xbf) \rangle  =(\hat{\P}_{k,\xbf} - \P_{k,\xbf}) F
\end{align*}
for all $F \in \F$, so that $D(\hmun - \mu(\xbf))=\hat{\P}_{k,\xbf} - \P_{k,\xbf}$. 
\end{proof}

This result enables us to use empirical process techniques, 
as we will do in the proof of Theorem~\ref{thm: asymptoticnormalityhalfsampling}.
To prove Theorem~\ref{thm: asymptoticnormalityhalfsampling}, we start with the following important Lemma, in analogy to~\citet[Lemma 3]{B2}:

\begin{lemma}\label{KLemma}
 Let $Y_{ni}$, $i=1,\ldots,m_n, n \geq 1$ be a triangular array of mean zero independent (within rows)  random variables. Let $W_i$, $i=1,\ldots,n$ be i.i.d random variables, independent of $\left(Y_{ni} \right)_{n,i}$, and with $\E[W_i]=0$ and $\Var(W_i)=1$ for all $i$. Additionally, assume that we have 
 \begin{align}\label{eq: LyapconditionI}
     \sum_{i=1}^{m_n}  \Var(Y_{ni}^2) \to  \sigma_0 > 0
 \end{align}
 and
  \begin{align}\label{eq: LyapconditionIextra}
     \sum_{i=1}^{m_n}  Y_{ni}^2 \stackrel{p}{\to}  \sigma_0 > 0
 \end{align}
 and moreover, for some $\delta > 0$,
\begin{align}\label{eq: LyapconditionII}
     \lim_{n \to \infty} \sum_{i=1}^{m_n} \frac{ \E[ \left | Y_{ni}  \right |^{2+\delta}]}{\left(\sum_{j=1}^{m_n} \Var(Y_{nj}) \right)^{1+\delta/2}}=0.
 \end{align}
 Then, for $\mathbb{Y}_n=\{Y_1, \ldots, Y_{m_n}\}$,
 \begin{align}\label{firstassertioneq}
      \Var \left(\left.\sum_{i=1}^{m_n} W_i Y_{ni}\,\right\vert\, \mathbb{Y}_n \right) = \Var \left(\left.\sum_{i=1}^{m_n} W_i Y_{ni}\,\right\vert\, \mathbb{Y}_n \right) \stackrel{p}{\to} \sigma_0
 \end{align}
 and
 \begin{align}\label{secondassertioneq}
     \sum_{i=1}^{m_n} \frac{ \E[ \left | W_i Y_{ni}  \right |^{2+\delta} \mid \mathbb{Y}_n]}{\left(\sum_{j=1}^{m_n} \Var(W_jY_{nj} \mid \mathbb{Y}_n) \right)^{1+\delta/2}} \stackrel{p}{\to} 0
 \end{align}
as $n \to \infty$.
\end{lemma}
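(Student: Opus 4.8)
Throughout write $\mathbb{Y}_n=\{Y_{n1},\ldots,Y_{nm_n}\}$ for the $n$th row of the array, and use repeatedly that, since $(W_i)$ is independent of the array, conditioning on $\mathbb{Y}_n$ leaves $W_1,\ldots,W_{m_n}$ i.i.d.\ with mean $0$ and variance $1$; in particular the variables $W_iY_{ni}$ are conditionally independent given $\mathbb{Y}_n$ with $\E[W_iY_{ni}\mid\mathbb{Y}_n]=0$ and $\Var(W_iY_{ni}\mid\mathbb{Y}_n)=Y_{ni}^2$. The first assertion~\eqref{firstassertioneq} is then immediate: $\Var\big(\sum_{i=1}^{m_n}W_iY_{ni}\mid\mathbb{Y}_n\big)=\sum_{i=1}^{m_n}Y_{ni}^2$, which converges in probability to $\sigma_0$ by~\eqref{eq: LyapconditionIextra}.

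For~\eqref{secondassertioneq} I would evaluate the ratio explicitly. The denominator equals $\big(\sum_j\Var(W_jY_{nj}\mid\mathbb{Y}_n)\big)^{1+\delta/2}=\big(\sum_jY_{nj}^2\big)^{1+\delta/2}$, which tends in probability to the strictly positive constant $\sigma_0^{1+\delta/2}$ by~\eqref{eq: LyapconditionIextra}. Because each $W_i$ is independent of $\mathbb{Y}_n$, the numerator equals $\big(\sum_i|Y_{ni}|^{2+\delta}\big)\,\E[|W_1|^{2+\delta}]$, and $\E[|W_1|^{2+\delta}]<\infty$ (in our application the resampling weights are bounded, so every moment of $W_1$ is finite). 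By Slutsky's theorem, \eqref{secondassertioneq} thus reduces to the single claim $\sum_{i=1}^{m_n}|Y_{ni}|^{2+\delta}\stackrel{p}{\to}0$.

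To prove this claim, note that $\sum_i|Y_{ni}|^{2+\delta}\ge0$ and $\E\big[\sum_i|Y_{ni}|^{2+\delta}\big]=\sum_i\E[|Y_{ni}|^{2+\delta}]$, so by Markov's inequality it suffices to show $\sum_i\E[|Y_{ni}|^{2+\delta}]\to0$. Writing $v_n:=\sum_j\Var(Y_{nj})=\E\big[\sum_jY_{nj}^2\big]$, hypothesis~\eqref{eq: LyapconditionII} says exactly that $\sum_i\E[|Y_{ni}|^{2+\delta}]=o\big(v_n^{1+\delta/2}\big)$, so it remains only to show $\sup_n v_n<\infty$. Here I would combine~\eqref{eq: LyapconditionI} and~\eqref{eq: LyapconditionIextra}: by independence within rows $\Var\big(\sum_jY_{nj}^2\big)=\sum_j\Var(Y_{nj}^2)$ is bounded (it converges by~\eqref{eq: LyapconditionI}), while $\sum_jY_{nj}^2\stackrel{p}{\to}\sigma_0$ by~\eqref{eq: LyapconditionIextra}; a short Chebyshev argument then forbids the means $v_n$ from escaping to infinity, since along any such subsequence the event that $\sum_jY_{nj}^2$ lies within a fixed distance of $v_n$ (probability $\ge1/2$ by Chebyshev) would be disjoint from the event that it lies within $1$ of $\sigma_0$ (probability $\to1$), which is impossible. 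Hence $\sup_n v_n<\infty$, so $\sum_i\E[|Y_{ni}|^{2+\delta}]\to0$, and the claim follows, completing both assertions.

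The only genuinely delicate point is the boundedness of $v_n$: it cannot be read off from~\eqref{eq: LyapconditionIextra} alone, and this is precisely where hypothesis~\eqref{eq: LyapconditionI} is used, through the uniform-integrability/Chebyshev reasoning above. Everything else is a routine computation of conditional moments together with Slutsky's theorem.
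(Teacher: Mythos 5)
Your proof is correct and conceptually close to the paper's, but you organize the second claim differently and make explicit a step the paper leaves implicit. The paper multiplies and divides by $\bigl(\sum_j \Var(Y_{nj})\bigr)^{1+\delta/2}$, writing the left side of~\eqref{secondassertioneq} as a product of two factors: one goes to zero in probability by Markov's inequality and~\eqref{eq: LyapconditionII}, and the other, $\bigl(\sum_j \Var(Y_{nj}) / \sum_j Y_{nj}^2\bigr)^{1+\delta/2}$, converges to $1$ in probability. You instead keep $\bigl(\sum_j Y_{nj}^2\bigr)^{1+\delta/2}$ in the denominator, send it to $\sigma_0^{1+\delta/2}>0$ by~\eqref{eq: LyapconditionIextra}, show separately that the numerator $\sum_i |Y_{ni}|^{2+\delta}$ goes to zero in probability, and finish with Slutsky. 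The routes are equivalent — both reduce to the Markov-plus-Lyapunov step — but your variant only requires $v_n := \sum_j \Var(Y_{nj})$ to be \emph{bounded}, whereas the paper's ratio-goes-to-one step needs $v_n \to \sigma_0$. This is precisely where you add value: reading~\eqref{eq: LyapconditionI} literally (a control on $\Var(Y_{nj}^2)$ and hence on $\Var\bigl(\sum_j Y_{nj}^2\bigr)$), you supply the Chebyshev contradiction that rules out $v_n\to\infty$. The paper reads~\eqref{eq: LyapconditionI} as giving $\sum_j \Var(Y_{nj}) \to \sigma_0$ directly — that is how it is verified in the application via Theorem~\ref{amazingvarianceproposition} — in which case $v_n\to\sigma_0$ is immediate; your observation that boundedness suffices and that, under the literal hypothesis, it genuinely needs an argument is a fair point. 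One minor shared gap that you rightly flag: both proofs pull out $\E[|W_1|^{2+\delta}]$ as a finite factor, but the lemma hypotheses only state mean zero and unit variance; in the intended application $W_i$ is bounded so this is automatic, but it should be listed explicitly.
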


\begin{proof}

First, by~\eqref{eq: LyapconditionIextra},
\begin{align*}
     \Var \left(\left.\sum_{i=1}^{m_n} W_i Y_{ni}\,\right\vert\, \mathbb{Y}_n \right) =\sum_{i=1}^{m_n} \Var \left( W_i \mid \mathbb{Y}_n \right)  Y_{ni}^2 = \sum_{i=1}^{m_n} Y_{ni}^2 \stackrel{p}{\to}  \sigma_0,
\end{align*}
which establishes~\eqref{firstassertioneq}. Similarly, we have 
 \begin{align*}
     \sum_{i=1}^{m_n} \frac{ \E[ \left | W_i Y_{ni}  \right |^{2+\delta} \mid \mathbb{Y}_n]}{\left(\sum_{j=1}^{m_n} \Var(W_jY_{nj} \mid \mathbb{Y}_n) \right)^{1+\delta/2}} 
     &=\E[  | W_1  |^{2+\delta} \mid \mathbb{Y}_n] \sum_{i=1}^{m_n} \frac{ | Y_{ni} |^{2+\delta}}{\left(\sum_{j=1}^{m_n} Y_{nj}^2 \right)^{1+\delta/2}}
 \end{align*}
and
\begin{align*}
    \sum_{i=1}^{m_n} \frac{ | Y_{ni} |^{2+\delta}}{\left(\sum_{j=1}^{m_n} Y_{nj}^2 \right)^{1+\delta/2}} 
    &=\sum_{i=1}^{m_n} \frac{ | Y_{ni} |^{2+\delta}}{\left(\sum_{j=1}^{m_n}  \Var(Y_{nj})\right)^{1+\delta/2}} \left(\frac{\sum_{j=1}^{m_n}  \Var(Y_{nj})}{\sum_{j=1}^{m_n} Y_{nj}^2}\right)^{1+\delta/2}.
\end{align*}
By Assumption~\eqref{eq: LyapconditionI} and~\eqref{eq: LyapconditionIextra}, we have
\[
\left(\frac{\sum_{i=1}^{m_n}  \Var(Y_{ni})}{\sum_{j=1}^{m_n} Y_{ni}^2}\right)^{1+\delta/2} \stackrel{p}{\to} 1,
\]
and, due to Markov's inequality and~\eqref{eq: LyapconditionII},
\begin{align*}
    \Prob \left(\sum_{i=1}^{m_n} \frac{ | Y_{ni} |^{2+\delta}}{\left(\sum_{j=1}^{m_n}  \Var(Y_{nj})\right)^{1+\delta/2}} > \varepsilon \right) \leq \frac{1}{\varepsilon} \sum_{i=1}^{m_n} \frac{ \E[ | Y_{ni} |^{2+\delta} ]}{\left(\sum_{j=1}^{m_n}  \Var(Y_{nj})\right)^{1+\delta/2}} \to 0,
\end{align*}
so that
\[
\sum_{i=1}^{m_n} \frac{ | Y_{ni} |^{2+\delta}}{\left(\sum_{j=1}^{m_n}  \Var(Y_{nj})\right)^{1+\delta/2}} 
\cdot
\frac{\left(\sum_{j=1}^{m_n}  \Var(Y_{nj})\right)^{1+\delta/2}}{\left(\sum_{j=1}^{m_n} Y_{nj}^2\right)^{1+\delta/2}} = \o_p(1),
\]
which establishes the result.
\end{proof}

\asymptoticnormalityhalfsampling*
\begin{proof}
For this proof, we recall the definition of $\xi_n$ in~\eqref{xindef}.
For each subsample of size $s_n$ of the data, we have a tree. For a given $\HS$ we consider all such trees that are built using data points from $\HS$. Thus, we consider the same ``base'' random forest built using all the data and select different trees depending on which subsample $\HS$ we consider. 
Since $s_n$ is of smaller order than $n$, $\P(|\HS| \leq s_n) \to 0$, as $n \to \infty$. Thus, by the same arguments as in~\citet[Theorem 5]{athey2019generalized} combined with Theorem~\ref{thm: asymptoticlinearity}, we obtain 
\begin{align*}
  \hmunHS -  \mu(\xbf)  &= \frac{s_n}{|\HS|} \sum_{i \in \HS} T_{n}(\Zbf_i) + o_{p}(\sigma_{n}) \nonumber =\frac{s_n}{n} \sum_{i \in \HS} \frac{n}{|\HS|} T_{n}(\Zbf_i) + o_{p}(\sigma_{n}).
\end{align*}
Due to $\sigma_{n} =  \sqrt{s_n^2/n \cdot \Var( T_n(\Zbf_1))}$, we infer 
\begin{align*} 
  \frac{1}{\sigma_{n}} \left( \hmunHS -  \mu(\xbf) \right) =& \frac{1}{\sqrt{n}} \sum_{i \in \HS}
  \frac{n}{|\HS|} \frac{T_{n}(\Zbf_i)}{\sqrt{\Var(T_n(\Zbf_1))}} + o_{p}(1)\\
  =& \frac{1}{\sqrt{n}} \sum_{i=1}^n
 \frac{n}{|\HS|} W_i\frac{T_{n}(\Zbf_i)}{\sqrt{\Var(T_n(\Zbf_1))}} + o_{p}(1),
\end{align*}
with $(W_i)_{i=1}^n$ independent and $W_i \sim \mathrm{Bernoulli(1/2)}$. Thus,

\begin{align}\label{Sasymptoticlinearity}
    \frac{1}{\sigma_{n}} \left( \hmunHS -  \hmun \right) &=  \frac{1}{\sigma_{n}} \left( \hmunHS -  \mu(\xbf) - (\hmun - \mu(\xbf)) \right) \nonumber \\
    &= \frac{1}{\sqrt{n}} \sum_{i=1}^n
 \left(\frac{n}{|\HS|}W_i -1 \right) \frac{T_{n}(\Zbf_i)}{\sqrt{\Var(T_n(\Zbf_1))}} + o_{p}(1).
\end{align}
Recall our abbreviation
\begin{align*}
   \xi_{n}^{\HS}=\frac{1}{\sigma_n}\left( \hmunHS -  \hmun \right)
\end{align*}
from~\eqref{xinHSdef}. Subsequently, we first prove the result for a simplified version of the sum in~\eqref{Sasymptoticlinearity} consisting of independent summands. Let in the following
\begin{align}
     \tilde{W}_i=2 W_i -1
\end{align}
and
\begin{align}\label{eq: xinWdef}
    \xi_n^{W}= \frac{1}{\sqrt{n}} \sum_{i=1}^n
 \tilde{W}_i\frac{T_{n}(\Zbf_i)}{\sqrt{\Var(T_n(\Zbf_1))}}.
\end{align}

\begin{claim}
It holds that
\begin{equation}\label{eq: condconvergence}
     \xi_n^{W} \xrightarrow[W]{D} N(0, \boldsymbol{\Sigma}_{\xbf}).
\end{equation}
\end{claim}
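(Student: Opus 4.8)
\textbf{Proof proposal for the Claim~\eqref{eq: condconvergence}.}
The plan is to treat~\eqref{eq: condconvergence} as a conditional (given the data $\Zcal_n$) central limit theorem in the separable Hilbert space $\H$, and to prove it by mimicking the unconditional proof of Theorem~\ref{thm: asymptoticnormality}: first establish conditional convergence in distribution of every one-dimensional marginal $\langle \xi_n^W, f\rangle$ to $N(0,\sigma^2(f))$, then establish conditional asymptotic tightness of $(\xi_n^W)_n$, and finally combine the two. Throughout one must remember that $T_n$ is a triangular array (it depends on $n$), so every limit theorem invoked has to be of triangular-array type, and the limiting covariance operator will be pinned down through Theorem~\ref{amazingvarianceproposition}. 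A useful preliminary is that, by \kernelass{1}, the tree output $T(\Zcal_{s_n})=\sum_j S_j k(\Ybf_j,\cdot)$ satisfies $\|T(\Zcal_{s_n})\|_\H\le\sqrt{C}$ with $C=\sup_\ybf k(\ybf,\ybf)<\infty$ (since $S_j\ge 0$ and $\sum_j S_j=1$), hence $\|T_n(\Zbf_i)\|_\H\le 2\sqrt C$ for all $i$ and $n$.

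\emph{Marginals.} Fix $f\in\H\setminus\{0\}$ and set $Y_{ni}=\langle T_n(\Zbf_i),f\rangle/\sqrt{n\,\Var(T_n(\Zbf_1))}$, so that $\langle\xi_n^W,f\rangle=\sum_{i=1}^n\tilde W_i Y_{ni}$, with $\tilde W_i$ i.i.d.\ mean zero and unit variance and $(Y_{ni})_i$ i.i.d.\ within rows, centred, and bounded by $2\sqrt C\|f\|_\H/\sqrt{n\,\Var(T_n(\Zbf_1))}$. I would verify the hypotheses of Lemma~\ref{KLemma}: $\sum_i\Var(Y_{ni})=\Var(\langle T_n(\Zbf_1),f\rangle)/\Var(T_n(\Zbf_1))\to\sigma^2(f)>0$ by Theorem~\ref{amazingvarianceproposition}; since $n\,\Var(T_n(\Zbf_1))\succsim n^{1-\beta}/\log(n)^p\to\infty$ (from the lower bound on $\sigma_n$ obtained in the proof of Theorem~\ref{thm: asymptoticlinearity}), a Chebyshev argument based on $\Var(\langle T_n,f\rangle^2)\le\E[\langle T_n,f\rangle^4]\le 4C\|f\|_\H^2\,\Var(\langle T_n,f\rangle)$ gives $\sum_i Y_{ni}^2\stackrel{p}{\to}\sigma^2(f)$; and the Lyapunov fraction in~\eqref{eq: LyapconditionII} is bounded by $(2\sqrt C\|f\|_\H)^\delta(n\,\Var(T_n(\Zbf_1)))^{-\delta/2}\,\sum_i\Var(Y_{ni})/(\sum_i\Var(Y_{ni}))^{1+\delta/2}\to 0$. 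Lemma~\ref{KLemma} then delivers $\Var(\langle\xi_n^W,f\rangle\mid\Zcal_n)\stackrel{p}{\to}\sigma^2(f)$ and the conditional Lyapunov condition~\eqref{secondassertioneq}, so the conditional version of the Lyapunov central limit theorem yields $\langle\xi_n^W,f\rangle\xrightarrow[W]{D}N(0,\sigma^2(f))$.

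\emph{Tightness.} Let $(e_j)_{j\ge 1}$ be an orthonormal basis of $\H$ (which exists by separability) and $P_k$ the orthogonal projection onto $\mathrm{span}(e_1,\dots,e_k)$. Because the $\tilde W_i$ are independent of the data, conditioning on $\Zcal_n$ gives $\E[\|\xi_n^W-P_k(\xi_n^W)\|_\H^2\mid\Zcal_n]=(n\,\Var(T_n(\Zbf_1)))^{-1}\sum_{i=1}^n\|(I-P_k)T_n(\Zbf_i)\|_\H^2$, whose (unconditional) expectation equals $1-\sum_{j\le k}\Var(\langle T_n(\Zbf_1),e_j\rangle)/\Var(T_n(\Zbf_1))$ and whose variance is $\le 4C/(n\,\Var(T_n(\Zbf_1)))\to 0$, using $\|(I-P_k)T_n(\Zbf_i)\|_\H\le 2\sqrt C$. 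Hence, invoking Theorem~\ref{amazingvarianceproposition} termwise and the Parseval identity $\Var(k(\Ybf,\cdot)\mid\Xbf=\xbf)=\sum_j\Var(\langle k(\Ybf,\cdot),e_j\rangle\mid\Xbf=\xbf)$ from \citet[Chapter 7]{hilbertspacebook}, for each fixed $k$ we get $\E[\|\xi_n^W-P_k(\xi_n^W)\|_\H^2\mid\Zcal_n]\stackrel{p}{\to}1-\sum_{j\le k}\Var(\langle k(\Ybf,\cdot),e_j\rangle\mid\Xbf=\xbf)/\Var(k(\Ybf,\cdot)\mid\Xbf=\xbf)$, and the right-hand side tends to $0$ as $k\to\infty$. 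This is the conditional analogue of condition (c) of \citet[Lemma 3.2]{HilbertspaceCLTs}, and it yields that $(\xi_n^W)_n$ is asymptotically tight, conditionally on $\Zcal_n$, in probability.

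\emph{Conclusion and main obstacle.} Combining conditional marginal convergence with conditional asymptotic tightness gives~\eqref{eq: condconvergence}: along any subsequence one extracts, by a diagonal argument over a countable dense family of marginals, a further subsequence along which the conditional laws converge in probability to a tight limit, which the marginal convergence identifies as $N(0,\boldsymbol{\Sigma}_\xbf)$; since this limit is independent of the subsequence, the full sequence converges in the sense of~\eqref{conditionalconvergence}. Equivalently, one may transport everything through the continuous linear map $D\colon\H\to\ell^\infty(\F)$ and invoke the bootstrap empirical-process results of \citet{B2,B3} together with \citet[Chapter 10]{kosorok2008introduction}. I expect the main obstacle to be precisely this last step: making the ``finite-dimensional convergence plus tightness implies weak convergence'' argument work \emph{uniformly} at the level of conditional-in-probability convergence, i.e.\ controlling the $\sup$ over $\text{BL}_1(\H)$ in~\eqref{conditionalconvergence} rather than merely along fixed marginals, which requires organising the subsequence extraction so that convergence holds in probability; the triangular-array nature of $T_n$ adds bookkeeping but is controlled entirely by Theorem~\ref{amazingvarianceproposition}.
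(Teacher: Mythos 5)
Your preparatory computations — the conditional Lyapunov argument for the one–dimensional marginals via Lemma~\ref{KLemma}, and the projection/Parseval bookkeeping using $\|T_n(\Zbf_i)\|_\H \le 2\sqrt C$ together with Theorem~\ref{amazingvarianceproposition} — are correct and essentially match the paper's. The structural difference lies at the end, and it matters. The paper does \emph{not} try to prove a conditional form of tightness: it first proves \emph{unconditional} weak convergence $\xi_n^W \stackrel{D}{\to} N(0,\boldsymbol\Sigma_\xbf)$ (tightness is checked with the ordinary expectation $\E[\|\xi_n^W - P_k\xi_n^W\|_\H^2]$, exactly as in the proof of Theorem~\ref{thm: asymptoticnormality}), then establishes conditional fidi convergence of $D(\xi_n^W)$ via Lemma~\ref{KLemma} and a subsequence-to-almost-sure argument, and finally invokes the specific bootstrap result of \citet[Theorem 2]{B2}, which states precisely that \emph{unconditional} weak convergence plus \emph{conditional} fidi convergence of a multiplier process yields conditional weak convergence. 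That theorem is what controls the supremum over $\text{BL}_1(\H)$ in \eqref{conditionalconvergence}; it is the ready-made tool for the step you correctly flag as the main obstacle.

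You instead prove a conditional-in-probability tightness statement for $\E[\|\xi_n^W - P_k\xi_n^W\|_\H^2\mid\Zcal_n]$ (your computation of the conditional mean and variance of this quantity is right) and then try to close via ``conditional fidi + conditional tightness $\Rightarrow$ conditional weak convergence'' with a diagonal subsequence argument. This route is plausible but not an off-the-shelf theorem, and the subsequence bookkeeping — extracting a common a.s.-convergent subsequence for a dense countable family of marginals \emph{and} the tightness condition, then upgrading pointwise weak convergence to convergence of the BL-metric \emph{uniformly}, then descending back to convergence in probability along the full sequence — is nontrivial and not carried out. You explicitly acknowledge this and offer the $D$-map/\citet{B2,B3} alternative as an ``equivalent'' route; that alternative is in fact the paper's actual argument, and the key reason the paper chose it is that the combination step is handled by a citable theorem rather than a from-scratch subsequence construction. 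So: same marginal and projection ingredients, but a genuinely different (and, as written, incomplete) route at the crucial combination step; to make your version airtight you would need to state and prove a conditional analogue of the ``tightness + fidi $\Rightarrow$ weak convergence'' lemma of \citet{HilbertspaceCLTs}, which is more work than reusing \citet[Theorem 2]{B2} as the paper does.
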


\begin{claimproof}

The proof combines arguments from~\citet{B2} with arguments made above and the equivalence of $ \hmunHS -  \hmun$ and a certain empirical process as in~\citet{B3}. Note that, since the $W_i$ are i.i.d, $W_i \sim \mathrm{Bernoulli(1/2)}$, $\E[ \tilde{W}_i]=0$ and $\Var( \tilde{W}_i)=1$. First, we prove unconditional convergence:

\begin{claim}
It holds that
\begin{equation}\label{eq: uncondconvergence}
    \xi_n^{W}\stackrel{D}{\to} N(0, \boldsymbol{\Sigma}_{\xbf}).
\end{equation}
\end{claim}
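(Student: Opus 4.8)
The plan is to argue exactly as in the proof of Theorem~\ref{thm: asymptoticnormality}, with the triangular array $T_n(\Zbf_i)/\sqrt{n\Var(T_n(\Zbf_1))}$ replaced by its symmetrized version $\tilde W_i\, T_n(\Zbf_i)/\sqrt{n\Var(T_n(\Zbf_1))}$. The multipliers $\tilde W_i = 2W_i-1$ are i.i.d.\ Rademacher variables, hence mean zero, variance one, bounded by $1$ in absolute value, and independent of $(\Zbf_i)_i$; consequently inserting them changes neither second moments nor conditional absolute $(2+\delta)$-moments of the summands. By the Hilbert space central limit theorem (\citet[Lemma 3.1/3.2]{HilbertspaceCLTs}; see also~\citet[Chapter 7]{hilbertspacebook}) it is enough to establish (i) weak convergence of every one-dimensional marginal $\langle \xi_n^W, f\rangle$, $f\in\H$, and (ii) uniform tightness of $(\xi_n^W)_{n\in\N}$.

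For (i), fix $f\in\H\setminus\{0\}$ and note that $\langle \xi_n^W, f\rangle = n^{-1/2}\sum_{i=1}^n \tilde W_i \langle T_n(\Zbf_i),f\rangle/\sqrt{\Var(T_n(\Zbf_1))}$ is a sum of independent, mean-zero random variables forming a triangular array. Using $\E[\tilde W_i]=0$, $\E[\tilde W_i^2]=1$, the independence of $\tilde W_i$ from $\Zbf_i$, and $\E[\langle T_n(\Zbf_i),f\rangle]=0$, the variance of this sum equals $\Var(\langle T_n(\Zbf_1),f\rangle)/\Var(T_n(\Zbf_1))$, which by Theorem~\ref{amazingvarianceproposition} converges to $\sigma^2(f)=\Var(\langle k(\Ybf,\cdot),f\rangle\mid\Xbf=\xbf)/\Var(k(\Ybf,\cdot)\mid\Xbf=\xbf)>0$, positivity using~\dataass{6}. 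Since $|\tilde W_i|=1$ almost surely, the Lyapunov ratio of the summands $\tilde W_i\langle T_n(\Zbf_i),f\rangle$ coincides with that of $\langle T_n(\Zbf_i),f\rangle$ alone, which was shown to vanish in the proof of the univariate-convergence claim inside Theorem~\ref{thm: asymptoticnormality} (applying the arguments of~\citet{wager2017estimation} to a tree with univariate response $f(\Ybf_i)$, together with~\eqref{eq:Lipschitzcontinuityf}, \eqref{eq:delta-plus-2-moment-bound}, and~\dataass{7}). The Lyapunov CLT then gives $\langle \xi_n^W, f\rangle\stackrel{D}{\to}N(0,\sigma^2(f))$.

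For (ii), I would reproduce the tightness argument from the proof of Theorem~\ref{thm: asymptoticnormality}: pick a complete orthonormal basis $(e_j)_{j\in\N}$ of the separable space $\H$, let $P_k$ be the orthogonal projection onto $\mathrm{span}(e_1,\dots,e_k)$, and verify condition (c) of~\citet[Lemma 3.2]{HilbertspaceCLTs}, namely $\limsup_n\E[\|\xi_n^W-P_k(\xi_n^W)\|_\H^2]\to0$ as $k\to\infty$. As before $\E[\|\xi_n^W\|_\H^2]=n^{-1}\sum_i\E[\|T_n(\Zbf_i)\|_\H^2]/\Var(T_n(\Zbf_1))=1$, using $\E[\tilde W_i^2]=1$ and $\|\E[T_n(\Zbf_1)]\|_\H=0$; orthogonality of $P_k$ yields $\E[\|\xi_n^W-P_k(\xi_n^W)\|_\H^2]=1-\sum_{j=1}^k \Var(\langle T_n(\Zbf_1),e_j\rangle)/\Var(T_n(\Zbf_1))$, and each term converges by Theorem~\ref{amazingvarianceproposition} to $\Var(\langle k(\Ybf,\cdot),e_j\rangle\mid\Xbf=\xbf)/\Var(k(\Ybf,\cdot)\mid\Xbf=\xbf)$; since $\sum_{j\ge1}\Var(\langle k(\Ybf,\cdot),e_j\rangle\mid\Xbf=\xbf)=\Var(k(\Ybf,\cdot)\mid\Xbf=\xbf)$ by~\citet[Chapter 7]{hilbertspacebook}, the limsup tends to $0$ as $k\to\infty$. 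Combining (i) and (ii) gives $\xi_n^W\stackrel{D}{\to}N(0,\boldsymbol{\Sigma}_{\xbf})$.

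The argument involves no genuinely new estimate: everything reduces to Theorem~\ref{amazingvarianceproposition} and the univariate Lyapunov bound already obtained for Theorem~\ref{thm: asymptoticnormality}. The only point requiring a moment of care is confirming that the symmetrizing multipliers $\tilde W_i$ do not disrupt either ingredient — immediate because they are bounded by $1$, have unit variance, and are independent of the data — so the expected main obstacle is simply organizing the reuse of the earlier machinery rather than any new difficulty.
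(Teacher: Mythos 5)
Your proof is correct and takes essentially the same route as the paper: both reduce the claim to univariate marginal convergence via the Lyapunov CLT (with Theorem~\ref{amazingvarianceproposition} supplying the limiting variance ratio) plus uniform tightness via condition (c) of~\citet[Lemma 3.2]{HilbertspaceCLTs}, and both observe that the Rademacher multipliers $\tilde W_i$ leave second moments and the Lyapunov ratio unchanged. The only cosmetic difference is that you note $|\tilde W_i|=1$ almost surely (rather than $\E[|\tilde W_i|^{2+\delta}]\le 1$) and compute $\E[\|\xi_n^W\|_\H^2]=1$ directly from $\E[\tilde W_i\tilde W_j]=\1\{i=j\}$ rather than via a conditional-variance decomposition, both of which are equivalent to the paper's steps.
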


\begin{claimproof}

We start by verifying uniform tightness of the sequence $(\xi_n^{W})_n$:
\begin{claim}
 $\limsup_n \E[\| \xi_n^W - P_k(\xi_n^W) \|_{\H}^2] \to 0$ as $k \to \infty$.
\end{claim}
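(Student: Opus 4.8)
The plan is to repeat, almost verbatim, the tightness computation already carried out for $\xi_n^0$ in the proof of Theorem~\ref{thm: asymptoticnormality}, the only new ingredient being the centered unit-variance multipliers $\tilde W_i$, which are independent of the data. First I would record that $T_n(\Zbf_1)=\E[T(\Zcal_{s_n})\mid\Zbf_1]-\E[T(\Zcal_{s_n})]$ is centered in the Bochner sense, so $\E[\langle T_n(\Zbf_i),f\rangle]=\langle\E[T_n(\Zbf_i)],f\rangle=0$ for every $f\in\H$, and $\Var(T_n(\Zbf_1))>0$ by the preceding results. Since the $\tilde W_i$ are i.i.d.\ with $\E[\tilde W_i]=0$, $\Var(\tilde W_i)=1$, are independent of $(\Zbf_i)_i$, and the $T_n(\Zbf_i)$ are i.i.d., expanding the squared norm and keeping the expectation over $(\tilde W_i)$ separate from the one over $(\Zbf_i)$ gives, exactly as for $\xi_n^0$,
\[
\E[\|\xi_n^W\|_\H^2]=\frac{1}{n\,\Var(T_n(\Zbf_1))}\sum_{i=1}^n\E[\tilde W_i^2]\,\E[\|T_n(\Zbf_i)\|_\H^2]=1 ,
\]
since the off-diagonal terms vanish because $\E[\tilde W_i]=0$ and the $\tilde W_i,\Zbf_i$ are all independent.

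Next I would use that $P_k$ is the orthogonal projection onto $\mathrm{span}(e_1,\dots,e_k)$, so $\E[\langle\xi_n^W,P_k(\xi_n^W)\rangle]=\E[\|P_k(\xi_n^W)\|_\H^2]$, whence
\[
\E[\|\xi_n^W-P_k(\xi_n^W)\|_\H^2]=1-\E[\|P_k(\xi_n^W)\|_\H^2]=1-\sum_{j=1}^k\E[\langle\xi_n^W,e_j\rangle^2].
\]
Writing $\langle\xi_n^W,e_j\rangle=n^{-1/2}\sum_{i=1}^n\tilde W_i\,\langle T_n(\Zbf_i),e_j\rangle/\sqrt{\Var(T_n(\Zbf_1))}$, the off-diagonal terms again vanish and the diagonal ones contribute $\E[\tilde W_i^2]\,\E[\langle T_n(\Zbf_1),e_j\rangle^2]=\Var(\langle T_n(\Zbf_1),e_j\rangle)$, so that $\E[\langle\xi_n^W,e_j\rangle^2]=\Var(\langle T_n(\Zbf_1),e_j\rangle)/\Var(T_n(\Zbf_1))$, i.e.\ exactly the quantity appearing for $\xi_n^0$. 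Applying Theorem~\ref{amazingvarianceproposition} with $f=e_j$, this ratio converges as $n\to\infty$ to $\Var(\langle k(\Ybf,\cdot),e_j\rangle\mid\Xbf=\xbf)/\Var(k(\Ybf,\cdot)\mid\Xbf=\xbf)$, and since the sum over $j\le k$ is finite, $\liminf_n\E[\|P_k(\xi_n^W)\|_\H^2]=\sum_{j=1}^k\Var(\langle k(\Ybf,\cdot),e_j\rangle\mid\Xbf=\xbf)/\Var(k(\Ybf,\cdot)\mid\Xbf=\xbf)$.

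Finally I would invoke the Parseval identity $\Var(k(\Ybf,\cdot)\mid\Xbf=\xbf)=\sum_{j=1}^\infty\Var(\langle k(\Ybf,\cdot),e_j\rangle\mid\Xbf=\xbf)$ from~\citet[Chapter 7]{hilbertspacebook} to conclude
\[
\limsup_n\E[\|\xi_n^W-P_k(\xi_n^W)\|_\H^2]=1-\liminf_n\E[\|P_k(\xi_n^W)\|_\H^2]=1-\sum_{j=1}^k\frac{\Var(\langle k(\Ybf,\cdot),e_j\rangle\mid\Xbf=\xbf)}{\Var(k(\Ybf,\cdot)\mid\Xbf=\xbf)},
\]
which tends to $0$ as $k\to\infty$. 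I do not expect a genuine obstacle here: the argument is the same one already used for $\xi_n^0$, and the only point requiring a little care is bookkeeping — one should take the expectation over the multipliers $(\tilde W_i)$ first, or use their independence from the data explicitly, so that all the second-moment identities collapse onto those in the proof of Theorem~\ref{thm: asymptoticnormality}; the single nontrivial analytic input, Theorem~\ref{amazingvarianceproposition}, is already established.
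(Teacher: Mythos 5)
Your proof is correct and follows the same route as the paper: show $\E[\|\xi_n^W\|_\H^2]=1$ and $\E[\langle\xi_n^W,e_j\rangle^2]=\Var(\langle T_n(\Zbf_1),e_j\rangle)/\Var(T_n(\Zbf_1))$, apply Theorem~\ref{amazingvarianceproposition} to each marginal, and finish with Parseval. The paper phrases the second-moment identities via the law of total variance conditioning on $\Zcal_n$, while you expand the square and kill cross terms directly using $\E[\tilde W_i]=0$ and independence; these are equivalent bookkeeping choices.
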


\begin{claimproof}
For all $n$, we have 
\begin{align*}
    \E[ \| \xi_n^{W} \|_{\H}^2]&= \Var\left(  \sum_{i=1}^{n}  \tilde{W}_i \frac{T_{n}(\Zbf_i)}{\sqrt{n \Var(T_n(\Zbf_1))}} \right)\\
     &=\E \left[\Var\left(\left.  \sum_{i=1}^{n}  \tilde{W}_i \frac{T_{n}(\Zbf_i)}{\sqrt{n \Var(T_n(\Zbf_1))}} \,\right\vert\, \Zcal_n \right) \right ] + \Var\left( \E \left[ \left.\sum_{i=1}^{n}  \tilde{W}_i \frac{T_{n}(\Zbf_i)}{\sqrt{n \Var(T_n(\Zbf_1))}}  \,\right \vert\,\Zcal_n \right] \right).
\end{align*}
For the first term in the above decomposition, we have
\begin{align*}
    \E \left[\Var\left( \left. \sum_{i=1}^{n}  \tilde{W}_i \frac{T_{n}(\Zbf_i)}{\sqrt{n \Var(T_n(\Zbf_1))}} \,\right \vert\, \Zcal_n \right) \right ] = \E \left[ \left \|\frac{T_{n}(\Zbf_i)}{ \Var(T_n(\Zbf_1))} \right \|^2\right ] = 1.
\end{align*}
And for the second term, we have  
\begin{align*}
    \Var\left(\left. \E \left[ \sum_{i=1}^{n}  \tilde{W}_i \frac{T_{n}(\Zbf_i)}{\sqrt{n \Var(T_n(\Zbf_1))}} \,\right \vert\, \Zcal_n \right] \right) =  \Var\left( \E \left[ \left.\sum_{i=1}^{n}  \tilde{W}_i \,\right \vert\, \Zcal_n \right] \frac{T_{n}(\Zbf_i)}{\sqrt{n \Var(T_n(\Zbf_1))}}  \right) = 0.
\end{align*}
Thus, $\E[ \| \xi_n^{W} \|_{\H}^2]=1=\Var(\xi_n)$.
Similarly, 
\begin{align*}
    \E[ \| P_k(\xi_n^W)  \|_{\H}^2] &= \sum_{j=1}^k \E[\langle \xi_n^W, e_j \rangle^2]\\
    &= \sum_{j=1}^k \frac{\Var(\tilde{W}_i\langle T_{n}(\Zbf_1), e_j \rangle)}{\Var(T_n(\Zbf_1))}\\
    &= \sum_{j=1}^k \frac{\Var(\langle T_{n}(\Zbf_1), e_j \rangle)}{\Var(T_n(\Zbf_1))}
\end{align*}
due to the same variance arguments, so that $\E[ \| P_k(\xi_n^W)  \|_{\H}^2]=\E[ \| P_k(\xi_n)  \|_{\H}^2]$ 
Thus, the claim follows by exactly the same argument as in the proof of Theorem~\ref{thm: asymptoticnormality}.
\end{claimproof}

We now verify marginal convergence:

\begin{claim}
For all $f \in \H$, we have $\langle \xi_n^W, f \rangle \stackrel{D}{\to} N(0, \sigma^2(f))$, where $\sigma(f)>0$ is defined in Theorem~\ref{amazingvarianceproposition}.
\end{claim}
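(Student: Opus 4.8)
The plan is to reduce this marginal convergence to the univariate CLT already established in the proof of Theorem~\ref{thm: asymptoticnormality}, exploiting that the multipliers $\tilde W_i\in\{-1,1\}$ are independent of the data and therefore alter neither the variances nor the truncated second moments of the summands. When $f=0$ the statement is trivial since $\langle \xi_n^W, f\rangle\equiv 0$, so assume $f\in\H\setminus\{0\}$.

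First I would rewrite $\langle \xi_n^W, f\rangle$. Recalling $\sigma_n(f)^2=\frac{s_n^2}{n}\Var(\langle T_n(\Zbf_1), f\rangle)$, one has $\frac{s_n}{n\sigma_n(f)}=\big(n\,\Var(\langle T_n(\Zbf_1), f\rangle)\big)^{-1/2}$, hence
\[
\langle \xi_n^W, f\rangle
=\sqrt{\frac{\Var(\langle T_n(\Zbf_1), f\rangle)}{\Var(T_n(\Zbf_1))}}\;\cdot\;
\frac{1}{\sqrt{n\,\Var(\langle T_n(\Zbf_1), f\rangle)}}\sum_{i=1}^n \tilde W_i\,\langle T_n(\Zbf_i), f\rangle .
\]
The array $Y_{ni}:=\tilde W_i\,\langle T_n(\Zbf_i), f\rangle$, $i=1,\dots,n$, consists of i.i.d.\ (within each row $n$), mean-zero random variables: i.i.d.\ because the $\Zbf_i$ are i.i.d.\ and the $\tilde W_i$ are i.i.d.\ and independent of $(\Zbf_i)_i$, and mean-zero because $\E[\tilde W_i]=0$. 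Since $|\tilde W_i|=1$ almost surely, $\Var(Y_{ni})=\Var(\langle T_n(\Zbf_1), f\rangle)$ and, for every $t>0$, $\E\big[Y_{ni}^2\,\1\{|Y_{ni}|>t\}\big]=\E\big[\langle T_n(\Zbf_1), f\rangle^2\,\1\{|\langle T_n(\Zbf_1), f\rangle|>t\}\big]$; thus $(Y_{ni})$ obeys exactly the same Lindeberg (and Lyapunov) condition as the array $(\langle T_n(\Zbf_i), f\rangle)$.

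Next I would invoke that, in the proof of Theorem~\ref{thm: asymptoticnormality}, the convergence $\sum_{i=1}^n \frac{s_n}{n\sigma_n(f)}\langle T_n(\Zbf_i), f\rangle\stackrel{D}{\to}N(0,1)$ was obtained via the arguments of Theorem~8 in~\citet{wager2017estimation} (equivalently Theorem~3.1 in~\citet{wager2018estimation}), which amount to verifying the Lindeberg/Lyapunov condition for the triangular array $(\langle T_n(\Zbf_i), f\rangle)$ scaled by $\sqrt{n\,\Var(\langle T_n(\Zbf_1), f\rangle)}$, the relevant $(2+\delta)$-moment bound being~\eqref{eq:delta-plus-2-moment-bound}. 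Since $(Y_{ni})$ has the identical moment structure, the same condition gives $\frac{1}{\sqrt{n\,\Var(\langle T_n(\Zbf_1), f\rangle)}}\sum_{i=1}^n Y_{ni}\stackrel{D}{\to}N(0,1)$. Finally, Theorem~\ref{amazingvarianceproposition} yields $\frac{\Var(\langle T_n(\Zbf_1), f\rangle)}{\Var(T_n(\Zbf_1))}\to\sigma^2(f)>0$, so Slutsky's theorem applied to the displayed factorization gives $\langle \xi_n^W, f\rangle\stackrel{D}{\to}\sigma(f)\cdot N(0,1)=N(0,\sigma^2(f))$, as required.

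The only point requiring care — rather than a genuine obstacle — is making explicit that the Rademacher-type multipliers leave the univariate CLT untouched; once this is noted, the proof is a direct combination of the already-established univariate convergence, Theorem~\ref{amazingvarianceproposition}, and Slutsky's theorem. An alternative route, parallel to Lemma~\ref{KLemma}, would be to verify a Lyapunov condition for $(Y_{ni})$ from scratch using~\eqref{eq:delta-plus-2-moment-bound}, but reusing the condition already verified for $(\langle T_n(\Zbf_i), f\rangle)$ is shorter.
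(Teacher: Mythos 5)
Your proposal is correct and takes essentially the same route as the paper: both rely on (i) the Lyapunov condition for the array $\langle T_n(\Zbf_i),f\rangle$ from the Wager--Walther/Wager--Athey arguments, (ii) the observation that $|\tilde W_i|=1$ a.s.\ leaves all moments unchanged, and (iii) Theorem~\ref{amazingvarianceproposition} for the variance ratio. The only superficial difference is organizational: you establish the CLT under the $\Var(\langle T_n(\Zbf_1),f\rangle)$-normalization and then apply Slutsky to carry over the limiting factor $\sqrt{\Var(\langle T_n(\Zbf_1),f\rangle)/\Var(T_n(\Zbf_1))}\to\sigma(f)$, whereas the paper uses Theorem~\ref{amazingvarianceproposition} up front to verify the Lyapunov condition directly under the $\Var(T_n(\Zbf_1))$-normalization and then computes the limiting variance; these are equivalent.
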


\begin{claimproof}
We prove convergence using the Lyapunov central limit theorem similarly to~\citet[Theorem 3.4]{wager2018estimation}.
First, with the arguments in the proof of Theorem 8 in~\citet{wager2017estimation}, it can be shown that, under Assumption~\forestass{1}--\forestass{5} and \dataass{1}--\dataass{7} with the implications~\eqref{eq:Lipschitzcontinuityf}--\eqref{supbound}, that we have 
\begin{align*}
         \lim_{n \to \infty} \sum_{i=1}^{n} \frac{ \E[ \left | \langle T_{n}(\Zbf_i), f \rangle  \right |^{2+\delta}]}{\left(n \Var(\langle T_{n}(\Zbf_1), f \rangle ) \right)^{1+\delta/2}}=0.
\end{align*}
By Theorem~\ref{amazingvarianceproposition}, consequently also
\begin{align*}
    \lim_{n \to \infty} \sum_{i=1}^{n} \frac{ \E[ \left | \langle T_{n}(\Zbf_i), f \rangle  \right |^{2+\delta}]}{\left(n \Var( T_{n}(\Zbf_1)) \right)^{1+\delta/2}}=0,
\end{align*}
that is, the Lyapunov condition holds for $\langle \xi_n, f \rangle$. As $\Var( \tilde{W}_1 T_{n}(\Zbf_1))=1$ and $\E[|\tilde{W}_i|^{2+\delta}]=\E[|\tilde{W}_1|^{2+\delta}] \leq 1$, we have 
\begin{align*}
    \lim_{n \to \infty} \sum_{i=1}^{n} \frac{ \E\left[ \left | \tilde{W}_i \langle T_{n}(\Zbf_i), f \rangle  \right |^{2+\delta}\right]}{\left(n \Var(\tilde{W}_i T_{n}(\Zbf_1)) \right)^{1+\delta/2}} \leq \lim_{n \to \infty} \sum_{i=1}^{n} \frac{ \E[ \left | \langle T_{n}(\Zbf_i), f \rangle  \right |^{2+\delta}]}{\left(n \Var(T_{n}(\Zbf_1)) ) \right)^{1+\delta/2}}=0,
\end{align*}
so that the Lyapunov condition holds for $\langle \xi_n^W, f \rangle$. Finally, by the same arguments,
\begin{align*}
    \Var( \langle \xi_n^W, f \rangle)= \frac{\Var(\langle T_{n}(\Zbf_1), f \rangle)}{\Var(T_{n}(\Zbf_1))} \to \sigma(f),
\end{align*}
which shows the claim.
\end{claimproof}

Uniform tightness and convergence of univariate marginals together imply~\eqref{eq: uncondconvergence}.
\end{claimproof}

Let us consider again the function $D$ defined in~\eqref{Ddefinition} and the set $\F=\{F \in \H^*\colon \|F\|_{\H^*} \leq 1\}$ defined in~\eqref{Fdef}. As mentioned above, $D\colon \H \to \ell^{\infty}(\F)$ is continuous with a continuous inverse, and we consider the non-i.i.d empirical process
\begin{align*}
    D(\xi_n)= \frac{1}{\sqrt{n}} \sum_{i=1}^n
 D \left(\frac{T_{n}(\Zbf_i)}{\sqrt{\Var(T_n(\Zbf_1))}} \right)
\end{align*}
and similarly the multiplier process
\begin{align*}
    D(\xi_n^W)= \frac{1}{\sqrt{n}} \sum_{i=1}^n
 \tilde{W}_i D \left(\frac{T_{n}(\Zbf_i)}{\sqrt{\Var(T_n(\Zbf_1))}} \right).
\end{align*}
Using continuity of $D$, we showed $D(\xi_n^W) \stackrel{D}{\to} D(\xi)$, which in turn is a tight Gaussian element in $\ell^{\infty}(\F)$; see~\citet{B3}. 

Having shown unconditional convergence, we show conditional convergence of finite-dimensional marginals of $D(\xi_n^W)$: 

\begin{claim}
For all $K \in \N$ and $(f_1,\ldots,f_K) \in \F^{K}$,
\begin{align}\label{marginalconvergence}
   \left(D(\xi_n^W)(f_1), \ldots, D(\xi_n^W)(f_K)  \right) \xrightarrow[W]{D} \left(D(\xi)(f_1), \ldots, D(\xi)(f_K)  \right).
\end{align}
\end{claim}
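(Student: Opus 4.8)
The plan is to establish the conditional convergence of finite-dimensional marginals in~\eqref{marginalconvergence} by reducing it, via the Cramér--Wold device, to the one-dimensional conditional central limit theorem supplied by Lemma~\ref{KLemma}. First I would observe that, by the Riesz representation theorem, each $f_j \in \F$ corresponds to an element $g_j \in \H$ with $\|g_j\|_{\H} \leq 1$ such that $D(\xi_n^W)(f_j) = f_j(\xi_n^W) = \langle \xi_n^W, g_j\rangle$, and similarly $D(\xi)(f_j) = \langle \xi, g_j\rangle$. So~\eqref{marginalconvergence} is equivalent to the statement that the random vector $\big(\langle \xi_n^W, g_1\rangle, \ldots, \langle \xi_n^W, g_K\rangle\big)$ converges conditionally on $\Zcal_n$ in distribution to the Gaussian vector $\big(\langle \xi, g_1\rangle, \ldots, \langle \xi, g_K\rangle\big)$, which has covariance matrix with entries $\langle \boldsymbol{\Sigma}_{\xbf} g_j, g_l\rangle$.

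By the Cramér--Wold device (applied to conditional convergence, which is legitimate since the continuous-mapping and subsequential arguments used to prove Cramér--Wold go through for the bounded-Lipschitz metric characterization in~\eqref{conditionalconvergence}), it suffices to show that for every fixed $(a_1,\ldots,a_K) \in \R^K$, writing $g = \sum_{j=1}^K a_j g_j \in \H$, we have
\begin{align*}
    \sum_{j=1}^K a_j \langle \xi_n^W, g_j \rangle = \langle \xi_n^W, g \rangle = \frac{1}{\sqrt{n}} \sum_{i=1}^n \tilde W_i \frac{\langle T_n(\Zbf_i), g\rangle}{\sqrt{\Var(T_n(\Zbf_1))}} \xrightarrow[W]{D} N\big(0, \langle \boldsymbol{\Sigma}_{\xbf} g, g\rangle\big).
\end{align*}
Now I would apply Lemma~\ref{KLemma} with $m_n = n$, $Y_{ni} = \langle T_n(\Zbf_i), g\rangle / \sqrt{n \Var(T_n(\Zbf_1))}$ (so that $\mathbb{Y}_n = \Zcal_n$ up to the deterministic normalization), and $W_i \leftarrow \tilde W_i$, which are mean zero, variance one, and independent of the data. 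I need to verify the three hypotheses~\eqref{eq: LyapconditionI}, \eqref{eq: LyapconditionIextra}, and~\eqref{eq: LyapconditionII} for these $Y_{ni}$. The Lyapunov-type condition~\eqref{eq: LyapconditionII} follows from the arguments in the proof of Theorem~8 in~\citet{wager2017estimation} combined with Theorem~\ref{amazingvarianceproposition} (exactly as used already in the marginal convergence claim for $\langle \xi_n^W, f\rangle$), since $\langle T_n(\Zbf_i), g\rangle$ is the first-order approximation of a tree with univariate response $g(\Ybf_i)$, and assumptions~\dataass{6}--\dataass{7} together with~\eqref{eq:delta-plus-2-moment-bound} cover the needed moment and Lipschitz conditions. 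Condition~\eqref{eq: LyapconditionI}, namely $\sum_{i=1}^n \Var(Y_{ni}^2) \to \sigma_0$ for some $\sigma_0 > 0$, and condition~\eqref{eq: LyapconditionIextra}, namely $\sum_{i=1}^n Y_{ni}^2 \stackrel{p}{\to} \sigma_0$, are where the main work lies: by Theorem~\ref{amazingvarianceproposition} the natural candidate is $\sigma_0 = \langle \boldsymbol{\Sigma}_{\xbf} g, g\rangle = \Var(\langle k(\Ybf,\cdot), g\rangle \mid \Xbf = \xbf)/\Var(k(\Ybf,\cdot)\mid\Xbf=\xbf)$, and one must argue that $n^{-1}\sum_i \langle T_n(\Zbf_i), g\rangle^2 / \Var(T_n(\Zbf_1)) \stackrel{p}{\to} \sigma_0$ (a weak law of large numbers for the triangular array of independent, non-identically-scaled terms, whose mean converges to $\sigma_0$ by Theorem~\ref{amazingvarianceproposition} and whose variance vanishes by the Lyapunov bound~\eqref{eq: LyapconditionII} controlling the $(2+\delta)$-moments) and likewise that the variance of this average is controlled; this uses that $\E[\langle T_n(\Zbf_i),g\rangle^2]/\Var(T_n(\Zbf_1)) \to \sigma^2(g) = \sigma_0$ and a Chebyshev argument. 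Given these, Lemma~\ref{KLemma} yields that conditionally on $\Zcal_n$ the conditional variance $\Var(\langle \xi_n^W, g\rangle \mid \Zcal_n) \stackrel{p}{\to} \sigma_0$ and the conditional Lyapunov condition~\eqref{secondassertioneq} holds in probability, whence the conditional Lindeberg--Feller / Lyapunov CLT (applied along subsequences, exactly as in~\citet[Lemma~3]{B2}) gives $\langle \xi_n^W, g\rangle \xrightarrow[W]{D} N(0,\sigma_0)$.

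The main obstacle I anticipate is making the passage from the unconditional statements about $T_n$ (the Lyapunov bound and the variance ratio from Theorem~\ref{amazingvarianceproposition}) to the \emph{conditional} CLT rigorous and uniform over the bounded-Lipschitz test functions in~\eqref{conditionalconvergence}: specifically, verifying~\eqref{eq: LyapconditionIextra}, i.e. that the random conditional variance concentrates at the deterministic limit, and then invoking Lemma~\ref{KLemma} and a subsequence argument to conclude. Everything else — the Riesz identification, the Cramér--Wold reduction, and the identification of the limiting covariance with $\boldsymbol{\Sigma}_{\xbf}$ — is bookkeeping that parallels the already-completed proofs of Theorems~\ref{thm: asymptoticnormality} and the unconditional part of Theorem~\ref{thm: asymptoticnormalityhalfsampling}. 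One should also note that $\tilde W_i \in \{-1,1\}$ is bounded, so $\E[|\tilde W_i|^{2+\delta} \mid \Zcal_n] = 1$ and the conditional moment factor in Lemma~\ref{KLemma}'s conclusion is trivial, which simplifies the verification considerably.
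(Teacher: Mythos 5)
Your proposal is correct and follows essentially the same route as the paper's proof: the Cramér--Wold reduction, the Riesz identification of $D(\xi_n^W)(f_j)$ with inner products $\langle \xi_n^W, g_j\rangle$, verification of the hypotheses of Lemma~\ref{KLemma} for $Y_{ni}=\langle T_n(\Zbf_i),g\rangle/\sqrt{n\Var(T_n(\Zbf_1))}$ via Theorem~\ref{amazingvarianceproposition} and the arguments of \citet[Theorem 8, Lemma 12]{wager2017estimation}, and the final subsequence argument to pass from almost-sure conditional convergence along subsequences to convergence in probability. The only cosmetic difference is ordering: you apply Riesz first and Cramér--Wold in $\R^K$, whereas the paper first reduces to linear continuous functionals on $\ell^\infty(\F)$ and then invokes Riesz for $F\circ D$.
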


\begin{claimproof}
By the Cramer-Wold device, it suffices to show 
\begin{align}\label{marginalconvergence2}
   \left(D(\xi_n^W)(f_1), \ldots, D(\xi_n^W)(f_K)  \right) \cdot  \wbf \xrightarrow[W]{D} \left(D(\xi)(f_1), \ldots, D(\xi)(f_K)  \right)\cdot  \wbf,
\end{align}
for any $ \wbf \in \R^K$. This in turn is implied if for all $F\colon\ell^{\infty}(\F) \to \R$ linear and continuous, it holds that
\begin{align}\label{marginalconvergence3}
    F(D(\xi_n^W)) \xrightarrow[W]{D} F(D(\xi))
\end{align}
because $F_K\colon \ell^{\infty}(\F) \to \R$, $F_K(D(\xi))=\left(D(\xi)(f_1), \ldots, D(\xi)(f_K)  \right)\cdot  \wbf$ is linear and continuous.
Consider a linear and continuous function $F\colon \mathcal{H} \to \R$.
Because $F \circ D\colon \mathcal{H} \to \R$ is linear and continuous from $\H$ to $\R$, by the Riesz representation theorem, we have 
\begin{align*}
    F(D(\xi_n^W))&=\frac{1}{\sqrt{n}} \sum_{i=1}^n
 \tilde{W}_i F \circ D \left(\frac{T_{n}(\Zbf_i)}{\sqrt{\Var(T_n(\Zbf_1))}}\right)\\
 &=\frac{1}{\sqrt{n}} \sum_{i=1}^n
 \tilde{W}_i \frac{\langle T_{n}(\Zbf_i), f_F \rangle}{\sqrt{\Var(T_n(\Zbf_1))}},
\end{align*}
for a unique $f_F \in \H$. Combining the arguments to prove the Lyapunov conditions in~\citet[Theorem 8]{wager2017estimation} with Theorem~\ref{amazingvarianceproposition}, we see that conditions~\eqref{eq: LyapconditionI} and~\eqref{eq: LyapconditionII} of Lemma~\ref{KLemma} hold for $Y_{ni}=\frac{\langle T_{n}(\Zbf_i), f_F \rangle}{\sqrt{n\Var(T_n(\Zbf_1))}}$. Similarly,~\citet[Lemma 12]{wager2017estimation} implies that~\eqref{eq: LyapconditionIextra} holds as well for $Y_{ni}$. Since $(\tilde{W}_i)_i$ is i.i.d.\ with expectation 0 and variance 1, it follows from Lemma~\ref{KLemma} that the Lyapunov condition for $\tilde{W}_iY_{ni}$ holds in probability, that is,~\eqref{firstassertioneq} and~\eqref{secondassertioneq} hold. Thus, we can find for any subsequence a further subsequence indexed by say $l$ such that Lyapunov condition for $\sum_{i} \tilde{W} Y_{li}$ given $\Zcal_{l}$ hold almost surely.  Arguing pointwise for fixed $\Zcal_{l}$ implies
\begin{align*}
 \sup_{h \in \text{BL}_1(\H) }  \left |  \E[ h( F(D(\xi_{l}^{\HS})) ) \mid \Zcal_{l}] - \E[ h(F(D(\xi))) ] \right | \to 0 \text{ a.s.};
\end{align*}
see~\citet{B2}.
Using an argument by contradiction as in~\citet[Lemma 14]{DRF-paper}, this in turn means
\[
 \sup_{h \in \text{BL}_1(\H) }  \left |  \E[ h( F(D(\xi_{l}^{\HS})) ) \mid \Zcal_{l}] - \E[ h(F(D(\xi))) ] \right | \stackrel{p}{\to} 0,
\]
proving the claim. 
\end{claimproof}

Combining unconditional convergence~\eqref{eq: uncondconvergence} and conditional finite-dimensional convergence~\eqref{marginalconvergence} with the arguments in~\citet[Theorem 2]{B2} then gives
\begin{align}
    D(\xi_n^W) \xrightarrow[W]{D} D(\xi).
\end{align}
Finally, due to continuity of the inverse of $D$, this implies~\eqref{eq: condconvergence}.

\end{claimproof}

Having shown~\eqref{eq: condconvergence}, it holds that

\begin{claim}
\begin{align}\label{eq: Wimin2}
    &\frac{1}{\sqrt{n}} \sum_{i=1}^n
 \left(\frac{n}{|\HS|}W_i -1 \right) \frac{T_{n}(\Zbf_i)}{\sqrt{\Var(T_n(\Zbf_1))}} - \frac{1}{\sqrt{n}} \sum_{i=1}^n
 (2 W_i-1)\frac{T_{n}(\Zbf_i)}{\sqrt{\Var(T_n(\Zbf_1))}} \nonumber \\
 =&  \left(\frac{n}{|\HS|} -2  \right) \frac{1}{\sqrt{n}} \sum_{i=1}^n
  W_i
\frac{T_{n}(\Zbf_i)}{\sqrt{\Var(T_n(\Zbf_1))}}  \\
\stackrel{p}{\to} & 0. 
\end{align}
\end{claim}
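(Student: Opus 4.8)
The plan is to note first that the displayed equality in~\eqref{eq: Wimin2} is purely algebraic: since $\tfrac{n}{|\HS|}$ does not depend on $i$,
\[
\Bigl(\tfrac{n}{|\HS|}W_i-1\Bigr)-(2W_i-1)=\Bigl(\tfrac{n}{|\HS|}-2\Bigr)W_i,
\]
so the difference of the two sums collapses to $\bigl(\tfrac{n}{|\HS|}-2\bigr)\tfrac{1}{\sqrt n}\sum_{i=1}^n W_i\,T_n(\Zbf_i)/\sqrt{\Var(T_n(\Zbf_1))}$ (on the event $|\HS|\ge 1$, which has probability $1-2^{-n}\to 1$, so it is harmless for a statement about convergence in probability). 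It then remains to show this product is $\o_p(1)$, which I would do by writing it as a deterministic factor tending to zero almost surely times a Hilbert-space-valued factor that is bounded in probability.

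For the scalar factor, since $\HS=\{i\colon W_i=1\}$ with $W_i\iid\mathrm{Bernoulli}(1/2)$, the strong law of large numbers gives $|\HS|/n\to 1/2$ almost surely, hence $\tfrac{n}{|\HS|}-2\to 0$ almost surely, and in particular $\tfrac{n}{|\HS|}-2=\o_p(1)$. For the random-element factor, I would use $W_i=\tfrac{\tilde W_i+1}{2}$ with $\tilde W_i=2W_i-1$ as in~\eqref{eq: xinWdef} to write
\[
\frac{1}{\sqrt n}\sum_{i=1}^n W_i\frac{T_n(\Zbf_i)}{\sqrt{\Var(T_n(\Zbf_1))}}=\tfrac12\,\xi_n^{W}+\tfrac12\,\xi_n^{0},
\]
where $\xi_n^{W}$ is as in~\eqref{eq: xinWdef} and $\xi_n^{0}=\sum_{i=1}^n T_n(\Zbf_i)/\sqrt{n\Var(T_n(\Zbf_1))}$ is the centered sum from the proof of Theorem~\ref{thm: asymptoticnormality}. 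By~\eqref{eq: uncondconvergence}, $\xi_n^{W}\stackrel{D}{\to}N(0,\boldsymbol{\Sigma}_{\xbf})$, and the proof of Theorem~\ref{thm: asymptoticnormality} shows $\xi_n^{0}\stackrel{D}{\to}N(0,\boldsymbol{\Sigma}_{\xbf})$ as well. A sequence of random elements of the separable Hilbert space $\H$ that converges in distribution is uniformly tight (Prohorov; cf.\ the discussion in Section~\ref{sec:background}), so $\|\xi_n^{W}\|_{\H}=\O_p(1)$ and $\|\xi_n^{0}\|_{\H}=\O_p(1)$, whence $\bigl\|\tfrac{1}{\sqrt n}\sum_{i=1}^n W_i T_n(\Zbf_i)/\sqrt{\Var(T_n(\Zbf_1))}\bigr\|_{\H}=\O_p(1)$.

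Finally I would combine the two: the $\H$-norm of the left-hand side of~\eqref{eq: Wimin2} is at most $\bigl|\tfrac{n}{|\HS|}-2\bigr|$ times an $\O_p(1)$ quantity, and an $\o_p(1)$ scalar times an $\O_p(1)$ Hilbert-valued sequence is $\o_p(1)$ (for any $M,\varepsilon>0$, bound the probability that the product exceeds $\varepsilon$ by $\P(|\tfrac{n}{|\HS|}-2|>\varepsilon/M)+\P(\O_p(1)\text{-term}>M)$ and let $M\to\infty$, then $n\to\infty$). This yields the asserted $\stackrel{p}{\to}0$. There is no serious obstacle here; the only mildly delicate points are the passage from convergence in distribution to uniform tightness in $\H$ and keeping track of the negligible event $\{|\HS|=0\}$, both of which are routine.
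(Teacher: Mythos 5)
Your proof is correct, and the overall skeleton (scalar $\o_p(1)$ factor times a Hilbert-valued $\O_p(1)$ factor) matches the paper's. The one genuine difference is how you establish that $\frac{1}{\sqrt n}\sum_i W_i T_n(\Zbf_i)/\sqrt{\Var(T_n(\Zbf_1))}$ is bounded in probability. You decompose $W_i=\tfrac12(\tilde W_i+1)$ and write the sum as $\tfrac12\xi_n^W+\tfrac12\xi_n^0$, each of which was already shown (in~\eqref{eq: uncondconvergence} and in the proof of Theorem~\ref{thm: asymptoticnormality}, respectively) to converge weakly to $N(0,\boldsymbol{\Sigma}_{\xbf})$, and then invoke the fact that a weakly convergent sequence in a Polish space is tight. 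The paper instead does a direct Chebyshev bound: since $W_1$ and $\Zbf_1$ are independent and $T_n(\Zbf_1)$ is centered, one computes $\Var(W_1T_n(\Zbf_1))/\Var(T_n(\Zbf_1))=\tfrac12$, so the norm of the sum has bounded second moment and is hence $\O_p(1)$ by Markov's inequality. Your route leverages the two weak convergence results you have already proved and is perfectly valid; the paper's route is more self-contained and avoids any appeal to Prohorov-type tightness, at the small cost of an extra (trivial) variance computation. Both are sound; your handling of the negligible event $\{|\HS|=0\}$ and the SLLN for $|\HS|/n$ is also fine and essentially what the paper implicitly relies on.
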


\begin{claimproof}

Indeed, $ \left(\frac{n}{|\HS|} -2  \right)=\o_p(1)$, and due to
\begin{align*}
    \Prob \left( \left \| \sum_{i=1}^n
  W_i
\frac{T_{n}(\Zbf_i)}{\sqrt{n\Var(T_n(\Zbf_1))}} \right\| > \varepsilon \right) &\leq \frac{1}{\varepsilon^2} \Var \left(\sum_{i=1}^n
  W_i
\frac{T_{n}(\Zbf_i)}{\sqrt{n\Var(T_n(\Zbf_1))}}  \right)\\
&=\frac{1}{\varepsilon^2} \frac{\Var(W_1 T_{n}(\Zbf_1) )}{\Var(T_n(\Zbf_1))}
\end{align*}
with
\begin{align*}
    \frac{\Var(W_1 T_{n}(\Zbf_1) )}{\Var(T_n(\Zbf_1))} = \frac{1/4 \Var( T_{n}(\Zbf_1) ) + 1/4 \Var( T_{n}(\Zbf_1) )}{\Var(T_n(\Zbf_1))} = \frac{1}{2} < \infty,
\end{align*}
we have
\begin{align*}
    \left\|\frac{1}{\sqrt{n}} \sum_{i=1}^n
  W_i
\frac{T_{n}(\Zbf_i)}{\sqrt{\Var(T_n(\Zbf_1))}}  \right \| = \O_p(1),
\end{align*}
establishing~\eqref{eq: Wimin2}.

\end{claimproof}

Thus, we have~\eqref{eq: condconvergence}, that is,
     $\xi_n^{W} \xrightarrow[W]{D} N(0, \boldsymbol{\Sigma}_{\xbf})$. Moreover, we have 
\begin{align*}
    \frac{1}{\sigma_{n}} \left( \hmunHS -  \hmun \right) =  \xi_n^{W} + o_p(1),
\end{align*}
by combining~\eqref{Sasymptoticlinearity} with~\eqref{eq: Wimin2}. Let as in the main text $\xi_n^{\HS}=\frac{1}{\sigma_{n}} \left( \hmunHS -  \hmun \right)$, and let $\mathcal{D}_n$ be the difference
\begin{align*}
   \mathcal{D}_n= \xi_n^{\HS} -  \xi_n^{W},
\end{align*}
so that $\|  \mathcal{D}_n \|_{\H}=o_p(1)$. With this, we can finally show that
\eqref{conditionalconvergence} holds, that is,
\begin{align*}
     \sup_{h \in \text{BL}_1(\H) }  \left |  \E \left[ h\left( \xi_n^{\HS} \right) \mid \Zcal_{n} \right] - \E[ h(\xi) ] \right | \stackrel{p}{\to} 0.
\end{align*}
Indeed, we have
\begin{align}\label{eq: breakup}
   &\sup_{h \in \text{BL}_1(\H) }  \left |  \E \left[ h\left(  \xi_n^{\HS}\right) \mid \Zcal_{n} \right] - \E[ h(\xi) ] \right |  \nonumber \\
   \leq& \sup_{h \in \text{BL}_1(\H) } |  \E \left[ h\left(  \xi_n^{\HS}\right) \mid \Zcal_{n} \right] - \E[ h(\xi_n^{W}) \mid \Zcal_n ] | + \sup_{h \in \text{BL}_1(\H) } | \E[ h(\xi_n^{W}) \mid \Zcal_n ]- \E[ h(\xi) ]|.
 \end{align}
The second term goes to zero in probability by~\eqref{eq: condconvergence}, and the first term satisfies
\begin{align*}
    \sup_{h \in \text{BL}_1(\H) } |  \E \left[ h\left(  \xi_n^{\HS}\right) \mid \Zcal_{n} \right] - \E[ h(\xi_n^{W}) \mid \Zcal_n ] | &\leq  \sup_{h \in \text{BL}_1(\H) }  \E [ | h(  \xi_n^{\HS}) - h(\xi_n^{W})| \mid \Zcal_n ] \\
    & \leq \E [ \min(\|\mathcal{D}_n\|_{\H}, 2) \mid \Zcal_n ]
\end{align*}
because for all $h \in \text{BL}_1(\H)$, $h$ is Lipschitz with constant bounded by 1, and $|h(f_1) - h(f_2)| \leq 2 \sup_{f \in \H} |h(f)|\leq 2$. Moreover, since $(\min(\|\mathcal{D}_n\|_{\H}, 2))_n$ is a bounded sequence, it is \emph{uniformly integrable}; see~\citet[Chapter 10.3]{dudley}. It follows by an extension of the Dominated Convergence Theorem for convergence in probability~\citep[Theorem 10.3.6]{dudley} that $\min(\|\mathcal{D}_n\|_{\H}, 2)=o_p(1)$, which implies $\E [ \min(\|\mathcal{D}_n\|_{\H}, 2)] \to 0$. Since $\min(\|\mathcal{D}_n\|_{\H}, 2)$ is also nonnegative and
\[
o(1)=\E [ \min(\|\mathcal{D}_n\|_{\H}, 2)] = \E[ \E [ \min(\|\mathcal{D}_n\|_{\H}, 2) \mid \Zcal_n] ],
\]
this implies that $\E [ \min(\|\mathcal{D}_n\|_{\H}, 2) \mid \Zcal_n] \stackrel{p}{\to} 0$. This convergence, together with the above bound, shows that the first part of~\eqref{eq: breakup} also goes to zero in probability.

\end{proof}

Finally, we show that the variance of finite dimensional marginals can be estimated consistently:

\variancestimation*

\begin{proof}

Define 
\begin{align}
   F \circ \boldsymbol{\hat{\Sigma}}_n = \E\left[ \left.\frac{1}{\sigma_n^2}  \left( F( \hmunHS ) -  F(\hmun) \right) \left( F( \hmunHS ) -  F(\hmun) \right)^{\top}   \,\right\vert\, \Zcal_n \right],
\end{align}
and note that $ F \circ \boldsymbol{\hat{\Sigma}}_n= \E\left[  F(\xi_n^{\HS})F(\xi_n^{\HS})^{\top} \mid \Zcal_n \right]$. Similarly, we define
\begin{align}
    F \circ\boldsymbol{\hat{\Sigma}}_n^{o}=\E\left[ \left. F(\xi_n^{W})F(\xi_n^{W})^{\top} \,\right\vert\, \Zcal_n \right],
\end{align}
with $\xi_n^{W}$ defined as in~\eqref{eq: xinWdef}. We will first show in several steps that:
\begin{claim}
For all $ \wbf \in \R^q$, we have 
\begin{align}\label{eq: univariateconvergencehatSigma}
 \wbf^{\top}( F \circ \boldsymbol{\hat{\Sigma}}_n)  \wbf \stackrel{p}{\to}  \wbf^{\top}(F \circ \boldsymbol{\Sigma}_{\xbf}) \wbf.    
\end{align}

\end{claim}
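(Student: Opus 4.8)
The plan is to prove the claim \eqref{eq: univariateconvergencehatSigma} for one $\wbf$ at a time, reducing it to a scalar statement via the Riesz representation theorem and then invoking a triangular--array law of large numbers. Since $\wbf^{\top}F\colon\H\to\R$ is linear and continuous, there is a unique $f_{\wbf}\in\H$ with $\wbf^{\top}F(h)=\langle f_{\wbf},h\rangle$ for all $h\in\H$, and $f_{\wbf}=\sum_j w_j g_j$ where $g_j$ is the Riesz representer of the $j$-th component of $F$. Consequently $\wbf^{\top}(F\circ\boldsymbol{\hat{\Sigma}}_n)\wbf=\E[\langle f_{\wbf},\xi_n^{\HS}\rangle^2\mid\Zcal_n]$ and, by the bilinearity of $F\circ\boldsymbol{\Sigma}_{\xbf}$ together with Theorem~\ref{thm: asymptoticnormality}, $\wbf^{\top}(F\circ\boldsymbol{\Sigma}_{\xbf})\wbf=\langle\boldsymbol{\Sigma}_{\xbf}f_{\wbf},f_{\wbf}\rangle=\sigma^2(f_{\wbf})$ with $\sigma^2(f_{\wbf})$ as in Theorem~\ref{amazingvarianceproposition}. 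So it suffices to show $\E[\langle f_{\wbf},\xi_n^{\HS}\rangle^2\mid\Zcal_n]\stackrel{p}{\to}\sigma^2(f_{\wbf})$.

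First I would handle the multiplier surrogate $\xi_n^{W}$ of \eqref{eq: xinWdef}. Because the $\tilde W_i$ are i.i.d.\ with mean zero and variance one and are independent of the data, conditioning on $\Zcal_n$ eliminates the cross terms and yields the exact identity $\wbf^{\top}(F\circ\boldsymbol{\hat{\Sigma}}_n^{o})\wbf=\E[\langle f_{\wbf},\xi_n^{W}\rangle^2\mid\Zcal_n]=\tfrac1n\sum_{i=1}^n Y_{ni}$, where $Y_{ni}=\langle T_n(\Zbf_i),f_{\wbf}\rangle^2/\Var(T_n(\Zbf_1))$ is a row-wise i.i.d.\ triangular array. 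Since $\E[T_n(\Zbf_1)]=0$, $\E[Y_{ni}]=\Var(\langle T_n(\Zbf_1),f_{\wbf}\rangle)/\Var(T_n(\Zbf_1))\to\sigma^2(f_{\wbf})$ by Theorem~\ref{amazingvarianceproposition}, so by Chebyshev it only remains to check $\tfrac1n\Var(Y_{n1})\to0$. Here I use that $\langle T(\Zcal_{s_n}),f_{\wbf}\rangle=\sum_j S_j\langle k(\Ybf_j,\cdot),f_{\wbf}\rangle$ is a convex combination of the numbers $\langle k(\Ybf_j,\cdot),f_{\wbf}\rangle$, each bounded by $\|f_{\wbf}\|_{\H}\sqrt{C}$ with $C=\sup_{\ybf}k(\ybf,\ybf)<\infty$ by \kernelass{1}; hence $|\langle T_n(\Zbf_1),f_{\wbf}\rangle|\le2\|f_{\wbf}\|_{\H}\sqrt{C}$, so $\E[Y_{n1}^2]\precsim\Var(\langle T_n(\Zbf_1),f_{\wbf}\rangle)/\Var(T_n(\Zbf_1))^2\precsim 1/\Var(T_n(\Zbf_1))$. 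Combining this with the variance lower bound $\Var(T_n(\Zbf_1))=\Var(\E[T(\Zcal_{s_n})\mid\Zbf_1])\succsim(s_n\log(s_n)^p)^{-1}\Var(k(\Ybf,\cdot)\mid\Xbf=\xbf)$ from Theorem~\ref{majorsteptheorem} (with \dataass{4} making the last factor strictly positive) gives $\Var(Y_{n1})=\O(s_n\log(s_n)^p)=\o(n)$, because $s_n=n^{\beta}$ with $\beta<1$ by \forestass{5}. Therefore $\wbf^{\top}(F\circ\boldsymbol{\hat{\Sigma}}_n^{o})\wbf\stackrel{p}{\to}\sigma^2(f_{\wbf})$.

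It then remains to pass from $\xi_n^{W}$ to $\xi_n^{\HS}$. Writing $\mathcal{D}_n=\xi_n^{\HS}-\xi_n^{W}$ and using $a^2-b^2=(a-b)(a+b)$ with Cauchy--Schwarz in the conditional $\mathcal{L}^2(\cdot\mid\Zcal_n)$ inner product,
\[
\bigl|\wbf^{\top}(F\circ\boldsymbol{\hat{\Sigma}}_n)\wbf-\wbf^{\top}(F\circ\boldsymbol{\hat{\Sigma}}_n^{o})\wbf\bigr|\le\|f_{\wbf}\|_{\H}^2\Bigl(\E[\|\mathcal{D}_n\|_{\H}^2\mid\Zcal_n]+2\sqrt{\E[\|\xi_n^{W}\|_{\H}^2\mid\Zcal_n]}\,\sqrt{\E[\|\mathcal{D}_n\|_{\H}^2\mid\Zcal_n]}\Bigr),
\]
and $\E[\|\xi_n^{W}\|_{\H}^2\mid\Zcal_n]=\tfrac1n\sum_i\|T_n(\Zbf_i)\|_{\H}^2/\Var(T_n(\Zbf_1))\stackrel{p}{\to}1$ by the same triangular--array argument (using $\|T(\Zcal_{s_n})\|_{\H}\le\sqrt C$). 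Thus everything reduces to the single estimate $\E[\|\mathcal{D}_n\|_{\H}^2\mid\Zcal_n]\stackrel{p}{\to}0$, and this I expect to be the main obstacle: the probability statements \eqref{Sasymptoticlinearity} and \eqref{eq: Wimin2} used in the proof of Theorem~\ref{thm: asymptoticnormalityhalfsampling} must be upgraded to convergence in conditional $\mathcal{L}^2$. I would obtain this by (i) strengthening the linearization of $\hmun$ to the $\mathcal{L}^2$ bound already available as \eqref{eq: Convergenceinsquaredmean}, (ii) proving the analogous $\mathcal{L}^2$ linearization for the half-sampled forest $\hmunHS$ by conditioning on $\HS$ and using $|\HS|/n\to1/2$ a.s., and (iii) controlling the residual term $(n/|\HS|-2)\,n^{-1/2}\sum_i W_i T_n(\Zbf_i)/\sqrt{\Var(T_n(\Zbf_1))}$ from \eqref{eq: Wimin2} by a further Cauchy--Schwarz step, using $\E[(n/|\HS|-2)^4]\to0$ and the fact that, since $\|T_n(\Zbf_i)\|_{\H}\le2\sqrt C$, the conditional (given $\Zcal_n$) second and fourth moments of $n^{-1/2}\sum_i W_i T_n(\Zbf_i)/\sqrt{\Var(T_n(\Zbf_1))}$ are $\O_p(1)$. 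Finally, applying this conditional convergence to $\wbf$ running over a basis of $\R^q$ (or using polarization) upgrades \eqref{eq: univariateconvergencehatSigma} to the matrix convergence $F\circ\boldsymbol{\hat{\Sigma}}_n\stackrel{p}{\to}F\circ\boldsymbol{\Sigma}_{\xbf}$ asserted by the corollary.
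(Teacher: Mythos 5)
Your proposal follows essentially the same route as the paper: Riesz-reduce the scalar statistic to $\E[\langle f_{\wbf},\xi_n^{\HS}\rangle^2\mid\Zcal_n]$, treat the Bernoulli-multiplier surrogate $\xi_n^{W}$ with a row-wise i.i.d.\ law of large numbers, and then transfer the limit to $\xi_n^{\HS}$ through a conditional $\mathcal{L}^2$ bound on $\mathcal{D}_n=\xi_n^{\HS}-\xi_n^{W}$ together with Cauchy--Schwarz. The one genuine departure is in the first step: the paper simply cites the argument of Lemma 12 in \citet{wager2017estimation}, whereas you make it self-contained by noting that the tree prediction is a convex combination of the $k(\Ybf_j,\cdot)$'s, so $\|T_n(\Zbf_i)\|_{\H}\le 2\sqrt{C}$ by \kernelass{1} and a direct Chebyshev bound gives $\Var(Y_{n1})=\O\!\left(1/\Var(T_n(\Zbf_1))\right)=\O(s_n\log(s_n)^p)=\o(n)$. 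That observation is cleaner than the citation and is worth keeping.

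The place your sketch needs a fix is step~(iii). The quantity $\E\!\left[(n/|\HS|-2)^4\right]$ is in fact $+\infty$ for every $n$, since $\P(|\HS|=0)=2^{-n}>0$, and even on $\{|\HS|\ge 1\}$ the small-$|\HS|$ tail must be controlled against the blow-up $\sigma_n^{-2}=\O(n^{1-\beta+\varepsilon})$ coming from the un-centred forest prediction. What the paper does---and what your plan implicitly needs---is to truncate at $|\HS|>s_n$ (the threshold below which the half-sample carries no trees), prove $\E\!\left[\|\xi_n^{\HS}-\xi_n^{W}\|_{\H}^2\,\1\{|\HS|>s_n\}\right]\to 0$ by re-running the $\mathcal{L}^2$ linearization argument of~\eqref{eq: Convergenceinsquaredmean} conditionally on $\HS$, and handle the complement by combining $\P(|\HS|\le s_n)=\O(1/n)$ with the explicit rate for $\sigma_n^{-2}$ so that the product still vanishes. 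With that truncation in place of the bare fourth-moment claim, your argument recovers~\eqref{eq: Convergenceinsquaredmean2}, and the remaining Cauchy--Schwarz steps you describe then give the claim exactly as in the paper.
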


\begin{claimproof}

To prove the claim, we first show:

\begin{claim}
For all $ \wbf \in \R^q$,  we have 
\begin{align}\label{eq: univariateconvergencehatSigmao}
 \wbf^{\top}( F \circ \boldsymbol{\hat{\Sigma}}_n^{o})  \wbf \stackrel{p}{\to}  \wbf^{\top}(F \circ \boldsymbol{\Sigma}_{\xbf}) \wbf.    
\end{align}

\end{claim}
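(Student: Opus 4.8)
The plan is to reduce the claim to a single application of Lemma~\ref{KLemma}. Since $\mathbf{w}^{\top} F \colon \H \to \R$ is linear and continuous, the Riesz representation theorem provides a unique $f_{\mathbf{w}} \in \H$ with $\mathbf{w}^{\top} F(h) = \langle f_{\mathbf{w}}, h \rangle$ for all $h \in \H$. Because $F$ is linear and $\E[\tilde{W}_i \mid \Zcal_n] = \E[\tilde{W}_i] = 0$, we have $\E[\mathbf{w}^{\top} F(\xi_n^{W}) \mid \Zcal_n] = 0$, so
\[
\mathbf{w}^{\top}\big(F \circ \boldsymbol{\hat{\Sigma}}_n^{o}\big) \mathbf{w} = \E\big[(\mathbf{w}^{\top} F(\xi_n^{W}))^2 \mid \Zcal_n\big] = \Var\big(\mathbf{w}^{\top} F(\xi_n^{W}) \mid \Zcal_n\big).
\]
Writing $Y_{ni} = \langle T_{n}(\Zbf_i), f_{\mathbf{w}} \rangle / \sqrt{n \Var(T_n(\Zbf_1))}$, linearity of $F$ together with the definition~\eqref{eq: xinWdef} of $\xi_n^{W}$ gives $\mathbf{w}^{\top} F(\xi_n^{W}) = \sum_{i=1}^n \tilde{W}_i Y_{ni}$, so the quantity of interest is exactly $\Var\big(\sum_{i=1}^n \tilde{W}_i Y_{ni} \mid \Zcal_n\big)$, which is the object appearing on the left of~\eqref{firstassertioneq}.

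Next I would check that $(Y_{ni})_{n,i}$ satisfies the hypotheses of Lemma~\ref{KLemma}. These are precisely the conditions already verified in the marginal-convergence part of the proof of Theorem~\ref{thm: asymptoticnormalityhalfsampling}: combining the Lyapunov-type estimates obtained from the arguments of~\citet[Theorem 8]{wager2017estimation} with the variance equivalence of Theorem~\ref{amazingvarianceproposition} yields~\eqref{eq: LyapconditionI} and~\eqref{eq: LyapconditionII}, while~\citet[Lemma 12]{wager2017estimation} yields~\eqref{eq: LyapconditionIextra}. Moreover, by Theorem~\ref{amazingvarianceproposition},
\[
\sum_{i=1}^n \Var(Y_{ni}) = \frac{\Var(\langle T_{n}(\Zbf_1), f_{\mathbf{w}} \rangle)}{\Var(T_n(\Zbf_1))} \longrightarrow \sigma^2(f_{\mathbf{w}}) > 0,
\]
so the limiting constant in Lemma~\ref{KLemma} is $\sigma_0 = \sigma^2(f_{\mathbf{w}})$. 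Since $(\tilde{W}_i)$ is i.i.d., independent of the data, with mean $0$ and variance $1$, assertion~\eqref{firstassertioneq} of Lemma~\ref{KLemma} applies and gives $\mathbf{w}^{\top}(F \circ \boldsymbol{\hat{\Sigma}}_n^{o}) \mathbf{w} \stackrel{p}{\to} \sigma^2(f_{\mathbf{w}})$.

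Finally I would identify the limit. With $\xi \sim N(0, \boldsymbol{\Sigma}_{\xbf})$ as in Theorem~\ref{thm: asymptoticnormality}, the matrix $F \circ \boldsymbol{\Sigma}_{\xbf}$ is the covariance of $F(\xi)$, hence $\mathbf{w}^{\top}(F \circ \boldsymbol{\Sigma}_{\xbf}) \mathbf{w} = \Var(\mathbf{w}^{\top} F(\xi)) = \Var(\langle f_{\mathbf{w}}, \xi \rangle) = \langle \boldsymbol{\Sigma}_{\xbf} f_{\mathbf{w}}, f_{\mathbf{w}} \rangle = \sigma^2(f_{\mathbf{w}})$, which matches the displayed limit and establishes~\eqref{eq: univariateconvergencehatSigmao}. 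The only non-bookkeeping ingredient is the verification of the Lyapunov conditions of Lemma~\ref{KLemma} for the triangular array $(Y_{ni})$; this is the genuine obstacle, but it has already been carried out in the proof of Theorem~\ref{thm: asymptoticnormalityhalfsampling} and rests on Assumption~\ref{dataass5} (via~\eqref{eq:delta-plus-2-moment-bound}) together with the leaf-geometry bounds of~\citet{wager2017estimation}, so here it suffices to invoke it.
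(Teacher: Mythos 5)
Your proof is correct and takes essentially the same route as the paper: both reduce the claim to showing $\frac{1}{n}\sum_{i=1}^n \langle f_{\wbf}, T_n(\Zbf_i)\rangle^2/\Var(T_n(\Zbf_1)) \stackrel{p}{\to} \sigma^2(f_{\wbf})$ via Lemma 12 of~\citet{wager2017estimation} and then identify $\sigma^2(f_{\wbf})$ with $\wbf^\top(F\circ\boldsymbol{\Sigma}_{\xbf})\wbf$. Your detour through~\eqref{firstassertioneq} of Lemma~\ref{KLemma} is harmless but redundant, since that assertion, once its hypothesis~\eqref{eq: LyapconditionIextra} is in hand, is just the identity $\Var(\sum_i \tilde W_i Y_{ni} \mid \Zcal_n)=\sum_i Y_{ni}^2$, and~\eqref{eq: LyapconditionIextra} is precisely the convergence the paper establishes directly.
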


\begin{claimproof}
First, note that we may define $F_{ \wbf} \in \H^*$ by
$F_{ \wbf}(f) =   \wbf^{\top} F(f)$.
Particularly, it is linear, and $\|F_{ \wbf}(f_1) - F_{ \wbf}(f_2)\| \leq \|\wbf \|_{\R^q} \|F(f_1)-F(f_2)\|_{\R^q}$, so that it is also continuous. Then, we have 
\begin{align*}
     \wbf^{\top}( F \circ \boldsymbol{\hat{\Sigma}}_n^{o})  \wbf &=\E \left[ \left.\left \| \frac{1}{\sqrt{n}} \sum_{i=1}^n
 \tilde{W}_i\frac{F_{ \wbf}\circ T_{n}(\Zbf_i)}{\sqrt{\Var( T_n(\Zbf_1))}} \right \|^2 \,\right\vert\, \Zcal_n \right]\\
 &= \frac{1}{n} \sum_{i=1}^n  \frac{ (F_{ \wbf}\circ T_{n}(\Zbf_i))^2}{\Var( T_n(\Zbf_1))},
\end{align*}
because $\E [ 
 \tilde{W}_i^2 ]=1$ and because the cross-terms are of the form
\[
\E [ \tilde{W}_i \tilde{W}_j  ] \frac{F_{ \wbf}\circ T_{n}^2(\Zbf_i) \cdot F_{ \wbf}\circ T_{n}^2(\Zbf_j)}{\Var( T_n(\Zbf_1))}=0.
\]
As already argued in the proof of Theorem~\ref{thm: asymptoticnormalityhalfsampling}, under Assumption~\forestass{1}--\forestass{5} and  \dataass{1}--\dataass{7}, the arguments in the proof of Lemma 12 in~\citet{wager2017estimation} imply that
\begin{align*}
    \frac{1}{n} \sum_{i=1}^n  \frac{ (F_{ \wbf}\circ T_{n}(\Zbf_i))^2}{\Var( T_n(\Zbf_1))} =    \frac{1}{n} \sum_{i=1}^n  \frac{ \langle f_{ \wbf}, T_{n}(\Zbf_i) \rangle^2}{\Var( T_n(\Zbf_1))} \stackrel{p}{\to} \sigma^2(f_{ \wbf})
\end{align*}
for the unique $f_{ \wbf} \in \H$ given by the Riesz representation theorem. Moreover, by consistency arguments, we have $\sigma^2(f_{ \wbf})= \wbf^{\top}( F \circ \boldsymbol{\Sigma}_{\xbf}) \wbf$, proving the claim.
\end{claimproof}

In the proof of Theorem~\ref{thm: asymptoticnormalityhalfsampling}, we showed
$\xi_n^{\HS}=\xi_n^{W} + o_p(1)$. To show that~\eqref{eq: univariateconvergencehatSigma} follows from~\eqref{eq: univariateconvergencehatSigmao}, we now strengthen this to:
\begin{claim}
\begin{align}\label{eq: Convergenceinsquaredmean2}
   \E[\|\xi_n^{\HS}-\xi_n^{W}  \|_{\H}^2]=o(1). 
\end{align}
\end{claim}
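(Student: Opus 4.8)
The plan is to prove $\E[\|\xi_n^{\HS}-\xi_n^W\|_{\H}^2]=o(1)$ by (a) discarding an exponentially unlikely event on which $\HS$ is too small, and (b) on its complement, decomposing $\mathcal{D}_n=\xi_n^{\HS}-\xi_n^W$ into three explicit pieces and bounding the $L^2$-norm of each. Set $A_n=\{|\HS|\ge n/4\}$; since $|\HS|=\sum_{i=1}^n W_i$ with $W_i$ i.i.d.\ $\mathrm{Bernoulli}(1/2)$, Hoeffding's inequality gives $\Prob(A_n^c)\le e^{-n/8}$. On $A_n^c$ I would use the crude deterministic bound: by boundedness of $k$ (\kernelass{1}) one has $\|\hmunHS\|_{\H},\|\hmun\|_{\H}\le\sqrt{C}$ with $C=\sup_{\ybf}k(\ybf,\ybf)$ (adopting the convention $\hmunHS:=0$ when $|\HS|<s_n$), and $\|T_n(\Zbf_i)\|_{\H}\le 2\sqrt{C}$, so $\|\mathcal{D}_n\|_{\H}^2\le 16C s_n^2/\sigma_n^2$; using the lower bound $\sigma_n^2\succsim s_n/(n\log(s_n)^p)$ established in the proof of Theorem~\ref{thm: asymptoticlinearity}, this is polynomial in $n$, so $\E[\|\mathcal{D}_n\|_{\H}^2\1_{A_n^c}]\le\mathrm{poly}(n)\,e^{-n/8}\to 0$.

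On $A_n$, write $\tilde\mu_n(\xbf)$ for the first-order (Hájek) approximation of $\hmun$ as in~\eqref{firstorderapproxmu} and $\tilde\mu^{\HS}(\xbf)$ for the corresponding approximation of $\hmunHS$, and observe that the constant term $\E[T(\Zcal_{s_n})]$ common to both cancels in $\hmunHS-\hmun$, so that
\[
\mathcal{D}_n=\frac{1}{\sigma_n}\bigl(\hmunHS-\tilde\mu^{\HS}(\xbf)\bigr)-\frac{1}{\sigma_n}\bigl(\hmun-\tilde\mu_n(\xbf)\bigr)+\Bigl(\frac{n}{|\HS|}-2\Bigr)\frac{s_n}{n\sigma_n}\sum_{i=1}^n W_i T_n(\Zbf_i).
\]
The middle term's $L^2$-norm is controlled by~\eqref{eq: Convergenceinsquaredmean}: $\E\|\sigma_n^{-1}(\hmun-\tilde\mu_n(\xbf))\|_{\H}^2\precsim s_n\log(s_n)^p/(n C_{f,p})\to 0$. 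For the first term, conditioning on $(W_i)_{i=1}^n$ the data $\{\Zbf_i:i\in\HS\}$ are i.i.d., so the ANOVA computation~\eqref{ANOVAmux} with $|\HS|$ in place of $n$ gives $\E[\|\hmunHS-\tilde\mu^{\HS}(\xbf)\|_{\H}^2\mid(W_i)_i]\le s_n^2|\HS|^{-2}\Var(T)$, hence on $A_n$ this is $\le 16 s_n^2 n^{-2}\Var(T)$; dividing by $\sigma_n^2=\tfrac{s_n}{n}\Var(\tilde T)$, using $\Var(T)<\infty$ (Lemma~\ref{helperlemma}) and Theorem~\ref{majorsteptheorem}, yields $\E[\|\sigma_n^{-1}(\hmunHS-\tilde\mu^{\HS}(\xbf))\|_{\H}^2\1_{A_n}]\precsim s_n\log(s_n)^p/(nC_{f,p})\to 0$.

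The third term is the delicate one, and the key is to condition on $(W_i)_i$ \emph{before} taking the norm expectation. Since $\E[T_n(\Zbf_i)]=0$, the $T_n(\Zbf_i)$ are i.i.d.\ and independent of $(W_i)_i$, and $W_i^2=W_i$, the cross terms vanish and $\E\bigl[\|\tfrac{s_n}{n\sigma_n}\sum_i W_i T_n(\Zbf_i)\|_{\H}^2\mid(W_i)_i\bigr]=\tfrac{s_n^2}{n^2\sigma_n^2}|\HS|\Var(T_n(\Zbf_1))=|\HS|/n$ by~\eqref{eq: sigmandef}. Therefore
\[
\E\Bigl[\Bigl(\tfrac{n}{|\HS|}-2\Bigr)^2\Bigl\|\tfrac{s_n}{n\sigma_n}\sum_i W_i T_n(\Zbf_i)\Bigr\|_{\H}^2\1_{A_n}\Bigr]=\E\Bigl[\tfrac{(n-2|\HS|)^2}{n|\HS|}\1_{A_n}\Bigr]\le\frac{4}{n^2}\,\E\bigl[(n-2|\HS|)^2\bigr]=\frac{4}{n},
\]
since $\Var(|\HS|)=n/4$. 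Combining the three bounds with the exceptional-event estimate gives~\eqref{eq: Convergenceinsquaredmean2}. The main obstacle is precisely this third term: a naive Cauchy--Schwarz split of $(\tfrac{n}{|\HS|}-2)^2$ against the squared norm would require a uniformly bounded fourth moment of $\sigma_n^{-1}\tfrac{s_n}{n}\sum_i W_iT_n(\Zbf_i)$, which fails because $\|T_n(\Zbf_1)\|_{\H}^2$ is only $\O(s_n)$ and $s_n=n^\beta$ can be arbitrarily close to $n$; conditioning on the weights first and exploiting that the $T_n(\Zbf_i)$ are centered is what saves the estimate. A minor point to get right is that $T_n$ is the Hájek term of a \emph{single} tree and therefore depends only on the subsample size $s_n$, so the same $T_n$ enters both $\tilde\mu_n(\xbf)$ and $\tilde\mu^{\HS}(\xbf)$, and that the ANOVA bound for the subsampled forest is applied only for $n$ large enough that $n/4\ge s_n$.
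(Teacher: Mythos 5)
Your proof is correct, and it is more explicit than the one in the paper. The paper compresses this step: it invokes the arguments of~\citet[Theorem 5]{athey2019generalized} to get an $L^2$ bound on the gap between $\hmunHS$ and its H\'ajek projection $\tfrac{s_n}{|\HS|}\sum_{i\in\HS}T_n(\Zbf_i)$, and then asserts $\E[\|\xi_n^{\HS}-\xi_n^W\|_{\H}^2\1\{|\HS|>s_n\}]\to 0$ without spelling out how to go from the H\'ajek projection of $\hmunHS$ (whose weights are $\tfrac{n}{|\HS|}W_i-1$) to the multiplier process $\xi_n^W$ (whose weights are $2W_i-1$). The remaining discrepancy is precisely the term $\bigl(\tfrac{n}{|\HS|}-2\bigr)\tfrac{s_n}{n\sigma_n}\sum_iW_iT_n(\Zbf_i)$ that you isolate; the only place the paper treats it is in the proof of Theorem~\ref{thm: asymptoticnormalityhalfsampling}, via Slutsky, which gives $o_p(1)$ rather than the $L^2$-smallness needed here. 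Your conditioning-on-$W$ computation, which exploits $\E[T_n(\Zbf_i)]=0$ to kill the cross terms and yields $\E[(n-2|\HS|)^2/(n|\HS|)\,\1_{A_n}]\le 4/n$, is exactly the missing step. Your exceptional-set argument (Hoeffding on $\{|\HS|<n/4\}$ plus the deterministic bound from~\kernelass{1}) differs from the paper's $\{|\HS|\le s_n\}$ with a Chebyshev bound, but both work; the decomposition of $\mathcal{D}_n$ and the H\'ajek-with-$|\HS|$-in-place-of-$n$ computation are all sound.

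One inaccuracy in your commentary: you justify the conditioning trick by claiming that a Cauchy--Schwarz split would require a fourth-moment bound that fails because $\|T_n(\Zbf_1)\|_{\H}^2=\O(s_n)$. In fact, by~\kernelass{1}, $T_n(\Zbf_1)=\E[T(\Zcal_{s_n})\mid\Zbf_1]-\E[T(\Zcal_{s_n})]$ is a difference of two conditional expectations of tree outputs, each a convex combination of $k(\Ybf_i,\cdot)$'s, so $\|T_n(\Zbf_1)\|_{\H}\le 2\sqrt{C}$ uniformly. Then $\E[\|T_n(\Zbf_1)\|^4]\le 4C\,\Var(T_n(\Zbf_1))$, and in the fourth moment of $\Theta_n:=\sigma_n^{-1}\tfrac{s_n}{n}\sum_iW_iT_n(\Zbf_i)$ the diagonal contribution is $\precsim\E[\|T_n(\Zbf_1)\|^4]/(n\Var(T_n(\Zbf_1))^2)\precsim C/(n\Var(T_n(\Zbf_1)))\precsim s_n\log(s_n)^p/n\to 0$ by Theorem~\ref{majorsteptheorem}, while the off-diagonal contributions are $\O(1)$ by centeredness of $T_n$. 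So $\E[\|\Theta_n\|_{\H}^4]=\O(1)$, and the naive Cauchy--Schwarz bound $\E[(\tfrac{n}{|\HS|}-2)^4\1_{A_n}]^{1/2}\E[\|\Theta_n\|^4]^{1/2}=\O(1/n)$ would also close the argument. Your conditioning approach is the cleaner route, but the stated obstruction to the direct approach does not actually arise.
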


\begin{claimproof}
We recall the argument in the beginning of Theorem~\ref{thm: asymptoticnormalityhalfsampling}. By construction, we always consider the same forest and just use different trees or subsamples for each $\HS$, namely such that the subset of size $s_n$ is included in $\HS$. Since $s_n$ is of smaller order than $n$, $\P(|\HS| \leq s_n) \to 0$, as $n \to \infty$. Thus, by the same arguments as in~\citet[Theorem 5]{athey2019generalized} combined with the claim~\eqref{eq: Convergenceinsquaredmean}, we have
\begin{align*}
    \E \left[ \left \| \frac{1}{\sigma_n} \left( (\hmunHS -  \E[ \hmun]) - \frac{s_n}{n} \sum_{i \in \HS} \frac{n}{|\HS|} T_{n}(\Zbf_i) \right)\right \|_{\H}^2 \1\{|\HS| > s_n \} \right] \to 0.
\end{align*}
Moreover, using that we have
\begin{align*}
    \frac{\| \E[\hat{\mu}(\xbf)] - \mu(\xbf) \|_{\H}}{\sigma_n} =o(1),
\end{align*}
as shown in the proof of Theorem~\ref{thm: asymptoticlinearity}, this convergence also holds with $\E[\hat{\mu}(\xbf)]$ replaced by $\mu(\xbf)$, or 
\begin{align*}
    \E \left[ \left \|  \xi_n^{\HS}- \xi_n^{W} \right \|_{\H}^2 \1\{|\HS| > s_n \} \right] \to 0.
\end{align*}

In the case $|\HS| \leq s_n$, we set  
\begin{align*}
    \sum_{i \in \HS} \frac{n}{|\HS|} T_{n}(\Zbf_i) =\hmunHS=0 \in \H
\end{align*}
to zero. 
Then, we have 
\begin{align*}
    &\Big|\E \left[ \left \|  \xi_n^{\HS}- \xi_n^{W} \right \|_{\H}^2  \right] -\E \left[ \left \|  \xi_n^{\HS}- \xi_n^{W} \right \|_{\H}^2 \1\{|\HS| > s_n \} \right]\Big| \\
    =&\E \left[ \left \|  \xi_n^{\HS}- \xi_n^{W} \right \|_{\H}^2 \1\{|\HS| \leq s_n \} \right]\\
    =&\left \|  \mu(\xbf) \right \|_{\H}^2  \frac{\P(|\HS| \leq  s_n)}{\sigma_n^2} .
\end{align*}
From the proof of Theorem~\ref{thm: asymptoticnormality} and the fact that $s_n=n^{\beta}$ with $\beta < 1$, it follows that
\begin{align*}
    \sigma_n^2=\Omega \left(\frac{ s_n}{n \log(s_n)^p}\right)=\Omega(n^{\beta -(1 + \varepsilon)} )
\end{align*}
for $\varepsilon > 0$ arbitrarily small. On the other hand, we can employ a Hoeffding bound on $\P(|\HS| \leq  s_n)$ to obtain
\begin{align*}
    \P(|\HS| \leq  s_n) = \P(|\HS| - n/2 \leq  s_n - n/2) \leq \frac{\Var(|\HS|)}{(s_n-n/2)^2}=\frac{n/4}{s_n^2 + n^2/4 - s_n n},
\end{align*}
so that
\begin{align*}
    \P(|\HS| \leq  s_n)  \leq  \frac{1}{4} \frac{1}{n/4 + n^{2\beta-1}  - n^{\beta}} =\O\left(\frac{1}{n}\right).
\end{align*}
This results in
\begin{align*}
    \frac{\P(|\HS| \leq  s_n)}{\sigma_n^2}=\O \left(n^{1+\varepsilon - \beta - 1}  \right)=\O \left(n^{\varepsilon - \beta}  \right).
\end{align*}
Since $\varepsilon$ can be chosen arbitrarily small, this converges to 0.

\end{claimproof}

Having shown~\eqref{eq: Convergenceinsquaredmean2}, we have that

\begin{align*}
     \wbf^{\top}( F \circ \boldsymbol{\hat{\Sigma}}_n)  \wbf &= \E[  \wbf^{\top} ( F(\xi_{n}^{W} )+ F(D_n) ) ( F(\xi_{n}^{W} )+ F(D_n) )^{\top}  \wbf \mid \Zcal_n]\\
    &=   \wbf^{\top}( F \circ \boldsymbol{\hat{\Sigma}}_n^{o})  \wbf+ \E[  \wbf^{\top} F(D_n) F(D_n)^{\top}  \wbf \mid \Zcal_n] + 2 \E[  \wbf^{\top} F(\xi_{n}^{W}) F(D_n)^{\top}  \wbf \mid \Zcal_n],
\end{align*}
where $D_n= \xi_{n}^{\HS} - \xi_{n}^{W} $. Note that $\E[\|D_n\|^2_{\H}]=\E[ \E[\|D_n\|^2_{\H} \mid \Zcal_n]]=o(1)$ 
implies that $\E[\|D_n\|^2_{\H} \mid \Zcal]=o_p(1)$; see~\citet[Lemma 2.2.2]{durrett}. Moreover, 
\begin{align*}
    \E[  \wbf^{\top} F(D_n) F(D_n)^{\top}  \wbf \mid \Zcal] &=  \E[ | \wbf^{\top} F(D_n)|^2  \mid \Zcal] \\
    & \leq  \|F_{ \wbf}\|_{\H^*}^2 \E[ \|D_n\|_{\H}^2  \mid \Zcal]\\
    & = o_p(1) 
\end{align*}
by~\eqref{eq: Convergenceinsquaredmean2}. Similarly, by H\"older's inequality,
\begin{align*}
     \E[  \wbf^{\top} F(\xi_{n}^{W}) F(D_n)^{\top}  \wbf \mid \Zcal_n] \leq  \E[ | \wbf^{\top} F(\xi_{n}^{W})|^2 \mid \Zcal_n]^{1/2} \cdot \E[ |  \wbf^{\top} F(D_n)|^2 \mid \Zcal_n]^{1/2} \stackrel{p}{\to} 0.
\end{align*}
Thus, $\left|   \wbf^{\top}( F \circ \boldsymbol{\hat{\Sigma}}_n)  \wbf -    \wbf^{\top}( F \circ \boldsymbol{\hat{\Sigma}}_n^o)  \wbf \right| \stackrel{p}{\to} 0$, which shows~\eqref{eq: univariateconvergencehatSigma}.
 
\end{claimproof}

Finally,~\eqref{eq: univariateconvergencehatSigma} implies the result. Indeed, for a matrix $\mathbf{A} \in \R^{q \times q}$, define the operator $\vect(\mathbf{A})\in \R^{q^2}$ that concatenates the rows of $\mathbf{A}$ on top of each other. This operator is continuous and invertible with a continuous inverse. Moreover, for any $ \wbf$, we can consider the element $\tilde{\wbf}=\wbf \otimes \wbf^{\top}\in\R^{q^2}$ satisfying $ \wbf^{\top}\mathbf{A}  \wbf=\mathbf{\tilde{w}}^{\top} \vect(\mathbf{A})$ such that we have
\begin{align*}
    \tilde{ \wbf}^{\top} \vect(F \circ \boldsymbol{\hat{\Sigma}}_n) =  \wbf^{\top} (F \circ \boldsymbol{\hat{\Sigma}}_n)  \wbf \stackrel{p}{\to}  \wbf^{\top} (F \circ \boldsymbol{\Sigma}_{\xbf})  \wbf = \tilde{ \wbf}^{\top} \vect(F \circ \boldsymbol{\Sigma}_{\xbf}). 
\end{align*}
Utilizing the Cramer-Wold device and the fact that convergence in distribution to a constant is equivalent to convergence in probability, this implies that $\vect(F \circ \boldsymbol{\hat{\Sigma}}_n) \stackrel{p}{\to} \vect(F \circ \boldsymbol{\Sigma}_{\xbf})$. By continuity of the inverse of the $\vect$ operator, this implies the result.

\end{proof}

\variancestimationtwo*

\begin{proof}

First, we have 
\begin{align*}
    &\frac{\E[\| \hmunHS  -  \hmun \|_{\H}^2 \mid \Zcal_n]}{\sigma_n^2}\\
    =&\E\left[  \| \xi_{n}^{\HS} -  \xi_n^{W}\|_{\H}^2 \mid \Zcal_n \right] + \E \left[  \|\xi_n^{W}\|^2_{\H} \mid \Zcal_n \right] + 2 \E[ \langle \xi_{n}^{\HS} -  \xi_n^{W}, \xi_n^{W} \rangle \mid \Zcal_n ],
\end{align*}
where we recall
\begin{align*}
\xi_{n}^{\HS}&=\frac{1}{\sigma_n}\left( \hmunHS -  \hmun \right),\\
    \xi_n^{W}&= \frac{1}{\sqrt{n}} \sum_{i=1}^n
 \tilde{W}_i\frac{T_{n}(\Zbf_i)}{\sqrt{\Var(T_n(\Zbf_1))}}.
\end{align*}
As we proved in Corollary~\ref{thm: variancestimation}, as a consequence of~\eqref{eq: Convergenceinsquaredmean2}, we have
\begin{align*}
    \E\left[  \| \xi_{n}^{\HS} -  \xi_n^{W}\|_{\H}^2 \mid \Zcal_n \right]=o_p(1).
\end{align*}
Moreover, using Cauchy--Schwarz inequality and H\"older's inequality, we have
\begin{align*}
\E[ |\langle \xi_{n}^{\HS} -  \xi_n^{W}, \xi_n^{W} \rangle|  ] 
   & \leq \E[ \| \xi_{n}^{\HS} -  \xi_n^{W}\|_{\H} \|\xi_n^{W}\|_{\H}  ] \\
   &\leq \E[ \| \xi_{n}^{\HS} -  \xi_n^{W}\|_{\H}^2]^{1/2} \E[\|\xi_n^{W}\|_{\H}^2 ]^{1/2}.
\end{align*}
Recall that we argued  $\E[\|  \xi_n^{W}\|^2_{\H} ] =1$, and $\E[\| \xi_{n}^{\HS} -  \xi_n^{W}\|^2_{\H} ]=o(1)$ above. This thus implies  $\E[ |\langle \xi_{n}^{\HS} -  \xi_n^{W}, \xi_n^{W} \rangle|  ]=o(1)$, which in turn implies  
\begin{align*}
   | \E[ \langle \xi_{n}^{\HS} -  \xi_n^{W}, \xi_n^{W} \rangle \mid \Zcal_n ]| \leq   \E[| \langle \xi_{n}^{\HS} -  \xi_n^{W}, \xi_n^{W} \rangle| \mid \Zcal_n ] = o_p(1).
\end{align*}
Thus, it remains to show:
\bigskip

\begin{claim}
$ \E \left[  \|\xi_n^{W}\|^2_{\H} \mid \Zcal_n \right] \stackrel{p}{\to} 1$.
\end{claim}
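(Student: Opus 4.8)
The plan is to evaluate the conditional expectation in closed form and then apply Chebyshev's inequality. Conditional on $\Zcal_n$ the elements $T_n(\Zbf_i)$ are fixed, while the $\tilde W_i = 2W_i-1$ remain i.i.d.\ with $\E[\tilde W_i]=0$ and $\Var(\tilde W_i)=1$; hence $\E[\tilde W_i\tilde W_j\mid\Zcal_n]=\1\{i=j\}$ and, expanding $\|\xi_n^W\|_\H^2 = \frac1n\sum_{i,j}\tilde W_i\tilde W_j \langle T_n(\Zbf_i),T_n(\Zbf_j)\rangle/\Var(T_n(\Zbf_1))$, all cross terms vanish in conditional expectation, leaving
\begin{align*}
    \E[\|\xi_n^W\|_\H^2\mid\Zcal_n] = \frac{1}{n}\sum_{i=1}^n \frac{\|T_n(\Zbf_i)\|_\H^2}{\Var(T_n(\Zbf_1))}.
\end{align*}
Since $\E[T_n(\Zbf_1)]=0$ by~\eqref{Tn}, the summands are i.i.d.\ with mean $\E[\|T_n(\Zbf_1)\|_\H^2]/\Var(T_n(\Zbf_1))=1$, so it suffices to show that this average has vanishing variance, i.e.\ that $\Var(\|T_n(\Zbf_1)\|_\H^2)/\bigl(n\,\Var(T_n(\Zbf_1))^2\bigr)\to 0$.

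The main step is a uniform bound on $\|T_n(\Zbf_1)\|_\H$ obtained from boundedness of the kernel. Writing $T(\Zcal_{s_n})=\sum_i S_i k(\Ybf_i,\cdot)$ with $S_i\ge 0$ and $\sum_i S_i=1$ (by~\eqref{eq:expectS1}), and $\|k(\Ybf_i,\cdot)\|_\H^2=k(\Ybf_i,\Ybf_i)\le C:=\sup_{\ybf}k(\ybf,\ybf)<\infty$ by \kernelass{1}, we get $\|T(\Zcal_{s_n})\|_\H\le \sqrt C$ almost surely, and therefore, using $\|\E[\cdot\mid\cdot]\|_\H\le\E[\|\cdot\|_\H\mid\cdot]$ and the triangle inequality in~\eqref{Tn}, $\|T_n(\Zbf_1)\|_\H\le 2\sqrt C$ almost surely. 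Consequently $\Var(\|T_n(\Zbf_1)\|_\H^2)\le\E[\|T_n(\Zbf_1)\|_\H^4]\le 4C\,\E[\|T_n(\Zbf_1)\|_\H^2]=4C\,\Var(T_n(\Zbf_1))$, so
\begin{align*}
    \frac{\Var(\|T_n(\Zbf_1)\|_\H^2)}{n\,\Var(T_n(\Zbf_1))^2}\le \frac{4C}{n\,\Var(T_n(\Zbf_1))}.
\end{align*}

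Finally I would invoke the sharp lower bound established in Theorem~\ref{majorsteptheorem} (together with Lemma~\ref{lemma4} and \dataass{4}), namely $\Var(T_n(\Zbf_1))=\Var(\E[T(\Zcal_{s_n})\mid\Zbf_1])\succsim C_{f,p}\,\Var(\xi\mid\Xbf=\xbf)/(s_n\log(s_n)^p)$, as already used in the proof of Theorem~\ref{thm: asymptoticlinearity}. With $s_n=n^\beta$ and $\beta<1$ this gives $1/(n\,\Var(T_n(\Zbf_1)))=\O\bigl(n^{\beta-1}\log(n)^p\bigr)\to 0$, hence the variance of $\E[\|\xi_n^W\|_\H^2\mid\Zcal_n]$ vanishes and Chebyshev's inequality yields $\E[\|\xi_n^W\|_\H^2\mid\Zcal_n]\stackrel{p}{\to}1$, which closes the claim and with it Corollary~\ref{thm: variancestimation2}. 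The only point requiring care is the fourth-moment estimate: rather than analysing $\E[\|T_n(\Zbf_1)\|_\H^4]$ directly, one exploits the almost-sure bound $\|T_n(\Zbf_1)\|_\H\le 2\sqrt C$ to reduce it to the second moment, after which the known lower bound on $\Var(T_n(\Zbf_1))$ does the rest; everything else is routine manipulation of conditional expectations.
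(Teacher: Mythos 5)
Your proof is correct and is arguably a cleaner, more self-contained route than what the paper takes. Both start from the same reduction: conditional on $\Zcal_n$ the cross terms vanish, leaving $\E[\|\xi_n^W\|_\H^2\mid\Zcal_n]=\frac{1}{n\Var(T_n(\Zbf_1))}\sum_i\|T_n(\Zbf_i)\|_\H^2$, which has mean $1$. The paper then simply defers the weak-law step to ``the same steps as at the end of the proof of Lemma 12 in \citet{wager2017estimation},'' relying on \dataass{3} (Lipschitz continuity of $\xbf\mapsto\E[\|k(\Ybf,\cdot)\|_\H^2\mid\Xbf=\xbf]$), which in the scalar GRF setting requires a truncation and moment argument because the response need not be bounded. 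You instead exploit \kernelass{1} directly: since $T(\Zcal_{s_n})$ is a convex combination of the $k(\Ybf_i,\cdot)$ and the kernel is bounded, $\|T_n(\Zbf_1)\|_\H\le 2\sqrt C$ almost surely, which collapses the fourth-moment estimate to $\Var(\|T_n(\Zbf_1)\|_\H^2)\le 4C\,\Var(T_n(\Zbf_1))$ in one line. Combined with the variance lower bound $\Var(T_n(\Zbf_1))\succsim (s_n\log(s_n)^p)^{-1}$ already established via Theorem~\ref{majorsteptheorem} and Lemma~\ref{lemma4}, Chebyshev gives the claim since $s_n\log(s_n)^p/n\to 0$ for $s_n=n^\beta$, $\beta<1$. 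What you lose relative to the Wager--Athey route is generality to unbounded kernels; what you gain is that the argument is elementary, avoids the external reference, and does not invoke \dataass{3} at all for this particular step (\kernelass{1} is already assumed in the corollary). Both are valid under the stated hypotheses.
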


\begin{claimproof}
First, note that
\begin{align*}
   & \E \left[ \left.  \left\langle
 \tilde{W}_i\frac{T_{n}(\Zbf_i)}{\sqrt{\Var(T_n(\Zbf_1))}},  \tilde{W}_j\frac{T_{n}(\Zbf_j)}{\sqrt{\Var(T_n(\Zbf_1))}}  \right\rangle  \,\right\vert\, \Zcal_n \right] \\
 =& \E [ \tilde{W}_i \tilde{W}_j  ]\left\langle
 \frac{T_{n}(\Zbf_i)}{\sqrt{\Var(T_n(\Zbf_1))}},  \frac{T_{n}(\Zbf_j)}{\sqrt{\Var(T_n(\Zbf_1))}} \right \rangle\\
 =&0.
\end{align*}
Consequently,
\begin{align*}
     \E \left[  \|\xi_n^{W}\|^2_{\H} \mid \Zcal_n \right]& =    \E \left[ \left. \left \|\frac{1}{\sqrt{n}} \sum_{i=1}^n
 \tilde{W}_i\frac{T_{n}(\Zbf_i)}{\sqrt{\Var(T_n(\Zbf_1))}} \right \|^2_{\H} \,\right\vert\, \Zcal_n \right]\\
 &=\frac{1}{n} \sum_{i=1}^n  \E [ \tilde{W}_i^2  \mid \Zcal_n ] \left \|
 \frac{T_{n}(\Zbf_i)}{\sqrt{\Var(T_n(\Zbf_1))}} \right \|^2_{\H}\\
  &=\frac{1}{n\Var(T_n(\Zbf_1))} \sum_{i=1}^n  
 \|T_{n}(\Zbf_i)\|^2_{\H} .
\end{align*}
Thus, we need to show that
\begin{align*}
    \frac{\frac{1}{n} \sum_{i=1}^n  
 \|T_{n}(\Zbf_i)\|^2_{\H}}{\Var(T_n(\Zbf_1))} \stackrel{p}{\to} 1.
\end{align*}
But due to assumption~\ref{dataass3}, this can be shown using the same steps as at the end of the proof of Lemma 12 in~\citet{wager2017estimation}, with $\|T_{n}(\Zbf_i)\|^2_{\H}$ in place of their $T_1^2(Z_i)$.

\end{claimproof}

\end{proof}

\normresulttwogroups*

\begin{proof}
Using independence of $\hmuno$ and $\hmunz$ for all $n_0$, $n_1$, 
together with Theorem~\ref{thm: asymptoticnormality}, it follows that
\begin{align}\label{sumasyptoticnormality}
    \frac{1}{\sigma_{n_1,1}}(\hmuno -\mu_1(\xbf))  - \frac{1}{\sigma_{n_0,0}}(\hmunz - \mu_0(\xbf)) \stackrel{D}{\to} \xi_1-\xi_0. 
\end{align}
Similarly, due to independence of $\HS_0$ and $\HS_1$, the arguments in the proof of Theorem~\ref{thm: asymptoticnormalityhalfsampling} can be repeated to obtain
\begin{align*}
 \sup_{h \in \text{BL}_1(\H) }  \left |  \E\left[\left. h  \left(\frac{1}{\sigma_{n_1,1}} (\hmunHSo-\hmuno) - \frac{1}{\sigma_{n_0,0}}(\hmunHSz - \hmunz) \right) \right\vert \Zcal_{n_{01}}\right] - \E[ h(\xi_1-\xi_0) ] \right | \stackrel{p}{\to} 0,
\end{align*}
or in other words
\begin{align}\label{sumasyptoticnormalityHS}
    \frac{1}{\sigma_{n_1,1}} (\hmunHSo-\hmuno) - \frac{1}{\sigma_{n_0,0}}(\hmunHSz - \hmunz) \xrightarrow[W]{D} \xi_1-\xi_0.
\end{align}
Finally, if~\eqref{Techassumption} holds, it follows from the arguments in the proof of Theorem~\ref{amazingvarianceproposition} 
that, ignoring smaller order terms,
\begin{align*}
    \frac{\sigma_{n_0,0}^2}{\sigma_{n_1,1}^2} &= \frac{\Var( \E[\frac{1}{N_{\xbf}^0} \1\{\Xbf_2 \in \mathcal{L}^0(\xbf) \} \mid \Xbf_1] )}{\Var( \E[\frac{1}{N_{\xbf}^1} \1\{\Xbf_2 \in \mathcal{L}^1(\xbf) \} \mid \Xbf_1] )} \frac{\Var(k(\Ybf^0, \cdot) \mid \Xbf=\xbf)}{\Var(k(\Ybf^1, \cdot) \mid \Xbf=\xbf)} \\
    &\to c(\xbf) \frac{\Var(k(\Ybf^0, \cdot) \mid \Xbf=\xbf)}{\Var(k(\Ybf^1, \cdot) \mid \Xbf=\xbf)} = c_2(\xbf)
\end{align*}
where $c(\xbf)$ is as in~\eqref{Techassumption}. 
It thus follows from Slutsky's theorem that
\begin{align}\label{sumasyptoticnormality2}
    &\frac{1}{\sigma_{n_1,1}} (\hmuno-\mu_1(\xbf)) -\frac{1}{\sigma_{n_1,1}} (\hmunz - \mu_0(\xbf)) \nonumber \\
    =&\frac{1}{\sigma_{n_1,1}} (\hmuno-\mu_1(\xbf)) -\frac{\sigma_{n_0,0}}{\sigma_{n_1,1}} \frac{1}{\sigma_{n_0,0}} (\hmunz - \mu_0(\xbf)) \\\stackrel{D}{\to}& \xi_1- c_2(\xbf) \xi_0.
\end{align}
and similarly
\begin{align}\label{sumasyptoticnormality2HS}
    \frac{1}{\sigma_{n_1,1}} (\hmunHSo-\hmuno) -  \frac{1}{\sigma_{n_1,1}} (\hmunHSz - \hmunz) \xrightarrow[W]{D} \xi_1- c_2(\xbf) \xi_0.
\end{align}

Since $f \mapsto \|f \|_{\H}^2$ is continuous,~\eqref{conditionalindep4}--\eqref{conditionalindep2} follow from~\eqref{sumasyptoticnormality}--\eqref{sumasyptoticnormality2HS} combined with the continuous mapping theorem.

\end{proof}

\powerprop*

\begin{proof}
To simplify the proof, we assume 
$n_0=n_1=n$.
Since we assume that $n_0/n_1 \to 1$, this will not impact our asymptotic results. 
Let in the following for $j \in \{0,1\}$ 
\[
\xi_{n,j}^{\HS_j}= \frac{1}{\sigma_{n,1}} \left( \hat{\mu}_{n,j}^{\HS_j}(\mathbf{x}) - \hat{\mu}_{n,j}(\mathbf{x})\right),
\]
where we emphasize the fixed $1$ in $\sigma_{n,1}$.
We first note that by~\eqref{conditionalindep42}, the sequence $\| \xi_{n,1}^{\HS_1} -\xi_{n,0}^{\HS_0} \|_{\H}^2$, $n \in \N$, is uniformly tight, which in turn implies that there exists a large enough number $M_{\alpha} < \infty$ such that we have
\begin{align*}
    \sup_{n} \Prob \left( \| \xi_{n,1}^{\HS_1} -\xi_{n,0}^{\HS_0} \|_{\H}^2 > M_{\alpha}   \right) \leq \alpha.
\end{align*}
Since for each $n$, $c_{n,\alpha}$ is the smallest value such that~\eqref{finitealphabound} holds, we have $c_{n, \alpha} \leq M_{\alpha}<\infty$ for all $n$. In particular, $\sup_{n} c_{n, \alpha} \leq M_{\alpha} < \infty$, and $c_{n, \alpha}$ is a bounded sequence in $\R$. This allows us to find a convergent subsequence below. Second, 
if $\PYgXz=\PYgXo$, 
\[
  \Prob \left(\frac{1}{\sigma_{n,1}^2}\left \| \hat{\mu}_{n,1}(\mathbf{x} )- \hat{\mu}_{n,0}(\mathbf{x}) \right \|_{\H}^2 > z \right) \to \Prob(\|\xi_1 - c_2(\xbf)\xi_0\|_{\H}^2 > z )
\]
for all $z \geq 0$ by~\eqref{conditionalindep1}. We consider an arbitrary subsequence of $n$, $n(\ell)$. By~\eqref{conditionalindep4}, a further subsequence $m=n(\ell(m))$ can be chosen such that
\begin{align}\label{asconvergenceindistribution}
 \sup_{h \in \text{BL}_1(\H) }  \left |  \E[  h( \|\xi_{m,1}^{\HS_1} -\xi_{m,0}^{\HS_0}\|_{\H}^2 )  \mid \Zcal_{2m}] - \E[  h(\|\xi_{1}- c_2(\xbf)\xi_{0}\|_{\H}^2)  ] \right | \to 0
\end{align}
almost surely. Now, we argue pointwise for each realization $(z_{2m})_{m \in \N}$ of $(\Zcal_{2m})_{m \in \N}$  such that~\eqref{asconvergenceindistribution} holds.
As convergence in distribution implies convergence of CDF's at continuity points~\citep[Theorem 9.3.6]{dudley}, this implies that
\begin{align*}
    \Prob \left(\left.\| \xi_{m,1}^{\HS_1} -\xi_{m,0}^{\HS_0} \|_{\H}^2 > z \right\vert \Zcal_{2m} \right) \to \Prob(\|\xi_1 - c_2(\xbf)\xi_{0}\|_{\H}^2 > z )
\end{align*}
almost surely for all $z \geq 0$. 

\begin{claim}
There exists a further subsequence $l=n(\ell(m(l)))$ such that $\lim_{l} c_{l, \alpha} = c_{\alpha}$ exists that satisfies
\begin{align}\label{H0convergence_S}
    \alpha \geq \Prob \left(\left \| \xi_{l,1}^{\HS_1} -\xi_{l,0}^{\HS_0}\right \|_{\H}^2 > c_{l,\alpha} \mid \Zcal_{2l} \right) \to \Prob(\|\xi_1-c_2(\xbf)\xi_0\|_{\H}^2 > c_{\alpha} )
\end{align}
almost surely. Moreover, if $\PYgXz=\PYgXo$, we also have
\begin{align}\label{H0convergence}
    \Prob \left(\frac{1}{\sigma_{l,1}^2}\left \| \hat{\mu}_{l,1}(\mathbf{x} )- \hat{\mu}_{l,0}(\mathbf{x}) \right \|_{\H}^2 > c_{l,\alpha} \right) \to \Prob(\|\xi_1-c_2(\xbf)\xi_0\|_{\H}^2 > c_{\alpha} ).
\end{align}
\end{claim}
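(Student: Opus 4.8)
The plan is to produce $c_\alpha$ by a compactness argument and then to transfer the weak convergence already at our disposal through both the random conditioning and the deterministic shift $c_{l,\alpha}\to c_\alpha$, using that the limiting law has no atoms. First I would recall that we have already shown $\sup_n c_{n,\alpha}\le M_\alpha<\infty$, so along the subsequence $m$ the reals $c_{m,\alpha}$ form a bounded sequence; by Bolzano--Weierstrass there is a further subsequence $l=n(\ell(m(l)))$ of $m$ along which $c_{l,\alpha}\to c_\alpha$ for some $c_\alpha\in[0,M_\alpha]$. The inequality $\alpha\ge\Prob(\|\xi_{l,1}^{\HS_1}-\xi_{l,0}^{\HS_0}\|_\H^2>c_{l,\alpha}\mid\Zcal_{2l})$ holds for every $l$ by the very definition of $c_{l,\alpha}$ as the conditional $1-\alpha$ quantile in~\eqref{finitealphabound}, since $\|\xi_{l,1}^{\HS_1}-\xi_{l,0}^{\HS_0}\|_\H^2=\sigma_{l,1}^{-2}\|(\hat\mu_{l,1}^{\HS_1}(\xbf)-\hat\mu_{l,1}(\xbf))-(\hat\mu_{l,0}^{\HS_0}(\xbf)-\hat\mu_{l,0}(\xbf))\|_\H^2$.

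Next I would check that the law $\nu_\infty$ of $\|\xi_1-c_2(\xbf)\xi_0\|_\H^2$ is atomless, so that every point of $[0,\infty)$ is a continuity point of its distribution function. Since $\xi_0\sim N(0,\boldsymbol{\Sigma}_\xbf^0)$ and $\xi_1\sim N(0,\boldsymbol{\Sigma}_\xbf^1)$ are independent, $\xi_1-c_2(\xbf)\xi_0\sim N(0,\boldsymbol{\Sigma}_\xbf^1+c_2(\xbf)^2\boldsymbol{\Sigma}_\xbf^0)$, whose covariance operator is self-adjoint, trace-class (being the covariance operator of a Gaussian element of $\H$), and, by Assumption~\ref{dataass6} applied to the treatment group, strictly positive definite, hence nonzero. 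Diagonalising it writes $\|\xi_1-c_2(\xbf)\xi_0\|_\H^2=\sum_i\lambda_iZ_i^2$ with $Z_i\iid N(0,1)$ and at least one $\lambda_i>0$, which is absolutely continuous on $(0,\infty)$ and puts no mass at $\{0\}$; thus $\nu_\infty$ is atomless.

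Then I would prove~\eqref{H0convergence_S}. Fix a realisation of $(\Zcal_{2m})_m$ in the almost-sure event on which~\eqref{asconvergenceindistribution} holds; because bounded--Lipschitz convergence of probability measures on $\R$ is weak convergence, the conditional laws $\nu_l$ of $\|\xi_{l,1}^{\HS_1}-\xi_{l,0}^{\HS_0}\|_\H^2$ given $\Zcal_{2l}$ satisfy $\nu_l\Rightarrow\nu_\infty$ along the subsequence. Combining $\nu_l\Rightarrow\nu_\infty$ with $c_{l,\alpha}\to c_\alpha$ and the atomlessness of $\nu_\infty$ yields $\nu_l\big((c_{l,\alpha},\infty)\big)\to\nu_\infty\big((c_\alpha,\infty)\big)$ by the usual sandwich: for any $\varepsilon>0$ eventually $c_\alpha-\varepsilon\le c_{l,\alpha}\le c_\alpha+\varepsilon$, so $\nu_l((c_\alpha+\varepsilon,\infty))\le\nu_l((c_{l,\alpha},\infty))\le\nu_l((c_\alpha-\varepsilon,\infty))$, and letting $l\to\infty$ and then $\varepsilon\downarrow0$ (all endpoints being continuity points of $\nu_\infty$) forces the limit to equal $\nu_\infty((c_\alpha,\infty))$. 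Together with the pathwise bound from the first paragraph this gives~\eqref{H0convergence_S} and, in the limit, $\Prob(\|\xi_1-c_2(\xbf)\xi_0\|_\H^2>c_\alpha)\le\alpha$. For~\eqref{H0convergence}, under $H_0$ we have $\mu_0(\xbf)=\mu_1(\xbf)$, so~\eqref{conditionalindep1d} gives $\sigma_{n,1}^{-2}\|\hat\mu_{n,1}(\xbf)-\hat\mu_{n,0}(\xbf)\|_\H^2\stackrel{D}{\to}\|\xi_1-c_2(\xbf)\xi_0\|_\H^2$ along the full sequence, hence along $l$; applying the same sandwich argument to this unconditional convergence with the deterministic sequence $c_{l,\alpha}\to c_\alpha$ and the atomless limit delivers~\eqref{H0convergence}.

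The step I expect to be the main obstacle is the bookkeeping that makes the almost-sure, subsequence-restricted statement~\eqref{asconvergenceindistribution} interact cleanly with the deterministic convergence $c_{l,\alpha}\to c_\alpha$: one must run the sandwich argument pathwise on the intersection of the relevant almost-sure events after the second thinning, and verify that the quantile bound $\alpha\ge\nu_l((c_{l,\alpha},\infty))$, valid for every realisation, passes to the limit so that it can be fed into the remainder of the proof of Theorem~\ref{powerprop}. Everything else is routine once the atomlessness of $\nu_\infty$ is in hand, and invoking Assumption~\ref{dataass6} is the only non-trivial ingredient there.
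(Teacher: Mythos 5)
Your proof is correct and follows essentially the same route as the paper: Bolzano--Weierstrass to extract $c_{l,\alpha}\to c_\alpha$, then transfer the (conditional and unconditional) weak convergence across the drift $c_{l,\alpha}\to c_\alpha$. The only real difference is cosmetic at the transfer step — the paper invokes Slutsky's theorem to absorb the deterministic sequence $c_{l,\alpha}$ into the random variable and then uses convergence of CDFs at continuity points, whereas you run a direct $\varepsilon$-sandwich $c_\alpha-\varepsilon\le c_{l,\alpha}\le c_\alpha+\varepsilon$; these are equivalent and both need the limit law to have no atom at $c_\alpha$. Where your proof is actually sharper than the paper's is that you \emph{prove} atomlessness of $\|\xi_1 - c_2(\xbf)\xi_0\|_\H^2$ from the Gaussianity of $\xi_1-c_2(\xbf)\xi_0$ and strict positivity of its covariance operator (via \dataass{6}), so that every $z\ge 0$ is a continuity point. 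The paper simply appeals to ``convergence of CDFs at continuity points'' and asserts the conclusion for all $z\ge 0$ without checking that there are no atoms; your argument explicitly closes that small gap, which is a worthwhile improvement.
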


\begin{claimproof}
First, since $c_{m,\alpha}$, $m \in \N$, is a bounded sequence as discussed above, we can find a convergent subsequence indexed by $l$ such that $c_{l,\alpha} \to c_{\alpha}$, where $c_{\alpha} \in \R$ might depend on the chosen subsequence. Using Slutsky's theorem, we have that
\begin{align*}
    \frac{1}{\sigma_{l,1}^2}\left \|\hat{\mu}_{l,1}(\mathbf{x} )- \hat{\mu}_{l,0}(\mathbf{x}) \right \|_{\H}^2 - c_{l,\alpha} \stackrel{D}{\to} \|\xi_1-c_2(\xbf)\xi_0\|_{\H}^2-c_{\alpha}.
\end{align*}
Consequently, if $\PYgXz=\PYgXo$, we have
\[
\Prob \left(\frac{1}{\sigma_{l,1}^2}\left \| \hat{\mu}_{l,1}(\mathbf{x} )- \hat{\mu}_{l,0}(\mathbf{x}) \right \|_{\H}^2 - c_{l,\alpha}> 0 \right) \to \Prob(\|\xi_1-c_2(\xbf)\xi_0\|_{\H}^2 - c_{\alpha}> 0 ).
\]
Similarly, arguing again pointwise for a realization $z_{l}$, $l \in \N$, and using Slutsky's theorem, we have
\[
\Prob \left(\left \| \xi_{l,1}^{\HS_1} -\xi_{l,0}^{\HS_0} \right \|_{\H}^2 - c_{l,\alpha} > 0 \mid \Zcal_{2l} \right) \to \Prob \left(\left \| \xi_{1} - c_2(\xbf)\xi_{0} \right \|_{\H}^2 - c_{\alpha} > 0\right)
\]
almost surely.
\end{claimproof}

We note that $c_{\alpha}$, and thus the limit $\Prob(\|\xi_1-c_2(\xbf)\xi_0\|_{\H}^2 > c_{\alpha} )$, might depend on the chosen subsequence. However, the $\alpha$-bound in~\eqref{H0convergence_S} holds by construction. Consequently, it follows from~\eqref{H0convergence} that we have
\begin{align}\label{limitequality}
     \lim_{l}\Prob \left(\frac{1}{\sigma_{l,1}^2}\left \| \hat{\mu}_{l,1}(\mathbf{x} )- \hat{\mu}_{l,0}(\mathbf{x}) \right \|_{\H}^2 > c_{l,\alpha} \right)= \lim_{l} \Prob \left(\left \|\xi_{l,1}^{\HS_1} -\xi_{l,0}^{\HS_0} \right \|_{\H}^2 > c_{l,\alpha} \mid \Zcal_{l} \right) \leq  \alpha
\end{align}
almost surely under $\PYgXz=\PYgXo$.

Thus, we found that for every subsequence, there exists a further subsequence such that~\eqref{limitequality} holds. Now, assume that for the overall sequence
\[
\limsup_{n}\Prob \left(\frac{1}{\sigma_{n,1}^2}\left \| \hat{\mu}_{n,0}(\mathbf{x} )- \hat{\mu}_{n,1}(\mathbf{x}) \right \|_{\H}^2 > c_{n,\alpha} \right) > \alpha.
\]
Then, we can choose a subsequence satisfying
\begin{align*}
   \lim_{m}\Prob \left(\frac{1}{\sigma_{m,1}^2}\left \| \hat{\mu}_{m,0}(\mathbf{x} )- \hat{\mu}_{m,1}(\mathbf{x}) \right \|_{\H}^2 > c_{m,\alpha} \right) > \alpha. 
\end{align*}
But for this sequence, it is not possible to find a further subsequence $l$ such that
\begin{align*}
     \lim_{\ell}\Prob \left(\frac{1}{\sigma_{l,1}^2}\left \| \hat{\mu}_{l,0}(\mathbf{x} )- \hat{\mu}_{l,1}(\mathbf{x}) \right \|_{\H}^2 > c_{l,\alpha} \right) \leq  \alpha,
\end{align*}
a contradiction to~\eqref{limitequality}.
\bigskip

On the other hand, since~\ref{kernelass3} holds, $\PYgXz \neq \PYgXo$ implies $\mu_1(\xbf) \neq \mu_2(\xbf)$. Moreover, we have
\begin{align*}
    &\frac{1}{\sigma_{n,1}} (\hmuno - \hmunz)\\
    =& \frac{1}{\sigma_{n,1}} (\hmuno - \mu_1(\xbf)) +  \frac{1}{\sigma_{n,1}} (\mu_1(\xbf) - \mu_0(\xbf))-  \frac{\sigma_{n,0}}{\sigma_{n,1}} \frac{1}{\sigma_{n,0}} (\hmunz - \mu_0(\xbf) ).
\end{align*}
Define 
\begin{align*}
    \xi_{n}^{01} =  \frac{1}{\sigma_{n,1}} (\hmuno - \mu_1(\xbf)) - \frac{\sigma_{n,0}}{\sigma_{n,1}} \frac{1}{\sigma_{n,0}} (\hmunz - \mu_0(\xbf) )
\end{align*}
Next, we have
\begin{align*}
  &\frac{1}{\sigma_{n,1}^2}\left \| \hat{\mu}_{n,1}(\mathbf{x} )- \hat{\mu}_{n,0}(\mathbf{x}) \right \|_{\H}^2\\
  =& \|\xi_{n}^{01} \|_{\H}^2 + \frac{1}{\sigma_{n,1}^2} \|\mu_1(\xbf) - \mu_0(\xbf) \|_{\H}^2 + 2\left\langle  \xi_{n}^{01},\frac{1}{\sigma_{n,1}} (\mu_1(\xbf) - \mu_0(\xbf))   \right\rangle\\
  \geq&  \|\xi_{n}^{01} \|_{\H}^2 + \frac{1}{\sigma_{n,1}^2} \|\mu_1(\xbf) - \mu_0(\xbf) \|_{\H}^2 - 2\left|\left\langle  \xi_{n}^{01},\frac{1}{\sigma_{n,1}} (\mu_1(\xbf) - \mu_0(\xbf))  \right\rangle\right|\\
    \geq&  \|\xi_{n}^{01} \|_{\H}^2 + \frac{1}{\sigma_{n,1}^2} \|\mu_1(\xbf) - \mu_0(\xbf) \|_{\H}^2 - 2 \| \xi_{n}^{01} \|_{\H} \frac{1}{\sigma_{n,1}} \|\mu_1(\xbf) - \mu_0(\xbf) \|_{\H}.
\end{align*}
Due to $\|\xi_{n}^{01} \|_{\H}^2 \stackrel{D}{\to} \|\xi_0- c_2(\xbf)\xi_1 \|_{\H}^2$, we infer
\[
\|\xi_{n}^{01} \|_{\H}^2=\mathcal{O}_p(1).
\]
For the remaining terms, we have
\begin{align*}
    &\frac{1}{\sigma_{n,1}^2} \|\mu_1(\xbf) - \mu_0(\xbf) \|_{\H}^2 - 2 \| \xi_{n}^{01} \|_{\H} \frac{1}{\sigma_{n,1}} \|\mu_1(\xbf) - \mu_0(\xbf) \|_{\H} \\
    =&\frac{1}{\sigma_{n,1}} \|\mu_1(\xbf) - \mu_0(\xbf) \|_{\H}\left(\frac{1}{\sigma_{n,1}} \|\mu_1(\xbf) - \mu_0(\xbf) \|_{\H} - 2 \| \xi_{n}^{01} \|_{\H}\right) \\
    \to& \infty,
\end{align*}
as $\|\mu_1(\xbf) - \mu_0(\xbf) \|_{\H} >0$ and $\sigma_{n,1} \to 0$. But since $\sup_{n} c_{n,\alpha} \leq M_{\alpha}$, this immediately implies that (ii) must hold.

\end{proof}

\coverprop*

\begin{proof}

Due to~\ref{kernelass2}, the kernel $k$ is bounded. Without loss of generality, we assume that $C=1$ bounds the kernel, so that $\sup_{\ybf} k(\ybf, \ybf) \leq 1$. Moreover, we assume again $n_0=n_1=n$, which does not affect asymptotics.

First, by definition of $ \bandybf$, we have 
\begin{align*}
    &\Prob \left(\forall \ybf \ \  \mu_1(\xbf)(\ybf)-\mu_0(\xbf)(\ybf) \in   \bandybf \right)  \\
    =& \Prob \left(\forall \ybf \ \ |\hat{\mu}_{n,1}(\xbf)(\ybf) - \hat{\mu}_{n,0}(\xbf)(\ybf)-(\mu_1(\xbf)(\ybf)-\mu_0(\xbf)(\ybf))| \leq \sqrt{c_{n,\alpha}} \sigma_{n,1} \right).
\end{align*}
The probability of the complementary event is given by
\begin{align*}
   & \Prob \left(\exists \ybf \ \ |\hat{\mu}_{n,1}(\xbf)(\ybf) - \hat{\mu}_{n,0}(\xbf)(\ybf)-(\mu_1(\xbf)(\ybf)-\mu_0(\xbf)(\ybf))| > \sqrt{c_{n,\alpha}} \sigma_{n,1} \right)\\
    =&\Prob \left(\exists \ybf \ \ |\langle \hat{\mu}_{n,1}(\xbf) - \hat{\mu}_{n,0}(\xbf)-(\mu_1(\xbf)-\mu_0(\xbf)), k(\ybf, \cdot)\rangle| > \sqrt{c_{n,\alpha}} \sigma_{n,1} \right)\\
    \leq&\Prob \left( \frac{1}{\sigma_{n,1}} \|\hat{\mu}_{n,1}(\xbf) - \hat{\mu}_{n,0}(\xbf)-(\mu_1(\xbf)-\mu_0(\xbf))\|_{\H} > \sqrt{c_{n,\alpha}} \right)
 \end{align*}
 due to $\|k(\ybf,\cdot) \|_{\H}^2 = k(\ybf,\ybf) \leq 1$.
By the same arguments as in the proof of Theorem~\ref{powerprop} together with~\eqref{conditionalindep1}, we have 
\begin{align*}
    \limsup_{n}\Prob \left(\frac{1}{\sigma_{n,1}^2}\left \| \hat{\mu}_{n,1}(\mathbf{x} )- \hat{\mu}_{n,0}(\mathbf{x})  - (\mu_1(\xbf) - \mu_0(\xbf)) \right \|_{\H}^2 > c_{n,\alpha} \right) \leq \alpha,
\end{align*}
 implying~\eqref{CIguarantee}. 
 \end{proof}

\section{Additional Discussion on Assumptions}\label{appendix: assumptions}

In this section, we study an example for which Assumption \dataass{2} holds and show that \dataass{4} is trivally satisfied under the Gaussian kernel, if $\PYgX$ has positive variance. We first restate both for convenience:
\begin{enumerate}
        \item[\textbf{(D2)}] The mapping $\xbf \mapsto \mu(\xbf)=\E[ k(\Ybf,\cdot)  \mid \Xbf \myeq \xbf] \in \H $ is Lipschitz.
                \item[\textbf{(D4)}] $\Var(k(\Ybf,\cdot)\mid \Xbf=\xbf) = \E[\| k(\Ybf,\cdot) \|_{\H}^2 \mid \Xbf=\xbf] - \|\E[ k(\Ybf,\cdot)  \mid \Xbf=\xbf]\|_{\H}^2 > 0$.
\end{enumerate}

Despite \dataass{2} being intuitive, it is somewhat difficult to find an example, solely because closed-form expressions of MMD distances are not readily available. We thus concentrate on the simple case $\Ybf \mid \Xbf=\xbf \sim N(m(\xbf), \mathbf{I})$ some function $m:\R^p \to \R^d$, and show that under a Gaussian kernel, \dataass{2} holds. We thereby make use of an intuitive expression of the MMD distance between two Gaussians. The following result is a special case of Theorem 1 in \citet{ourMMDresult}.

\begin{proposition}
    Consider a Gaussian kernel $k$, with parameter $\sigma=1$ and $\Ybf \mid \Xbf=\xbf \sim N(m(\xbf), \mathbf{I})$, for some function $m:\R^p \to \R^d$. Then
    \begin{align}
               \norm{ \mu(\xbf_1) - \mu(\xbf_2)}_{\H}^2 =2\left(\frac{1}{3}\right)^{d/2} \left( 1 - \exp\left( -\frac{\norm{m(\xbf_2) - m(\xbf_1)}^2}{6} \right)\right)
    \end{align}
\end{proposition}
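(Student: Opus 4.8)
The plan is to compute $\norm{\mu(\xbf_1) - \mu(\xbf_2)}_{\H}^2$ directly, exploiting that both the kernel and the two conditional laws are Gaussian; this avoids invoking Theorem~1 of~\citet{ourMMDresult} in its full generality, although one could alternatively just specialize that theorem to the case $\Sigma_1 = \Sigma_2 = \mathbf{I}$. Writing $m_j = m(\xbf_j)$ and recalling that $\mu(\xbf_j) = \Phi(N(m_j,\mathbf{I}))$, the starting point is the standard MMD expansion of the squared distance between kernel mean embeddings: with $P = N(m_1,\mathbf{I})$ and $Q = N(m_2,\mathbf{I})$,
\begin{align*}
\norm{\mu(\xbf_1) - \mu(\xbf_2)}_{\H}^2 = \E_{\Ybf,\Ybf'\sim P}[k(\Ybf,\Ybf')] + \E_{\Ybf,\Ybf'\sim Q}[k(\Ybf,\Ybf')] - 2\,\E_{\Ybf\sim P,\,\Ybf'\sim Q}[k(\Ybf,\Ybf')],
\end{align*}
where in each term $\Ybf$ and $\Ybf'$ are independent. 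Thus it suffices to evaluate $\E[k(\Ybf,\Ybf')]$ when $\Ybf\sim N(a,\mathbf{I})$ and $\Ybf'\sim N(b,\mathbf{I})$ are independent, for arbitrary means $a,b\in\R^d$.

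The key step is a Gaussian integral identity: for $Z\sim N(\nu,\Sigma)$ on $\R^d$,
\begin{align*}
\E\!\left[\exp\!\left(-\tfrac12 Z^\top Z\right)\right] = \det(\mathbf{I}+\Sigma)^{-1/2}\exp\!\left(-\tfrac12\,\nu^\top(\mathbf{I}+\Sigma)^{-1}\nu\right).
\end{align*}
I would prove this by writing out the $N(\nu,\Sigma)$ density, merging the two quadratic forms $z^\top z + (z-\nu)^\top\Sigma^{-1}(z-\nu)$ into a single quadratic in $z$, completing the square, carrying out the resulting Gaussian integral, and then simplifying using the elementary identities $\det(\Sigma)\det(\mathbf{I}+\Sigma^{-1}) = \det(\mathbf{I}+\Sigma)$ and $\Sigma^{-1} - \Sigma^{-1}(\mathbf{I}+\Sigma^{-1})^{-1}\Sigma^{-1} = (\mathbf{I}+\Sigma)^{-1}$. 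Since for $\sigma=1$ we read the kernel in the standard way $k(\ybf_1,\ybf_2) = \exp(-\norm{\ybf_1-\ybf_2}^2/2)$, and $\Ybf-\Ybf'\sim N(a-b,\,2\mathbf{I})$, applying the identity with $\nu = a-b$ and $\Sigma = 2\mathbf{I}$ gives $(\mathbf{I}+2\mathbf{I})^{-1} = \tfrac13\mathbf{I}$ and $\det(3\mathbf{I})^{-1/2} = 3^{-d/2}$, hence $\E[k(\Ybf,\Ybf')] = 3^{-d/2}\exp(-\norm{a-b}^2/6)$.

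Substituting this into the three-term expansion: the first two terms take $a=b$ and equal $3^{-d/2}$ each, while the cross term takes $a=m_1$, $b=m_2$ and equals $3^{-d/2}\exp(-\norm{m_2-m_1}^2/6)$. Collecting the terms yields
\begin{align*}
\norm{\mu(\xbf_1) - \mu(\xbf_2)}_{\H}^2 = 2\cdot 3^{-d/2}\left(1 - \exp\!\left(-\frac{\norm{m(\xbf_2)-m(\xbf_1)}^2}{6}\right)\right),
\end{align*}
which is the claimed formula. There is no deep obstacle here; the only points requiring care are the bookkeeping in the Gaussian integral — correctly tracking the determinant prefactor and verifying the exponent simplification — and using the kernel normalization ($\sigma=1$, squared Euclidean distance in the exponent) consistently throughout. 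Everything else is routine algebra.
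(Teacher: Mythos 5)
Your derivation is correct: the three-term MMD expansion, the Gaussian integral identity for $\E[\exp(-\tfrac12 Z^\top Z)]$ with $Z\sim N(\nu,\Sigma)$, and the specialization to $\Sigma=2\mathbf{I}$, $\nu=m_1-m_2$ all check out, and the algebra lands exactly on the claimed formula. The paper, by contrast, does not prove the proposition at all — it simply states it as a special case of Theorem~1 in \citet{ourMMDresult} (which handles general covariance matrices $\Sigma_1,\Sigma_2$) and moves on. So your approach is genuinely different in that it is a self-contained first-principles computation rather than a citation. What you gain is independence from the external reference and a short, checkable argument that is tailored to the isotropic case actually needed here; what you give up is the extra generality of the cited theorem, which would be needed if one wanted heteroscedastic Gaussians. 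One small point of care, which you already flag: the paper's display of the Gaussian kernel omits the square on $\norm{\ybf-\cdot}$, which is evidently a typo, and your reading of $k(\ybf_1,\ybf_2)=\exp(-\norm{\ybf_1-\ybf_2}^2/2)$ at $\sigma=1$ is the intended one — the $3^{-d/2}$ prefactor and the $6$ in the exponent would not come out otherwise.
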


Since, $1+x \leq \exp(x)$ and thus $1-\exp(-x) \leq x$, it holds that
\begin{align*}
    2\left(\frac{1}{3}\right)^{d/2} \left( 1 - \exp\left( -\frac{\norm{m(\xbf_2) - m(\xbf_1)}^2}{6} \right)\right)\leq   2\left(\frac{1}{3}\right)^{d/2} \frac{\norm{m(\xbf_2) - m(\xbf_1)}^2}{6}.
\end{align*}
Thus, \dataass{2} is met, if the function $m$ is Lipschitz.

We now also show that \dataass{4} holds under the Gaussian kernel if $\mathbf{Y}\mid \Xbf=\xbf$ is not almost surely a constant. To see this, we first note that
\begin{align*}
\Var(k(\Ybf, \cdot) \mid \Xbf=\xbf)=\E[\norm{k(\Ybf, \cdot)- \mu(\xbf)}_{\H}^2 \mid \Xbf= \xbf] =0,   
\end{align*}
if and only if, $\norm{k(\Ybf, \cdot)- \mu(\xbf)]}_{\H}^2=0$ on a set $A$ with $\PYgX(A)=1$. This in turn means that $k(\Ybf, \cdot)$ is constant $\PYgX-$a.s. on $\H$, or,
\begin{align}\label{equivalence}
   \Var(k(\Ybf, \cdot) \mid \Xbf=\xbf)=0 \iff k(\ybf, \cdot) = \mu(\xbf)(\cdot), \text{ for } \ybf \in A, \text{ s.t. } \PYgX(A)=1.
\end{align}
However, the map
\[
\R^d \ni \ybf \mapsto k(\ybf, \cdot) = \exp\left( -\frac{\norm{ \ybf- \cdot}}{2\sigma^2} \right) \in \H,
\]
is injective. Indeed for $\ybf_1 \neq \ybf_2$, it holds
\begin{align*}
    \norm{ k(\ybf_1, \cdot) - k(\ybf_2, \cdot) }_{\H}^2&=\norm{k(\ybf_1, \cdot)}_{\H}^2 + \norm{k(\ybf_2, \cdot)}_{\H}^2 - 2 \langle k(\ybf_1, \cdot), k(\ybf_2, \cdot) \rangle\\
    &=2 - 2 \exp\left( -\frac{\norm{\ybf_1- \ybf_2}}{2\sigma^2} \right) > 0,
\end{align*}
such that $k(\ybf_1, \cdot) \neq k(\ybf_2, \cdot)$ in $\H$. Thus, if $\Ybf$ is not $\PYgX-$a.s. constant, then $k(\Ybf, \cdot)$ is not, and by \eqref{equivalence}, \dataass{4} holds.

\section{DRF Pseudocode}\label{appendix: pseudocode}

\begin{algorithm}[htp]
\caption{Pseudocode for Distributional Random Forest in~\citet{DRF-paper}} \label{pseudocode}
\begin{algorithmic}[1]
\Procedure{BuildForest}{set of samples $\mathcal{S} = \{(\bold{x}_i, \bold{y}_i)\}_{i=1}^n$, number of trees $N$}
    \For{$i=1,\ldots, N$}
        \State $\mathcal{S}_\text{subsample} \gets$  Subsample $\mathcal{S}$ as in \forestass{5}
        \State $\mathcal{S}_\text{build}, \mathcal{S}_\text{populate}\gets$ Split $\mathcal{S}_\text{subsample}$ as in \forestass{1} \Comment{$\mathcal{S}_\text{build}$ to determine tree splits, $\mathcal{S}_\text{populate}$ to populate the leaves}
        \State $\mathcal{N}_i \gets$  initialize root node using $\mathcal{S}_\text{build}$ 
        \State $\mathcal{T}_i \gets$ \textsc{BuildTree}($\mathcal{N}_i$) \Comment{Start recursion from the root node}
                \State Populate leaves with $\mathcal{S}_\text{populate}$
    \EndFor
    \State \textbf{return} $\mathcal{F} = \{\mathcal{T}_1, \ldots, \mathcal{T}_N\}$
\EndProcedure

\item[]

\Procedure{BuildTree}{current node $\mathcal{N}$} \Comment{Recursively constructs the trees} 
    \State $\mathcal{S} \gets$ Extract the samples in the node
    \State $\mathcal{I} \gets$ Random set of candidate variables to perform a split on
    \State $\mathcal{C}$ $\gets$  Initialize list
    \Comment{Here we store info  
    about candidate splits}
    \For{idx $\in \mathcal{I}$, level $l$} \Comment{$l$ iterates over all values of variable $X_{\text{idx}}$}
        \State $\mathcal{S}_L, \mathcal{S}_R \gets$ Split samples based on $(\bold{x}_i)_{i \in \text{idx}} \leq l$ 
        \State $v \gets $ Calculate splitting criterion in~\eqref{eq: splitting} using \textsc{($\mathcal{S}_L, \mathcal{S}_R$)} 
        \State Add ($v$, $\mathcal{S}_L, \mathcal{S}_R, \text{idx}, l$) to $\mathcal{C}$ 
    \EndFor
    \State $\mathcal{S}_L, \mathcal{S}_R, \text{idx}, l \gets$ find the best split in $\mathcal{C}$ 
    \State $\mathcal{N}_L \gets$  Create new node with set of samples $\mathcal{S}_L$
    \State $\mathcal{N}_R \gets$ Create new node with set of samples $\mathcal{S}_R$
    \State \textsc{BuildTree}($\mathcal{N}_L$), \textsc{BuildTree}($\mathcal{N}_R$) \Comment{Proceed building recursively}
    \State \textsc{Children}($\mathcal{N}) \gets \mathcal{N}_L, \mathcal{N}_R$ 
    \State \textsc{Split}($\mathcal{N}) \gets \text{idx}, l$ \Comment{Store the split} 
    \State \textbf{return} \textsc{Children}($\mathcal{N}$), \textsc{Split}($\mathcal{N}$) 
\EndProcedure

\item[]

\Procedure{GetWeights}{forest $\mathcal{F}$, test point $\bold{x}$} \Comment{Computes the weighting function}
    \State vector of weights $w$ = \textsc{Zeros}($n$) \Comment{$n$ is the training set size}
    \For{$i = 1,\ldots,|\mathcal{F}|$}
        \State $\mathcal{L} \gets $ Get indices of training samples in same leaf as $\bold{x}$
        \For{$\text{idx} \in \mathcal{L}$}
            \State $w[\text{idx}] = w[\text{idx}]$ + $1/(|\mathcal{L}|\cdot|\mathcal{F}|)$
        \EndFor
    \EndFor
    \State \textbf{return} $w$
\EndProcedure
\end{algorithmic}
\end{algorithm}

\clearpage
{\small
\bibliography{bibfile}
}

\end{document}